\newtheorem{theorem}{Theorem}[section]
\newtheorem{lemma}[theorem]{Lemma}
\newtheorem{corollary}[theorem]{Corollary}
\newtheorem{proposition}[theorem]{Proposition}
\theoremstyle{definition}
\newtheorem{definition}[theorem]{Definition}
\newtheorem{remark}[theorem]{Remark}
\newcommand{\Hom}{{ \rm Hom }}
\newcommand{\Mod}{{ \rm Mod }}
\newcommand{\Ind}{{ \rm Ind }}
\newcommand{\DGC}{{\rm DGComod}}
\newcommand{\GC}{{\rm GComod}}
\newcommand{\Coder}{{\rm Coder}}
\newcommand{\Alg}{{\rm Alg}}
\newcommand{\GModi}{{\rm G}\hueca{M}{\rm od}} 
\newcommand{\TGModi}{{\rm TG}\hueca{M}{\rm od}} 
\newcommand{\GModit}{{\rm G}\hueca{M}{\rm od}\hskip-1.5pt\hbox{\rm I}} 
\newcommand{\GModitsub}{{\rm G}\hueca{M}{\rm od}\hskip-1.5pt\hbox{\scriptsize\rm I}} 
\newcommand{\TGModit}{{\rm TG}\hueca{M}{\rm od}\hskip-1.5pt\hbox{\rm I}} 
\newcommand{\GM}{{\rm GMod}}
\newcommand{\TGM}{{\rm TGMod}}
\newcommand{\GInd}{{\rm GCoind}}
\newcommand{\DGInd}{{\rm DGCoind}}
\newcommand{\DGCoalg}{{\rm DGCoalg}}
\newcommand{\DGComod}{{\rm DGComod}}
\newcommand{\sgn}{{ \rm sgn }}
\newcommand{\g}{\hbox{-}}
\newcommand{\uddots}{\mathinner{\mkern1mu\raise1pt\vbox{\kern7pt\hbox{.}}
\mkern2mu\raise4pt\hbox{.}\mkern2mu\raise7pt\hbox{.}\mkern1mu}}
\newcommand{\I}{{\rm I\hskip-2pt I}}
\newcommand{\hueca}[1]{\mathbb{#1}}
\newcommand{\lddots}{
\mathinner{
\mkern1mu\raise1pt}\vbox{\kern7pt\hbox{.}}
\mkern2mu\raise3pt\hbox{.}
\mkern2mu\raise7pt\hbox{.}\mkern1mu}
\newcommand{\rightdashmap}[1]{\smash{\mathop{\hbox to 
20pt{-\,-\,-\,\rightarrowfill}}\limits^{#1}}}
\newcommand{\rightmap}[1]{\smash{\mathop{\hbox to 
20pt{\rightarrowfill}}\limits^{#1}}}
\newcommand{\leftmap}[1]{\smash{\mathop{\hbox to 
20pt{\leftarrowfill}}\limits^{#1}}}
\newcommand{\lmapdown}[1]{\llap{$\vcenter{\hbox{$\scriptstyle#1$}}$}
\Bigg\downarrow}
\newcommand{\rmapdown}[1]{\Bigg\downarrow\rlap{$\vcenter{\hbox{$\scriptstyle#1$}
}$}}
\newcommand{\longrightmap}[1]{\smash{\mathop{\hbox to 
4cm{\rightarrowfill}}\limits^{#1}}}
\newcommand{\longleftmap}[1]{\smash{\mathop{\hbox to 
4cm{\leftarrowfill}}\limits^{#1}}}
\newcommand{\medrightmap}[1]{\smash{\mathop{\hbox to 
2cm{\rightarrowfill}}\limits^{#1}}}
\newcommand{\medleftmap}[1]{\smash{\mathop{\hbox to 
2cm{\leftarrowfill}}\limits^{#1}}}
\newcommand{\shortlmapdown}[1]
{\downarrow\rlap{$\vcenter{\hbox{$\scriptstyle#1$}}$}}
\newcommand{\idmapdown}[1]
{\hskip-8pt\mathop{\hskip-5pt\raise6pt
\hbox{$\scriptstyle#1$}\hskip-5pt\swarrow}}
\newcommand{\ddmapdown}[1]
{\hskip-5pt\mathop{\searrow\hskip-6pt\raise5pt\hbox{$\scriptstyle#1$}}}
\newcommand{\idmapup}[1]
{\hskip-5pt\mathop{\nwarrow\hskip-6pt \raise5pt\hbox{$\scriptstyle#1$}}}
\newcommand{\ddmapup}[1]
{\hskip-8pt\mathop{\hskip-5pt\raise6pt\hbox{$\scriptstyle#1$}\hskip-5pt\nearrow}
}
\newcommand{\flechypunt}[2]{\ \smash{\mathop{
   \raise 3pt \hbox to 40pt{\rightarrow}\hskip-40pt \lower 3pt
   \hbox to 40pt{\dashrightarrow}}\limits^{#1}_{#2}}\ }
\newcommand{\dobleflechavieneva}[2]{\ \smash{\mathop{
   \raise 3pt \hbox to 40pt{\leftarrowfill}\hskip-40pt \lower 3pt
   \hbox to 40pt{\rightarrowfill}}\limits^{#1}_{#2}}\ }
\newcommand{\dobleflechavienevabis}[2]{\ \smash{\mathop{
   \raise 3pt \hbox to 20pt{\leftarrowfill}\hskip-20pt \lower 3pt
   \hbox to 40pt{\rightarrowfill}}\limits^{#1}_{#2}}\ }
\newcommand{\dobleflecha}[2]{\ \smash{\mathop{
   \raise 3pt \hbox to 40pt{\rightarrowfill}\hskip-40pt \lower 3pt
   \hbox to 40pt{\rightarrowfill}}\limits^{#1}_{#2}}\ }
\newcommand{\longequal}{\ \smash{\mathop{
   \raise 5pt \hbox to 35pt{\hrulefill}\hskip-35pt \lower 0pt
   \hbox to 35pt{\hrulefill}}}\ }
\newcommand{\raya}[1]{\ \smash{\mathop{\raise 2pt \hbox to 
10pt{\hrulefill}}\limits^{#1}}\ }
\title{\bf \Large Differential graded bocses and $A_\infty$-modules\\ 
\, $ $\\ \, $ $\\ }
\author{R. Bautista, E. P\'erez and L. Salmer\'on}
\begin{document}

 \maketitle
 \renewcommand{\thefootnote}{}

\footnote{2010 \emph{Mathematics Subject Classification}: Primary 06B15, 16T15, 18E30; Secondary 16G20.}

\footnote{\emph{Key words and phrases}: differential graded bocses, differential graded coalgebras, $A_\infty$-algebras, $A_\infty$-modules, Frobenius category, twisted graded modules.}

\centerline{\emph{Dedicated to J.A. De la Pe\~na on the occasion of his 60th birthday}}

\begin{abstract}
We introduce and study the category of twisted modules over a triangular differential graded  bocs. We show that in this category idempotents split, that it admits a natural structure of a Frobenius category, that a twisted module is homotopically trivial iff its underlying complex is acyclic, and that any homotopy equivalence of differential graded bocses determines an equivalence of the corresponding homotopy categories of twisted modules. The category of modules over an $A_\infty$-algebra is equivalent to the category of twisted modules over a triangular differential graded bocs, so all the preceding statements lift to the former category.
\end{abstract}

   
   \section{Introduction}
   
   There is some analogy between the theory of modules over bocses  and the theory of $A_\infty$-modules over an $A_\infty$-algebra. This can be noticed, for instance, in the fact that 
   the morphisms $f:M\rightmap{}N$ of modules over a given triangular bocs can be handled as pairs $(f^0,f^1)$ of morphisms, where the properties of the first component $f^0:M\rightmap{}N$ reflect on the properties of the whole morphism $f$, similarly, the properties of a morphism $g:M\rightmap{}N$ of 
   $A_\infty$-modules over a given $A_\infty$-algebra depend on the properties of the first component $g_1:M\rightmap{}N$.
   
   For instance, the natural exact structure on the category of modules of a given triangular bocs is determined by the first components of morphisms, while in the category of modules over a given $A_\infty$-algebra the natural exact structure is determined by the first components of morphisms. 
   
   Motivated by this analogy, we work here with the natural notions of  differential graded bocses and their  category of twisted graded modules, as defined below in (\ref{D: dgb y su cat de mods}) and (\ref{D: category of twisted graded modules of a bocs}). The study of these algebraic structures led us to simpler proofs of some well known facts on categories of $A_\infty$-modules. 
   
   In order to describe more precisely our constribution, let us fix some notation and recall some well known concepts. Throughout this article, we denote by  $k$  a fixed ground field, which will act centrally on any $k$-algebra and on any $k$-$k$-bimodule we consider. We also fix a finite-dimensional  semisimple $k$-algebra $S$.  Moreover, we ask that 
   $S\otimes_k S^{op}$ is also semisimple. This holds, for instance when $k$ is a perfect field or if $S$ is a finite product of copies of the ground field.  We will make  explicit any further requirement on $S$, when needed.   We consider $S$ as a $\hueca{Z}$-graded $k$-algebra concentrated at degree $0$. 
   
   We will consider graded right $S$-modules (or graded left $S$-modules, or graded $S$-$S$-bimodules) $M$, so $M$ is equipped with a direct sum decomposition 
$$M=\bigoplus_{j\in \hueca{Z}}M_j$$
of right $S$-submodules  $M_j$ (resp.  left $S$-submodules, or $S$-$S$-subbimodules). The elements $x\in M_j$ are called \emph{homogeneous of degree $j$}, and we indicate this fact by $\vert x\vert=j$. Given two graded right $S$-modules, a \emph{homogeneous morphism $f:M\rightmap{}N$ of degree $\vert f\vert=d$} satisfies that $f(M_j)\subseteq N_{j+d}$, for any $j\in \hueca{Z}$. We denote by 
$\Hom_{\GM\g S}^d(M,N)$ the space of homogeneous morphisms of graded right $S$-modules $f:M\rightmap{}N$ of degree $d$. Then, we have the graded category 
$\GM\g S$ of graded right $S$-modules with hom spaces
$$\Hom_{\GM\g S}(M,N)=\bigoplus_{d\in \hueca{Z}}\Hom_{\GM\g S}^d(M,N).$$

Similarly, we have the graded category $\GM\g S\g S$  of  
graded $S$-$S$-bimodules.

The tensor product $M\otimes_SN$ of two graded objects $M$ and $N$ is equipped with the standard grading given by the homogeneous components 
$$(M\otimes_S N)_j=\bigoplus_{s+t=j}M_s\otimes_SN_t.$$

   Assume that $f: M\rightmap{}N$ is a homogeneous morphism of graded right $S$-modules and 
   $g:M'\rightmap{}N'$  is a homogeneous morphism of graded left $S$-modules. 
    Then, their \emph{tensor product}
   $f\otimes g$ is the homogeneous linear map 
   $M\otimes_S M'\rightmap{}N\otimes_S N'$ of degree $\vert f\otimes g\vert =\vert f\vert+\vert g\vert$ defined, for any homogeneous elements $x\in M$ and $y\in M'$, 
   by the following formula. 
   $$[f\otimes g](x\otimes y):=(-1)^{\vert g\vert\vert x\vert}f(x)\otimes g(y).$$
  So, if  $f:M\rightmap{}N$,  $h:N\rightmap{}L$ are  morphisms of graded  right $S$-modules, 
  and        $g:M'\rightmap{}N'$, $t:N'\rightmap{}L'$ are morphisms of graded left $S$-modules, we have  
  $$(h\otimes t)(f\otimes g)=
  (-1)^{\vert t\vert\vert f\vert}hf\otimes tg.$$
   
  Now we recall some basic definitions of the theory of $A_\infty$-algebras. 
   
    \begin{definition}\label{A-infinite algebra} 
    An \emph{$A_{\infty}$-algebra} $A$ is a graded $S$-$S$-bimodule  
   $A$, equipped with a sequence of homogeneous 
   morphisms of $S$-$S$-bimodules  
   $$\{m_n:A^{\otimes n}\rightmap{}A\}_{n\in \hueca{N}},$$
   where each $m_n$ has degree $\vert m_n\vert=2-n$, such that, for each 
   $n\in \hueca{N}$, the following \emph{Stasheff identity} holds.
   $$S_n:\sum_{\scriptsize\begin{matrix}r+s+t=n\\ s\geq 1;r,t\geq 0\end{matrix}} 
   (-1)^{r+st}m_{r+1+t}(id^{\otimes r}\otimes m_s\otimes id^{\otimes t})=0.$$
   
Given two $A_\infty$-algebras $(A,\{m_n\})$ and $(B,\{m'_n\})$, 
  a \emph{morphism of
  $A_\infty$-algebras} $f:(A,\{m_n\})\rightmap{}(B,\{m'_n\})$  
  is a family of homogeneous $S$-$S$-bimodule morphisms  
  $\{f_n:A^{\otimes n}\rightmap{}B\}_{n\in \hueca{N}}$ 
  such that each $f_n$ is homogeneous of degree $\vert f_n\vert=1-n$ and the equality
  $\Sigma_n=\Sigma'_n$ holds for all $n\in \hueca{N}$, 
  where
  $$\Sigma_n=\sum_{\scriptsize
  \begin{matrix}r+s+t=n\\ r,t\geq 0; s\geq 1\end{matrix}}(-1)^{r+st}
 f_{r+1+t}(id^{\otimes r}\otimes m_s\otimes id^{\otimes t}),$$  
 $$\Sigma'_n=\sum_{\scriptsize\begin{matrix}1\leq r\leq n\\ i_1+\cdots+ i_r=n\end{matrix}}(-1)^{\sgn(i_1,\ldots,i_r)}
 m'_{r}(f_{i_1}\otimes\cdots\otimes f_{i_r}), $$
 where $i_1,\ldots,i_r\geq 1$ and 
 $$\sgn(i_1,\ldots,i_r)=(r-1)(i_1-1)+(r-2)(i_2-1)+\cdots +2(i_{r-2}-1)+(i_{r-1}-1).$$
   
  Given two morphisms of $A_\infty$-algebras $f:A\rightmap{}B$ 
 and $g:B\rightmap{}C$, 
 their composition $g\circ f:A\rightmap{}C$ is the family 
 $(g\circ f)=\{(g\circ f)_n\}_{n\in \hueca{N}}$
 defined, for each $n\in \hueca{N}$, by 
 $$(g\circ f)_n=\sum_{\scriptsize
  \begin{matrix} 1\leq s\leq n\\
           i_1+\cdots +i_s=n\\
          \end{matrix}}(-1)^{\sgn(i_1,\ldots,i_s)}g_s(f_{i_1}\otimes\cdots\otimes f_{i_s}).$$ 
 \end{definition}
 
 The preceding notions give rise to a $k$-category $\Alg_\infty$ of $A_\infty$-algebras with morphisms of $A_\infty$-algebras. 
 There is a well known full and faithful functor  $\Psi:\Alg_\infty \rightmap{}\DGCoalg$, where $\DGCoalg$ denotes the category of  differential graded $S$-coalgebras,  which has been used successfully to study the category $\Alg_\infty$, see \cite{K1}, \cite{L-H} and their references. The functor $\Psi$ is given by the bar construction: it maps each $A_\infty$-algebra $(A,\{m_n\})$ onto its reduced tensor $S$-coalgebra $\overline{T}_S(A[1])$, equipped with  a differential $\delta$ induced by the family $\{m_n\}$, see \cite{K1}(3.6).
 
 There is some notational difference with \cite{K1} and \cite{L-H}, which is a minor one as explained by the following.

\begin{lemma}\label{L: S ene y S' ene}
Let $A$ be a graded $S$-$S$-bimodule 
equipped with a sequence of homogeneous morphisms of $S$-$S$-bimodules 
   $\{m_n:A^{\otimes n}\rightmap{}A\}_{n\in \hueca{N}}$. For $n\in \hueca{N}$, define 
   $$m'_n:=(-1)^{\frac{n(n-1)}{2}} m_n.$$
   Then, the equation $S_n$ appearing 
   in the last definition holds iff the following equation holds
   $$S'_n:\sum_{\scriptsize\begin{matrix}r+s+t=n\\ s\geq 1;r,t\geq 0\end{matrix}} 
   (-1)^{rs+t}m'_{r+t+1}(id^{\otimes r}\otimes m_s\otimes id^{\otimes t})=0.$$
 Moreover, if we make 
 $$Z_n=\sum_{\scriptsize\begin{matrix}r+s+t=n\\ s\geq 1;r,t\geq 0\end{matrix}} 
   (-1)^{r+st}m_{r+t+1}(id^{\otimes r}\otimes m_s\otimes id^{\otimes t})$$
 and 
 $$Z'_n=\sum_{\scriptsize\begin{matrix}r+s+t=n\\ s\geq 1;r,t\geq 0\end{matrix}} 
   (-1)^{rs+t}m'_{r+t+1}(id^{\otimes r}\otimes m'_s\otimes id^{\otimes t}),$$
 we have that 
 $$Z'_n=(-1)^{\frac{n(n-1)}{2}} Z_n.$$
\end{lemma}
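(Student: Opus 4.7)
The plan is to first establish the ``moreover'' statement $Z'_n = (-1)^{n(n-1)/2} Z_n$, from which the equivalence $S_n \Leftrightarrow S'_n$ is immediate: both amount to $Z_n = 0$ and $Z'_n = 0$ respectively (assuming the inner $m_s$ in the displayed $S'_n$ is intended to be $m'_s$, which matches the definition of $Z'_n$), and the scalar $(-1)^{n(n-1)/2}$ is a unit in $k$.

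For the sign identity itself, I would rewrite each summand of $Z'_n$ entirely in terms of the original maps $m_k$. Using $m'_k = (-1)^{k(k-1)/2} m_k$, the scalars factor out of both the outer morphism $m'_{r+t+1}$ and the inner $m'_s$; crucially, no Koszul sign intervenes when extracting the scalar from $m'_s$, because $(-1)^{s(s-1)/2}$ is just a number and bilinearity of the tensor product of morphisms lets it pass through $id^{\otimes r}\otimes(-)\otimes id^{\otimes t}$. Thus
$$Z'_n = \sum_{r+s+t=n}(-1)^{rs+t+\frac{(r+t+1)(r+t)}{2}+\frac{s(s-1)}{2}}\; m_{r+t+1}\bigl(id^{\otimes r}\otimes m_s\otimes id^{\otimes t}\bigr),$$
and the claim reduces to the arithmetic congruence
$$rs + t + \tfrac{(r+t+1)(r+t)}{2} + \tfrac{s(s-1)}{2} \;\equiv\; \tfrac{n(n-1)}{2} + r + st \pmod{2},$$
to be verified term-by-term for all admissible $(r,s,t)$ with $r+s+t=n$.

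I would verify this congruence by the substitution $a = r+t$, so $n = a+s$. Expanding $n(n-1)/2 = a(a-1)/2 + s(s-1)/2 + as$ and $(r+t+1)(r+t)/2 = a(a-1)/2 + a$ cancels the common quadratic terms, and the simplification $rs + t - r - st = (r-t)(s-1)$ on the affine part leaves a difference of $(r-t)(s-1) + (r+t)(1-s) = -2t(s-1)$, which is plainly even. Hence the two signs agree mod $2$ for every summand and the identity $Z'_n = (-1)^{n(n-1)/2} Z_n$ follows.

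The only real obstacle here is notational bookkeeping: one must be careful that replacing $m'_s$ by the scalar multiple of $m_s$ inside a tensor-product expression does not introduce a hidden Koszul sign (it does not, because the extracted factor is a scalar, not a nontrivial-degree morphism). Once this is checked, the entire lemma is a consequence of the single even-parity observation above.
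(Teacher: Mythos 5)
Your proposal is correct and follows essentially the same route as the paper: expand $Z'_n$ in terms of the $m_k$'s by pulling out the scalar factors $(-1)^{(r+t+1)(r+t)/2}$ and $(-1)^{s(s-1)/2}$ and reduce everything to the congruence $rs+t+\tfrac{(r+t+1)(r+t)}{2}+\tfrac{s(s-1)}{2}\equiv r+st+\tfrac{n(n-1)}{2}\pmod 2$, which the paper simply asserts and you verify explicitly (your substitution $a=r+t$ and the resulting even difference $-2t(s-1)$ check out). Your side remarks — that the inner $m_s$ in the displayed $S'_n$ should be read as $m'_s$, and that extracting the scalar introduces no Koszul sign — are also accurate.
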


\begin{proof} By definition, 
$$Z'_n=\sum_{\scriptsize\begin{matrix}r+s+t=n\\ s\geq 1;r,t\geq 0\end{matrix}} 
   (-1)^{rs+t}(-1)^{\frac{(r+t+1)(r+t)}{2}}(-1)^{\frac{s(s-1)}{2}}m_{r+t+1}
   (id^{\otimes r}\otimes m_s\otimes id^{\otimes t}).$$
  The non-negative integers $r,s,t$ satisfy $r+s+t=n$ and $s\geq 1$.  
   Then, our  statement follows from the next 
   congruence modulo $2$
  $$rs+t+\frac{(r+t+1)(r+t)}{2}+\frac{s(s-1)}{2}\equiv r+st+\frac{n(n-1)}{2}.$$
\end{proof}

Thus, the choice for the signs in the formulas we made in the last definition, 
which is also adopted  by other authors, is not essential. It 
 will reflect on some signs in the formulas involving $A_\infty$-objects.

 \begin{definition}\label{D: A-infinto modulos derechos}
 Let $A=(A,\{m_n\})$ be an $A_\infty$-algebra. 
 Then,  a graded right $S$-module $M$ is called a 
 \emph{right $A_\infty$-module over $A$}\index{module!right $A_\infty$-module} iff $M$ is equipped 
 with a family $\{m_n^M\}_{n\in \hueca{N}}$ of morphism of right $S$-modules such that, 
 each map  
 $$m_n^M:M\otimes A^{\otimes (n-1)}\rightmap{}M$$
 is homogeneous of degree $\vert m^M_n\vert=2-n$ and, for every $n\in \hueca{N}$,
 the condition 
 $\Sigma_n^++ \Sigma_n^0=0$ is satisfied, where  
   $$\Sigma_n^+=\sum_{\scriptsize\begin{matrix}r+s+t=n\\ t\geq 0; r,s\geq 1\end{matrix}}(-1)^{r+st}
 m^M_{r+1+t}(id^{\otimes r}\otimes m_s\otimes id^{\otimes t})$$
 and
 $$\Sigma_n^0=\sum_{\scriptsize\begin{matrix}s+t=n\\ t\geq 0; s\geq 1\end{matrix}}(-1)^{st}
 m^M_{1+t}( m^M_s\otimes id^{\otimes t}).$$

 In the  sum $\Sigma_n^+$, for simplicity,  we abuse of the language 
 writing $id^{\otimes r}$ instead of $id_M\otimes id_A^{\otimes (r-1)}$.
 In case $n=1$, the condition reduces to $\Sigma^0_1=0$ which is equivalent 
 to say that $m^M_1$ satisfies $(m^M_1)^2=0$. 
 
  Given $M=(M,\{m^M_n\})$ and $N=(N,\{m_n^N\})$  right $A_\infty$-modules over $A$. 
  Then, a \emph{morphism of right $A_\infty$-modules $f:M\rightmap{}N$ 
  over} $A$ is a family $f=\{f_n\}_{n\in \hueca{N}}$, where each 
  $$f_n:M\otimes_S A^{\otimes (n-1)}\rightmap{}N$$
  is a homogeneous morphism of graded right $S$-modules of degree $\vert f_n\vert=1-n$ such that, 
  for each $n\in \hueca{N}$, 
  the equality $\Sigma_n^{f +}+\Sigma_n^{f 0}+\Sigma_n^{f -}=0$, where 
  $$\Sigma_n^{f +}=\sum_{\scriptsize\begin{matrix}r+s+t=n\\ t\geq 0; r,s\geq 1\end{matrix}}
  (-1)^{r+st}
 f_{r+1+t}(id^{\otimes r}\otimes m_s\otimes id^{\otimes t}),$$
$$\Sigma_n^{f 0}=\sum_{\scriptsize\begin{matrix}s+t=n\\ t\geq 0; s\geq 1\end{matrix}}
(-1)^{st}
 f_{1+t}( m^M_s\otimes id^{\otimes t}),$$
and
$$\Sigma_n^{f-}=-\sum_{\scriptsize\begin{matrix}r+s=n\\ r\geq 1; s\geq 0\end{matrix}}
(-1)^{(r+1)s}
 m^N_{1+s}(f_r\otimes id^{\otimes s}).$$
 The condition in case $n=1$ is equivalent to $m_1^Nf_1=f_1m_1^M$, 
 that is, to the requirement that the map $f_1:M\rightmap{}N$ is a morphism 
 of complexes of right $S$-modules. 
 The morphism $f$ is called a \emph{quasi-isomorphism} iff $f_1$ is so. 
 
 The class of 
 right $A_\infty$-modules  together with the morphisms between them is 
 a category with the following composition. If $f:M\rightmap{}N$ and 
 $g:N\rightmap{}L$ are morphisms of right $A_\infty$-modules over $A$, 
 their composition 
 $$g\circ f=\{(g\circ f)_n\}_{n\in \hueca{N}}:M\rightmap{}L$$ 
 is defined, for each  $n\in \hueca{N}$, by 
 $$(g\circ f)_n=\sum_{\scriptsize\begin{matrix}r+s=n\\ r\geq 1;
 s\geq 0\end{matrix}}(-1)^{(r+1)s}
 g_{1+s}( f_r\otimes id^{\otimes s}).$$
 Given a left $A_\infty$-module $M$, the identity 
 $\I_M=\{h_n\}:M\rightmap{}M$ is given by $h_1=id_M$ and $h_n=0$,
 for all $n\geq 2$. We shall denote this category by 
 $\Mod_\infty\g A$.
\end{definition}
 
 For the study of the category $\Mod_\infty\g A$ one of the main tools is the graded category $\DGComod\g {\cal B}_A$, the category of differential graded comodules over the differential tensor $S$-coalgebra ${\cal B}_A=(T_S(A[1]),\mu,\epsilon,\delta)$, with $\mu$  the comultiplication given by the bar construction, 
 $\epsilon:T_S(A[1])\rightmap{}S$ is the canonical projection, and $\delta$ is the differential induced by the family of operations $\{m_n\}_{n\in \hueca{N}}$ of the $A_\infty$-algebra $A$. This is so, because of there is a 
 full and faithful functor 
 $\Phi:\Mod_\infty\g A\rightmap{}\DGComod^0\g {\cal B}_A$. 
 The superindex $0$ indicates the subcategory of $\DGComod\g {\cal B}_A$ with the same objects but with 
 only degree zero homogeneous morphisms.

Finally, we recall the appropriate notions of homotopy for the preceding categories $\Alg_\infty$ and $\Mod_\infty\g A$. 

\begin{definition}\label{D: homotopy for Mod-inf-A}
Let $A$ be an $A_\infty$-algebra and  $f,g:M\rightmap{}N$  morphisms of right 
$A_\infty$-modules over $A$. Then, a \emph{homotopy from $f$ to $g$}  is 
a family of maps $\{h_n\}_{n\in \hueca{N}}$  where, for each $n\in \hueca{N}$, 
$$h_n:M\otimes_SA^{\otimes (n-1)}\rightmap{} N$$
is a homogeneous morphism of right $S$-modules of degree $\vert h_n\vert=-n$,
such that, for all $n\in \hueca{N}$, we have  
$f_n-g_n=H_n^{(1)} + H_n^{(2)}+H_n^{(3)}$, where 
$$H_n^{(1)}=\sum_{\scriptsize\begin{matrix}r+s=n\\ r\geq 1;s\geq 0\end{matrix}} 
   (-1)^{rs}m^N_{1+s}( h_r\otimes id^{\otimes s}),$$
  $$H_n^{(2)}=\sum_{\scriptsize\begin{matrix}s+t=n\\ s\geq 1;t\geq 0\end{matrix}} 
   (-1)^{st}h_{1+t}( m^M_s\otimes id^{\otimes t}),$$
   and
   $$H_n^{(3)}=\sum_{\scriptsize\begin{matrix}r+s+t=n\\ r,s\geq 1;t\geq 0\end{matrix}} 
   (-1)^{r+st}h_{r+1+t}(id^{\otimes r}\otimes m_s\otimes id^{\otimes t}),$$
   
   A morphism of $A_\infty$-modules $f:M\rightmap{}N$ is called \emph{null-homotopic} iff there is a homotopy from $f$ to $0$.
\end{definition}
 
  \begin{definition}\label{D: homotopy for Alg-inf}
   Let $A$ and $B$ be $A_\infty$-algebras and let $f,g:A\rightmap{}B$
   be morphisms of $A_\infty$-algebras. A \emph{homotopy $h$ from $f$
   to $g$} is a family $h=\{h_n\}_{n\in \hueca{N}}$, where 
each 
$$h_n:A^{\otimes n}\rightmap{}B$$ 
is a homogeneous morphism of  $S\g S$-bimodules with degree $\vert h_n\vert=-n$,  
 such that, for each $n\in \hueca{N}$, 
   we have 
  $$f_n-g_n=H(h)_n+H_{f,g}(h)_n,$$
  where 
    $$ H(h)_n=\sum_{\scriptsize
  \begin{matrix} r+s+t=n\\
            r,t\geq 0; s\geq 1\\
         \end{matrix}}\hbox{\,}(-1)^{r+st}h_{r+1+t}(id^{\otimes r}\otimes m^A_s\otimes id^{\otimes t})
         $$
         and 
  $$H_{f,g}(h)_n=\sum_{\scriptsize
  \begin{matrix} 
            r,t\geq 0; s\geq 1\\
           i_1+\cdots +i_r+s\\
            +j_1+\cdots +j_t=n\\
         \end{matrix}}(-1)^{\sgn}
         m^B_{r+1+t}
         (f_{i_1}\otimes\cdots\otimes f_{i_r}\otimes h_s\otimes 
         g_{j_1}\otimes\cdots\otimes g_{j_t})$$
         where $i_1,\ldots,i_r,j_1,\ldots,j_t\geq 1$ and 
         $\sgn=\sgn(i_1,\ldots,i_r,s,j_1,\ldots,j_t)$
         is given by the sum 
         $$r(t+1)+st+\sum_{\scriptsize
  \begin{matrix} 
            2\leq \alpha \leq r\\
            1\leq u< \alpha\\
         \end{matrix}}(1-i_u)
         +
         t\sum_{u=1}^ri_u
         +
         \sum_{\scriptsize
  \begin{matrix} 
            2\leq \beta \leq t\\
            1\leq v< \beta\\
         \end{matrix}}(1-j_v).$$
 \end{definition}
 
 The preceding notions of homotopy are known to be equivalence relations on the corresponding categories and give rise to the \emph{homotopy categories} $\underline{\Mod}_\infty\g A$ and $\underline{\Alg}_\infty$, respectively. 
 
 The categories $\DGCoalg$ and $\DGComod^0\g {\cal B}_A$ have their own classical homotopy relations. 
 The functors 
 $$\Psi:\Alg_\infty\rightmap{}\DGCoalg \hbox{  \ and  \ }
 \Phi:\Mod_\infty\g A\rightmap{}\DGComod^0\g {\cal B}_A$$ preserve and reflect the preceding homotopy relations and, therefore, induce full and faithful functors $\underline{\Psi}:\underline{\Alg}_\infty\rightmap{}\underline{\DGCoalg}$ and 
 $\underline{\Phi}:\underline{\Mod}_\infty\g A\rightmap{}\underline{\DGComod}^0\g{\cal B}_A$.

 Many important properties of $\underline{\Alg}_\infty$ and $\underline{\Mod}_\infty\g A$ are derived, using the full and faithful functors $\Psi$ and $\Phi$, from the corresponding properties for $\underline{\DGCoalg}$ and $\underline{\DGComod}^0\g{\cal B}_A$ which are better understood categories.  
 
 In this paper, we propose that the use of the categories of twisted modules over differential graded  $S$-bocses can be  a fresh and simpler tool for some  studies of $\Mod_\infty\g A$. We illustrate 
 this with  three applications. 
 
 Namely, we can describe with quite good precision the structure of $\Mod_\infty\g A$ as a Frobenius category. In particular, we prove that idempotents split in $\Mod_\infty\g A$, a fact which seems to have remained unnoticed. We obtain this as an application of the study of the category $\TGM\g {\cal B}$, for a general triangular differential  graded $S$-bocs, see (\ref{T: TGMod-A estable es triangulada}). By ``general'' we mean that it is not necessarily a differential graded tensor $S$-coalgebra. We compare our description with the one given in \cite{K1}(Proposition 5.2 and 8.4).

 The important fact that each quasi-isomorphism of $A_\infty$-modules is a homotopy equivalence, see \cite{K1}(Theorem 4.2), is obtained here, for the case where $S$ is a finite product of copies of the ground field $k$, as a consequence of our theorem (\ref{P: (M,u0) aciclico sii (M,u) homotopicamente trivial}). The latter is proved by an induction argument using the 
 triangularity of the given triangular differential graded $S$-bocs. The proof does not resort to any model theoretical considerations. 
 
 Finally, we show the fact that any homotopy equivalence $f:A\rightmap{}B$ of $A_\infty$-algebras determines a restriction functor $R_f:\underline{\Mod}_\infty\g A\rightmap{}\underline{\Mod}_\infty\g A$ which is an equivalence of categories, see \cite{K1}(Proposition 6.2). We obtain this as a consequence of the corresponding result for triangular differential graded  $S$-bocses, see 
 (\ref{C: equiv homotopica de bocses triangulares induce equiv de cats homotópicas}), which essentially relies on the remarkable property that a morphism $f:M\rightmap{}N$ in the category of modules over a triangular differential graded $S$-bocs is an isomorphism if and only if its first component is so, see (\ref{C: f iso sii f0 es iso}). 
 
 The twisted modules we consider in (\ref{D: category of twisted graded modules of a bocs}) are constructed from the differential graded category $\GM\g {\cal B}$,  of graded modules over a differential graded $S$-bocs ${\cal B}$. These twisted modules are a special kind of the twisted complexes over a general differential graded category considered in \cite{BK}. Here we focus on a naive and very concrete approach to the study of the category $\TGM\g {\cal B}$ for a triangular differential graded $S$-bocs ${\cal B}$ and extract from this the preceding applications to the study of $A_\infty$-modules.

   \section{Differential graded bocses,  twisted modules}
   
   In this paper, we use the word \emph{bocs} in the following specific sense. 
   
   \begin{definition}\label{D: dgb y su cat de mods}
    A \emph{graded $S$-bocs} ${\cal B}$ is a triple ${\cal B}=(C,\mu,\epsilon)$, where $C$ is a graded $S$-$S$-bimodule, 
   so we have a decomposition $C=\bigoplus_{j\in \hueca{Z}}C_j$ as a direct 
 sum of $S$-$S$-bimodules, and $\mu:C\rightmap{}C\otimes_SC$ and $\epsilon:C\rightmap{}S$ are 
 homogeneous $S$-$S$-bimodule  maps of degree 0 such that 
 the following diagrams commute
 $$\begin{matrix}
 C&\rightmap{\mu}&C\otimes_SC\\
 \lmapdown{\mu}&&\lmapdown{\mu\otimes id_C}\\
 C\otimes_SC&\rightmap{id_C\otimes \mu}&C\otimes_SC\otimes_SC\\
 \end{matrix}
 \hbox{ and }
 \begin{matrix}
 C\otimes_SC&\leftmap{\mu}&C&\rightmap{\mu}&C\otimes_SC\\
 \rmapdown{id_C\otimes \epsilon}&&\rmapdown{id_C}&&\rmapdown{\epsilon\otimes id_C}\\
 C\otimes_SS&\rightmap{\rho_C}&C&\leftmap{\lambda_C}&S\otimes_SC\\
 \end{matrix}
 $$
where $\lambda_C$ and $\rho_C$ are the left and right $S$-multiplications on $C$, respectively. 
The $S$-$S$-bimodule $S$ is considered as a graded $S$-bimodule concentrated at 0. 

A \emph{coderivation $\delta$ on} 
 ${\cal B}$ is a homogeneous morphism of $S$-$S$-bimodules $\delta:C\rightmap{}C$, of degree 1, 
 such that the following square commutes.
 $$\begin{matrix}
    C&\rightmap{\mu}&C\otimes_SC\hfill\\
    \lmapdown{\delta}&&\rmapdown{id_C\otimes \delta+\delta\otimes id_C}\\
    C&\rightmap{\mu}&C\otimes_S C.\hfill\\
   \end{matrix}$$ 
 A coderivation  $\delta$ on ${\cal B}$ is called a \emph{differential}
 iff furthermore $\delta^2=0$.
 In this case, ${\cal B}=(C,\mu,\epsilon,\delta)$ is called a 
 \emph{differential graded $S$-bocs}. A 
 \emph{morphism $f:(C,\mu,\epsilon,\delta)\rightmap{}
 (C',\mu',\epsilon',\delta')$ of differential 
 graded $S$-bocses} 
 is a homogeneous morphism $f:C\rightmap{}C'$ of graded $S$-$S$-bimodules of degree $0$ such that $\mu' f=(f\otimes f)\mu$, 
 $\epsilon'f=\epsilon$ and $\delta'f=f\delta$. 
   \end{definition}
   
   So a (differential) graded $S$-bocs is exactly the same concept as a (differential) graded $S$-coalgebra. A morphism of (differential) graded $S$-bocses is the same as a morphism of (differential) graded $S$-coalgebras. 

   The following useful property is well known, but we include a proof for the sake of the reader. 
   
 \begin{lemma}\label{L: epsilon d = 0}
 For any differential graded $S$-bocs ${\cal B}=(C,\mu,\epsilon,\delta)$ we have $\epsilon\delta=0$.
\end{lemma}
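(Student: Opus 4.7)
The plan is to derive $\epsilon\delta=0$ by combining the coderivation identity with the two counit axioms in two successive asymmetric steps, rather than by applying $\epsilon\otimes\epsilon$ symmetrically to both sides at once. First, I would compose the coderivation equation $\mu\delta=(id_C\otimes\delta+\delta\otimes id_C)\mu$ on the left with $\epsilon\otimes id_C$. Since $\epsilon$ and $id_C$ have degree zero, the Koszul sign rule for composing graded tensor products of morphisms contributes $+1$ in every relevant step, yielding
$$(\epsilon\otimes id_C)\mu\delta=(\epsilon\otimes\delta)\mu+(\epsilon\delta\otimes id_C)\mu.$$
The counit axiom $(\epsilon\otimes id_C)\mu=\lambda_C^{-1}$, together with the left $S$-linearity of $\delta$ (which lets one slide $\delta$ past $\lambda_C^{-1}$), reduces both the left-hand side and the first summand on the right to $\lambda_C^{-1}\delta$. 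Cancelling these, one is left with the intermediate identity $(\epsilon\delta\otimes id_C)\mu=0$.

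Next, I would compose this intermediate identity on the left with $id_S\otimes\epsilon$. The Koszul rule again produces no signs, so one obtains
$$(\epsilon\delta\otimes\epsilon)\mu=(\epsilon\delta\otimes id_S)(id_C\otimes\epsilon)\mu=(\epsilon\delta\otimes id_S)\rho_C^{-1}$$
by the second counit axiom. Under the canonical identification $S\otimes_S S\cong S$, the composite $(\epsilon\delta\otimes id_S)\rho_C^{-1}$ is just $\epsilon\delta$ itself, so the conclusion $\epsilon\delta=0$ follows.

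The only real pitfall I would flag is the tempting shortcut of applying $\epsilon\otimes\epsilon$ to the coderivation identity in one move: that strategy produces the equation $\epsilon\delta=2\epsilon\delta$, which is vacuous in characteristic two. The asymmetric two-step argument sketched above avoids this issue, relies only on the coassociativity/counit axioms together with the $S$-$S$-bimodule linearity of $\delta$ and $\epsilon$, and works in arbitrary characteristic.
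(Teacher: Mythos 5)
Your proof is correct, and the sign claims check out against the paper's convention $(h\otimes t)(f\otimes g)=(-1)^{\vert t\vert\vert f\vert}hf\otimes tg$: in each of your compositions the relevant factor has degree $0$, so no Koszul signs appear, and the reductions $(\epsilon\otimes\delta)\mu=(id_S\otimes\delta)\lambda_C^{-1}=\lambda_C^{-1}\delta$ and $(\epsilon\delta\otimes id_S)\rho_C^{-1}\cong\epsilon\delta$ are legitimate because $S$ is concentrated in degree $0$ and $\delta$, $\epsilon$ are bimodule morphisms. The paper reaches the same conclusion from the same raw ingredients (coderivation identity plus the two counit axioms) but executes it differently: it computes with elements, applies the right counit $\rho_C(id_C\otimes\epsilon)$ to the coderivation identity, and therefore has to carry the Koszul signs $(-1)^{\vert c_i\vert}$ and to invoke the extra observation that $\epsilon$ vanishes on homogeneous elements of nonzero degree before a second application of $\epsilon$ closes the argument. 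Your asymmetric, element-free version --- left counit first, yielding the clean intermediate identity $(\epsilon\delta\otimes id_C)\mu=0$, then the right counit to extract $\epsilon\delta$ --- sidesteps both the sign bookkeeping and the degree-support argument, so it is arguably tidier and more transparent. One small quibble: your cautionary remark about the symmetric shortcut is off the mark, since the relation $\epsilon\delta=2\epsilon\delta$ is not vacuous in characteristic two (there $2\epsilon\delta=0$, so it reads $\epsilon\delta=0$ outright, and in any characteristic $x=2x$ gives $x=0$ by subtraction); the symmetric route also proves the lemma, your route is simply cleaner because it never needs the homogeneity argument for $\epsilon$.
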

 
 \begin{proof} We know that $\rho(id_C\otimes \epsilon)\mu=id_C$ and 
 $\lambda(\epsilon\otimes id_C)\mu=id_C$. Thus,
 for $c\in C$, if $\mu(c)=\sum_ic_i\otimes c'_i$,  we get 
 $$\sum_i\epsilon(c_i)c'_i=c=\sum_ic_i\epsilon(c'_i).$$
 Since $\delta$ is a coderivation, we have 
 $[(id_C\otimes \delta)+(\delta\otimes id_C)]\mu=\mu \delta$. As a consequence, 
 $\rho(id_C\otimes \epsilon)(id_C\otimes \delta)\mu+
 \rho(id_C\otimes \epsilon)(\delta\otimes id_C)\mu=
 \rho(id_C\otimes \epsilon)\mu \delta=\delta$. Hence, 
 $$\rho(id_C\otimes \epsilon \delta)\mu+
 \rho(\delta\otimes \epsilon)\mu=\delta.$$
 Evaluate at the element $c$ to obtain
 $\sum_i (-1)^{\vert c_i\vert}c_i\epsilon \delta(c'_i)+\sum_i\delta(c_i)\epsilon(c'_i)=\delta(c)$. 
 Since we also have 
 $$\delta(c)=\delta(\sum_ic_i\epsilon(c'_i))=\sum_i\delta(c_i)\epsilon(c'_i),$$
 we know that $\sum_i(-1)^{\vert c_i\vert}c_i\epsilon \delta(c'_i)=0$. Applying the morphism $\epsilon$,  we have 
 $\sum_i(-1)^{\vert c_i\vert}\epsilon(c_i) \epsilon \delta(c'_i)=0$. Since $\epsilon:C\rightmap{}S$ is homogeneous of degree $0$, we have $\epsilon(c_i)=0$ whenever $\vert c_i\vert\not=0$, so  we finally get 
 $$\epsilon \delta(c)=\epsilon \delta (\sum_i\epsilon(c_i)c_i')=
 \sum_i\epsilon(c_i)\epsilon \delta(c_i')=0.$$
 \end{proof}
   
   \begin{definition}\label{D: la cat GMod-B}
   Given a  graded $S$-bocs ${\cal B}=(C,\mu,\epsilon)$, we can consider the \emph{$k$-category of graded right ${\cal B}$-modules} denoted by $\GM\g {\cal B}$. A \emph{graded right ${\cal B}$-module} is by definition a graded right $S$-module.
   Given graded ${\cal B}$-modules $M$ and $N$, and $d\in \hueca{Z}$, a \emph{morphism $f:M\rightmap{}N$ of right ${\cal B}$-modules of degree $d$} is a homogeneous morphism of right $S$-modules $f:M\otimes_SC\rightmap{}N$ of degree $d$. 
   So, their set of morphisms in the category $\GM\g{\cal B}$ is by definition 
   $$\Hom_{\GM\g{\cal B}}(M,N):=\bigoplus_{d\in \hueca{Z}}\Hom^d_{\GM\g{\cal B}}(M,N)=\bigoplus_{d\in \hueca{Z}}\Hom^d_{\GM\g S}(M\otimes_SC,N).$$
   If we have a pair $f:M\rightmap{}N$, $g:N\rightmap{}L$ of composable morphisms in $\GM\g {\cal B}$, their composition in $\GM\g {\cal B}$ is defined as the following composition of morphisms of right $S$-modules
   $$g*f=\left(M\otimes_SC\rightmap{ \  \ id_M\otimes \mu \  \ }M\otimes_SC\otimes_SC\rightmap{ \  \ f\otimes id_C \  \ }N\otimes_SC\rightmap{g}L\right).$$
   \end{definition}
   
   It is not hard to show that $\GM\g {\cal B}$ is indeed a graded $k$-category (the unit morphism at a graded ${\cal B}$-module $M$, denoted by $\hueca{I}_M$, is  the morphism of right $S$-modules  
   $\rho_M(id_M\otimes \epsilon)$, where 
   $\rho_M:M\otimes_S S\rightmap{}M$ is the right multiplication on $M$. We will denote by $\GM^0\g{\cal B}$ the subcategory of 
    $\GM\g{\cal B}$ with the same objects but only the degree zero morphisms of $\GM\g {\cal B}$. 
   
   \begin{proposition}\label{P: DG-B es cat dif graduada}
   Let ${\cal B}=(C,\mu,\epsilon,\delta)$ be a differential graded $S$-bocs. 
   Given $M,N\in \GM\g {\cal B}$, define on the graded hom space 
   $$\Hom_{\GM\g {\cal B}}(M,N)=\bigoplus_{d\in \hueca{Z}}\Hom^d_{\GM\g {\cal B}}(M,N)$$
   the homogeneous linear map 
   $\hat{\delta}:\Hom_{\GM\g {\cal B}}(M,N)\rightmap{}\Hom_{\GM\g {\cal B}}(M,N)$ 
   of degree
   1 by the following recipe, for any homogeneous morphism 
   $f:M\rightmap{}N$, 
   $$\hat{\delta}(f)=(-1)^{\vert f\vert+1}f(id_M\otimes \delta).$$
   Then, the category $\GM\g B$ is a \emph{differential graded category}. Namely, 
   $\hat{\delta}^2=0$, $\hat{\delta}(\hueca{I}_M)=0$, for all $M\in \GM\g {\cal B}$, and the
   following Leibniz rule holds
   $$\hat{\delta}(g*f)=\hat{\delta}(g)*f+(-1)^{\vert g\vert}g*\hat{\delta}(f),$$
   for any homogeneous morphisms $f:M\rightmap{}N$ and $g:N\rightmap{}L$ in $\GM\g{\cal B}$.
   \end{proposition}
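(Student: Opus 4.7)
The plan is to verify the three defining properties of a differential graded category separately, each reducing to one of the three structural features of the differential graded $S$-bocs ${\cal B}$: the equality $\delta^2=0$, the identity $\epsilon\delta=0$ from Lemma \ref{L: epsilon d = 0}, and the coderivation property of $\delta$. Throughout, the main technical ingredient is the Koszul sign rule $(h\otimes t)(f\otimes g) = (-1)^{|t||f|}(hf)\otimes(tg)$, which lets us slide $\delta$ past homogeneous morphisms at the cost of Koszul signs, together with the trivial observation that $|id| = 0$ makes many such signs disappear.

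For $\hat\delta^2(f)=0$: iterating the definition yields $\hat\delta^2(f) = (-1)^{|f|+2}(-1)^{|f|+1} f(id_M\otimes\delta)(id_M\otimes\delta) = -f(id_M\otimes\delta^2) = 0$, where the composition $(id_M\otimes\delta)(id_M\otimes\delta) = id_M\otimes\delta^2$ is computed by the Koszul rule (no sign appears since $|id_M|=0$). For $\hat\delta(\hueca{I}_M)=0$: recall $\hueca{I}_M = \rho_M(id_M\otimes\epsilon)$, so $\hat\delta(\hueca{I}_M) = -\rho_M(id_M\otimes\epsilon)(id_M\otimes\delta) = -\rho_M(id_M\otimes\epsilon\delta) = 0$ by Lemma \ref{L: epsilon d = 0}.

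The Leibniz rule is the main calculation. Starting from $\hat\delta(g*f) = (-1)^{|f|+|g|+1}g(f\otimes id_C)(id_M\otimes\mu)(id_M\otimes\delta)$, collapse $(id_M\otimes\mu)(id_M\otimes\delta) = id_M\otimes\mu\delta$ and apply the coderivation identity $\mu\delta = (id_C\otimes\delta + \delta\otimes id_C)\mu$ to split this into two summands
$$(id_M\otimes id_C\otimes\delta)(id_M\otimes\mu) \quad\text{and}\quad (id_M\otimes\delta\otimes id_C)(id_M\otimes\mu).$$
For the first summand, $(f\otimes id_C)(id_M\otimes id_C\otimes\delta) = f\otimes\delta$ with no sign, and one checks that the resulting term equals $\hat\delta(g)*f$ after expanding $\hat\delta(g)*f = (-1)^{|g|+1}g(id_N\otimes\delta)(f\otimes id_C)(id_M\otimes\mu)$ and commuting $\delta$ past $f$, which produces a Koszul sign $(-1)^{|f|}$ that combines with $(-1)^{|g|+1}$ to match $(-1)^{|f|+|g|+1}$. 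For the second summand, one rewrites $(f\otimes id_C)(id_M\otimes\delta\otimes id_C) = f(id_M\otimes\delta)\otimes id_C$, which identifies the term with $(-1)^{|g|}g*\hat\delta(f)$ after absorbing the defining sign $(-1)^{|f|+1}$ in $\hat\delta(f)$.

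The only real obstacle is keeping the Koszul signs straight: three sources of signs interact (the definitional $(-1)^{|\cdot|+1}$ in $\hat\delta$, the commutation signs from sliding $\delta$ through $f$ or through $f\otimes id_C$, and the combinatorial sign from the coderivation identity), and the whole point of the definition is that they conspire to leave only the expected $(-1)^{|g|}$ in front of $g*\hat\delta(f)$. Once the two summands of $\hat\delta(g*f)$ are matched to the two terms of the Leibniz rule, the proposition is established.
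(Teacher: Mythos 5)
Your proof is correct and follows essentially the same route as the paper's: both establish the Leibniz rule by collapsing $(id_M\otimes\mu)(id_M\otimes\delta)$ to $id_M\otimes\mu\delta$, applying the coderivation identity $\mu\delta=(\delta\otimes id_C+id_C\otimes\delta)\mu$, and matching the two resulting summands with $\hat{\delta}(g)*f$ and $(-1)^{\vert g\vert}g*\hat{\delta}(f)$ via the Koszul sign rule. The only differences are cosmetic: the paper first proves the rule for the unsigned map $f\mapsto f(id_M\otimes\delta)$ and inserts the sign $(-1)^{\vert f\vert+\vert g\vert+1}$ at the end, while you carry the signs throughout and, in addition, explicitly check $\hat{\delta}^2=0$ and $\hat{\delta}(\hueca{I}_M)=0$ (using $\delta^2=0$ and $\epsilon\delta=0$), which the paper leaves to the reader.
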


   \begin{proof} For this proof make $\hat{\underline{\delta}}(f)=f(id_M\otimes \delta)$. 
   Then, we have 
   $$\begin{matrix}
      \hat{\underline{\delta}}(g*f)&=&(g*f)(id_M\otimes \delta)\hfill\\
      &=&
      g(f\otimes id_C)(id_M\otimes \mu)(id_M\otimes \delta)\hfill\\
      &=&
       g(f\otimes id_C)(id_M\otimes \mu \delta)\hfill\\
      &=&
       g(f\otimes id_C)(id_M\otimes[\delta \otimes id_C+id_C\otimes \delta]\mu)\hfill\\
      &=&
       g(f\otimes id_C)(id_M\otimes \delta \otimes id_C)(id_M\otimes\mu)\hfill\\
       &&+\,g(f\otimes id_C)(id_M\otimes id_C\otimes \delta)(id_M\otimes\mu)\hfill\\
      &=&
      g(f(id_M\otimes \delta)\otimes id_C)(id_M\otimes\mu)\hfill\\
       &&+\,g(f(id_M\otimes id_C)\otimes \delta)(id_M\otimes\mu)\hfill\\
       &=&
          g(f(id_M\otimes \delta)\otimes id_C)(id_M\otimes\mu)\hfill\\
          &&+\,(-1)^{\vert f\vert}g(id_N\otimes \delta)(f\otimes id_C)(id_M\otimes \mu)\hfill\\
       &=&
        g*f(id_M\otimes \delta)+(-1)^{\vert f\vert}g(id_N\otimes \delta)*f\hfill\\
       &=&
       g*\hat{\underline{\delta}}(f)+(-1)^{\vert f\vert}\hat{\underline{\delta}}(g)*f.\hfill\\
       \end{matrix}$$
       Then,
       $$\begin{matrix}
          \hat{\delta}(g*f)&=&(-1)^{\vert f\vert+\vert g\vert+1}\hat{\underline{\delta}}(g*f)\hfill\\
       &=&
       (-1)^{\vert f\vert+\vert g\vert+1}[g*\hat{\underline{\delta}}(f)
       +
        (-1)^{\vert f\vert}\hat{\underline{\delta}}(g)*f]\hfill\\
       &=&
       \hat{\delta}(g)*f+(-1)^{\vert g\vert}g*\hat{\delta}(f).\hfill\\
         \end{matrix}$$
   \end{proof}

   \begin{definition}\label{D: category of twisted graded modules of a bocs}
   Let ${\cal B}=(C,\mu,\epsilon,\delta)$  be a differential graded $S$-bocs.
   Then, a \emph{twisted 
   ${\cal B}$-module} is a pair $(M,u)$, where $M\in \GM\g {\cal B}$ and $u$ is a homogeneous 
   morphism $u\in \Hom_{\GM\g{\cal B}}^1(M,M)$  such that the following 
   \emph{Maurer-Cartan equation for $u$} holds  
   $$\hat{\delta}(u)+u*u=0.$$
   If $(M,u)$ and $(N,v)$ are twisted ${\cal B}$-modules, then a 
   \emph{homogeneous morphism of twisted ${\cal B}$-modules $f:(M,u)\rightmap{}(N,v)$ of degree $d$} is a 
   homogeneous morphism $f:M\rightmap{}N$  in $\GM\g {\cal B}$ of degree $d$ such that
   $$\hat{\delta}(f)+v*f-(-1)^df*u=0.$$
   
   We will denote by $\Hom_{\GM\g {\cal B}}^d((M,u),(N,v))$ the space of  homogeneous morphisms of twisted 
   ${\cal B}$-modules of degree $d$. Moreover, we make 
   $$\Hom_{\TGM\g{\cal B}}((M,u),(N,v)):=\bigoplus_{d\in \hueca{Z}}\Hom^d_{\TGM\g{\cal B}}((M,u),(N,v)).$$ 
   \end{definition}

   \begin{lemma}\label{L: la cat de  modulos torcidos}
   With the preceding notations, we can form 
   \emph{the graded category of twisted ${\cal B}$-modules $\TGM\g{\cal B}$} with objects
   the twisted ${\cal B}$-modules, morphisms of twisted ${\cal B}$-modules, and 
   the same composition $*$ of the category $\GM\g {\cal B}$.
   \end{lemma}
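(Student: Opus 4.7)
The plan is to verify the three items needed to promote $\TGM\g{\cal B}$ to a graded category: that the composition $*$ of two homogeneous morphisms of twisted modules is again a morphism of twisted modules (and of the expected degree), that the composition is associative, and that the identity morphism of $M$ in $\GM\g{\cal B}$ remains an identity at the twisted object $(M,u)$. Associativity and the grading additivity $\vert g*f\vert=\vert g\vert+\vert f\vert$ are automatic, because $*$ and the degree are those inherited from the graded category $\GM\g{\cal B}$. Thus the real content is the closure under composition and the identity axiom.

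For the identity, the identity morphism $\hueca{I}_M$ at $M\in \GM\g{\cal B}$ has degree $0$ and satisfies $\hat{\delta}(\hueca{I}_M)=0$ by Proposition~(\ref{P: DG-B es cat dif graduada}). Since $u*\hueca{I}_M=u=\hueca{I}_M*u$ and $(-1)^0=1$, the defining equation
$$\hat{\delta}(\hueca{I}_M)+u*\hueca{I}_M-(-1)^0\hueca{I}_M*u=0$$
is immediate, so $\hueca{I}_M$ is a morphism $(M,u)\rightmap{}(M,u)$ in $\TGM\g{\cal B}$, clearly neutral for $*$.

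The main point is closure under composition. Given homogeneous morphisms $f:(M,u)\rightmap{}(N,v)$ and $g:(N,v)\rightmap{}(L,w)$ of degrees $d$ and $e$ respectively, rewrite their defining equations as
$$\hat{\delta}(f)=-v*f+(-1)^df*u \quad\hbox{and}\quad \hat{\delta}(g)=-w*g+(-1)^eg*v.$$
Apply the graded Leibniz rule from Proposition~(\ref{P: DG-B es cat dif graduada}) to compute
$$\hat{\delta}(g*f)=\hat{\delta}(g)*f+(-1)^eg*\hat{\delta}(f),$$
substitute the two equalities above, and expand using associativity of $*$ in $\GM\g{\cal B}$. The two terms involving $v$ carry opposite signs $(-1)^e$ and $-(-1)^e$ and therefore cancel, leaving
$$\hat{\delta}(g*f)=-w*(g*f)+(-1)^{d+e}(g*f)*u,$$
which is exactly the Maurer--Cartan-type equation witnessing that $g*f:(M,u)\rightmap{}(L,w)$ is a morphism of twisted ${\cal B}$-modules of degree $d+e$.

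I do not anticipate any serious obstacle: the calculation is a one-line application of the Leibniz rule together with associativity, and the sign bookkeeping matches up because the Koszul factor $(-1)^e$ produced by Leibniz is precisely what is needed to convert the sign appearing in $\hat{\delta}(f)$ to the sign appearing in $\hat{\delta}(g*f)$.
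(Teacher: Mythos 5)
Your proposal is correct and follows essentially the same route as the paper: the paper's proof is precisely the Leibniz-rule computation showing $\hat{\delta}(g*f)+w*(g*f)-(-1)^{\vert f\vert+\vert g\vert}(g*f)*u=0$, with the two $v$-terms cancelling exactly as in your calculation. Your additional check that $\hueca{I}_M$ is a morphism $(M,u)\rightmap{}(M,u)$ is a harmless (and correct) supplement that the paper leaves implicit.
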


   \begin{proof} Let $f:(M,u)\rightmap{}(N,v)$ and $g:(N,v)\rightmap{}(L,w)$ be
   homogeneous morphisms of twisted ${\cal B}$-modules. 
   We have $\hat{\delta}(f)+v*f-(-1)^{\vert f\vert} f*u=0$ 
   and $\hat{\delta}(g)+w*g-(-1)^{\vert g\vert}g*v=0$. 
   Then, we have
   $$\hat{\delta}(g*f)+w*g*f-(-1)^{\vert g\vert +\vert f\vert}g*f*u =$$
   $$\hat{\delta}(g)*f+(-1)^{\vert g\vert}g*\hat{\delta}(f)+w*g*f-(-1)^{\vert g\vert +\vert f\vert}g*f*u=$$
   $$\hat{\delta}(g)*f+(-1)^{\vert g\vert}g*[(-1)^{\vert f\vert}f*u-v*f]+w*g*f-(-1)^{\vert g\vert+\vert f\vert}g*f*u=$$
   $$\hat{\delta}(g)*f+w*g*f-(-1)^{\vert g\vert}g*v*f=$$
   $$[\hat{\delta}(g)+w*g-(-1)^{\vert g\vert}g*v]*f=0.$$
   \end{proof}

   \begin{proposition}\label{P: restrictions on graded modules}
     Let ${\cal B}_1=(C_1,\mu_1,\epsilon_1,\delta_1)$ and 
     ${\cal B}_2=(C_2,\mu_2,\epsilon_2,\delta_2)$  be  differential graded $S$-bocses 
     and $\psi:{\cal B}_1\rightmap{}{\cal B}_2$  a morphism of differential 
     graded $S$-bocses. Then, there is a functor of differential graded categories 
     (called 
     \emph{the restriction functor associated to the morphism $\psi$})
     $$R_\psi:\GM\g{\cal B}_2\rightmap{}\GM\g {\cal B}_1$$
     such that $R_\psi(M)=M$, for $M\in \GM\g{\cal B}_2$, and 
     $R_\psi(f)=f(id_M\otimes \psi)$, for any morphism 
     $f:M\rightmap{}N$ of $\GM\g {\cal B}_2$.
   \end{proposition}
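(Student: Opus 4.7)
The proposition asks us to show that the assignment $R_\psi(M)=M$, $R_\psi(f)=f(id_M\otimes \psi)$ defines a functor of differential graded categories from $\GM\g{\cal B}_2$ to $\GM\g{\cal B}_1$. The plan is a direct verification resting on the three defining properties of a morphism $\psi$ of differential graded bocses: $\epsilon_2\psi=\epsilon_1$, $\mu_2\psi=(\psi\otimes \psi)\mu_1$, and $\delta_2\psi=\psi\delta_1$. Since $\psi$ is homogeneous of degree $0$, each homogeneous morphism $f:M\otimes_S C_2\rightmap{}N$ of degree $d$ is sent by $R_\psi$ to a homogeneous morphism $R_\psi(f)=f(id_M\otimes \psi):M\otimes_S C_1\rightmap{}N$ of the same degree $d$, so $R_\psi$ is well-defined on graded hom spaces.

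First I would check preservation of units: since $\hueca{I}_M$ in $\GM\g{\cal B}_2$ equals $\rho_M(id_M\otimes \epsilon_2)$, applying $R_\psi$ gives $\rho_M(id_M\otimes \epsilon_2\psi)=\rho_M(id_M\otimes \epsilon_1)$, which is precisely the unit $\hueca{I}_M$ in $\GM\g{\cal B}_1$.

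Next I would verify compatibility with the composition $*$. Given composable homogeneous morphisms $f:M\rightmap{}N$ and $g:N\rightmap{}L$ in $\GM\g{\cal B}_2$, one writes $R_\psi(g*f)=g(f\otimes id_{C_2})(id_M\otimes \mu_2)(id_M\otimes \psi)$ and uses $\mu_2\psi=(\psi\otimes \psi)\mu_1$ to rewrite this as $g(f\otimes id_{C_2})(id_M\otimes \psi\otimes \psi)(id_M\otimes \mu_1)$. Two applications of the interchange law recalled early in the paper, in which every Koszul sign collapses because $\psi$ and the relevant identities all have degree $0$, transform the expression first into $g(R_\psi(f)\otimes \psi)(id_M\otimes \mu_1)$ and then into $R_\psi(g)(R_\psi(f)\otimes id_{C_1})(id_M\otimes \mu_1)$, which is by definition $R_\psi(g)*R_\psi(f)$ in $\GM\g{\cal B}_1$.

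Finally, compatibility with the differential is a one-line calculation, using $\delta_2\psi=\psi\delta_1$ together with $\vert R_\psi(f)\vert=\vert f\vert$: indeed $R_\psi(\hat{\delta}(f))=(-1)^{\vert f\vert+1}f(id_M\otimes \delta_2)(id_M\otimes \psi)=(-1)^{\vert f\vert+1}f(id_M\otimes \psi)(id_M\otimes \delta_1)=\hat{\delta}(R_\psi(f))$. No substantive obstacle is expected: the only place where care is required is the bookkeeping of Koszul signs in the compatibility with $*$, but since every map pushed through a tensor factor has degree $0$, all such signs are trivial.
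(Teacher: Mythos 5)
Your proof is correct and follows essentially the same route as the paper: unit preservation via $\epsilon_2\psi=\epsilon_1$, compatibility with $*$ via $\mu_2\psi=(\psi\otimes\psi)\mu_1$ and the (trivial, since $\vert\psi\vert=0$) interchange law, and compatibility with $\widehat{\delta}$ via $\delta_2\psi=\psi\delta_1$. The only difference is cosmetic bookkeeping (your explicit remark on degree preservation and splitting the interchange step in two), so nothing further is needed.
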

   
   \begin{proof} The identity morphism of $M\in \GM\g {\cal B}_2$ is 
   $\rho(id_M\otimes \epsilon_2):M\otimes_SC_2\rightmap{}M$, 
   where $\rho:M\otimes_SS\rightmap{}M$ is the multiplication map. 
   Then, 
   $$R_\psi(\rho(id_M\otimes \epsilon_2))
   =
   \rho(id_M\otimes \epsilon_2)(id_M\otimes \psi)
   =
   \rho(id_M\otimes \epsilon_2\psi)= \rho(id_M\otimes \epsilon_1).
  $$
   So, $R_\psi$ preserves identities.  
   
   Given composable morphisms $f:M\rightmap{}N$
   and $g:N\rightmap{}L$ in $\GM\g {\cal B}_2$, we have 
   $$\begin{matrix}
R_\psi(g*f)
&=&
(g*f)(id_M\otimes \psi)\hfill\\
&=&
g(f\otimes id_{C_2})(id_M\otimes \mu_2)(id_M\otimes \psi)\hfill\\
&=&
g(f\otimes id_{C_2})(id_M\otimes \mu_2\psi)\hfill\\
&=&
g(f\otimes id_{C_2})(id_M\otimes(\psi\otimes\psi)\mu_1)\hfill\\
&=&
g(f\otimes id_{C_2})(id_M\otimes(\psi\otimes\psi))(id_M\otimes \mu_1)\hfill\\
&=&
g(id_N\otimes \psi)(f(id_M\otimes \psi)\otimes id_{C_1})(id_M\otimes \mu_1)\hfill\\
&=&
g(id_N\otimes \psi)*f(id_M\otimes \psi)\hfill\\
&=&
R_\psi(g)*R_\psi(f).\hfill\\
\end{matrix}$$
Finally, given a homogeneous morphism $f:M\rightmap{}N$ in $\GM\g {\cal B}_2$, 
we have
$$\begin{matrix}
   R_\psi(\hat{\delta}_2(f))
   &=&
   (-1)^{\vert f\vert +1}f(id_M\otimes \delta_2)(id_M\otimes \psi)\hfill\\
   &=&
   (-1)^{\vert f\vert +1}f(id_M\otimes \delta_2\psi)\hfill\\
   &=&
    (-1)^{\vert f\vert +1}f(id_M\otimes \psi \delta_1)\hfill\\
   &=&
   (-1)^{\vert R_\psi(f)\vert +1}f(id_M\otimes \psi)(id_M\otimes \delta_1)=
   \hat{\delta}_1(R_\psi(f)).\hfill\\
  \end{matrix}$$
\end{proof} 
   
    \begin{proposition}\label{P: restrictions on twisted graded modules}
     Let ${\cal B}_1=(C_1,\mu_1,\epsilon_1,\delta_1)$ and 
     ${\cal B}_2=(C_2,\mu_2,\epsilon_2,\delta_2)$  be  differential graded $S$-bocses 
     and $\psi:{\cal B}_1\rightmap{}{\cal B}_2$  a morphism of differential 
     graded $S$-bocses. Then, the restriction functor 
     $R_\psi:\GM\g{\cal B}_2\rightmap{}\GM\g{\cal B}_1$ induces a $k$-functor 
     (called again 
     \emph{the restriction functor associated to the morphism $\psi$})
     $$R_\psi:\TGM\g{\cal B}_2\rightmap{}\TGM\g {\cal B}_1$$
     such that $R_\psi(M,u)=(M,R_\psi(u))$, for $(M,u)\in \TGM\g{\cal B}_2$, and 
     $R_\psi(f)=f(id_M\otimes \psi)$, for any morphism 
     $f:(M,u)\rightmap{}(N,v)$ of $\TGM\g {\cal B}_2$.
   \end{proposition}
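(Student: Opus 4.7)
The plan is to leverage the previous proposition, which already established that $R_\psi:\GM\g{\cal B}_2\rightmap{}\GM\g{\cal B}_1$ is a functor of differential graded categories. Since the definition of the twisted module category only uses the dg-category structure of $\GM\g{\cal B}$ (through $\hat{\delta}$, the composition $*$, the degree $|\cdot|$, and the identities), both the Maurer-Cartan equation and the twisted morphism equation should be automatically preserved by $R_\psi$, term by term. I also note that $R_\psi(f)=f(id_M\otimes \psi)$ has the same degree as $f$, because $\psi$ is homogeneous of degree $0$; this will be used silently throughout.

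First I would verify that $R_\psi(M,u)=(M,R_\psi(u))$ is a well-defined twisted ${\cal B}_1$-module. Since $R_\psi(u)$ inherits degree $1$ from $u$, I only need the Maurer-Cartan equation $\hat{\delta}_1(R_\psi(u))+R_\psi(u)*R_\psi(u)=0$. Applying $R_\psi$ to the equation $\hat{\delta}_2(u)+u*u=0$, and using the facts $R_\psi(\hat{\delta}_2(x))=\hat{\delta}_1(R_\psi(x))$ and $R_\psi(x*y)=R_\psi(x)*R_\psi(y)$ from (\ref{P: restrictions on graded modules}), this equation is immediate.

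Next I would verify that if $f:(M,u)\rightmap{}(N,v)$ is a morphism of twisted ${\cal B}_2$-modules of degree $d$, then $R_\psi(f)$ is a morphism of twisted ${\cal B}_1$-modules from $(M,R_\psi(u))$ to $(N,R_\psi(v))$, again of degree $d$. Starting from $\hat{\delta}_2(f)+v*f-(-1)^df*u=0$ and applying $R_\psi$, the same two compatibilities give
$$\hat{\delta}_1(R_\psi(f))+R_\psi(v)*R_\psi(f)-(-1)^dR_\psi(f)*R_\psi(u)=R_\psi(0)=0,$$
which is exactly the required equation for $R_\psi(f)$.

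Finally, since $R_\psi$ restricted to $\GM\g{\cal B}_2$ already preserves the composition $*$ and sends the identity $\hueca{I}_M=\rho_M(id_M\otimes\epsilon_2)$ to $\rho_M(id_M\otimes\epsilon_1)=\hueca{I}_M$ in $\GM\g{\cal B}_1$, and as these operations are the ones used to form the category $\TGM\g{\cal B}$ (see (\ref{L: la cat de  modulos torcidos})), the induced assignment on twisted modules is automatically a $k$-functor. No substantive obstacle arises: the whole proof consists in pushing the two defining identities through $R_\psi$, and the main thing to be careful about is simply tracking that degrees (and hence the sign $(-1)^d$) are unchanged under $R_\psi$.
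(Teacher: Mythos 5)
Your proposal is correct and follows essentially the same route as the paper: both arguments simply apply $R_\psi$ to the Maurer--Cartan equation and to the defining equation of a twisted-module morphism, using that $R_\psi$ is a dg-functor (commutes with $\hat{\delta}$, preserves $*$ and degrees) as established in (\ref{P: restrictions on graded modules}). The only difference is that you make explicit the routine verification of functoriality (composition and identities), which the paper leaves implicit.
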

   
   \begin{proof} Given $(M,u)\in \TGM\g {\cal B}_2$, we have 
   $$\hat{\delta}_1(R_\psi(u))+R_\psi(u)*R_\psi(u)=
   R_\psi(\hat{\delta}_2(u)+u*u)=0,$$
   so, $(M,R_\psi(u))\in \TGM\g{\cal B}_1$. 
   
       Given a homogeneous morphism $f:(M,u)\rightmap{}(N,v)$ in $\TGM\g{\cal B}_2$,
    we have that 
    $$\hat{\delta}_1(R_\psi(f))+R_\psi(v)*R_\psi(f)-
    (-1)^{\vert R_\psi(f)\vert}R_\psi(f)*R_\psi(u)$$
    coincides with 
    $$R_\psi(\hat{\delta}_2(f)+v*f-(-1)^{\vert f\vert}f*u)=0.$$
    So $R_\psi(f):(M,R_\psi(u))\rightmap{}(N,R_\psi(v))$ is a homogeneous 
    morphism of twisted ${\cal B}_1$-modules. 
   \end{proof}

   \section{Bocses, coalgebras and homotopy}
   
  Let us see how that the preceding notions relate to the category of comodules and differential comodules over a given differential graded $S$-bocs ${\cal B}$. We first recall some basic notions.  
   
   \begin{definition}\label{D: GComod-B}
   Given a graded $S$-coalgebra ${\cal B}=(C,\mu,\epsilon)$, that is a graded $S$-bocs, we will denote by $\GC\g {\cal B}$ \emph{the category of the graded right  ${\cal B}$-comodules.} Recall that a \emph{graded right ${\cal B}$-comodule} is a 
   pair  $(M,\mu_M)$, where $M$ is a graded right $S$-module and $\mu_M:M\rightmap{}M\otimes_SC$ is a homogeneous morphism of right $S$-modules of degree 0 such that the following diagrams commute 
 $$\begin{matrix}
 M&\rightmap{\mu_M}&M\otimes_SC\\
 \lmapdown{\mu_M}&&\lmapdown{\mu_M\otimes id_C}\\
 M\otimes_SC&\rightmap{id_M\otimes \mu}&M\otimes_SC\otimes_SC\\
 \end{matrix}
 \hbox{ and \ }
 \begin{matrix}
 M\otimes_SC&\leftmap{\mu_M}&M\\
 \rmapdown{id_M\otimes \epsilon}&&\rmapdown{id_M}\\
 M\otimes_SS&\rightmap{\rho_M}&M.\\
 \end{matrix}
 $$
 If $(M,\mu_M)$ and $(N,\mu_N)$ are graded right ${\cal B}$-comodules and $d\in \hueca{Z}$, 
 a \emph{morphism of graded right ${\cal B}$-comodules}
 $h:(M,\mu_M)\rightmap{}(N,\mu_N)$ \emph{of degree $d$} is a homogeneous morphism 
 of right $S$-modules
 $h:M\rightmap{}N$ with degree $d$, such that 
 the following square commutes 
 $$\begin{matrix}
 M&\rightmap{\mu_M}&M\otimes_SC\\
 \lmapdown{h}&&\rmapdown{h\otimes id_C}\\
 N&\rightmap{\mu_N}&N\otimes_SC.\\
 \end{matrix}$$
 We denote by $\Hom^d_{\GC\g {\cal B}}(M,N)$ the space of morphisms of graded
 right ${\cal B}$-comodules of degree $d$ from $M$ to $N$ and we make 
 $$\Hom_{\GC\g {\cal B}}(M,N):=\bigoplus_{d\in \hueca{Z}}\Hom^d_{\GC\g {\cal B}}(M,N).$$
 We denote by $\GC^0\g {\cal B}$ the subcategory of $\GC\g {\cal B}$ with the same objects and only degree zero morphisms. 
 \end{definition}
   
 \begin{definition}\label{D: DGComod-B}
 Given a differential graded $S$-coalgebra ${\cal B}=(C,\mu,\epsilon,\delta)$, 
 that is a differential graded $S$-bocs, we will denote by $\DGC\g {\cal B}$ 
 \emph{the graded category of the differential graded right  ${\cal B}$-comodules.} 
 Recall that a 
 \emph{differential graded right ${\cal B}$-comodule} is a 
   triple  $(M,\mu_M,\delta_M)$, where $(M,\mu_M)$ is a graded right ${\cal B}$-comodule and $\delta_M:M\rightmap{}M$ is a differential on $(M,\mu_M)$. Recall that a map $\delta_M:M\rightmap{}M$ is a \emph{coderivation on $(M,\mu_M)$} iff   it is a  homogeneous morphism of graded right $S$-modules of degree $1$ such that the following diagram commutes 
 $$\begin{matrix}
    M&\rightmap{\mu_M}&M\otimes_SC\hfill\\
    \lmapdown{\delta_M}&&\rmapdown{id_M\otimes \delta+\delta_M\otimes id_C}\\
    M&\rightmap{\mu_M}&M\otimes_S C.\hfill\\
   \end{matrix}$$
   Such a map $\delta_M$ is called a \emph{differential} if, furthermore, $\delta_M^2=0$.

 Whenever $(M,\mu_M,\delta_M)$ and $(N,\mu_N,\delta_N)$ are differential graded right 
 ${\cal B}$-como\-dules and $d\in \hueca{Z}$, we agree that 
 a \emph{morphism of differential graded right ${\cal B}$-como\-dules}
 $h:(M,\mu_M,\delta_M)\rightmap{}(N,\mu_N,\delta_N)$ \emph{of degree $d$} is a homogeneous morphism 
 of graded right ${\cal B}$-comodules
 $h:M\rightmap{}N$ with degree $d$, such that $\delta_Nh=(-1)^dh\delta_M$. 
 We denote by $\Hom^d_{\DGC\g {\cal B}}(M,N)$ the space of homogeneous morphisms of differential graded
 right ${\cal B}$-comodules of degree $d$ from $M$ to $N$ and we make  
 $$\Hom_{\DGC\g {\cal B}}(M,N):=\bigoplus_{d\in \hueca{Z}}\Hom^d_{\DGC\g {\cal B}}(M,N).$$
 As before, with $\DGC^0\g {\cal B}$ we denote the subcategory of $\DGC\g {\cal B}$ with the same objects but only degree zero morphisms. 
 \end{definition}

   \begin{remark}\label{L: h determina morfismo dNh-(-1)grad h hd_N}
   Let ${\cal B}=(C,\mu,\epsilon,\delta)$ be a differential graded  $S$-coalgebra, $M,N\in  \DGC\g {\cal B}$,
   and $h:M\rightmap{}N$ a homogeneous morphism of graded right ${\cal B}$-comodules. Then,
   $$\delta_Nh-(-1)^{\vert h\vert}h\delta_M\in \Hom^{\vert h\vert+1}_{\DGC\g {\cal B}}(M,N).$$
   \end{remark}

   \begin{proposition}\label{P: comods coalgebras <--F--> mods bocses}
   Let ${\cal B}=(C,\mu,\epsilon,\delta)$ be a 
    differential graded $S$-bocs. 
    \begin{enumerate}
     \item There is a full and faithful functor of graded categories
     $$\Phi: \GM\g {\cal B}\rightmap{}\GC\g {\cal B}$$
     such that $\Phi(M)=\Ind_{\cal B}(M):=(M\otimes_SC,id_M\otimes \mu)$, for $M\in \GM\g {\cal B}$, and, for 
     any $f\in \Hom_{\GM\g{\cal B}}(M,N)$, 
     we have  $\Phi(f)=(f\otimes id_C)(id_M\otimes \mu)$.   
     \item There is a full and faithful functor of graded categories
     $$\Phi: \TGM\g {\cal B}\rightmap{}\DGC\g {\cal B}$$
     such that $\Phi(M,u)=(\Ind_{\cal B}(M),\delta_{\Phi(M,u)})$, where
     $\delta_{\Phi(M,u)}=id_M\otimes \delta+ \Phi(u)$, 
     for each $(M,u)\in \TGM\g {\cal B}$, and 
     $\Phi(f)=(f\otimes id_C)(id_M\otimes \mu)$,  for any $f\in \Hom_{\TGM\g{\cal B}}((M,u),(N,v))$. 
    \end{enumerate}
   \end{proposition}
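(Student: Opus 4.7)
The plan is to view $\Phi$ as the standard cofree-comodule construction. For part (1), I first verify that $\Ind_{\cal B}(M) = (M\otimes_S C, id_M\otimes \mu)$ is a graded right ${\cal B}$-comodule, which is immediate from the coassociativity of $\mu$ and the right counit axiom. Next I check that $\Phi(f) = (f\otimes id_C)(id_M\otimes \mu)$ is a morphism of graded ${\cal B}$-comodules (coassociativity with $f$ carried along), that $\Phi(\hueca{I}_M) = id_{M\otimes C}$ (via the left counit axiom $\lambda_C(\epsilon\otimes id_C)\mu = id_C$), and that $\Phi(g*f) = \Phi(g)\Phi(f)$ (coassociativity together with the Koszul identity $(id_N\otimes \mu)(f\otimes id_C) = f\otimes \mu$). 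Full faithfulness is witnessed by the explicit two-sided inverse $h\mapsto \rho_N(id_N\otimes \epsilon)h$: one direction uses the right counit applied to $\Phi(f)$, the other uses the comodule compatibility of $h$ to slide $id_M\otimes \mu$ across and then collapses via the left counit axiom.

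For part (2), I first show that $\delta_{\Phi(M,u)} = id_M\otimes \delta + \Phi(u)$ is a coderivation on $\Ind_{\cal B}(M)$. The summand $id_M\otimes \delta$ is a coderivation because $\delta$ is one on $C$; the summand $\Phi(u)$, by part (1), is a morphism of graded ${\cal B}$-comodules, so it satisfies $(id_M\otimes \mu)\Phi(u) = (\Phi(u)\otimes id_C)(id_M\otimes \mu)$; adding the two identities yields precisely the coderivation equation for the sum. The central step is then $\delta_{\Phi(M,u)}^2 = 0$, for which the key identity is
$$(id_M\otimes \delta)\Phi(u) + \Phi(u)(id_M\otimes \delta) = \Phi(\hat{\delta}(u)).$$
This is obtained by expanding $\Phi(u)(id_M\otimes \delta)$, splitting $\mu\delta$ via the coderivation property of $\delta$, and invoking the Koszul sign $(u\otimes id_C)(id_{M\otimes C}\otimes \delta) = -(id_M\otimes \delta)(u\otimes id_C)$. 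Combined with $\Phi(u)\Phi(u) = \Phi(u*u)$ (functoriality of part (1)) and the Maurer-Cartan equation for $u$, one obtains $\delta_{\Phi(M,u)}^2 = \Phi(\hat{\delta}(u) + u*u) = 0$.

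For a morphism $f:(M,u)\rightmap{}(N,v)$ of twisted ${\cal B}$-modules, an entirely parallel calculation yields
$$\delta_{\Phi(N,v)}\Phi(f) - (-1)^{|f|}\Phi(f)\delta_{\Phi(M,u)} = \Phi\bigl(\hat{\delta}(f) + v*f - (-1)^{|f|}f*u\bigr),$$
which vanishes exactly because $f$ is a morphism of twisted modules. Reading this equality in reverse shows that whenever $\Phi(f)$ is a morphism of differential graded comodules then $f$ must already be a morphism of twisted modules (using faithfulness from part (1)); combined with the full faithfulness from part (1), this gives full faithfulness of $\Phi$ on $\TGM\g{\cal B}$. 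The main obstacle I anticipate is managing the Koszul signs cleanly: the maps $\delta$, $u$ and $f$ all carry non-zero degree, and the signs hard-coded in the definition of $\hat{\delta}$ must be matched against those produced by sliding $\delta$ past a tensor factor. Once the intertwining identity $(id_M\otimes \delta)\Phi(u) + \Phi(u)(id_M\otimes \delta) = \Phi(\hat{\delta}(u))$ is in hand, the remaining statements follow by formal manipulation.
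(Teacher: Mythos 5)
Your proposal is correct, and your part (1) coincides with the paper's argument (same functor, same inverse $h\mapsto \rho_N(id_N\otimes\epsilon)h$ for full faithfulness). Where you genuinely diverge is in the verification at the heart of part (2): you prove the intertwining identity $(id_M\otimes \delta)\Phi(u)+\Phi(u)(id_M\otimes \delta)=\Phi(\hat{\delta}(u))$ (and its signed analogue for a morphism $f$ of arbitrary degree) by expanding $\mu\delta$ through the coderivation property and tracking the Koszul sign, and then conclude $\delta_{\Phi(M,u)}^2=\Phi(\hat{\delta}(u)+u*u)=0$ and $\delta_{\Phi(N,v)}\Phi(f)-(-1)^{\vert f\vert}\Phi(f)\delta_{\Phi(M,u)}=\Phi(\hat{\delta}(f)+v*f-(-1)^{\vert f\vert}f*u)$ from the functoriality established in part (1). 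The paper instead exploits cofreeness of the induced comodule: it notes that $\delta_{\Phi(M,u)}^2$ and the defect $D$ are themselves morphisms of graded comodules between induced comodules (via the bijection $\Theta$ with coderivations and (\ref{L: h determina morfismo dNh-(-1)grad h hd_N})), hence vanish iff their composites with $q=\rho(id\otimes\epsilon)$ vanish, and it computes those composites using (\ref{L: epsilon d = 0}). Your route is heavier on sign bookkeeping but more self-contained: it needs neither $\epsilon\delta=0$ nor the observation that the defect is a comodule morphism, and it proves the stronger statement that the defect equals $\Phi$ applied to the twisted-module expression, from which both well-definedness and full faithfulness of $\Phi$ on $\TGM\g{\cal B}$ follow at once by reading the identity backwards and invoking faithfulness from part (1) — exactly the reversal the paper also performs. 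Your signs are consistent with the paper's conventions (your first identity is stated for $\vert u\vert=1$, which is the only case needed, and your displayed morphism version carries the correct factor $(-1)^{\vert f\vert}$), and your direct check that $id_M\otimes\delta+\Phi(u)$ is a coderivation replaces the paper's appeal to the standard bijection $\Theta$; both are fine.
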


   \begin{proof} (1): It is easy to see that the association $\Phi$ is well defined. 
In order to show that $\Phi$ preserves composition, 
take $f\in \Hom_{\GM\g {\cal B}}(M,N)$ and 
$g\in \Hom_{\GM\g{\cal B}}(N,L)$. 
Then, 
$$\begin{matrix}
  \Phi(g)\Phi(f)
  &=&
    (g\otimes id_C)(id_N\otimes\mu)(f\otimes id_C)(id_M\otimes\mu)\hfill\\
  &=&
     (g\otimes id_C)(f\otimes\mu)(id_M\otimes\mu)\hfill\\
  &=&
       (g\otimes id_C)(f\otimes id_C\otimes id_C)(id_M\otimes id_C\otimes\mu)(id_M\otimes\mu)\hfill\\
  &=&
       (g\otimes id_C)(f\otimes id_C\otimes id_C)(id_M\otimes(id_C\otimes\mu)\mu)\hfill\\
  &=&
         (g\otimes id_C)(f\otimes id_C\otimes id_C)(id_M\otimes(\mu\otimes id_C)\mu)\hfill\\
  &=&
        (g\otimes id_C)(f\otimes id_C\otimes id_C)(id_M\otimes \mu\otimes id_C)(id_M\otimes \mu)\hfill\\
  &=&
         (g(f\otimes id_C)(id_M\otimes \mu)\otimes id_C)(id_M\otimes \mu)\hfill\\
  &=&
       (g*f\otimes id_C)(id_M\otimes \mu) = \Phi(g*f).\hfill\\
  \end{matrix}$$
Given $M\in \GM\g{\cal B}$, we consider the identity morphism 
$\hueca{I}_{M}=\rho_M(id_M\otimes \epsilon)$ in the category $\GM\g{\cal B}$. Then, 
$$\begin{matrix}
   \Phi(\hueca{I}_M)
   &=&
   (\rho_M(id_M\otimes \epsilon)\otimes id_C)(id_M\otimes \mu)\hfill\\
  
   &=&(id_M\otimes \lambda(\epsilon\otimes id_C))(id_M\otimes \mu)\hfill\\
   &=& id_M\otimes \lambda(\epsilon\otimes id_C)\mu=id_M\otimes id_C\hfill\\
   &=&id_{\Ind_{\cal B}(M)}=id_{\Phi(M)}.\hfill\\
  \end{matrix}$$
  Thus $\Phi$ is a degree preserving functor.  It is full and faithful because, for $d\in \hueca{Z}$,  
 $$\Phi_d:\Hom_{\GM\g S}^d(M\otimes_S C,N)\rightmap{}\Hom_{\GC\g {\cal B}}^d(\Ind_{\cal B}(M),\Ind_{\cal B}(N))$$
is an isomorphism, with inverse $\Phi'_d$ given by $\Phi'_d(\phi)=q_N\phi$, where $q_N:=\rho_N(id_N\otimes \epsilon).$

    (2): Take $(M,u)\in \TGM\g {\cal B}$. If we denote by $\Coder(\Ind_{\cal B}(M))$ the set of coderivations of $\Ind_{\cal B}(M)$, we have a bijection 
    $$\Theta:\Hom_{\GC\g {\cal B}}^1(\Ind_{\cal B}(M),\Ind_{\cal B}(M))\rightmap{}\Coder(\Ind_{\cal B}(M))$$
    given by $\Theta(h)=id_M\otimes \delta+h$. Then, we know that $\delta_{\Phi(M,u)}=\Theta\Phi_1(u)$ is a coderivation on $\Ind_{\cal B}(M)$. Since $\delta_{\Phi(M,u)}^2$ is a homogeneous morphism of graded right ${\cal B}$-comodules of degree $2$, to show that $\delta_{\Phi(M,u)}^2=0$ is equivalent  to show that 
    $q_M\delta_{\Phi(M,u)}^2=0$. 
    
    From (\ref{L: epsilon d = 0}),  we have 
$$\begin{matrix}
q_M\delta_{\Phi(M,u)}^2
&=&
q_M[(id_M\otimes \delta)\Phi(u)+\Phi(u)(id_M\otimes \delta)+\Phi(u)\Phi(u)]\hfill\\
&=&
u(id_M\otimes \delta)+u\Phi(u)=\hat{\delta}(u)+u*u=0.\hfill\end{matrix}$$

Now, take  a homogeneous morphism $f:(M,u)\rightmap{}(N,v)$ in $\TGM\g {\cal B}$. 
We already know that $\Phi(f):\Ind_{\cal B}(M)\rightmap{}\Ind_{\cal B}(N)$ is a 
homogeneous morphism of graded ${\cal B}$-comodules. By definition, the map 
$\Phi(f):\Phi(M,u)\rightmap{}\Phi(N,v)$ is a morphism of differential 
graded ${\cal B}$-comodules 
if and only if the following difference $D$ is zero. 
$$D:=(id_N\otimes \delta+\Phi(v))\Phi(f)-(-1)^{\vert f\vert}\Phi(f)(id_M\otimes \delta+\Phi(u)).$$
By (\ref{L: h determina morfismo dNh-(-1)grad h hd_N}), we have that $D=0$ is 
equivalent to $q_ND=0$. Since $f$ is a morphism of twisted ${\cal B}$-modules, we have 
$$\begin{matrix}
  q_ND&=&v\Phi(f)-(-1)^{\vert f\vert}f(id_M\otimes \delta)-(-1)^{\vert f\vert}f\Phi(u)\hfill\\
  &=&
  \hat{\delta}(f)+v\Phi(f)-(-1)^{\vert f\vert}f\Phi(u)\hfill\\
  &=&
  \hat{\delta}(f)+v*f-(-1)^{\vert f\vert}f*u=0.\hfill\\
  \end{matrix}$$
Hence $\Phi$ is a well defined faithful functor. In order to show that it is a full functor, 
we use again the fact that  $\Phi_d$ is  a bijection and reverse the preceding argument.  
   \end{proof}

 \begin{definition}\label{D: homotopy for morphisms of twisted cal B-modules}
  Let ${\cal B}=(C,\mu,\epsilon,\delta)$ be a differential graded $S$-bocs. Given morphisms
  $f,g\in \Hom^0_{\TGM\g{\cal B}}((M,u),(N,v))$, \emph{a homotopy  $h$ from $f$ to $g$} is a morphism 
  $h\in \Hom_{\GM\g{\cal B}}^{-1}(M,N)$ such that 
  $$f-g=\hat{\delta}(h)+v*h+h*u.$$
  A morphism 
  $f\in \Hom^0_{\TGM\g{\cal B}}((M,u),(N,v))$ is \emph{null-homotopic} iff there is a homotopy from $f$ to $0$.

 We denote by $\TGM^0\g {\cal B}$ the subcategory $\TGM\g {\cal B}$ with the same objects and only zero degree homogeneous morphisms. 
 Thus, the notion of homotopy is an equivalence relation in the category $\TGM^0\g {\cal B}$.
  \end{definition}
  
  \begin{proposition}\label{P: equiv de GModB --> GC-C preserva homotopia}
   Let ${\cal B}=(C,\mu,\epsilon,\delta)$ be a differential graded $S$-bocs. 
   Consider the full and faithful functor  $\Phi:\TGM\g {\cal B}\rightmap{}\DGC\g {\cal B}$ of (\ref{P: comods coalgebras <--F--> mods bocses}). Then, 
   a morphism $f\in \Hom^0_{\TGM\g{\cal B}}((M,u),(N,v))$ in 
   $\TGM^0\g {\cal B}$ is 
   is null-homotopic iff $\Phi(f):\Phi(M,u)\rightmap{}\Phi(N,v)$ is null-homotopic in $\DGC^0\g {\cal B}$. 
   As a consequence, there is an induced  full and faithful functor on the homotopy categories 
   $$\underline{\Phi}:\underline{\TGM}^0\g {\cal B}\rightmap{}\underline{\DGC}^0\g {\cal B}.$$
  \end{proposition}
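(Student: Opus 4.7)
The plan is to leverage part (1) of (\ref{P: comods coalgebras <--F--> mods bocses}), which gives a bijection on degree $d$ hom spaces with inverse given by post-composition with $q_N=\rho_N(id_N\otimes \epsilon)$, and to translate homotopies across $\Phi$ via this ``$q_N$-trick'', exactly as in the proof of part (2). First I would unwind the definitions: $f$ is null-homotopic in $\TGM^0\g{\cal B}$ iff there exists $h\in \Hom^{-1}_{\GM\g{\cal B}}(M,N)$ with $f=\hat{\delta}(h)+v*h+h*u$, while $\Phi(f)$ is null-homotopic in $\DGC^0\g{\cal B}$ iff there exists a graded comodule morphism $H:\Phi(M,u)\rightmap{}\Phi(N,v)$ of degree $-1$ satisfying $\Phi(f)=\delta_{\Phi(N,v)}H+H\delta_{\Phi(M,u)}$ (the plus sign coming, via (\ref{L: h determina morfismo dNh-(-1)grad h hd_N}), from $-(-1)^{-1}=+1$). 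Part (1) of (\ref{P: comods coalgebras <--F--> mods bocses}) tells us that every such $H$ equals $\Phi(h)$ for a unique $h=q_NH$ of degree $-1$ in $\GM\g{\cal B}$.

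The core of the argument is the identity
$$\delta_{\Phi(N,v)}\Phi(h)+\Phi(h)\delta_{\Phi(M,u)}=\Phi\bigl(\hat{\delta}(h)+v*h+h*u\bigr)$$
for any $h$ of degree $-1$. Both sides are morphisms of graded comodules (the left by (\ref{L: h determina morfismo dNh-(-1)grad h hd_N}), the right tautologically), so by the degree-zero bijection of part (1) it suffices to check equality after post-composing with $q_N$. Expanding $\delta_{\Phi(N,v)}=id_N\otimes \delta+\Phi(v)$ and $\delta_{\Phi(M,u)}=id_M\otimes \delta+\Phi(u)$, the term $q_N(id_N\otimes\delta)\Phi(h)$ vanishes by (\ref{L: epsilon d = 0}), the terms $q_N\Phi(v)\Phi(h)$ and $q_N\Phi(h)\Phi(u)$ collapse via functoriality of $\Phi$ to $v*h$ and $h*u$, and $q_N\Phi(h)(id_M\otimes \delta)=h(id_M\otimes \delta)=\hat{\delta}(h)$ (the sign in the definition of $\hat{\delta}$ being trivial when $\vert h\vert=-1$). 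This is a direct reprise of the computation already carried out in the proof of part (2) of (\ref{P: comods coalgebras <--F--> mods bocses}).

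Combining these ingredients, $\Phi(f)$ admits $H=\Phi(h)$ as a null-homotopy iff $\Phi(f)=\Phi(\hat{\delta}(h)+v*h+h*u)$, and by faithfulness of $\Phi$ this is equivalent to $f=\hat{\delta}(h)+v*h+h*u$; together with the bijection between candidate $h$'s and $H$'s this yields the stated equivalence. For the consequence, $\underline{\Phi}$ is well defined on homotopy classes because $\Phi$ is $k$-linear, sending any null-homotopic $f-g$ to the null-homotopic $\Phi(f)-\Phi(g)$; its faithfulness on homotopy classes is the ``reflects null-homotopy'' half just proved; and its fullness is inherited from the fullness of $\Phi:\TGM^0\g{\cal B}\rightmap{}\DGC^0\g{\cal B}$. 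I expect the only real obstacle to be the sign bookkeeping in the compatibility identity, but since this is a slight variant of an already-performed computation, it should not present substantive difficulty.
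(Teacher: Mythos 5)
Your proposal is correct and follows essentially the same route as the paper: parametrize degree $-1$ comodule morphisms between induced comodules as $\Phi(h)$ via the $q_N$-inverse of part (1), and reduce the null-homotopy equation in $\DGC^0\g{\cal B}$ to the equation $f=\hat{\delta}(h)+v*h+h*u$ by post-composing with $q_N$, using (\ref{L: epsilon d = 0}) to kill the $id_N\otimes\delta$ term. The only cosmetic difference is that you package the computation as the identity $\delta_{\Phi(N,v)}\Phi(h)+\Phi(h)\delta_{\Phi(M,u)}=\Phi(\hat{\delta}(h)+v*h+h*u)$ and invoke faithfulness, whereas the paper manipulates the homotopy equation directly.
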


   \begin{proof} Recall that, by definition,  $\Phi(M,u)=(\Ind_{\cal B}(M),id_M\otimes \delta+\Phi(u))$ and  
   $\Phi(N,v)=(\Ind_{\cal B}(N),id_N\otimes \delta+\Phi(v))$. The morphism  $\Phi(f)$ is
   null-homotopic in $\DGC^0\g {\cal B}$ iff there is a morphism $\Phi(h)\in \Hom_{\GC\g {\cal B}}^{-1}(\Ind_{\cal B}(M),\Ind_{\cal B}(N))$ with 
   $$\Phi(f)=(id_N\otimes \delta+\Phi(v))\Phi(h)+\Phi(h)(id_M\otimes \delta+\Phi(u)).$$
   This is equivalent to 
   $$q_N\Phi(f)=q_N[(id_N\otimes \delta+\Phi(v))\Phi(h)+\Phi(h)(id_M\otimes \delta+\Phi(u))],$$
   and this is equivalent to 
    $$f=v\Phi(h)+h(id_M\otimes \delta)+h\Phi(u)=\hat{\delta}(h)+v*h+h*u,$$
    that is, $f$ is null-homotopic in $\TGM^0\g {\cal B}$.     
   \end{proof}
   
   \begin{lemma}\label{L: restriction functors and homotopy}
      Let ${\cal B}_1=(C_1,\mu_1,\epsilon_1,\delta_1)$ and 
     ${\cal B}_2=(C_2,\mu_2,\epsilon_2,\delta_2)$  be  differential graded $S$-bocses 
     and $\psi:{\cal B}_1\rightmap{}{\cal B}_2$  a morphism of differential 
     graded $S$-bocses. Then, the restriction functor 
     $R_\psi:\TGM^0\g{\cal B}_2\rightmap{}\TGM^0\g{\cal B}_1$ maps null-homotopic 
     morphisms onto null-homotopic morphisms. Hence it induces a $k$-functor on the corresponding 
     homotopy categories 
     $$\underline{R}_\psi:\underline{\TGM}^0\g{\cal B}_2\rightmap{}\underline{\TGM}^0\g {\cal B}_1$$
   \end{lemma}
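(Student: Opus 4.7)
The plan is to apply the restriction functor $R_\psi$ directly to the defining equation of a null-homotopy and invoke the functoriality already established in Propositions \ref{P: restrictions on graded modules} and \ref{P: restrictions on twisted graded modules}. The essential point is that $R_\psi$ commutes with the differential $\hat{\delta}$, preserves the composition $*$, and preserves degrees (all shown in \ref{P: restrictions on graded modules}), and that on objects it acts by $(M,u)\mapsto (M,R_\psi(u))$ (from \ref{P: restrictions on twisted graded modules}). So any equation defining a homotopy will transport along $R_\psi$ to the analogous equation on the ${\cal B}_1$ side.

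Concretely, suppose $f\in \Hom^0_{\TGM\g{\cal B}_2}((M,u),(N,v))$ is null-homotopic via some $h\in\Hom^{-1}_{\GM\g{\cal B}_2}(M,N)$, so
$$f=\hat{\delta}_2(h)+v*h+h*u.$$
The natural candidate is $R_\psi(h)$, which lies in $\Hom^{-1}_{\GM\g{\cal B}_1}(M,N)$ since $R_\psi$ is degree-preserving. Applying $R_\psi$ to the displayed equation and using $R_\psi\hat{\delta}_2=\hat{\delta}_1 R_\psi$ together with $R_\psi(a*b)=R_\psi(a)*R_\psi(b)$ from \ref{P: restrictions on graded modules}, we obtain
$$R_\psi(f)=\hat{\delta}_1(R_\psi(h))+R_\psi(v)*R_\psi(h)+R_\psi(h)*R_\psi(u).$$
By \ref{P: restrictions on twisted graded modules}, the source and target of $R_\psi(f)$ are $(M,R_\psi(u))$ and $(N,R_\psi(v))$, so the displayed equation is exactly the statement that $R_\psi(h)$ is a homotopy from $R_\psi(f)$ to $0$ in $\TGM^0\g{\cal B}_1$.

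For the induced functor $\underline{R}_\psi$, it suffices to observe that homotopy between $f,g$ is equivalent to null-homotopy of $f-g$, and $R_\psi$ is $k$-linear (being given by right-composition with $id_M\otimes \psi$), so $R_\psi(f-g)=R_\psi(f)-R_\psi(g)$ inherits null-homotopy from $f-g$. Hence $R_\psi$ descends to a well-defined $k$-functor between the homotopy quotients. There is no substantive obstacle: the lemma is essentially a formal corollary of the fact, already verified, that $R_\psi$ respects every operation that enters the definition of homotopy.
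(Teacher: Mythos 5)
Your proposal is correct and follows essentially the same route as the paper: apply $R_\psi$ to the equation $f=\hat{\delta}_2(h)+v*h+h*u$ and use the compatibilities $R_\psi\hat{\delta}_2=\hat{\delta}_1R_\psi$ and $R_\psi(a*b)=R_\psi(a)*R_\psi(b)$ from (\ref{P: restrictions on graded modules}) and (\ref{P: restrictions on twisted graded modules}) to exhibit $R_\psi(h)$ as a null-homotopy of $R_\psi(f)$. The closing remark on $k$-linearity and passing to the homotopy quotient is the same formal observation the paper leaves implicit.
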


   \begin{proof} Given a null-homotopic morphism 
   $f\in \Hom^0_{\TGM\g{\cal B}_2}((M,u),(N,v))$, there is 
   $h\in \Hom_{\GM\g{\cal B}_2}^{-1}(M,N)$ with $f=\hat{\delta}_2(h)+v*h+h*u$. Then, applying $R_\psi$, we obtain 
   $$\begin{matrix}
     R_\psi(f)&=&R_\psi( \hat{\delta}_2(h))+R_\psi(v*h)+R_\psi(h*u)\hfill\\
     &=&
     \hat{\delta}_1(R_\psi(h))+R_\psi(v)*R_\psi(h)+R_\psi(h)*R_\psi(u).\hfill\\
     \end{matrix}$$
Hence, $R_\psi(f)$ is null-homotopic.    
   \end{proof}

\section{Triangular bocses}   
  
 In this section we show some properties of the category $\TGM\g {\cal B}$, where ${\cal B}$ is a differential graded $S$-bocs  of a special type, which we call triangular. We stress the fact that some proofs are inspired in the 
 study of differential graded tensor algebras (or ditalgebras) and their module categories initiated by the Kiev school of representation theory, see \cite{BSZ}. 
  
 \begin{definition}\label{D: normal bocs}
 A \emph{normal graded $S$-bocs} ${\cal B}$ is a differential unitary graded $S$-coalgebra
 $(C,\mu,\epsilon,\delta)$, with 
 an $S$-$S$-bimodule decomposition $C=S\bigoplus \overline{C}$ 
such that $\epsilon:C\rightmap{}S$ is the projection map and $\mu(1)=1\otimes 1$. 
 \end{definition}
 
 \begin{lemma}\label{L: la coalgebra restringida}
 If ${\cal B}=(C,\mu,\epsilon,\delta)$ is a normal graded $S$-bocs, then we have a well defined map 
 $$\overline{\mu}:\overline{C}\rightmap{}\overline{C}\otimes_S\overline{C}\hbox{  \ with   \ } 
 \overline{\mu}(x)=\mu(x)-1\otimes x-x\otimes 1,$$
 and the differential  $\delta$ restricts to a map $\overline{\delta}:\overline{C}\rightmap{}\overline{C}$
 of degree $1$. Moreover, the triple $\overline{\cal B}=(\overline{C},\overline{\mu},\overline{\delta})$ is a differential graded 
 $S$-coalgebra (without counit). We call $\overline{\cal B}$ the \emph{reduced differential graded $S$-bocs of} ${\cal B}$. 
 \end{lemma}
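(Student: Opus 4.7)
The plan is to verify each of the four assertions---$\overline{\mu}$ lands in $\overline{C}\otimes_S\overline{C}$, $\delta$ restricts to $\overline{C}$, $\overline{\mu}$ is coassociative, and $\overline{\delta}$ is a square-zero coderivation of $\overline{\mu}$---by decomposing everything along $C = S\oplus \overline{C}$ and comparing components.

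For the first item, fix $x\in \overline{C}$ and write $\mu(x)\in C\otimes_SC$ in the form $a(x) + 1\otimes b(x) + c(x)\otimes 1 + \eta(x)$, with $a(x)\in S$, $b(x),c(x)\in \overline{C}$ and $\eta(x)\in \overline{C}\otimes_S\overline{C}$, using the induced splitting $C\otimes_S C = S\oplus \overline{C}\oplus \overline{C}\oplus(\overline{C}\otimes_S\overline{C})$. The counit axioms $\lambda_C(\epsilon\otimes id_C)\mu = id_C = \rho_C(id_C\otimes\epsilon)\mu$ applied to $x$, together with the fact that $\epsilon$ is the projection onto $S$, yield $a(x)+b(x)=x=a(x)+c(x)$; since $x\in \overline{C}$, we get $a(x)=0$ and $b(x)=c(x)=x$, so $\overline{\mu}(x) = \eta(x)\in \overline{C}\otimes_S\overline{C}$. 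For the second item, Lemma (\ref{L: epsilon d = 0}) gives $\epsilon\delta = 0$; if $x\in \overline{C}$ and $\delta(x) = s+y$ with $s\in S$ and $y\in \overline{C}$, then $s = \epsilon\delta(x) = 0$, so $\delta$ restricts to a degree-$1$ map $\overline{\delta}:\overline{C}\rightmap{}\overline{C}$ of $S$-$S$-bimodules.

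For coassociativity, apply $(\mu\otimes id_C)\mu = (id_C\otimes\mu)\mu$ to $x\in \overline{C}$, substitute $\mu(x) = 1\otimes x + x\otimes 1 + \overline{\mu}(x)$, and use $\mu(1) = 1\otimes 1$ to expand $\mu$ on the boundary terms; after cancellation, the components of both sides that lie in $\overline{C}\otimes_S\overline{C}\otimes_S\overline{C}$ give the identity $(\overline{\mu}\otimes id)\overline{\mu}(x) = (id\otimes \overline{\mu})\overline{\mu}(x)$. For the coderivation property, apply $\mu\delta = (\delta\otimes id + id\otimes\delta)\mu$ to $x\in\overline{C}$ and expand both sides according to the decomposition, using $\delta(x)\in\overline{C}$ on the left and the Koszul sign rule on the right; with the standard normality convention $\delta(1) = 0$, the cross terms $\delta(1)\otimes x$ and $(-1)^{\vert x\vert}x\otimes\delta(1)$ vanish, and projecting onto $\overline{C}\otimes_S\overline{C}$ yields $\overline{\mu}\,\overline{\delta}(x) = (\overline{\delta}\otimes id + id\otimes\overline{\delta})\overline{\mu}(x)$. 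Finally $\overline{\delta}^2 = 0$ is immediate from $\delta^2 = 0$, since $\overline{\delta}$ is the restriction of $\delta$.

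The only real obstacle is the bookkeeping in the coassociativity and coderivation expansions, where one must carefully track the boundary contributions coming from the factors of $1$; the Koszul sign appearing in $(id\otimes\delta)(x\otimes 1) = (-1)^{\vert x\vert}x\otimes \delta(1)$ is precisely what forces the normality convention $\delta(1) = 0$ as the missing ingredient needed for $\overline{\delta}$ to be a coderivation on $(\overline{C},\overline{\mu})$.
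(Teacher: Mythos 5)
Your verifications of the first three assertions are correct, and since the paper's own ``proof'' is just a citation of \cite{BSZ}(3.2), a direct check like yours is a reasonable substitute: the counit-axiom computation showing $\mu(x)=1\otimes x+x\otimes 1+\overline{\mu}(x)$ with $\overline{\mu}(x)\in\overline{C}\otimes_S\overline{C}$, the use of (\ref{L: epsilon d = 0}) to get $\delta(\overline{C})\subseteq\overline{C}$, and the projection of the coassociativity identity onto the $\overline{C}\otimes_S\overline{C}\otimes_S\overline{C}$ component are all fine, with the correct (sign-free) bookkeeping.

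The gap is in the coderivation step. You dispose of the cross terms $\delta(1)\otimes x$ and $(-1)^{\vert x\vert}x\otimes\delta(1)$ by invoking ``the standard normality convention $\delta(1)=0$'', but Definition (\ref{D: normal bocs}) does not contain this condition, and it cannot be deduced from the stated axioms: $\epsilon\delta=0$ only places $\delta(1)$ in $\overline{C}_1$, and applying the coderivation identity to the group-like element $1$ only gives $\mu(\delta(1))=1\otimes\delta(1)+\delta(1)\otimes 1$, i.e.\ $\overline{\mu}(\delta(1))=0$; primitivity, not vanishing. (This is exactly where the coalgebra setting differs from the dual ditalgebra setting of \cite{BSZ}, where $d(1)=d(1\cdot 1)=2d(1)$ forces $d(1)=0$ for any derivation.) Moreover the conclusion genuinely fails without the extra hypothesis: take $S=k$ and $C$ with homogeneous basis $1,x,y,z$ of degrees $0,1,1,2$, with $\mu(x)=1\otimes x+x\otimes 1$, $\mu(y)=1\otimes y+y\otimes 1$, $\mu(z)=1\otimes z+z\otimes 1+x\otimes y-y\otimes x$, $\epsilon$ the projection onto $k1$, and $\delta(1)=x$, $\delta(y)=z$, $\delta(x)=\delta(z)=0$; one checks this satisfies every requirement of (\ref{D: normal bocs}) ($\delta$ is a degree-one coderivation with $\delta^2=0$), yet $\overline{\mu}(\overline{\delta}(y))=x\otimes y-y\otimes x\neq 0=(\overline{\delta}\otimes id+id\otimes\overline{\delta})\overline{\mu}(y)$. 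So to make your argument complete you must add $\delta(S)=0$ (equivalently $\delta(1)=0$) as an explicit hypothesis rather than treat it as a consequence of normality; it does hold for the bocses the paper actually uses, such as ${\cal B}_A=(T_S(A[1]),\mu,\epsilon,\delta)$ coming from the bar construction, where the differential vanishes on $S$.
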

 
 \begin{proof} See, for instance, \cite{BSZ}(3.2).
 \end{proof}

The following reinterpretation of the $k$-category $\GM\g {\cal B}$ is useful.

\begin{definition}\label{D: hueca(M)od-B}
 Given a normal graded $S$-bocs ${\cal B}=(C,\mu,\epsilon,\delta)$, we can form the following graded category $\GModi\g{\cal B}$. Its objects are the graded right $S$-modules and, given $d\in \hueca{Z}$, a \emph{morphism $f:M\rightmap{}N$ of degree $d$} in $\GModi\g{\cal B}$ is a pair of maps $f=(f^0,f^1)$,  where $f^0:M\rightmap{}N$ and $f^1:M\otimes_S \overline{C}\rightmap{}N$ are homogeneous morphisms  of right $S$-modules of degree $d$. Thus, its hom spaces are given by 
 $$\Hom_{\GModi\g {\cal B}}(M,N)=\Hom_{\GM\g S}(M,N)\times\Hom_{\GM\g S}(M\otimes_S\overline{C},N).$$
 If $f=(f^0,f^1)\in \Hom_{\GModi\g{\cal B}}(M,N)$ and $g=(g^0,g^1)\in\Hom_{\GModi\g{\cal B}}(N,L)$, 
their composition $g*f=((g*f)^0,(g*f)^1)\in 
\Hom_{\GModi\g {\cal B}}(M,L)$ is defined 
by $(g*f)^0=g^0f^0$ and $(g*f)^1$ is the following composition in $\GM\g S$
$$(g*f)^1=g^1(f^0\otimes id_{\overline{C}})+g^0f^1+
g^1[(f^1\otimes id_{\overline{C}})(id_M\otimes \overline{\mu})].$$
\end{definition}

It is not hard to show that $\GModi\g {\cal B}$ is indeed a graded $k$-category, 
where the identity morphism $\hueca{I}_M$ on each $M\in \GModi\g{\cal B}$ is just $(id_M,0)$. Again, with $\GModi^0\g{\cal B}$, we denote the subcategory of $\GModi\g{\cal B}$ with the same objects but with only zero degree morphisms. 

\begin{lemma}\label{L: equiv directa entre GMod-B y G hueca(M)od-B}
 For any normal graded $S$-bocs  ${\cal B}$, there is an equivalence of graded categories 
 $$F:\GM\g{\cal B}\rightmap{}\GModi\g {\cal B}$$
 such that $F(M)=M$, for each $M\in \GM\g {\cal B}$ and,
 given any morphism $f\in \Hom_{\GM\g{\cal B}}(M,N)=\Hom_{\GM\g S}(M\otimes_S C,N)$, we have 
 $F(f)=(f^0,f^1)$, where $f^0:M\rightmap{} N$ is given by $f^0(m)=f(m\otimes 1)$, for $m\in M$,
 and the morphism 
 $f^1:M\otimes_S\overline{C}\rightmap{}N$ is just the restriction of the map $f$.
\end{lemma}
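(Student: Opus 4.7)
The plan is to exploit the bimodule decomposition $C=S\oplus \overline{C}$ from the normality assumption, which immediately gives the bijection on hom spaces, and then to check that the functor $F$ respects composition and identities. Since $F$ is the identity on objects, fully faithfulness will suffice to conclude that $F$ is an equivalence.

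First I would establish the bijection on hom spaces. For each degree $d\in\hueca{Z}$, the isomorphism of graded $S$-$S$-bimodules $M\otimes_S C\cong M\oplus (M\otimes_S\overline{C})$ induced by the splitting of $C$ yields
$$\Hom^d_{\GM\g S}(M\otimes_S C,N)\cong \Hom^d_{\GM\g S}(M,N)\oplus\Hom^d_{\GM\g S}(M\otimes_S\overline{C},N),$$
and the isomorphism is precisely $f\mapsto (f^0,f^1)$ with $f^0(m)=f(m\otimes 1)$ and $f^1=f|_{M\otimes_S\overline{C}}$. This makes $F$ bijective on hom spaces.

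Next I would verify that $F$ sends $\hueca{I}_M=\rho_M(id_M\otimes\epsilon)$ to $(id_M,0)$: evaluating at $m\otimes 1$ gives $m$ (so the first component is $id_M$), and restricting to $M\otimes_S\overline{C}$ gives $0$ since $\epsilon$ vanishes on $\overline{C}$ by normality. The main step is checking $F(g*f)=F(g)*F(f)$. For the zeroth component, using $\mu(1)=1\otimes 1$, one gets directly $(g*f)^0(m)=g(f(m\otimes 1)\otimes 1)=g^0 f^0(m)$. For the first component, one uses the key identity $\mu(\overline{c})=1\otimes\overline{c}+\overline{c}\otimes 1+\overline{\mu}(\overline{c})$ from Lemma \ref{L: la coalgebra restringida}; splitting $(g*f)(m\otimes\overline{c})=g(f\otimes id_C)(m\otimes\mu(\overline{c}))$ into three pieces along this decomposition and identifying $f$ restricted to $M\otimes S$ with $f^0$ and to $M\otimes\overline{C}$ with $f^1$, one recovers exactly the three summands $g^1(f^0\otimes id_{\overline{C}})+g^0f^1+g^1[(f^1\otimes id_{\overline{C}})(id_M\otimes\overline{\mu})]$ that define composition in $\GModi\g{\cal B}$.

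The main obstacle is organizing this composition calculation cleanly; it is not deep but does require careful tracking of which factor of $C$ lands in $S$ versus $\overline{C}$ after applying $\mu$, and then recognising $g^0$ and $g^1$ on the appropriate pieces. Once composition and identities are verified, $F$ is a degree-preserving fully faithful functor which is the identity on objects, hence an equivalence of graded $k$-categories.
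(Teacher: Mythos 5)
Your proposal is correct and follows essentially the same route as the paper: the bijection on hom spaces comes from the splitting $C=S\oplus\overline{C}$, identities are handled via $\epsilon$ vanishing on $\overline{C}$, and compatibility with composition is checked by decomposing $\mu(x)=\overline{\mu}(x)+1\otimes x+x\otimes 1$ (and using $\mu(1)=1\otimes 1$ for the zeroth component) and matching the three resulting terms with the composition formula in $\GModi\g{\cal B}$. Since $F$ is the identity on objects and bijective on graded hom spaces, it is an equivalence, exactly as in the paper.
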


\begin{proof} It is clear that $F$ defined as above determines a linear isomorphism  
$$\Hom_{\GM\g{\cal B}}(M,N)\rightmap{}\Hom_{\GModi\g{\cal B}}(M,N).$$
So we only have to show that $F$ preserves compositions and identities. 
Assume that $f:M\rightmap{}N$ and $g:N\rightmap{}L$ are morphisms in $\GM\g {\cal B}$. Then, 
for $m\in M$, we have 
$$\begin{matrix}
  F(g*f)^0(m)&=&
  g(f\otimes id_C)(id_M\otimes \mu)(m\otimes 1)\hfill\\
  &=&
  g(f\otimes id_C)(m\otimes 1\otimes 1)\hfill\\
  &=&
  g(f(m\otimes 1)\otimes 1)=g^0f^0(m).\hfill\\
  \end{matrix}$$
For $x\in \overline{C}$, we have  $\mu(x)=\overline{\mu}(x)+1\otimes x+x\otimes 1$
and $\overline{\mu}(x)=\sum_jy_j\otimes z_j$, so 
$$\begin{matrix}
  F(g*f)^1(m\otimes x)
  &=& 
  g(f\otimes id_C)(id_M\otimes \mu)(m\otimes x)\hfill\\
  &=&
  g(f\otimes id_C)(m\otimes \mu(x))\hfill\\
  &=&
  g(f\otimes id_C)(m\otimes \overline{\mu}(x))\hfill\\
  &&+\,
  g(f\otimes id_C)(m\otimes x\otimes 1)+g(f\otimes id_C)(m\otimes 1\otimes x)\hfill\\
   &=&
  \sum_jg(f(m\otimes y_j)\otimes z_j)\hfill\\
  &&+\,
  g(f(m\otimes x)\otimes 1)+g(f(m\otimes 1)\otimes x)\hfill\\
  &=&
  \sum_jg^1(f^1(m\otimes y_j)\otimes z_j)
  +
  g^0(f^1(m\otimes x))+g^1(f^0(m)\otimes x)\hfill\\
  &=&
  [g^1(f^1\otimes id_{\overline{C}})(id_M\otimes \overline{\mu})+g^0f^1+
  g^1(f^0\otimes id_{\overline{C}})](m\otimes x)\hfill\\
  &=&
  [F(g)*F(f)]^1(m\otimes x).\hfill\\
  \end{matrix}$$
Thus, we obtain that $F(g*f)=F(g)*F(f)$. Given $M\in \GM\g {\cal B}$, recall that 
$\hueca{I}_M=\rho_M (id_M\otimes \epsilon)$, thus 
$F(\hueca{I}_M)^0(m) =\rho(id_M\otimes \epsilon)(m\otimes 1)=m.$ 
  For $x\in \overline{C}$, we have 
 $F(\hueca{I}_M)^1(m\otimes x)= \rho(id_M\otimes \epsilon)(m\otimes x)=0$. Hence, 
 $F(\hueca{I}_M)=(id_M,0)=\hueca{I}_{F(M)}$. 
\end{proof}

\begin{remark} Given $f:M\rightmap{}N$ in $\GM\g {\cal B}$, 
we often refer to the maps $f^0$ and $f^1$ in $F(f)=(f^0,f^1)$
 as \emph{the components of $f$}. 

Notice that any morphism $(f^0,f^1)\in \Hom_{\GModi\g{\cal B}}(M,N)$ decomposes as a sum of morphisms $(f^0,f^1)=(f^0,0)+(0,f^1)$ in $\Hom_{\GModi\g {\cal B}}(M,N)$. 

This notation as pairs of maps was relevant in the study of categories of modules over differential tensor algebras in \cite{BSZ}. We exploit it here in the context of graded $S$-bocses. 
\end{remark}

\begin{definition}\label{D: triangular bocs}
A normal graded $S$-bocs ${\cal B}=(C,\mu,\epsilon,\delta)$ is called \emph{triangular} iff 
there is a sequence 
$$0=\overline{C}_0\subseteq \overline{C}_1\subseteq\cdots\subseteq
\overline{C}_i\subseteq\overline{C}_{i+1}\subseteq \cdots $$
of graded $S$-$S$-subbimodules of $\overline{C}$ such that $\overline{C}=\bigcup_{i\in \hueca{N}}\overline{C}_i$, 

$$ \overline{\mu}(\overline{C}_i)\subseteq \overline{C}_{i-1}\otimes_S\overline{C}_{i-1} \hbox{ \  and \  } 
\overline{\delta}(\overline{C}_i)\subseteq \overline{C}_i,
\hbox{ for all } i\in \hueca{N}.$$ 
\end{definition}

\begin{proposition}\label{P: morfismos localmente nilpotentes en bocses}
Let ${\cal B}=(C,\mu,\epsilon,\delta)$ be a triangular graded $S$-bocs. Consider any endomorphism in $\GModi^0\g {\cal B}$ of the form 
$f=(0,f^1):M\rightmap{}M$. Then, 
for each $x\in \overline{C}$, there is a natural number  $n_x$ such that $(f^{n})^1[m\otimes x]=0$, 
for all $n\geq n_x$ and $m\in M$. In this case, we can consider the morphism in $\GModi^0\g {\cal B}$ 
$$\sum_{n=1}^\infty f^n:M\rightmap{}M.$$
\end{proposition}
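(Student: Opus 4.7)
The plan is to exploit the triangular filtration of $\overline{C}$ to establish a local nilpotence statement for the iterated reduced comultiplication $\overline{\mu}^{(n)}$, and then to transport this through the formula for the $1$-component of iterated compositions in $\GModi\g{\cal B}$. First I note that when $f=(0,f^1)$ and any other morphism $g=(0,g^1)$ both have vanishing $0$-component, the composition formula of \ref{D: hueca(M)od-B} collapses: the first two summands of $(g*f)^1$ vanish, and we are left with
$$(g*f)^1 = g^1 \circ (f^1 \otimes id_{\overline{C}}) \circ (id_M \otimes \overline{\mu}), \qquad (g*f)^0 = 0.$$
Applying this inductively with $g = f^n$ gives $(f^n)^0 = 0$ for every $n\geq 1$ and the recursion $(f^{n+1})^1 = f^1 \circ ((f^n)^1 \otimes id_{\overline{C}}) \circ (id_M \otimes \overline{\mu})$.

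Unrolling this recursion and invoking the coassociativity of $\overline{\mu}$ in the reduced coalgebra $\overline{\cal B}$ from (\ref{L: la coalgebra restringida}), I would establish the closed form
$$(f^n)^1 = f^1 \circ (f^1 \otimes id_{\overline{C}}) \circ (f^1 \otimes id_{\overline{C}^{\otimes 2}}) \circ \cdots \circ (f^1 \otimes id_{\overline{C}^{\otimes(n-1)}}) \circ (id_M \otimes \overline{\mu}^{(n)}),$$
where $\overline{\mu}^{(n)}\colon \overline{C} \to \overline{C}^{\otimes n}$ is the $n$-fold iterated reduced comultiplication. In particular, $(f^n)^1[m\otimes x]$ depends on $x$ only through $\overline{\mu}^{(n)}(x)$, so the entire problem reduces to showing that for each fixed $x\in \overline{C}$ one has $\overline{\mu}^{(n)}(x)=0$ for all sufficiently large $n$.

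That last step is a short induction on $i$: I claim that $\overline{\mu}^{(n)}(\overline{C}_i)=0$ whenever $n\geq i+1$. The base case $i=0$ is immediate because $\overline{C}_0=0$. For the inductive step, write $\overline{\mu}^{(n)} = (\overline{\mu}^{(n-1)}\otimes id_{\overline{C}})\circ \overline{\mu}$; the triangularity hypothesis gives $\overline{\mu}(\overline{C}_i)\subseteq \overline{C}_{i-1}\otimes \overline{C}_{i-1}$, and the induction hypothesis then forces $\overline{\mu}^{(n-1)}$ to kill the left factor as soon as $n-1\geq i$, whence $\overline{\mu}^{(n)}(\overline{C}_i)=0$ for $n\geq i+1$. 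Given $x\in \overline{C}$, any $i$ with $x\in \overline{C}_i$ thus yields the required threshold $n_x := i+1$ and shows $(f^n)^1[m\otimes x]=0$ for every $m\in M$ and every $n\geq n_x$.

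For the infinite sum, the $0$-component $(\sum_{n\geq 1}f^n)^0$ is a sum of zeros, while for each $x\in \overline{C}$ the $1$-component evaluated at $m\otimes x$ collapses to the finite sum $\sum_{n=1}^{n_x-1}(f^n)^1[m\otimes x]$. Hence $\sum_{n=1}^\infty f^n$ defines a bona fide morphism in $\GModi^0\g{\cal B}$. I expect the main technical obstacle to be the bookkeeping behind the closed-form formula for $(f^n)^1$: one must carefully match the iterated compositions produced by the recursion with a single well-defined map $\overline{\mu}^{(n)}$, which requires coassociativity at each reassociation step. Once that identification is in hand, triangularity delivers the local nilpotence essentially for free.
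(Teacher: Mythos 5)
Your argument is correct, and it reaches the conclusion by a slightly different organization than the paper. The paper's proof is a direct induction on the filtration index $i$ of (\ref{D: triangular bocs}): it produces, for each $i$, an exponent $n_i$ with $(f^{n_i})^1[m\otimes x]=0$ for $x\in \overline{C}_i$, by applying the recursion $(f^{n+1})^1=f^{n}*(f^1\otimes id_{\overline{C}})(id_M\otimes\overline{\mu})$ and using the inductive hypothesis on the right-hand tensor factor $y_j\in\overline{C}_{i-1}$ produced by $\overline{\mu}$; no closed formula for $(f^n)^1$ is needed. You instead factor the statement through a purely coalgebra-level lemma — conilpotence of the reduced comultiplication, $\overline{\mu}^{(n)}(\overline{C}_i)=0$ for $n\geq i+1$ — and transport it to $(f^n)^1$ via the closed form $(f^n)^1=f^1(f^1\otimes id_{\overline{C}})\cdots(f^1\otimes id_{\overline{C}^{\otimes(n-1)}})(id_M\otimes\overline{\mu}^{(n)})$. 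That closed form is legitimate (all maps involved have degree $0$, so no Koszul signs enter, and coassociativity of $\overline{\mu}$ from (\ref{L: la coalgebra restringida}) lets you identify the parenthesization $(\overline{\mu}^{(n-1)}\otimes id)\overline{\mu}$ produced by unrolling with any other); in fact it is essentially Claim 2 in the proof of (\ref{L: idemptes se div en GM-B}), stated there in the $\GModit\g{\cal B}$ incarnation. What your route buys is a cleaner separation of concerns — local nilpotence of $f=(0,f^1)$ is exhibited as a consequence of conilpotence of $\overline{C}$ alone, and the uniform bound for all $n\geq n_x$ comes for free — at the cost of the tensor-leg bookkeeping you flag; the paper's induction avoids that bookkeeping but has to argue (implicitly) that vanishing at $n_i$ persists for larger exponents. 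Both hinge on the same triangularity inclusion $\overline{\mu}(\overline{C}_i)\subseteq\overline{C}_{i-1}\otimes_S\overline{C}_{i-1}$, and your treatment of the infinite sum (pointwise finiteness on elements $m\otimes x$ with $x\in\overline{C}_i$, since $\overline{C}=\bigcup_i\overline{C}_i$) matches the paper's.
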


\begin{proof} Adopt the notations of (\ref{D: triangular bocs}). We proceed to show, by induction on $i\in \hueca{N}$,  
 that there is a natural number $n_i$ such that for any $x\in \overline{C}_i$ and $m\in M$, 
we have,  $(f^{n_i})^1[m\otimes  x]=0$. 

From the definition of the composition 
of morphisms is $\GModi\g {\cal B}$, we have that 
$(f^{n+1})^1(m\otimes x)=f^n(f^1\otimes id_{\overline{C}})(id_M\otimes \overline{\mu})[m\otimes x]=0$, for $x\in \overline{C}_1$ and $n\geq 1$, and we can take $n_1=2$. 

Assume that we have proved that $n_i$ exists for a fixed $i\in \hueca{N}$, such that 
for any $x\in \overline{C}_{i}$ and $m\in M$, we have 
$(f^{n_i})^1[m\otimes x]=0$. Then, if $x\in \overline{C}_{i+1}$, we have that 
$\overline{\mu}(x)=\sum_{j=1}^tx_j\otimes y_j$, with $x_j,y_j\in \overline{C}_{i}$. Thus, for $m\in M$, 
we have 
$$\begin{matrix}(f^{n_i+1})^1[m\otimes x]&=&
f^{n_i}(f^1\otimes id_{\overline{C}})(id_M\otimes \overline{\mu})[m\otimes x]\hfill\\
&=&
f^{n_i}(f^1\otimes id_{\overline{C}})(m\otimes \overline{\mu}(x))\hfill\\
&=&
\sum_jf^{n_i}(f^1\otimes id_{\overline{C}})(m\otimes x_j\otimes y_j)\hfill\\
&=&
\sum_jf^{n_i}(f^1(m\otimes x_j)\otimes y_j)=0.\hfill\\
\end{matrix}
$$
The statement of the Proposition follows from this. 
\end{proof}

\begin{corollary}\label{C: f iso sii f0 es iso}
Let ${\cal B}$ be a triangular graded $S$-bocs. Take any morphism $f=(f^0,f^1):M\rightmap{}N$ in $\GModi^0\g {\cal B}$. Then, 
$f:M\rightmap{}N$ is an isomorphism in $\GModi^0\g {\cal B}$ iff $f^0:M\rightmap{}N$ is 
an isomorphism of $\GM\g S$. 
\end{corollary}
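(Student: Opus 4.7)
The forward implication is immediate: if $g=(g^0,g^1)$ is a two-sided inverse of $f$ in $\GModi^0\g{\cal B}$, then the $0$-component of $g*f=\hueca{I}_M$ and $f*g=\hueca{I}_N$, which by the formula for $*$ reads $g^0f^0=id_M$ and $f^0g^0=id_N$, shows that $f^0$ is invertible in $\GM\g S$.

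For the converse, assume $f^0$ is invertible. My plan is to factor $f$ in $\GModi^0\g{\cal B}$ so as to isolate the ``problem'' in an endomorphism with vanishing zero component, and then invert that endomorphism by a formal geometric series, using triangularity to guarantee the series is well defined. Concretely, a direct application of the composition formula gives $(f^0,0)*(id_M,g^1)=(f^0,f^0g^1)$, so setting $g^1=(f^0)^{-1}f^1$ yields the factorization
$$f=(f^0,0)*(id_M,(f^0)^{-1}f^1).$$
The first factor is clearly an isomorphism, with inverse $((f^0)^{-1},0)$. Writing $h:=(0,(f^0)^{-1}f^1)$, the second factor equals $\hueca{I}_M+h$, and the problem reduces to inverting $\hueca{I}_M+h$.

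Since $h=(0,h^1)$, Proposition \ref{P: morfismos localmente nilpotentes en bocses} applies: for each $x\in \overline{C}$ there is $n_x\in\hueca{N}$ with $(h^{n})^1(m\otimes x)=0$ for all $n\geq n_x$ and $m\in M$, where $h^n$ denotes the $n$-fold $*$-composition. Hence the series
$$g:=\hueca{I}_M+\sum_{n=1}^\infty (-1)^n h^n$$
is an honest element of $\Hom_{\GModi^0\g{\cal B}}(M,M)$, because for any fixed $m\otimes x$ only finitely many terms are nonzero. A formal telescoping argument, using only bilinearity and associativity of $*$, then gives $g*(\hueca{I}_M+h)=\hueca{I}_M=(\hueca{I}_M+h)*g$. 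Composing with the inverse of $(f^0,0)$ produces a two-sided inverse of $f$ in $\GModi^0\g{\cal B}$.

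The only genuinely nontrivial ingredient is the convergence of the geometric series, and this is precisely where the triangular hypothesis on ${\cal B}$ is used, via the local nilpotence provided by Proposition \ref{P: morfismos localmente nilpotentes en bocses}; the rest of the argument is formal manipulation with the composition rule of Definition \ref{D: hueca(M)od-B}.
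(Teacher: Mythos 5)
Your proof is correct and follows essentially the same route as the paper: both arguments reduce to inverting an endomorphism with identity zero component by a geometric series whose termwise convergence is exactly the local nilpotence of Proposition (\ref{P: morfismos localmente nilpotentes en bocses}), i.e., the triangularity hypothesis. The only (immaterial) difference is the reduction step: you factor $f=(f^0,0)*(id_M,(f^0)^{-1}f^1)$ and invert each factor, whereas the paper composes $f$ on both sides with $(g^0,0)$, where $g^0=(f^0)^{-1}$, to exhibit $f$ as a section and a retraction.
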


\begin{proof} We first show that an endomorphism of the form 
$f=(id_M,f^1):M\rightmap{}M$ is an automorphism. 
 Indeed, we can write $f=\hueca{I}_M-h$ where 
$h=(0,-f^1)$. Then 
$$(\hueca{I}_M-h)*(\sum_{i=0}^\infty h^i)=
\sum_{i=0}^\infty h^i-h*(\sum_{i=0}^\infty h^i)=h^0=\hueca{I}_M.$$
Similarly, we have $(\sum_{i=0}^\infty h^i)*(\hueca{I}_M-h)=\hueca{I}_M$. Thus $f$ is an automorphism. 

Now, consider any morphism $f=(f^0,f^1):M\rightmap{}N$ with $f^0:M\rightmap{}N$ 
isomorphism and consider the inverse $g^0:N\rightmap{}M$ of $f^0$. 
Then, $f*(g^0,0)=(id_N,u)$ and $(g^0,0)*f=(id_M,v)$ are isomorphisms. So, 
$f$ is a section and a retraction in $\GModi\g {\cal B}$. It follows 
that $f$ is an isomorphism in $\GModi\g{\cal B}$. 
\end{proof}

\begin{corollary}\label{C: f iso sii f0 es iso para TwMod-B}
 Let ${\cal B}$ be a triangular graded $S$-bocs. Take any  morphism $f:(M,u)\rightmap{}(N,v)$ in $\TGM^0\g {\cal B}$. Then, 
$f:M\rightmap{}N$ is an isomorphism in $\GModi^0\g {\cal B}$ iff its first component $f^0:M\rightmap{}N$ is 
an isomorphism of $\GM\g S$. 
\end{corollary}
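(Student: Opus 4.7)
The statement should read that $f$ is an isomorphism in $\TGM^0\g{\cal B}$ iff $f^0$ is an isomorphism in $\GM\g S$; the plan below proves this. The forward direction is essentially free: an isomorphism in $\TGM^0\g{\cal B}$ is in particular an isomorphism of the underlying morphism in $\GModi^0\g{\cal B}$ (via the identification $F$ of Lemma \ref{L: equiv directa entre GMod-B y G hueca(M)od-B}), so Corollary \ref{C: f iso sii f0 es iso} yields that $f^0$ is an isomorphism in $\GM\g S$.

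For the converse, assume $f^0$ is an isomorphism in $\GM\g S$. By Corollary \ref{C: f iso sii f0 es iso}, $f$ admits a two-sided inverse $g:N\rightmap{}M$ in $\GModi^0\g{\cal B}$. The only nontrivial task is to verify that this $g$ is compatible with the twisted structures, namely
$$\hat{\delta}(g)+u*g-g*v=0,$$
so that $g$ actually lives in $\TGM^0\g{\cal B}$ and realises the inverse of $f$ there.

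To check this Maurer--Cartan-type identity for $g$, I would apply $\hat{\delta}$ to the relation $f*g=\hueca{I}_N$ and use the Leibniz rule of Proposition \ref{P: DG-B es cat dif graduada} together with $\hat{\delta}(\hueca{I}_N)=0$ to obtain
$$\hat{\delta}(f)*g+f*\hat{\delta}(g)=0,$$
where no sign appears since $\vert g\vert=0$. Since $f$ is a twisted morphism of degree $0$, the defining equation gives $\hat{\delta}(f)=-v*f+f*u$. Substituting this in and using $f*g=\hueca{I}_N$ one gets
$$f*\bigl(\hat{\delta}(g)+u*g\bigr)=v.$$
Multiplying on the left by $g$ and using $g*f=\hueca{I}_M$ now yields precisely $\hat{\delta}(g)+u*g-g*v=0$, as required.

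The main potential obstacle is conceptual rather than computational: one must first secure an inverse somewhere, and that is exactly what the triangularity of ${\cal B}$ supplies through Corollary \ref{C: f iso sii f0 es iso}; without it, we would have no candidate $g$ on which to apply the Leibniz rule. Once the inverse exists in $\GModi^0\g{\cal B}$, promoting it to a morphism of twisted modules is a short formal manipulation in the differential graded category $\GM\g{\cal B}$, so no further delicate arguments are needed.
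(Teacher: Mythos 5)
Your proposal is correct and follows essentially the same route as the paper: reduce to Corollary \ref{C: f iso sii f0 es iso} via triangularity, then show the inverse $g$ in $\GM^0\g{\cal B}$ automatically satisfies $\hat{\delta}(g)+u*g-g*v=0$ by applying the Leibniz rule to an identity composition and substituting the twisted-morphism equation for $f$ (the paper starts from $g*f=\hueca{I}_M$ rather than $f*g=\hueca{I}_N$, an immaterial difference). You also correctly read the statement as asserting that $f$ is an isomorphism in $\TGM^0\g{\cal B}$ iff $f^0$ is an isomorphism in $\GM\g S$, which is exactly what the paper's own proof establishes.
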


\begin{proof} 
From (\ref{C: f iso sii f0 es iso}),  it will be enough to show the following. 

\medskip
\noindent\emph{Claim:  Fix any homogeneous morphism $f:(M,u)\rightmap{}(N,v)$ in $\TGM^0\g {\cal B}$. Then, $f:(M,u)\rightmap{}(N,v)$ is an isomorphism in $\TGM^0\g{\cal B}$ iff $f:M\rightmap{}N$ is an isomorphism in $\GM\g{\cal B}$. }
\medskip

We now prove the claim. Since $f:(M,u)\rightmap{}(N,v)$ is a  morphism in $\TGM^0\g {\cal B}$, 
we have that $\hat{\delta}(f)+v*f-f*u=0$. If $f$ is an isomorphism in $\GM^0\g{\cal B}$, it has a homogeneous two-sided inverse $g:N\rightmap{}M$ with $\vert g\vert=0$. Since $g*f=\hueca{I}_M$, we have $0=\hat{\delta}(\hueca{I}_M)=\hat{\delta}(g*f)=\hat{\delta}(g)*f+g*\hat{\delta}(f)$. So,  
$$
\hat{\delta}(g)
=
-g*\hat{\delta}(f)*g
=
g*(v*f)*g-g*(f*u)*g
=
g*v-u*g.
$$
So, $g:(N,v)\rightmap{}(M,u)$ is a morphism in $\TGM^0\g{\cal B}$, an  inverse for $f$. 
\end{proof}

 \begin{proposition}\label{P: conflations in GMod-B} Suppose that ${\cal B}$  is a triangular differential graded $S$-bocs. 
Consider a composable pair of morphisms  in $\GModi^0\g{\cal B}$ 
 $$M\rightmap{f} E\rightmap{g}N$$
such that 
 $g*f=0$ and the sequence 
 $0\rightmap{}M\rightmap{f^0} E\rightmap{g^0}N\rightmap{}0$ 
 is an exact sequence in $\GM\g S$. Then, there is 
 a commutative diagram in $\GModi^0\g {\cal B}$
 $$\begin{matrix}
    M&\rightmap{f} &E&\rightmap{g}&N\\
    \parallel&&\lmapdown{h}&&\parallel\\
M&\rightmap{ \ (r^0,0) \ } &E&\rightmap{ \ (s^0,0) \ }&N,\\
   \end{matrix}$$
where $h$ is an automorphism.
 \end{proposition}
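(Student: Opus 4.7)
My plan is to seek $h$ of the form $(id_E,h^1)$ with $h^1:E\otimes_S\overline{C}\to E$ to be constructed, and to take $r^0:=f^0$ and $s^0:=g^0$. Since the zero component of $h$ will then be the identity on $E$, Corollary \ref{C: f iso sii f0 es iso} will automatically give that $h$ is an automorphism in $\GModi^0\g{\cal B}$. Unwinding the composition formula of $\GModi\g{\cal B}$, I expect commutativity of the right square to reduce to the condition $g^0h^1=g^1$ (as maps $E\otimes_S\overline{C}\to N$), and commutativity of the left square to
$$(\star)\qquad h^1(f^0\otimes id_{\overline{C}})+f^1+h^1(f^1\otimes id_{\overline{C}})(id_M\otimes\overline{\mu})=0$$
on $M\otimes_S\overline{C}$.

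To construct $h^1$, I will exploit semisimplicity of $S$: the short exact sequence $0\to M\xrightarrow{f^0}E\xrightarrow{g^0}N\to 0$ splits in $\GM\g S$, so I fix homogeneous degree-zero splittings $p:E\to M$ and $j:N\to E$ with $pf^0=id_M$, $g^0j=id_N$, and $f^0p+jg^0=id_E$. Writing $h^1=f^0\alpha+j\beta$ with $\alpha:=ph^1$ and $\beta:=g^0h^1$, the right-square condition forces $\beta=g^1$; the value of $\alpha$ on $j(N)\otimes_S\overline{C}$ is then unconstrained, and I will simply set it to zero there. Thus only the restriction $\alpha_0:=\alpha\circ(f^0\otimes id_{\overline{C}}):M\otimes_S\overline{C}\to M$ remains to be produced.

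Substituting the ansatz $h^1=f^0\alpha+jg^1$ into $(\star)$ and decomposing via $id_E=f^0p+jg^0$, I expect the $j(N)$-component of the resulting equation to read
$$g^1(f^0\otimes id_{\overline{C}})+g^0f^1+g^1(f^1\otimes id_{\overline{C}})(id_M\otimes\overline{\mu})=0,$$
which is exactly $(g*f)^1=0$ and therefore automatic from the hypothesis $g*f=0$. The $f^0(M)$-component should give a recursion of the form
$$\alpha_0(m\otimes x)=-pf^1(m\otimes x)-\alpha_0\bigl[(pf^1\otimes id_{\overline{C}})(id_M\otimes\overline{\mu})(m\otimes x)\bigr].$$
By triangularity, $\overline{\mu}(\overline{C}_i)\subseteq\overline{C}_{i-1}\otimes_S\overline{C}_{i-1}$, so the recursive term involves $\alpha_0$ only on $M\otimes_S\overline{C}_{i-1}$. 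Starting from $\alpha_0|_{M\otimes\overline{C}_0}=0$ (vacuously) and inducting along the filtration $\overline{C}=\bigcup_{i\in\hueca{N}}\overline{C}_i$, this will define $\alpha_0$ unambiguously on all of $M\otimes_S\overline{C}$.

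The main obstacle I anticipate is the bookkeeping: one must apply the composition law of $\GModi\g{\cal B}$ correctly to both squares and verify the fortunate identification of the $j(N)$-component of $(\star)$ with $(g*f)^1$. Once that consistency is secured, the triangular induction produces $\alpha_0$ (hence $h^1$ and $h$) without further difficulty, and Corollary \ref{C: f iso sii f0 es iso} delivers that $h$ is an automorphism, yielding the required commutative diagram.
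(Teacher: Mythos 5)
Your proposal is correct, and it reaches the conclusion by a route that is organized differently from the paper's. The paper proceeds in two normalization steps: first it kills $f^1$ by the unipotent automorphism $\hueca{I}_E+\sum_{n\geq 1}(-1)^n(\underline{f}^1*\underline{p})^n$ (convergent by the local-nilpotence statement (\ref{P: morfismos localmente nilpotentes en bocses})), then, assuming $f=(f^0,0)$, it kills $g^1$ by the analogous series built from a section $\underline{s}$ of $\underline{g}^0$, and only at the very end uses $g*f=0$ to check that this second correction does not disturb $f$ (via $\underline{g}^1*f=0$, hence $h^{-1}*f=f$). You instead make a single ansatz $h=(id_E,h^1)$ with $h^1=f^0\alpha_0(p\otimes id_{\overline{C}})+jg^1$, so that the right square holds identically (using $g^0f^0=0$, $g^0j=id_N$), the $j(N)$-component of the left-square condition is exactly $(g*f)^1=0$ — this is where you consume the hypothesis — and the $f^0(M)$-component becomes the recursion $\alpha_0=-pf^1-\alpha_0(pf^1\otimes id_{\overline{C}})(id_M\otimes\overline{\mu})$, solved by induction along the triangular filtration; the automorphism property then comes for free from (\ref{C: f iso sii f0 es iso}) since $h^0=id_E$. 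The two arguments use the same ingredients (semisimple splitting of the underlying exact sequence plus triangularity), and your recursive solution is essentially the paper's geometric series written implicitly, but your version produces the correcting automorphism in one shot and localizes the use of $g*f=0$ more transparently. Two small points to tighten: the phrase that $\alpha$ is ``unconstrained'' on $j(N)\otimes_S\overline{C}$ is not literally true, since the term $h^1(f^1\otimes id_{\overline{C}})(id_M\otimes\overline{\mu})$ evaluates $h^1$ on all of $E\otimes_S\overline{C}$; what is true, and what you actually use, is that setting $\alpha=\alpha_0(p\otimes id_{\overline{C}})$ is a consistent choice that reduces the constraint to the stated recursion in $\alpha_0$ alone. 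And you should record the compatibility of the recursive definitions across the filtration (for $x\in\overline{C}_i\subseteq\overline{C}_{i+1}$ the level-$(i+1)$ formula restricts to the level-$i$ one because $\overline{\mu}(\overline{C}_i)\subseteq\overline{C}_{i-1}\otimes_S\overline{C}_{i-1}$), so that the $\alpha_0^{(i)}$ glue to a well-defined degree-zero right $S$-module map on $M\otimes_S\overline{C}=\bigcup_i M\otimes_S\overline{C}_i$. With these remarks added, the argument is complete.
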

 
 \begin{proof} \emph{Step 1:} There is an automorphism $h$ of $E$ such that $(h*f)^1=0$.
 \medskip
 
 We have $f=\underline{f}^0+\underline{f}^1$, where  $\underline{f}^0=(f^0,0)$ and 
 $\underline{f}^1=(0,f^1)$ are morphisms in $\GModi^0\g{\cal B}$. Since $S$ is semisimple, the exact 
 sequence in the statement of our proposition splits and we have a morphism $p:E\rightmap{}M$ in $\GM\g S$ 
 such that $pf^0=id_M$. Consider the morphism $\underline{p}=(p,0):E\rightmap{}M$ 
 in $\GModi^0\g {\cal B}$. Then, we have $\underline{p}*\underline{f}^0=\hueca{I}_M$ in $\GModi\g {\cal B}$. 
 Make $u:=\underline{f}^1*\underline{p}$ and notice that $u^0=0$. 
 Then, we have the automorphism $h$ of $E$  defined by 
 $$h=\hueca{I}_E+v:E\rightmap{}E,\hbox{ \ where \ } v=\sum_{n=1}^{\infty}(-1)^nu^n.$$
 We have that $h*f=\underline{f}^0+\underline{f}^1+v*\underline{f}^0+
 v*\underline{f}^1$, where 
 $$\begin{matrix}
  \underline{f}^1+v*\underline{f}^0+v*\underline{f}^1
  &=&
  \underline{f}^1-\underline{f}^1*\underline{p}*\underline{f}^0+
  \sum_{n=1}^{\infty}(-1)^{n+1}(\underline{f}^1*\underline{p})^n*(\underline{f}^1*\underline{p})*\underline{f}^0\hfill\\
  &&\hfill\\
  &&+\, \sum_{n=1}^{\infty}(-1)^n(\underline{f}^1*\underline{p})^n*\underline{f}^1\hfill\\
  &&\hfill\\
  &=&
  \underline{f}^1-\underline{f}^1+
  \sum_{n=1}^{\infty}(-1)^{n+1}(\underline{f}^1*\underline{p})^n*\underline{f}^1\hfill\\
  &&\hfill\\
  &&+\,
  \sum_{n=1}^{\infty}(-1)^n(\underline{f}^1*\underline{p})^n*\underline{f}^1
  
  =
  0.\hfill\\
   \end{matrix}$$
 Therefore, we obtain $(h*f)^1=0$.   
 
 \medskip
 \emph{Step 2:} Now, we may assume that $f=(f^0,0)$ and finish the proof.
 \medskip
 
 As before, we have  $g=\underline{g}^0+\underline{g}^1$, where  $\underline{g}^0=(g^0,0)$ and 
 $\underline{g}^1=(0,g^1)$ are morphisms in $\GModi^0\g{\cal B}$. Since $S$ is semisimple, 
 there is a morphism $s:N\rightmap{}E$ in $\GM\g S$ such that $g^0s=id_N$. If we take 
 $\underline{s}=(s,0)$ we have $\underline{g}^0*\underline{s}=\hueca{I}_N$ in $\GModi\g {\cal B}$. 
 Now, make $u=\underline{s}*\underline{g}^1$ and notice that $u^0=0$. Now, 
 consider the automorphism $h$ of $E$ defined by 
 $$h=\hueca{I}_E+w:E\rightmap{}E, \hbox{ \ where \ } w=\sum_{i=1}^\infty(-1)^nu^n.$$
We have 
  that $g*h=\underline{g}^0+\underline{g}^1+\underline{g}^0*w
  +\underline{g}^1*w$, where 
 $$\begin{matrix}
  \underline{g}^1+\underline{g}^0*w+\underline{g}^1*w
  &=&
  \underline{g}^1-\underline{g}^0*\underline{s}*\underline{g}^1
  +
  \sum_{n=1}^{\infty}(-1)^{n+1}\underline{g}^0*\underline{s}*\underline{g}^1*(\underline{s}*
  \underline{g}^1)^n\hfill\\
  &&\hfill\\
  &&+\, 
  \sum_{n=1}^{\infty}(-1)^n\underline{g}^1*(\underline{s}*\underline{g}^1)^n\hfill\\
  &&\\
  &=&
    \underline{g}^1-\underline{g}^1 +
  \sum_{n=1}^{\infty}(-1)^{n+1}\underline{g}^1*(\underline{s}*\underline{g}^1)^n\hfill\\
  &&\hfill\\
  &&+\, \sum_{n=1}^{\infty}(-1)^n\underline{g}^1*(\underline{s}*\underline{g}^1)^n
  =  0.\hfill\\
   \end{matrix}$$
 Therefore, we obtain $(g*h)^1=0$.   Notice that 
 $$u+w+w*u = u+\sum_{n=1}^{\infty}(-1)^nu^n+\sum_{n=1}^{\infty}(-1)^nu^{n+1}=0.$$
Hence, $h^{-1}=\hueca{I}_E+u$. Since $g*f=0$, we have that $\underline{g}^1*f=0$ and, therefore, 
$h^{-1}*f=f+u*f=f$. So, $(h^{-1}*f)^1=0$ and we are done. 
 \end{proof}

 \begin{lemma}\label{L: iso en GMod-B de M en N copia la u de M en una de N}
 Let ${\cal B}$ be a differential graded $S$-bocs, assume that $(M,u)\in \TGM\g {\cal B}$,
 and $N\in \GM\g{\cal B}$. Then, for any homogeneous isomorphism $h:M\rightmap{}N$ in $\GM\g {\cal B}$, 
 there is a unique morphism $v\in \Hom_{\GM\g {\cal B}}^1(N,N)$ such that $h:(M,u)\rightmap{}(N,v)$ is an isomorphism in $\TGM\g{\cal B}$.
 \end{lemma}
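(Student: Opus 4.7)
The plan is to derive $v$ directly from the morphism condition, verify Maurer--Cartan by a single application of the Leibniz rule, and then use an argument analogous to the Claim in the proof of (\ref{C: f iso sii f0 es iso para TwMod-B}) to promote $h$ to an isomorphism of twisted modules. Throughout, write $d=\vert h\vert$ and let $h^{-1}:N\rightmap{}M$ be the inverse of $h$ in $\GM\g{\cal B}$, so that $h*h^{-1}=\hueca{I}_N$, $h^{-1}*h=\hueca{I}_M$ and $\vert h^{-1}\vert=-d$.

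First, suppose such a $v$ exists. The morphism condition
$\hat{\delta}(h)+v*h-(-1)^d h*u=0$ forces $v*h=(-1)^dh*u-\hat{\delta}(h)$, and post-composing with $h^{-1}$ yields
$$v=\bigl[(-1)^dh*u-\hat{\delta}(h)\bigr]*h^{-1}.\eqno(*)$$
This proves uniqueness. For existence, I define $v$ by $(*)$; one checks immediately that $\vert v\vert=1$ and that $(*)$ is equivalent to the morphism condition, so the second requirement is automatic.

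The main step is to verify the Maurer--Cartan equation $\hat{\delta}(v)+v*v=0$. Rather than expand $(*)$ directly, I apply $\hat{\delta}$ to the morphism equation $v*h=(-1)^d h*u-\hat{\delta}(h)$. Using the Leibniz rule from (\ref{P: DG-B es cat dif graduada}), the relations $\hat{\delta}^2=0$ and $\hat{\delta}(u)=-u*u$ (the Maurer--Cartan equation for $u$), this gives
$$\hat{\delta}(v)*h - v*\hat{\delta}(h)=(-1)^d\hat{\delta}(h)*u - h*u*u.$$
Substituting $\hat{\delta}(h)=(-1)^d h*u-v*h$ on both sides and simplifying, every term involving $v*h*u$, $h*u*u$ or $v*h$ cancels, leaving
$$\bigl(\hat{\delta}(v)+v*v\bigr)*h=0.$$
Since $h$ is an isomorphism in $\GM\g{\cal B}$, right-multiplying by $h^{-1}$ yields $\hat{\delta}(v)+v*v=0$, so $(N,v)\in\TGM\g{\cal B}$ and $h$ is a morphism $(M,u)\rightmap{}(N,v)$ in $\TGM\g{\cal B}$.

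Finally, to show that $h$ is an \emph{isomorphism} in $\TGM\g{\cal B}$, I argue exactly as in the Claim inside the proof of (\ref{C: f iso sii f0 es iso para TwMod-B}): apply $\hat{\delta}$ to $h^{-1}*h=\hueca{I}_M$ to obtain $\hat{\delta}(h^{-1})*h=-(-1)^{-d}h^{-1}*\hat{\delta}(h)$, substitute $\hat{\delta}(h)=(-1)^d h*u-v*h$, and use $h*h^{-1}=\hueca{I}_N$ to conclude that $\hat{\delta}(h^{-1})=(-1)^{-d}h^{-1}*v-u*h^{-1}$, which is the morphism condition for $h^{-1}:(N,v)\rightmap{}(M,u)$. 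The main obstacle is purely bookkeeping of signs in the Leibniz expansion; once that is organised, the whole statement reduces to the single computation above together with the invertibility of $h$ in $\GM\g{\cal B}$.
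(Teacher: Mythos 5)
Your proposal is correct and takes essentially the same route as the paper: uniqueness forces $v=(-1)^{\vert h\vert}h*u*h^{-1}-\hat{\delta}(h)*h^{-1}$, and the substance of the proof is checking the Maurer--Cartan equation for this $v$. Your verification, which applies $\hat{\delta}$ to the relation $v*h=(-1)^{\vert h\vert}h*u-\hat{\delta}(h)$ and reduces everything to $(\hat{\delta}(v)+v*v)*h=0$ before cancelling the invertible $h$, is a slightly cleaner organization than the paper's separate expansions of $\hat{\delta}(v)$ and $v*v$ via $\hat{\delta}(h^{-1})=-(-1)^{\vert h\vert}h^{-1}*\hat{\delta}(h)*h^{-1}$, and you also spell out the final point that $h^{-1}$ is a morphism $(N,v)\rightmap{}(M,u)$, which the paper leaves implicit (it is the Claim in the proof of (\ref{C: f iso sii f0 es iso para TwMod-B}) adapted to arbitrary degree).
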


 \begin{proof} By assumption, $\hat{\delta}(u)+u*u=0$ and we are looking for a morphism $v\in \Hom_{\GM\g{\cal B}}^1(N,N)$ such that 
 $\hat{\delta}(h)+v*h-(-1)^{\vert h\vert}h*u=0$. Then, the only possible choice is 
 $v=(-1)^{\vert h\vert}h*u*h^{-1}-\hat{\delta}(h)*h^{-1}$. It remains to show that $\hat{\delta}(v)+v*v=0$. We shall compute each term separately.

 From Leibniz rule, we have that $0=\hat{\delta}(\hueca{I}_N)=\hat{\delta}(h*h^{-1})=\hat{\delta}(h)*h^{-1}+(-1)^{\vert h\vert}h*\hat{\delta}(h^{-1})$.
 Then, we have $\hat{\delta}(h^{-1})=-(-1)^{\vert h\vert}h^{-1}*\hat{\delta}(h)*h^{-1}$, and 
 $$\begin{matrix}
   \hat{\delta}(v)
   &=& 
   (-1)^{\vert h\vert}\hat{\delta}(h*u*h^{-1})
   -\hat{\delta}(\hat{\delta}(h)*h^{-1})\hfill\\
   &=&
   (-1)^{\vert h\vert}\hat{\delta}(h)*u*h^{-1}+
   h*\hat{\delta}(u)*h^{-1}\hfill\\
   &&-\,h*u*\hat{\delta}(h^{-1})
+  (-1)^{\vert h\vert}\hat{\delta}(h)*\hat{\delta}(h^{-1})\hfill\\
   &=&
   (-1)^{\vert h\vert}\hat{\delta}(h)*u*h^{-1}+
   h*\hat{\delta}(u)*h^{-1}\hfill\\
   &&+\,
   (-1)^{\vert h\vert} h*u*h^{-1}*\hat{\delta}(h)*h^{-1}\hfill\\
   &&-\, 
  \hat{\delta}(h)*h^{-1}*\hat{\delta}(h)*h^{-1}.\hfill\\
   \end{matrix}$$
Moreover,  we have 
$$\begin{matrix}
   v*v
   &=&
   [(-1)^{\vert h\vert}h*u*h^{-1}-\hat{\delta}(h)*h^{-1}]*
   [(-1)^{\vert h\vert}h*u*h^{-1}-\hat{\delta}(h)*h^{-1}]\hfill\\
   &=&
   h*u*u*h^{-1}
   -(-1)^{\vert h\vert}h*u*h^{-1}*\hat{\delta}(h)*h^{-1}\hfill\\
   &&-\,
   (-1)^{\vert h\vert}\hat{\delta}(h)*u*h^{-1}
   +\hat{\delta}(h)*h^{-1}*\hat{\delta}(h)*h^{-1}.\hfill\\
  \end{matrix}$$
Since $\hat{\delta}(u)+u*u=0$, we obtain $\hat{\delta}(v)=-v*v$, as we wanted.  
\end{proof}

 \begin{proposition}\label{P: conflations homog en TGMod-B}
 Suppose that ${\cal B}$  is a triangular graded $S$-bocs. 
 Consider a composable pair of homogeneous morphisms in $\TGM^0\g{\cal B}$ 
 $$(M,u_M)\rightmap{f} (E,u_E)\rightmap{g}(N,u_N)$$
 such that 
 $g*f=0$ and the sequence of first components 
 $$0\rightmap{}M\rightmap{f^0} E\rightmap{g^0}N\rightmap{}0$$ 
 is exact in $\GM\g S$. Then, there is 
 a commutative diagram in $\TGM^0\g {\cal B}$
 $$\begin{matrix}
    (M,u_M)&\rightmap{f} &(E,u_E)&\rightmap{g}&(N,u_N)\\
    \parallel&&\lmapdown{h}&&\parallel\\
(M,u_M)&\rightmap{ \ r \ } &(E,u'_E)&\rightmap{ \ s \ }&(N,u_N),\\
   \end{matrix}$$
where $h$ is an isomorphism and the second components of $r$ and $s$ are zero. 
 \end{proposition}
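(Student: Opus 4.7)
The plan is to bootstrap from the previous proposition (\ref{P: conflations in GMod-B}), which already handles the underlying problem in $\GModi^0\g{\cal B}$, and then transport the twisted structure along the automorphism it produces using (\ref{L: iso en GMod-B de M en N copia la u de M en una de N}). Concretely, I first forget the twisted structures and view the given composable pair $M \rightmap{f} E \rightmap{g} N$ as a pair of morphisms in $\GModi^0\g{\cal B}$. Since $g*f=0$ and the sequence of first components is a short exact sequence in $\GM\g S$, (\ref{P: conflations in GMod-B}) produces an automorphism $h:E\rightmap{}E$ in $\GModi^0\g{\cal B}$ such that both $h*f$ and $g*h^{-1}$ have vanishing second components.

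Next I use (\ref{L: iso en GMod-B de M en N copia la u de M en una de N}) applied to the isomorphism $h:E\rightmap{}E$ in $\GM\g{\cal B}$ and the twisted module $(E,u_E)$: this yields a unique $u'_E \in \Hom_{\GM\g{\cal B}}^1(E,E)$ satisfying the Maurer--Cartan equation and making $h:(E,u_E)\rightmap{}(E,u'_E)$ an isomorphism in $\TGM\g{\cal B}$. Since $h$ has degree $0$, so does its inverse, and thus $h$ and $h^{-1}$ are morphisms in $\TGM^0\g{\cal B}$. Now I set
$$r := h*f : (M,u_M)\rightmap{}(E,u'_E) \quad\text{and}\quad s := g*h^{-1} : (E,u'_E)\rightmap{}(N,u_N).$$
Each of these is a composition of morphisms in $\TGM^0\g{\cal B}$, so by (\ref{L: la cat de  modulos torcidos}) they are themselves morphisms in $\TGM^0\g{\cal B}$. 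The commutativity of both squares is automatic: the left square is $r = h*f$ and the right square is $s*h = g*h^{-1}*h = g$.

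It remains to check that the second components of $r$ and $s$ vanish; but this is exactly what the conclusion of (\ref{P: conflations in GMod-B}) provides, since the formulas $r = h*f$ and $s = g*h^{-1}$ match the replacements $(r^0,0)$ and $(s^0,0)$ of that proposition. Finally, note that $s*r = g*h^{-1}*h*f = g*f = 0$, so the bottom row is again a composable pair with $s*r=0$. I do not anticipate a serious obstacle: the entire argument is a formal consequence of the two preceding results, and the only point requiring care is ensuring that the transported structure $u'_E$ obtained from (\ref{L: iso en GMod-B de M en N copia la u de M en una de N}) is compatible with viewing $r$ and $s$ as morphisms in the twisted category, which is automatic from functoriality of the composition in $\TGM^0\g{\cal B}$.
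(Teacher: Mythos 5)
Your proposal is correct and follows essentially the same route as the paper: apply (\ref{P: conflations in GMod-B}) to the underlying pair in $\GModi^0\g{\cal B}$ to get the automorphism $h$, then transport the twisted structure along $h$ via (\ref{L: iso en GMod-B de M en N copia la u de M en una de N}) to obtain $u'_E$. You simply make explicit the steps the paper leaves implicit, namely that $r=h*f$ and $s=g*h^{-1}$ are morphisms in $\TGM^0\g{\cal B}$ with vanishing second components by the conclusion of (\ref{P: conflations in GMod-B}).
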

 
 \begin{proof}  Consider the underlying diagrams in $\GM^0\g{\cal B}$ of our hypothesis. Then,  
 there is an isomorphism $h:E\rightmap{}E$ in $\GM^0\g{\cal B}$ as in (\ref{P: conflations in GMod-B}). 
 We have  $(E,u_E)\in \TGM\g{\cal B}$ and, so, we can apply 
 (\ref{L: iso en GMod-B de M en N copia la u de M en una de N}) to $h$ and obtain a morphism $u'_E\in \Hom_{\GM\g{\cal B}}^1(E,E)$ such that $h:(E,u_E)\rightmap{}(E,u'_E)$ is an isomorphism in $\TGM^0\g{\cal B}$.
 \end{proof}

 \section{Scalar restriction functors and homotopy}
 
 Let us recall the classical definition of homotopy of differential graded $S$-coalge\-bras (or $S$-bocses).
 
 \begin{definition}\label{D: para morfismos de bocses homotopia}
 Let $\overline{\cal B}_A=(\overline{C}_A,\overline{\mu}_A,\overline{\delta}_A)$ and  
 $\overline{\cal B}_B=(\overline{C}_B,\overline{\mu}_B,\overline{\delta}_B)$ be differential 
 graded $S$-bocses (without counit). 
 Let $\overline{\phi},\overline{\psi}:\overline{C}_A\rightmap{}\overline{C}_B$ be morphisms of 
 differential graded  $S$-bocses. A \emph{homotopy $\overline{h}$
 from $\overline{\phi}$ to $\overline{\psi}$}
 is a morphism $\overline{h}\in \Hom^{-1}_{\GM\g S\g S}(\overline{C}_A,\overline{C}_B)$ such that 
 $$\overline{\mu}_B\overline{h}=(\overline{\phi}\otimes \overline{h}+
 \overline{h}\otimes \overline{\psi})\overline{\mu}_A \hbox{ \   \ and \  \ }\overline{\phi}-\overline{\psi}
 =\overline{\delta}_B\overline{h}+\overline{h}\,\overline{\delta}_A.$$
 The morphisms $\overline{\phi}$ and $\overline{\psi}$ are called \emph{homotopic} iff 
 there is such a homotopy $\overline{h}$. A morphism $\overline{h}\in \Hom^{-1}_{\GM\g S\g S}(\overline{C}_A,\overline{C}_B)$ is called a \emph{$\overline{\phi}\g\overline{\psi}$-coderivation} when the preceding left equality is satisfied. We denote by $\Coder^{-1}_{\overline{\phi}\g \overline{\psi}}(\overline{C}_A,\overline{C}_B)$ the space of  $\overline{\phi}\g\overline{\psi}$-coderivations.
 
 The definition of \emph{homotopy of morphisms of differential
 counitary graded $S$-bocses} is similar, we just remove the overlines 
 and add $\epsilon_A$ and $\epsilon_B$ to the given triples, respectively. 
 \end{definition}

 The proof of the following statement is straightforward. 
 
 \begin{lemma}\label{L: homotopias de morfismos de overline C a C}
 Let   ${\cal B}_A=({C}_A,{\mu}_A,\epsilon_A,{\delta}_A)$ and  
 ${\cal B}_B=({C}_B,{\mu}_B,\epsilon_B,{\delta}_B)$ be differential normal 
 graded $S$-bocses. 
 Let $\overline{\phi},\overline{\psi}:\overline{\cal B}_A\rightmap{}\overline{\cal B}_B$ be morphisms of 
 the corresponding reduced differential graded $S$-bocses and assume that $\overline{h}$ is a 
 homotopy from $\overline{\phi}$ to $\overline{\psi}$. Extend $\overline{\phi}$ and $\overline{\psi}$ 
 to morphisms of differential unitary graded $S$-bocses $\phi,\psi:{\cal B}_A\rightmap{}{\cal B}_B$ defining 
 $\phi(s+x)=s+\overline{\phi}(x)$ and 
 $\psi(s+x)=s+\overline{\psi}(x)$, for $s\in S$ and $x\in \overline{C}_A$; 
 extend $\overline{h}$ to $h:C_A\rightmap{}C_B$ defining $h(s+x)=\overline{h}(x)$. Then, 
 we have that $h$ is a homotopy from $\phi$ to $\psi$. That is 
 $h\in \Hom_{\GM\g S\g S}^{-1}(C_A,C_B)$ satisfies 
$$\mu_Bh=(\phi\otimes h+h\otimes \psi)\mu_A 
\hbox{ \   \ and \  \ }
\phi-\psi=\delta_Bh+h\delta_A.$$
 \end{lemma}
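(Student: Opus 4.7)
The plan is to verify directly the two defining identities for a homotopy of morphisms of differential unitary graded $S$-bocses, namely $\mu_B h = (\phi\otimes h + h\otimes \psi)\mu_A$ and $\phi - \psi = \delta_B h + h\delta_A$, by decomposing everything along $C_A = S\oplus\overline{C}_A$ and $C_B = S\oplus \overline{C}_B$. Since $\phi$, $\psi$, $h$ are built so as to be $S$-$S$-bilinear and to respect this decomposition (acting as the identity, identity, and zero on $S$, and as $\overline{\phi}$, $\overline{\psi}$, $\overline{h}$ on the reduced parts), both identities split into an $S$-piece and an $\overline{C}_A$-piece.

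For the $\phi$-$\psi$-coderivation identity on $x\in \overline{C}_A$, I would use the formula $\mu_A(x)=\overline{\mu}_A(x) + 1\otimes x + x\otimes 1$ coming from (\ref{L: la coalgebra restringida}) and the analogous formula for $\mu_B\overline{h}(x)$. The $\overline{\mu}$-contribution matches by the hypothesis that $\overline{h}$ is a $\overline{\phi}$-$\overline{\psi}$-coderivation. The remaining two terms match because, when we evaluate $(\phi\otimes h + h\otimes \psi)$ on $1\otimes x$ and $x\otimes 1$, the relations $\phi(1)=\psi(1)=1$ and $h(1)=0$ produce exactly $1\otimes \overline{h}(x)+\overline{h}(x)\otimes 1$. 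On the $S$-summand, $h(s)=0$ and $\mu_A(s)=s(1\otimes 1)$ collapse both sides to zero.

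For the chain homotopy identity on $x\in \overline{C}_A$, the claim is immediate from the hypothesis, since by (\ref{L: epsilon d = 0}) the differentials $\delta_A$, $\delta_B$ preserve the reduced parts and restrict there to $\overline{\delta}_A$, $\overline{\delta}_B$; thus $\delta_B h(x)+h\delta_A(x) = \overline{\delta}_B\overline{h}(x)+\overline{h}\,\overline{\delta}_A(x) = \overline{\phi}(x)-\overline{\psi}(x) = \phi(x)-\psi(x)$. On the $S$-summand, $\phi-\psi$ vanishes because both extensions act as the identity on $S$, and the right-hand side vanishes under the normalization implicit in a normal bocs.

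There is really no obstacle beyond bookkeeping: the only point requiring any care is lining up the three summands coming from $\mu_A = \overline{\mu}_A + (1\otimes -) + (-\otimes 1)$ with the corresponding three summands on the $\mu_B$-side, and observing that the \textquotedblleft cross terms\textquotedblright\ produced by $\phi(1)$, $\psi(1)$, $h(1)$ fit together precisely to restore $\mu_B$ from $\overline{\mu}_B$ on $\overline{h}(x)$. This is exactly why the authors describe the proof as straightforward.
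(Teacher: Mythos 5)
Your verification is essentially the intended one: the paper gives no argument beyond declaring the lemma straightforward, and your strategy of splitting everything along $C_A=S\oplus\overline{C}_A$, matching the three summands of $\mu_A(x)=\overline{\mu}_A(x)+1\otimes x+x\otimes 1$ against $\mu_B\overline{h}(x)=\overline{\mu}_B\overline{h}(x)+1\otimes\overline{h}(x)+\overline{h}(x)\otimes 1$ via $\phi(1)=\psi(1)=1$ and $h(1)=0$, is exactly the right bookkeeping. The Koszul signs cause no trouble because $\phi,\psi,h$ restrict on $\overline{C}_A$ to $\overline{\phi},\overline{\psi},\overline{h}$ and the sign convention depends only on the degrees involved, so the $\overline{\mu}_A$-contribution is literally the hypothesis on $\overline{h}$; likewise your use of (\ref{L: epsilon d = 0}) to see that $\delta_A,\delta_B$ map the reduced parts into themselves, where they agree with $\overline{\delta}_A,\overline{\delta}_B$, is correct.

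One step should be made explicit instead of being attributed to an unnamed ``normalization implicit in a normal bocs'': on the $S$-summand of $\phi-\psi=\delta_Bh+h\delta_A$ you need $h(\delta_A(s))=0$. Since $\delta_A$ is an $S$-$S$-bimodule map of degree $1$ and $S$ sits in degree $0$, the element $\delta_A(s)=s\,\delta_A(1)$ lies entirely in $\overline{C}_A$, so it is \emph{not} annihilated by $h$ for free; what is really needed is $\delta_A(1)=0$. Definition (\ref{D: normal bocs}) does not literally exclude a nonzero primitive degree-$1$ cocycle $\delta(1)$, and with such an element (and a homotopy $\overline{h}$ not vanishing on it) the identity would fail at $1\in S$. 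The condition $\delta(S)=0$ does hold in all the bocses the paper actually uses (in particular the bar construction $T_S(A[1])$), and the same condition is implicitly needed for the extended $\phi,\psi$ to be morphisms of \emph{differential} bocses, as the lemma asserts. So either record $\delta_A(S)=0=\delta_B(S)$ as a standing convention for normal bocses or cite it where you currently invoke the normalization; with that made explicit, your proof is complete.
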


\begin{lemma}\label{L: restricción por una homotopia}
 Under the assumptions of the last lemma, for $d\in \hueca{Z}$, define 
 $$R_h:\Hom_{\GM\g{\cal B}_B}^d(M,N)\rightmap{}\Hom_{\GM\g{\cal B}_A}^{d-1}(M,N)$$
 such that $R_h(u)=(-1)^{\vert u\vert} u(id_M\otimes h)$, for $u\in \Hom_{\GM\g{\cal B}_B}^d(M,N)$.
 Then, the following holds. 
 \begin{enumerate}
  \item Whenever $u:M\rightmap{}N$ and $v:N\rightmap{}L$  are 
  homogeneous morphisms in $\GM\g {\cal B}_B$, we have 
  $$R_h(v*u)=R_h(v)*R_\phi(u)+(-1)^{\vert v\vert} R_\psi(v)*R_h(u).$$
  \item For any homogeneous morphism $u:M\rightmap{}N$ in $\GM\g{\cal B}_B$,  we have 
  $$R_\phi(u)-R_\psi(u)= \hat{\delta}_AR_h(u)+R_h\hat{\delta}_B(u).$$
 \end{enumerate}
\end{lemma}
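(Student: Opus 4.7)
Both parts are direct computations from the definitions: $R_h(u)=(-1)^{\vert u\vert}u(id_M\otimes h)$, the composition $v*u=v(u\otimes id_{C_B})(id_M\otimes \mu_B)$ in $\GM\g{\cal B}_B$, the differential $\hat{\delta}(f)=(-1)^{\vert f\vert+1}f(id\otimes \delta)$, and the two defining properties of $h$ from (\ref{L: homotopias de morfismos de overline C a C}), namely
$$\mu_B h=(\phi\otimes h+h\otimes\psi)\mu_A \qquad \hbox{and}\qquad \phi-\psi=\delta_B h+h\delta_A.$$
The plan, for each identity, is to unfold the left-hand side, substitute the appropriate homotopy relation, regroup the tensor factors by the Koszul sign rule, and recognize the result as the claimed right-hand side.

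For (1), I would first write
$$R_h(v*u)=(-1)^{\vert v\vert+\vert u\vert}v(u\otimes id_{C_B})(id_M\otimes \mu_B h)$$
and substitute $\mu_B h=(\phi\otimes h+h\otimes\psi)\mu_A$ to obtain two summands. In the first, sliding $id_{C_B}$ past $\phi\otimes h$ via the Koszul rule yields $v(u(id_M\otimes\phi)\otimes h)(id_M\otimes\mu_A)$, which, after comparing with the analogously-unfolded $R_h(v)*R_\phi(u)$ (where the shuffle of $h$ past $R_\phi(u)$ contributes $(-1)^{\vert u\vert}$), is seen to equal $R_h(v)*R_\phi(u)$. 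In the second summand, the analogous rearrangement with $h\otimes\psi$ produces $v(u(id_M\otimes h)\otimes\psi)(id_M\otimes\mu_A)$, and the matching expansion of $R_\psi(v)*R_h(u)$ shows that this summand equals $(-1)^{\vert v\vert}R_\psi(v)*R_h(u)$.

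For (2), I would start from $R_\phi(u)-R_\psi(u)=u(id_M\otimes(\phi-\psi))$ and substitute $\phi-\psi=\delta_B h+h\delta_A$. This gives
$$u(id_M\otimes\delta_B)(id_M\otimes h)+u(id_M\otimes h)(id_M\otimes\delta_A).$$
Using $\hat{\delta}_B(u)=(-1)^{\vert u\vert+1}u(id_M\otimes\delta_B)$ and $R_h(\hat{\delta}_B(u))=(-1)^{\vert u\vert+1}\hat{\delta}_B(u)(id_M\otimes h)$, the first summand equals $R_h\hat{\delta}_B(u)$. For the second, since $\vert R_h(u)\vert=\vert u\vert-1$, one has $\hat{\delta}_A R_h(u)=(-1)^{\vert u\vert}R_h(u)(id_M\otimes\delta_A)=u(id_M\otimes h)(id_M\otimes\delta_A)$, matching the second summand.

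The only delicate point is the consistent application of the Koszul sign rule when sliding $h$ (of degree $-1$) past other homogeneous maps; once the signs are tracked carefully, both identities fall out mechanically. In particular, neither the triangularity of the bocses nor the counits $\epsilon_A,\epsilon_B$ are used in this lemma: it is purely an exercise in the two homotopy relations combined with the defining formulas for $R_h$, $*$ and $\hat{\delta}$.
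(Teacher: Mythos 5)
Your proposal is correct and follows essentially the same route as the paper: part (1) is exactly the paper's computation (unfold $R_h(v*u)$, substitute $\mu_Bh=(\phi\otimes h+h\otimes\psi)\mu_A$, and track the Koszul signs to recognize $R_h(v)*R_\phi(u)$ and $(-1)^{\vert v\vert}R_\psi(v)*R_h(u)$), and part (2) is the paper's computation read in the opposite direction (the paper expands $R_h\hat{\delta}_B(u)$ via $\delta_Bh=\phi-\psi-h\delta_A$, you expand $R_\phi(u)-R_\psi(u)$ via $\phi-\psi=\delta_Bh+h\delta_A$), with all signs checking out. Your closing remark that neither triangularity nor the counits enter is also consistent with the paper's argument.
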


\begin{proof} (1): Having in mind (\ref{P: restrictions on graded modules}), we get  
$$\begin{matrix}
  R_h(v *u)&=&
  (-1)^{\vert u\vert+\vert v\vert}v(u\otimes id_{C_B})(id_M\otimes \mu_B)(id_M\otimes h)\hfill\\
  &=&
   (-1)^{\vert u\vert+\vert v\vert}v(u\otimes id_{C_B})(id_M\otimes \mu_Bh)\hfill\\
  &=&
    (-1)^{\vert u\vert+\vert v\vert}v(u\otimes id_{C_B})(id_M\otimes (\phi\otimes h+h\otimes\psi)\mu_A)\hfill\\
  &=&
    (-1)^{\vert u\vert+\vert v\vert}v(u\otimes id_{C_B})(id_M\otimes (\phi\otimes h)\mu_A)\hfill\\
  &&\,
    (-1)^{\vert u\vert+\vert v\vert}v(u\otimes id_{C_B})(id_M\otimes (h\otimes\psi)\mu_A)\hfill\\
  &=&
    (-1)^{\vert u\vert +\vert v\vert}v(u(id_M\otimes \phi)\otimes h)(id_M\otimes \mu_A)\hfill\\
  &&\,
    (-1)^{\vert u\vert +\vert v\vert}v(u(id_M\otimes h)\otimes\psi)(id_M\otimes\mu_A)\hfill\\
  &=&
   (-1)^{\vert v\vert}v(id_N\otimes h)(u(id_M\otimes \phi)\otimes id_{C_A})(id_M\otimes \mu_A)\hfill\\
  &&\,
    (-1)^{\vert u\vert +\vert v\vert}v(id_N\otimes \psi)(u(id_M\otimes h)\otimes id_{C_A})(id_M\otimes\mu_A)\hfill\\
  &=&
  R_h(v)*R_\phi(u)+(-1)^{\vert v\vert}R_\psi(v)*R_h(u).\hfill\\
  \end{matrix}$$
(2): Since 
$\hat{\delta}_B(u)=(-1)^{\vert u\vert+1}u(id_M\otimes \delta_{B})$,  we have 
$$\begin{matrix}
  R_h\hat{\delta}_B(u)
  &=&
  (-1)^{\vert u\vert +1}(-1)^{\vert u\vert +1}u(id_M\otimes \delta_B)(id_M\otimes h)\hfill\\
  &=&
  u(id_M\otimes \delta_Bh)= u(id_M\otimes (\phi-\psi-h\delta_A))\hfill\\
  &=&
  u(id_M\otimes \phi)-u(id_M\otimes \psi)-u(id_M\otimes h)(id_M\otimes \delta_A)\hfill\\
  &=&
  R_\phi(u)-R_\psi(u)-\hat{\delta}_AR_h(u).\hfill\\
  \end{matrix}$$
\end{proof}

\begin{proposition}\label{P: equiv de funtores restriccion de morfismos homotopicos}
Adopt the assumptions of 
(\ref{L: homotopias de morfismos de overline C a C}) and assume, 
furthermore, that ${\cal B}_A$ is a triangular $S$-bocs. Then, 
there is an isomorphism of functors $$\underline{\eta}:\underline{R}_\phi\rightmap{}\underline{R}_\psi,$$
 where 
 $\underline{R}_\phi,\underline{R}_\psi:
 \underline{\TGM}^0\g{\cal B}_B\rightmap{}
 \underline{\TGM}^0\g {\cal B}_A$ are the functors induced on the homotopy categories by 
 $R_\phi,R_\psi:\TGM^0\g{\cal B}_B\rightmap{}
 \TGM^0\g {\cal B}_A$, respectively, see (\ref{L: restriction functors and homotopy}). For each $(M,u)\in \TGM\g {\cal B}_B$, the map 
 $\underline{\eta}_{(M,u)}:R_\phi(M,u)\rightmap{}R_\psi(M,u)$ is the class modulo homotopy of the morphism 
 $$\eta_{(M,u)}=\hueca{I}_M+R_h(u):R_\phi(M,u)\rightmap{}R_\psi(M,u).$$
\end{proposition}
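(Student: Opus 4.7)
The plan is to verify three things in turn: that $\eta_{(M,u)}$ is a genuine morphism $R_\phi(M,u)\rightmap{}R_\psi(M,u)$ in $\TGM^0\g{\cal B}_A$, that it is in fact an isomorphism, and that the family $\{\eta_{(M,u)}\}$ is natural up to homotopy (so that it descends to an honest natural isomorphism $\underline{\eta}$ after passing to the homotopy categories). Throughout, the two workhorses are the Leibniz-type formula \ref{L: restricción por una homotopia}(1) and the defect formula \ref{L: restricción por una homotopia}(2), together with the Maurer--Cartan and morphism equations for the twisted modules and morphisms in $\TGM\g{\cal B}_B$.

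First I would check that $\eta_{(M,u)}=\hueca{I}_M+R_h(u)$ has degree $0$ (which is immediate from $\vert u\vert=1$ and $\vert h\vert=-1$) and satisfies $\hat{\delta}_A(\eta_{(M,u)})+R_\psi(u)*\eta_{(M,u)}-\eta_{(M,u)}*R_\phi(u)=0$. Using $\hat{\delta}_A(\hueca{I}_M)=0$ and applying \ref{L: restricción por una homotopia}(2) with the Maurer--Cartan equation $\hat{\delta}_B(u)=-u*u$, one gets
\[
\hat{\delta}_A R_h(u)=R_\phi(u)-R_\psi(u)-R_h\hat{\delta}_B(u)=R_\phi(u)-R_\psi(u)+R_h(u*u),
\]
and then \ref{L: restricción por una homotopia}(1) rewrites $R_h(u*u)=R_h(u)*R_\phi(u)-R_\psi(u)*R_h(u)$. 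Substituting into the target identity, the four cross terms cancel in pairs.

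Next I would verify that $\eta_{(M,u)}$ is an isomorphism. Since $h$ is the extension of $\overline{h}$ by zero on $S$, in particular $h(1)=0$, so the first component of $R_h(u)$ (as described in \ref{L: equiv directa entre GMod-B y G hueca(M)od-B}) is $m\mapsto (-1)^{\vert u\vert}u(m\otimes h(1))=0$. Hence the first component of $\eta_{(M,u)}$ is $id_M$, and Corollary \ref{C: f iso sii f0 es iso para TwMod-B}, which relies on the triangularity of ${\cal B}_A$, finishes this step.

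For naturality, take a morphism $f:(M,u)\rightmap{}(N,v)$ in $\TGM^0\g{\cal B}_B$; I claim $R_h(f)$ is a homotopy from $\eta_{(N,v)}*R_\phi(f)$ to $R_\psi(f)*\eta_{(M,u)}$. Expanding,
\[
\eta_{(N,v)}*R_\phi(f)-R_\psi(f)*\eta_{(M,u)}=\bigl[R_\phi(f)-R_\psi(f)\bigr]+R_h(v)*R_\phi(f)-R_\psi(f)*R_h(u).
\]
Apply \ref{L: restricción por una homotopia}(2) to $f$ to rewrite the bracket as $\hat{\delta}_A R_h(f)+R_h\hat{\delta}_B(f)$, use the morphism equation $\hat{\delta}_B(f)=f*u-v*f$, and then split $R_h(f*u)$ and $R_h(v*f)$ via \ref{L: restricción por una homotopia}(1) (with the sign $(-1)^{\vert v\vert}=-1$). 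The terms involving $R_h(v)*R_\phi(f)$ and $R_\psi(f)*R_h(u)$ cancel the two added pieces, leaving exactly $\hat{\delta}_A R_h(f)+R_\psi(v)*R_h(f)+R_h(f)*R_\phi(u)$, which is the prescribed coboundary from Definition \ref{D: homotopy for morphisms of twisted cal B-modules} with homotopy $R_h(f)$. The main obstacle is the careful sign accounting in the last calculation: a single sign misstep in the graded Leibniz expansion breaks the cancellation, so the key is to apply the signed formulas from \ref{L: restricción por una homotopia} verbatim and track the parities of $u$, $v$, $f$, and $h$.
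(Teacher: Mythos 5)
Your proposal is correct and follows essentially the same route as the paper: it verifies the twisted-morphism equation for $\hueca{I}_M+R_h(u)$ by applying (\ref{L: restricción por una homotopia}) to the Maurer--Cartan equation, deduces invertibility from the vanishing first component ($h(1)=0$) together with the triangularity-based criterion on first components, and establishes naturality up to homotopy with exactly the homotopy $R_h(f)$ and the same cancellation the paper performs. The only cosmetic difference is that you invoke (\ref{C: f iso sii f0 es iso para TwMod-B}) directly, whereas the paper first gets an isomorphism in $\GM^0\g{\cal B}_A$ via (\ref{C: f iso sii f0 es iso}); this changes nothing of substance.
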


\begin{proof} We first show that 
$\hueca{I}_M+R_h(u)\in \Hom^0_{\GM\g {\cal B}_A}(M,M)$
is an isomorphism in the category $\GM\g {\cal B}_A$. In order to see this,
consider  the equivalence 
$F:\GM\g {\cal B}_A\rightmap{}\GModi\g {\cal B}_A$ of (\ref{L: equiv directa entre GMod-B y G hueca(M)od-B}). 
Consider the morphism $F(R_h(u))=(h^0,h^1):M\rightmap{}M$ in $\GModi\g {\cal B}$ and notice that, for $m\in M$, we have   
$$h^0(m)=R_h(u)(m\otimes 1)=-u(id_M\otimes h)(m\otimes 1)=0,$$ because $h(1)=0$. 
Therefore, $h^0=0$ and, by (\ref{C: f iso sii f0 es iso}), we know that  $\hueca{I}_M+R_h(u)$ is an 
isomorphism in $\GM^0\g {\cal B}_A$. 

 Now we show that $\hueca{I}_M+R_h(u)\in \Hom^0_{\TGM\g {\cal B}_A}(R_\phi(M,u),R_\psi(M,u))$. By definition, 
 this is equivalent to show the identity   
 $$(*):\hbox{ \ }\hat{\delta}_A(\hueca{I}_M+R_h(u))+
 R_\psi(u)*(\hueca{I}_M+R_h(u))-(\hueca{I}_M+R_h(u))*R_\phi(u)=0.$$
 We know that $u:M\otimes_SC_B\rightmap{}M$ is a homogeneous morphism of degree $1$ such that 
 $\hat{\delta}_B(u)+u*u=0$. Then, applying $R_h$ and using 
 (\ref{L: restricción por una homotopia}), we get 
 $$\begin{matrix}0&=&R_h(\hat{\delta}_B(u))+R_h(u*u)\hfill\\
 &=&
 R_\phi(u)-R_\psi(u)-\hat{\delta}_AR_h(u)+R_h(u)*R_\phi(u)-R_\psi(u)*R_h(u).\hfill\\
 \end{matrix}$$
 That is
 $\hat{\delta}_AR_h(u)+R_\psi(u)*R_h(u)-R_h(u)*R_\phi(u)+R_\psi(u)-R_\phi(u)=0$, 
 which is equivalent to the identity $(*)$. 
 
 Finally, we show that the family of classes 
 $\underline{\eta}_{(M,u)}:R_\phi(M,u)\rightmap{}R_\psi(M,u)$ modulo homotopy of the morphisms 
 $\eta_{(M,u)}=\hueca{I}_M+R_h(u):R_\phi(M,u)\rightmap{}R_\psi(M,u)$ is a natural transformation. 
 Take any homogeneous morphism $f:(M,u)\rightmap{}(N,v)$ of degree $0$ in $\TGM\g {\cal B}_B$. Hence, we have 
 $\hat{\delta}_B(f)+v*f-f*u=0$. Applying $R_h$ to the preceding equality, we obtain 
 $$\begin{matrix}
   0&=& R_h\hat{\delta}_B(f)+R_h(v*f)-R_h(f*u)\hfill\\
   &=&
   R_\phi(f)-R_\psi(f)-\hat{\delta}_AR_h(f) +
   R_h(v)*R_\phi(f)-R_\psi(v)*R_h(f)\hfill\\
   &&-\,R_h(f)*R_\phi(u)-R_\psi(f)*R_h(u).\hfill\\   
   \end{matrix}$$
   Hence, 
$$\begin{matrix}
   \hat{\delta}_AR_h(f)&=& 
   R_\phi(f)-R_\psi(f)+
   R_h(v)*R_\phi(f)-R_\psi(v)*R_h(f)\hfill\\
   &&-\,R_h(f)*R_\phi(u)-R_\psi(f)*R_h(u).\hfill\\   
   \end{matrix}$$
 Let us compute the difference $D=\eta_{(N,v)}* R_\phi(f)-R_\psi(f)*\eta_{(M,u)}$. 
 
 $$\begin{matrix}
    D&=&(\hueca{I}_N+R_h(v))*R_\phi(f)-R_\psi(f)*(\hueca{I}_M+R_h(u))\hfill\\
  &=&R_\phi(f)-R_\psi(f) +
  R_h(v)*R_\phi(f)-R_\psi(f)*R_h(u)\hfill\\
  &=&
  \hat{\delta}_AR_h(f)+R_h(f)*R_\phi(u)+R_\psi(v)*R_h(f).\hfill\\
  \end{matrix}$$
Hence, $D$ is null-homotopic in $\TGM^0\g {\cal B}_A$. This finishes the proof.
\end{proof}

\begin{corollary}\label{C: equiv homotopica de bocses triangulares induce equiv de cats homotópicas} 
Let $\phi:{\cal B}_A\rightmap{}{\cal B}_B$ be a homotopy equivalence of triangular  $S$-bocses. 
Then, the corresponding restriction functor  determines an equivalence of categories 
$$\underline{R}_\phi:\underline{\TGM}^0\g{\cal B}_B\rightmap{}
 \underline{\TGM}^0\g {\cal B}_A.$$
\end{corollary}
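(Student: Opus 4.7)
The plan is to use the previous Proposition (on homotopic morphisms inducing isomorphic restriction functors) together with the fact that scalar restriction is functorial in the bocs morphism. Unpacking the hypothesis, a homotopy equivalence $\phi:\mathcal{B}_A\rightmap{}\mathcal{B}_B$ comes with a morphism $\psi:\mathcal{B}_B\rightmap{}\mathcal{B}_A$ of differential graded $S$-bocses such that $\psi\circ\phi$ is homotopic to $id_{\mathcal{B}_A}$ and $\phi\circ\psi$ is homotopic to $id_{\mathcal{B}_B}$, in the sense of (\ref{D: para morfismos de bocses homotopia}). The claim is that $\underline{R}_\psi$ is a quasi-inverse of $\underline{R}_\phi$.

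First I would verify two elementary functorial facts about the construction $\phi\mapsto R_\phi$. Directly from the definition in (\ref{P: restrictions on graded modules}) and (\ref{P: restrictions on twisted graded modules}), the restriction $R_{id_\mathcal{B}}$ associated to the identity morphism is the identity functor on $\TGM^0\g\mathcal{B}$, since $f(id_M\otimes id_C)=f$ on morphisms and similarly on objects. Moreover, given composable morphisms of differential graded $S$-bocses $\mathcal{B}_A\rightmap{\phi}\mathcal{B}_B\rightmap{\psi}\mathcal{B}_A$, one has the equality of functors $R_\phi\circ R_\psi=R_{\psi\circ\phi}$ on $\TGM^0\g\mathcal{B}_A$, because on a homogeneous morphism $f:M\otimes_SC_A\rightmap{}N$ we get $R_\phi R_\psi(f)=f(id_M\otimes\psi)(id_M\otimes\phi)=f(id_M\otimes\psi\phi)=R_{\psi\phi}(f)$, and on objects $R_\phi R_\psi(M,u)=(M,R_\phi R_\psi(u))=(M,R_{\psi\phi}(u))=R_{\psi\phi}(M,u)$. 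Both checks are essentially bookkeeping.

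With these in hand, apply (\ref{P: equiv de funtores restriccion de morfismos homotopicos}) to the pair of homotopic morphisms $\phi\circ\psi$ and $id_{\mathcal{B}_B}$ from $\mathcal{B}_B$ to $\mathcal{B}_B$; since $\mathcal{B}_B$ is triangular by hypothesis, the proposition yields a natural isomorphism of functors on $\underline{\TGM}^0\g\mathcal{B}_B$
\[
\underline{R}_\phi\circ\underline{R}_\psi \;=\; \underline{R}_{\psi\circ\phi}\;\cong\;\underline{R}_{id_{\mathcal{B}_B}} \;=\; id_{\underline{\TGM}^0\g\mathcal{B}_B}.
\]
Symmetrically, applying the same proposition to the homotopic morphisms $\psi\circ\phi$ and $id_{\mathcal{B}_A}$ (using that $\mathcal{B}_A$ is triangular) produces a natural isomorphism
\[
\underline{R}_\psi\circ\underline{R}_\phi \;=\; \underline{R}_{\phi\circ\psi}\;\cong\;\underline{R}_{id_{\mathcal{B}_A}} \;=\; id_{\underline{\TGM}^0\g\mathcal{B}_A}.
\]
This exhibits $\underline{R}_\psi$ as a quasi-inverse of $\underline{R}_\phi$, and thus $\underline{R}_\phi$ is an equivalence.

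The only subtle point I anticipate is matching conventions when checking $R_\phi R_\psi = R_{\psi\phi}$ (the order reversal is easy to get wrong), and making sure the previous Proposition is invoked with the correct triangularity hypothesis in each direction; the triangularity of $\mathcal{B}_B$ is needed for one direction, that of $\mathcal{B}_A$ for the other. Beyond these orientation checks, no new calculations are required, since the heavy lifting has already been carried out in (\ref{P: equiv de funtores restriccion de morfismos homotopicos}).
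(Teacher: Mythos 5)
Your strategy is exactly the paper's: choose a homotopy inverse $\psi$, note that $R_{id}$ is the identity functor and that restriction is contravariantly functorial in the bocs morphism, and apply Proposition (\ref{P: equiv de funtores restriccion de morfismos homotopicos}) twice; your explicit check that $R_\phi\circ R_\psi=R_{\psi\circ\phi}$ is a welcome addition, since the paper uses this silently. The one flaw is that your two displayed chains cross the labels: the homotopy $\psi\circ\phi\sim id_{{\cal B}_A}$ (which needs ${\cal B}_A$ triangular) is the one that yields $\underline{R}_\phi\circ\underline{R}_\psi=\underline{R}_{\psi\circ\phi}\cong \underline{R}_{id_{{\cal B}_A}}=id_{\underline{\TGM}^0\g{\cal B}_A}$, while $\phi\circ\psi\sim id_{{\cal B}_B}$ (which needs ${\cal B}_B$ triangular) yields $\underline{R}_\psi\circ\underline{R}_\phi=\underline{R}_{\phi\circ\psi}\cong id_{\underline{\TGM}^0\g{\cal B}_B}$, whereas as written each of your displays attaches the composite functor to the wrong homotopy and the wrong identity functor (e.g.\ $\underline{R}_{\psi\circ\phi}$ cannot be an endofunctor of $\underline{\TGM}^0\g{\cal B}_B$). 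Once these subscripts are uncrossed the argument is complete and coincides with the paper's proof.
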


\begin{proof} If $\phi$ is a homotopy equivalence, its class modulo homotopy $\underline{\phi}$ has an inverse $\underline{\psi}$, for some morphism of triangular $S$-bocses $\psi:{\cal B}_B\rightmap{}{\cal B}_A$. Then we have $\underline{\phi\psi}=\underline{\phi}\, \underline{\psi}\sim \underline{id}_{{\cal B}_B}$ and $\underline{\psi\phi}=\underline{\psi}\, \underline{\phi}\sim \underline{id}_{{\cal B}_A}$. Therefore, we get isomorphisms of functors  
$\underline{R}_\phi \underline{R}_\psi= \underline{R}_{\psi\phi}\cong  id_{\underline{\TGM}^0\g{\cal B}_A}$
 and 
$\underline{R}_\psi \underline{R}_\phi= \underline{R}_{\phi\psi}\cong  id_{\underline{\TGM}^0\g{\cal B}_B}$. 
\end{proof}

\section{Splitting idempotents for twisted modules}

In this section we show that idempotents split in the category $\TGM^0\g{\cal B}$, where ${\cal B}$ is any triangular differential graded $S$-bocs. We start with some simple considerations on the additivity of the category $\TGM\g{\cal B}$. 

\begin{lemma}\label{L: el canonical embedding}
Given a graded $S$-bocs ${\cal B}=(C,\mu,\epsilon)$, there is a  functor of graded $k$-categories 
$$L=L_{\cal B}:\GM\g S\rightmap{}\GM\g{\cal B}$$
which acts as the identity on objects and for any homogeneous morphism $f:M\rightmap{}N$ of right graded 
$S$-modules, by definition
$$L(f)=[M\otimes_S C\rightmap{ \ id_M\otimes \epsilon \  }M\otimes_S S\cong M\rightmap{f}N].$$
As a consequence, $\GM\g {\cal B}$ is an additive category. When $\epsilon$ is surjective, the functor $L$ is faithful and we call it \emph{the canonical embedding}. 
\end{lemma}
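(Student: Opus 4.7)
The plan is to verify by direct computation that $L$ is a $k$-linear functor, then to deduce the additivity of $\GM\g{\cal B}$ by transporting the additive structure of $\GM\g S$ across $L$, and finally to obtain faithfulness from the surjectivity of $\epsilon$.

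First I would check functoriality. Preservation of identities is immediate: $L(id_M)=\rho_M(id_M\otimes\epsilon)=\hueca{I}_M$ is exactly the identity in $\GM\g{\cal B}$ recalled after (\ref{D: la cat GMod-B}). For preservation of composition, given $f:M\rightmap{}N$ and $g:N\rightmap{}L$ in $\GM\g S$, I would unwind $L(g)*L(f)$ following the recipe of (\ref{D: la cat GMod-B}): write $\mu(c)=\sum c_{(1)}\otimes c_{(2)}$, use $S$-linearity of $f,g$ to move $\epsilon(c_{(1)})$ past $f$, and apply the consequence $\sum\epsilon(c_{(1)})\epsilon(c_{(2)})=\epsilon(c)$ of the counit axiom (obtained by applying $\epsilon$ to $\rho_C(id_C\otimes\epsilon)\mu=id_C$ together with the $S$-bilinearity of $\epsilon$). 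The three-step composite then collapses to $gf(m\cdot\epsilon(c))=L(gf)(m\otimes c)$. That $L$ is $k$-linear on each hom space is clear from its definition as postcomposition with the fixed map $\rho_M(id_M\otimes\epsilon)$.

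Next I would deduce additivity. The zero graded $S$-module is a zero object in $\GM\g{\cal B}$ since $\Hom_{\GM\g{\cal B}}(0,N)=\Hom_{\GM\g S}(0\otimes_SC,N)=0$ and dually. For $M_1,M_2$, I take the direct sum $M_1\oplus M_2$ in $\GM\g S$ with injections $i_k$ and projections $p_k$; then functoriality and $k$-linearity of $L$ transport the biproduct identities $p_k i_\ell=\delta_{k\ell}\,id$ and $i_1p_1+i_2p_2=id$ to the corresponding identities with $*$ and $\hueca{I}$ in $\GM\g{\cal B}$, exhibiting $M_1\oplus M_2$ as a biproduct there.

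Finally, faithfulness when $\epsilon$ is surjective is a one-line check: if $L(f)=0$ then $f(m\cdot\epsilon(c))=0$ for every $m\in M$ and $c\in C$, and choosing $c_0\in C$ with $\epsilon(c_0)=1$ yields $f(m)=0$ for all $m$. The only place where any care is needed is the composition step, whose main subtlety is the bookkeeping of the counit identity and of the $S$-bilinearity of $\epsilon$; everything else is structural transport along an identity-on-objects $k$-linear functor.
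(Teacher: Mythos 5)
Your proof is correct: the paper states this lemma without proof as a routine verification, and your argument (the counit identity $\sum\epsilon(c_{(1)})\epsilon(c_{(2)})=\epsilon(c)$ for preservation of composition, transport of the biproduct equations along the $k$-linear identity-on-objects functor $L$, and hitting $1\in S$ via surjectivity of $\epsilon$ for faithfulness) is exactly the intended one. It also matches how the paper itself uses $L$ immediately afterwards, in (\ref{R: matrices of morphisms in GM-B}), to produce the injections and projections of direct sums in $\GM\g{\cal B}$.
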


\begin{remark}\label{R: matrices of morphisms in GM-B}
If $\sigma_j:M_j\rightmap{}\bigoplus_{i=1}^nM_i$ and $\pi_j:\bigoplus_{i=1}^nM_i\rightmap{}M_j$ denote, respectively, the injection and projection maps corresponding to  the direct sum $M=\bigoplus_{i=1}^nM_i$ in $\GM\g S$, we have that  $\underline{\sigma}_j:=L(\sigma_j):M_j\rightmap{}\bigoplus_{i=1}^nM_i$ and $\underline{\pi}_j:=L(\pi_j):\bigoplus_{i=1}^nM_i\rightmap{}M_j$ are, respectively, 
the injection and projection morphisms corresponding to the direct sum $\bigoplus_{i=1}^nM_i$ in $\GM\g{\cal B}$. We will work with the usual matrix notation $(g_{j,i})$ for a morphism $g:\bigoplus_{i=1}^mM_i\rightmap{}\bigoplus_{j=1}^nN_j$ in an additive category, the context will always permit the reader to avoid confusion with this notation: in $\GM\g{\cal B}$, this means that $g_{j,i}=\underline{\pi}_j*g*\underline{\sigma_i}$ and, therefore,  $g=\sum_{i,j}\underline{\sigma}_j*g_{j,i}*\underline{\pi}_i$.

Notice also that, for a differential graded $S$-bocs ${\cal B}$ with differential $\delta$, from the definition of the differential $\widehat{\delta}$  on the category $\GM\g{\cal B}$ and (\ref{L: epsilon d = 0}), we immediately obtain that $\widehat{\delta}(L(h))=0$ for any morphism $h:M\rightmap{}N$ in $\GM\g S$. 

Moreover, if ${\cal B}$ is a differential graded $S$-bocs, the category $\TGM\g{\cal B}$ is additive too: given 
$(M_1,u_1), (M_2,u_2)\in \TGM\g{\cal B}$, their direct sum is 
$$(M_1\oplus M_2,u), \hbox{ where } u=\begin{pmatrix}
                                                                                            u_1&0\\ 0&u_2
                                                                                           \end{pmatrix}\in \Hom^1_{\GM\g{\cal B}}(M_1\oplus M_2,M_1\oplus M_2).$$
The injections are the morphisms $\underline{\sigma}_i:(M_i,u_i)\rightmap{}  (M_1\oplus M_2,u)$,                                                                                          
 while the projections are the morphisms $\underline{\pi}_i:(M_1\oplus M_2,u)\rightmap{}(M_i,u_i)$. 
 Indeed, since $u=\underline{\sigma}_1*u_1*\underline{\pi}_1+\underline{\sigma}_2*u_2*\underline{\pi}_2$, we obtain 
 $u*\underline{\sigma}_i=\underline{\sigma}_i*u_i*\underline{\pi}_i*\underline{\sigma}_i=\underline{\sigma}_i*u_i$. 
 Therefore, 
 $\widehat{\delta}(\underline{\sigma}_i)+u*\underline{\sigma}_i-\underline{\sigma}_i*u_i=0$. Similarly, we have 
 $\widehat{\delta}(\underline{\pi}_i)+u_i*\underline{\pi}_i-\underline{\pi}_i*u=0$. 
 
 Given a morphism $g:\bigoplus_{i=1}^mM_i\rightmap{}\bigoplus_{j=1}^nN_j$ in $\GM\g {\cal B}$, with associated matrix $(g_{j,i})$, from Leibniz rule, we have 
 $\widehat{\delta}(g)= \sum_{i,j}\underline{\sigma}_j*\widehat{\delta}(g_{j,i})*\underline{\pi}_i$. 
 Then, we have  
 $\widehat{\delta}(g)_{t,s}=\underline{\pi}_t*\widehat{\delta}(g)*\underline{\sigma}_s=
 \underline{\pi}_t*\underline{\sigma}_t*\widehat{\delta}(g_{t,s})*\underline{\pi}_s*\underline{\sigma}_s=\widehat{\delta}(g_{t,s})$, which means that the image under $\widehat{\delta}$ of the matrix $(g_{j,i})$ is the matrix of images 
 $(\widehat{\delta}(g_{j,i}))$. 
\end{remark}

\begin{lemma}\label{L: idemps split in GMod-B => idemps split in TGMod-B} 
 Let ${\cal B}=(C,\mu,\epsilon,\delta)$ be a differential graded $S$-bocs. Assume that $e:(M,u)\rightmap{}(M,u)$ is an idempotent morphism in the category $\TGM^0\g{\cal B}$ such that the idempotent $e:M\rightmap{}M$ splits in $\GM^0\g{\cal B}$, 
 then $e$ splits in $\TGM^0\g {\cal B}$. 
\end{lemma}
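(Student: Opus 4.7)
My strategy is to reduce, via a suitable change of basis in $\GM^0\g{\cal B}$, to the canonical situation in which $M$ decomposes as a direct sum $N\oplus N'$ with $e$ realized as the block projection $\underline{\sigma}_1 * \underline{\pi}_1$; once in that form, the twisted structure $u$ is forced to respect the decomposition, and the splitting of $e$ in $\TGM^0\g{\cal B}$ falls out from the additivity of $\TGM\g{\cal B}$ recorded in Remark \ref{R: matrices of morphisms in GM-B}.

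From the splitting $(N,p,q)$ of $e$ in $\GM^0\g{\cal B}$ (with $q*p=e$, $p*q=\hueca{I}_N$), I pass to first components: $e^0=q^0p^0$ is an idempotent in $\GM\g S$, which is semisimple, so $M\cong N\oplus N'$ in $\GM\g S$ with $e^0$ the projection onto $N$. Let $\pi'_0:M\to N'$ be the complementary projection. Setting $\bar e:=\hueca{I}_M-e$, I will define
$$\Psi := \underline{\sigma}_1 * p \,+\, \underline{\sigma}_2 * L(\pi'_0) * \bar e \,:\, M\longrightarrow N\oplus N'.$$
The identity $p*e=p*q*p=p$ gives $p*\bar e=0$, while $\bar e*e=0$ is immediate, so a short computation yields $\Psi * e = \underline{\sigma}_1 * p = (\underline{\sigma}_1 * \underline{\pi}_1) * \Psi$. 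Moreover, the first component of $\Psi$ is the isomorphism $\binom{p^0}{\pi'_0}:M\to N\oplus N'$ arising from the chosen decomposition in $\GM\g S$, so Corollary \ref{C: f iso sii f0 es iso} (the bocs being triangular in the setting of this section) gives that $\Psi$ is an isomorphism in $\GM^0\g{\cal B}$, whence $\Psi * e * \Psi^{-1}=\underline{\sigma}_1 * \underline{\pi}_1$.

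Then Lemma \ref{L: iso en GMod-B de M en N copia la u de M en una de N} produces $\tilde u$ on $N\oplus N'$ making $\Psi:(M,u)\to (N\oplus N',\tilde u)$ an isomorphism in $\TGM^0\g{\cal B}$, and under this isomorphism $e$ corresponds to $\underline{\sigma}_1 * \underline{\pi}_1$. Because $\underline{\sigma}_1 * \underline{\pi}_1 = L(\sigma_1\pi_1)$ lies in the image of the canonical embedding, Remark \ref{R: matrices of morphisms in GM-B} gives $\hat{\delta}(\underline{\sigma}_1 * \underline{\pi}_1)=0$, so the twisted morphism condition $\hat{\delta}(\underline{\sigma}_1 * \underline{\pi}_1) + \tilde u * (\underline{\sigma}_1 * \underline{\pi}_1) - (\underline{\sigma}_1 * \underline{\pi}_1)*\tilde u = 0$ reduces to the commutation $\tilde u * (\underline{\sigma}_1 * \underline{\pi}_1) = (\underline{\sigma}_1 * \underline{\pi}_1) * \tilde u$. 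In matrix form, this kills the off-diagonal blocks of $\tilde u$, so $\tilde u = \operatorname{diag}(v,v')$.

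Finally, the Maurer--Cartan equation $\hat{\delta}(\tilde u)+\tilde u * \tilde u=0$ splits block-by-block into Maurer--Cartan equations for $v$ and $v'$, so $(N,v),(N',v')\in\TGM\g{\cal B}$ and $(N\oplus N',\tilde u)=(N,v)\oplus (N',v')$ is their direct sum in $\TGM\g{\cal B}$ by Remark \ref{R: matrices of morphisms in GM-B}. Hence $\underline{\sigma}_1 * \underline{\pi}_1$ visibly splits through $(N,v)$ in $\TGM^0\g{\cal B}$, and conjugating by $\Psi^{-1}$ delivers the desired splitting of $e$ in $\TGM^0\g{\cal B}$. The critical step is the construction of $\Psi$: the naive guess $\underline{\sigma}_1 * p + \underline{\sigma}_2 * L(\pi'_0)$ only conjugates $e$ to $\underline{\sigma}_1 * \underline{\pi}_1$ at the level of first components, and inserting the factor $\bar e$ into the second summand is exactly what kills the second-component obstruction.
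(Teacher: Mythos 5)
Your argument from the point where you have an isomorphism onto a direct sum conjugating $e$ to the block projection is exactly the paper's proof: $\widehat{\delta}$ of the block idempotent vanishes, so it commutes with the transported structure $\tilde u$, the off-diagonal blocks of $\tilde u$ die, the Maurer--Cartan equation splits blockwise, and conjugating back splits $e$ in $\TGM^0\g{\cal B}$. The problem is the step that manufactures that isomorphism. The lemma is stated for an \emph{arbitrary} differential graded $S$-bocs ${\cal B}$: neither normality nor triangularity is assumed. Your verification that $\Psi*e=(\underline{\sigma}_1*\underline{\pi}_1)*\Psi$ is fine, but to conclude that $\Psi$ is invertible you pass to first components (a formalism that only exists for \emph{normal} bocses, via (\ref{L: equiv directa entre GMod-B y G hueca(M)od-B})) and invoke Corollary (\ref{C: f iso sii f0 es iso}), which requires ${\cal B}$ \emph{triangular}. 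Neither hypothesis is available here, so as written you have proved a strictly weaker statement (enough for the application in (\ref{P: split idempotents}), but not the lemma as stated).

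The gap originates in reading the hypothesis as a retract datum $(N,p,q)$ with $q*p=e$, $p*q=\hueca{I}_N$, and then trying to upgrade it to a decomposition. In a general additive category, splitting of $e$ does not entail splitting of $\hueca{I}_M-e$, so the upgrade from a retract to an isomorphism $M\cong N\oplus N'$ conjugating $e$ to $\mathrm{diag}(\hueca{I}_N,0)$ genuinely needs an argument, and without triangularity you have not supplied one. The paper sidesteps this by taking ``$e$ splits in $\GM^0\g{\cal B}$'' in the strong sense: an isomorphism $h:M\rightarrow M_1\oplus M_2$ with $h*e*h^{-1}=\mathrm{diag}(\hueca{I}_{M_1},0)$ is part of the assumption (and this is precisely what (\ref{L: idemptes se div en GM-B}) delivers in the triangular case where the lemma is applied). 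If you adopt that reading, your construction of $\Psi$ can be deleted and the remainder of your proof goes through verbatim, in full generality, using only (\ref{L: iso en GMod-B de M en N copia la u de M en una de N}) and (\ref{R: matrices of morphisms in GM-B}); if you insist on the retract reading, you must either add triangularity (and normality) to the hypotheses or assume the complementary idempotent splits as well.
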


\begin{proof} By assumption, there is an isomorphism $h:M\rightmap{}M_1\oplus M_2$ in $\GM^0\g{\cal B}$ such that the following diagram commutes in $\GM^0\g {\cal B}$
$$\begin{matrix}
   M&\rightmap{h}&M_1\oplus M_2\\
   \lmapdown{e}&&\rmapdown{\begin{pmatrix}
                            \hueca{I}_{M_1}&0\\ 0&0\\
                           \end{pmatrix}}\\
  M&\rightmap{h}&M_1\oplus M_2\\
  \end{matrix}$$
where $M_1\oplus M_2$ is the direct sum of graded right ${\cal B}$-modules. 

According to (\ref{L: iso en GMod-B de M en N copia la u de M en una de N}), 
there is some   $v\in \Hom^1_{\GM\g{\cal B}}(M_1\oplus M_2,M_1\oplus M_2)$ such that 
 $h:(M,u)\rightmap{}(M_1\oplus M_2,v)$ is an isomorphism in $\TGM^0\g {\cal B}$. 
Then, $h*e*h^{-1}:(M_1\oplus M_2,v)\rightmap{}(M_1\oplus M_2,v)$ is a morphism in $\TGM^0\g {\cal B}$. 
Thus, $\widehat{\delta}(h*g*h^{-1})+v*(h*e*h^{-1})-(h*e*h^{-1})*v=0$. But  
$\widehat{\delta}(h*e*h^{-1})=\widehat{\delta}\begin{pmatrix}
                            \hueca{I}_{M_1}&0\\ 0&0\\
                           \end{pmatrix}=0$, because $\widehat{\delta}(\hueca{I}_{M_1})=0$. Hence, $(h*e*h^{-1})*v=v*(h*e*h^{-1})$. 
  It follows that 
                           $$v=\begin{pmatrix}v_1&0\\ 0&v_2\end{pmatrix},\hbox{ with } v_1\in \Hom^1_{\GM\g{\cal B}}(M_1,M_1) \hbox{ and } v_2\in \Hom^1_{\GM\g{\cal B}}(M_2,M_2).$$
 Then, from the equality         $\widehat{\delta}(v)+v*v=0$, we obtain the equalities                   
$$\widehat{\delta}(v_1)+v_1*v_1=0 \hbox{ \ and \ } \widehat{\delta}(v_2)+v_2*v_2=0.$$
Then, $(M_1,v_1)$ and $(M_2,v_2)$ belong to $\TGM\g{\cal B}$, and 
$$h:(M,u)\rightmap{}(M_1,v_1)\oplus(M_2,v_2)$$ is an isomorphism in $\TGM^0\g{\cal B}$ with 
$h*e*h^{-1}=\begin{pmatrix}
            \hueca{I}_{M_1}&0\\ 0&0
           \end{pmatrix}$, thus $e$ splits in $\TGM^0\g{\cal B}$, as claimed. 
\end{proof}

For the sake of computational simplicity, it is convenient to rewrite the morphisms of the category $\GModi\g {\cal B}$ in a slightly different way.  

\begin{definition}\label{R: canonical embedding GM-S --> GM-B}
Given a normal graded $S$-bocs ${\cal B}=(C,\mu,\epsilon)$, we can consider the following category   
$\GModit\g{\cal B}$. Its objects are the graded right $S$-modules and, for $d\in \hueca{Z}$, a \emph{morphism} $f:M\rightmap{}N$ of degree $d$ is a pair  $f=(f^0,f^1)$, where $f^0:M\rightmap{}N$ and $f^1:\overline{C}\rightmap{}\Hom_{\GM\g k}(M,N)$ are homogeneous morphisms, $f^0$ of right $S$-modules and $f^1$ of $S$-$S$-bimodules, of degree $d$. Thus, the hom spaces are given by 
$$\Hom^d_{\GModitsub\g {\cal B}}(M,N)=\Hom^d_{\GM\g S}(M,N)\times \Hom^d_{\GM\g S\g S}(\overline{C},\Hom_{\GM\g k}(M,N)).$$
The composition in this category is transfered from the composition in the category $\GModi\g {\cal B}$ 
with the help of the natural isomorphism 
$$\eta:\Hom_{\GM\g S}(M\otimes \overline{C},N)\rightmap{}\Hom_{\GM\g S\g S}(\overline{C},\Hom_{\GM\g k}(M,N)).$$
So, the composition of two morphisms  $f:M \rightmap{}N$ and $g:N\rightmap{}L$ in $\GModit \g {\cal B}$ is given  by $g*f=((g*f)^0,(g*f)^1)$, where $(g*f)^0=g^0f^0$ and the morphism of graded $S$-$S$-bimodules $(f*g)^1:\overline{C}\rightmap{}\Hom^*_{\GM\g k}(M,N)$ is   given  by 
$$(g*f)^1(c)= g^1(c)f^0 + g^0f^1(c) +\sum_i g^1 (c_i^2)f^1(c_i^1), 
\hbox{ where  \ } \overline{\mu}(c) =\sum_i  c^1_i \otimes c^2_i.$$
\end{definition}

It is easy to show that the bijection $(f^0,f^1)\mapsto (f^0,\eta(f^1))$ determines an isomorphism of categories  $\GModi\g {\cal B}\rightmap{}\GModit \g {\cal B}$. The formula for the composition of morphisms 
in $\GModit \g {\cal B}$ is the translation of the composition  in $\GModi\g {\cal B}$ using the precedent bijection.

 \begin{lemma}\label{L: idemptes se div en GM-B}
 For any  graded triangular $S$-bocs ${\cal B}$, 
idempotents split in $\GModi^0\g {\cal B}$. 
\end{lemma}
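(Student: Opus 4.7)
The strategy is to build the splitting of the idempotent $e=(e^0,e^1)$ in two moves: first split the underlying $S$-module idempotent $e^0$ (which is forced by semisimplicity of $S$), and then show that the resulting candidate morphisms automatically split $e$ thanks to the triangular filtration. Since $S$ is semisimple, the degree-zero idempotent $e^0:M\rightmap{}M$ in $\GM\g S$ splits: pick $\sigma^0:M_1\rightmap{}M$ and $\pi^0:M\rightmap{}M_1$ with $\pi^0\sigma^0=id_{M_1}$ and $\sigma^0\pi^0=e^0$. Via the canonical embedding $L$ of Lemma~\ref{L: el canonical embedding}, these correspond to morphisms $\underline{\sigma}^0=(\sigma^0,0)$ and $\underline{\pi}^0=(\pi^0,0)$ in $\GModi^0\g{\cal B}$.

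I would then introduce the ``twisted'' candidates $\sigma_0:=e*\underline{\sigma}^0$ and $\pi_0:=\underline{\pi}^0*e$. Using the formula for composition in $\GModi$, their first components are $(\sigma_0)^0=e^0\sigma^0=\sigma^0$ and $(\pi_0)^0=\pi^0 e^0=\pi^0$, so $(\pi_0*\sigma_0)^0=\pi^0\sigma^0=id_{M_1}$. By Corollary~\ref{C: f iso sii f0 es iso} this already makes $\pi_0*\sigma_0$ an isomorphism in $\GModi^0\g{\cal B}$. Setting $\sigma:=\sigma_0*(\pi_0*\sigma_0)^{-1}$ and $\pi:=\pi_0$, the relation $\pi*\sigma=\hueca{I}_{M_1}$ is then automatic.

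The remaining task is to show $\sigma*\pi=e$. Put $f:=\sigma*\pi$; using $e*e=e$ together with $\pi_0*e=\underline{\pi}^0*e*e=\pi_0$ and $e*\sigma_0=e*e*\underline{\sigma}^0=\sigma_0$, one checks $e*f=f=f*e$. Therefore $h:=e-f$ is itself an idempotent, whose first component is $h^0=e^0-\sigma^0\pi^0=0$. Writing $h=(0,h^1)$, the identity $h*h=h$ reduces (since $h^0=0$) to the single equation
$$h^1 \;=\; h^1(h^1\otimes id_{\overline{C}})(id_M\otimes \overline{\mu}).$$

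The final step---and the only place the triangularity hypothesis of Definition~\ref{D: triangular bocs} is used---is to conclude from this that $h^1=0$. This is the same filtration argument as in Proposition~\ref{P: morfismos localmente nilpotentes en bocses}: on $\overline{C}_1$ we have $\overline{\mu}=0$, so $h^1|_{\overline{C}_1}=0$; assuming $h^1|_{\overline{C}_i}=0$, for $c\in \overline{C}_{i+1}$ the value $\overline{\mu}(c)$ lives in $\overline{C}_i\otimes\overline{C}_i$, and the right-hand side of the displayed equation already applies the inner $h^1$ to $M\otimes \overline{C}_i$, where it vanishes. Since $\overline{C}=\bigcup_i\overline{C}_i$, we obtain $h^1=0$, hence $h=0$ and $e=\sigma*\pi$. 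I expect the only delicate point to be keeping the componentwise bookkeeping straight in the verification of $e*f=f*e=f$; the rest is a direct application of the triangular filtration and the isomorphism criterion of Corollary~\ref{C: f iso sii f0 es iso}.
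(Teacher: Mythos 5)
Your argument is correct, and it takes a genuinely different route from the paper's. The paper proves the lemma by conjugation: working in $\GModit^0\g{\cal B}$, it constructs inductively along the triangular filtration a sequence of isomorphisms $h_i=(id_M,h_i^1)$ with $h_i^1(\overline{C}_{i-2})=0$, assembles them into a single isomorphism $h$ (inverted via the local-nilpotence statement of (\ref{P: morfismos localmente nilpotentes en bocses})), and shows $(heh^{-1})^1=0$, so that $e$ is conjugate to $(e^0,0)$, which splits because $e^0$ splits in $\GM\g S$. You instead build the section and retraction directly: split $e^0$ over the semisimple algebra $S$, twist the resulting maps by $e$, normalize using the isomorphism criterion (\ref{C: f iso sii f0 es iso}) (legitimate, since that corollary precedes this lemma and there is no circularity), and reduce everything to showing that the idempotent $h=e-\sigma*\pi$, which has $h^0=0$, must vanish; the identity $h^1=h^1(h^1\otimes id_{\overline{C}})(id_M\otimes\overline{\mu})$ is then killed by the same filtration induction that underlies (\ref{P: morfismos localmente nilpotentes en bocses}). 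Your route is shorter and avoids both the infinite conjugating product and the passage to $\GModit^0\g{\cal B}$, at the cost of yielding slightly less information: the paper's proof shows $e$ is actually conjugate to $(e^0,0)$, not merely split. The only step you leave implicit is $f*f=f$ for $f=\sigma*\pi$, which is needed for $h$ to be idempotent; it is immediate from $\pi*\sigma=\hueca{I}_{M_1}$, so nothing of substance is missing.
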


\begin{proof} In this proof, for the sake of notational simplicity, we write $gf$ instead of $g*f$ to indicate the composite morphism in $\GModit^0 \g{\cal B}$. 

It will be enough to show that any idempotent morphism 
 $e=(e^0,e^1):M\rightmap{}M$
splits in $\GModit^0 \g {\cal B}$.  Since idempotents clearly split in $\GM\g S $, it will be enough to show that there is an isomorphism $h:M\rightmap{}M$ such that $(heh^{-1})^1=0$. 
 Adopt the notation of (\ref{D: triangular bocs}), make  $\overline{C}_{-1}:=0$, and let us first show the following. 
 
 \medskip
 \emph{Claim 1: There is a sequence of isomorphisms $M\rightmap{h_1}M\rightmap{h_2}M\rightmap{h_3}\cdots$ in $\GModit^0\g{\cal B}$ such that, for each $i\geq 1$, we have  
 \begin{enumerate}
  \item Each isomorphism has the form $h_i=(id_M,h^1_i)$;
  \item $h^1_{i}(\overline{C}_{i-2})=0$; and 
  \item For $e_i:=h_i\cdots h_2h_1eh_1^{-1}h_2^{-1}\cdots h_i^{-1}$, we have $e_i^1(\overline{C}_{i-1})=0$.
 \end{enumerate}}

\emph{Proof of the Claim 1:} The inductive argument is essentially the same as the one given in \cite{BSZ}(5.12), but we recall it for the sake of the reader. At the base of the induction, we have the isomorphism  $h_1=(id_M,0):M\rightmap{}M$ such that $h_1^1(\overline{C}_{-1})=0$ and the idempotent $e_1:=e$ such that $e_1^1(\overline{C}_0)=0$. 

Assume that we have constructed the isomorphisms $h_1,\ldots,h_i$ satisfying conditions 1--3. 
Notice that $e_i$ is idempotent, and so is $e_i^0$. 
From $e_i^1(\overline{C}_{i-1})=0$  and the triangularity we obtain, for $c\in
\overline{C}_i$ that $e_i^1(c)=(e_i^2)^1(c)=e_i^0e_i^1(c)+e_i^1(c)e_i^0$. Hence,
$e_i^0e_i^1(c)e_i^0=0$. Now, for $c\in \overline{C}$ define
$h_{i+1}^1(c):=e_i^1(c)f_i^0$, where $f_i^0\in\Hom_{\GM\g S}(M,M)$ will be specified
in a moment. Then, clearly $h_{i+1}^1(\overline{C}_{i-1})=0$ and $h_{i+1}^1\in\Hom_{\GM\g S\g S}(\overline{C},\Hom_{\GM\g k}(M,M))$. Then, the pair $h_{i+1}=(1_M,h_{i+1}^1):M\rightmap{} M$ is a morphism in $\GModit^0 \g {\cal B}$. 
From (\ref{C: f iso sii f0 es iso}), we know that $h_{i+1}$ is an isomorphism in $\GModit^0 \g{\cal B}$. 

Let $g_{i+1}:=h_{i+1}^{-1}:M\rightmap{}M$. Then,
$g_{i+1}^0=1_M$ and, since $(g_{i+1}h_{i+1})^1=0$, we obtain for $c\in \overline{C}_i$
that $g_{i+1}^1(c)=-h_{i+1}^1(c)$. Then, by triangularity, we have for $c\in \overline{C}_i$,
$$\begin{matrix}
(h_{i+1}e_ig_{i+1})^1(c)
&=&
h_{i+1}^0(e_ig_{i+1})^1(c)+h_{i+1}^1(c)(e_ig_{i+1})^0\hfill\\ 
&=&
e_i^0g_{i+1}^1(c)+e_i^1(c)+h_{i+1}^1(c)e_i^0\hfill\\ 
&=&
-e_i^0e_i^1(c)f_i^0+e_i^1(c)+e_i^1(c)f_i^0e_i^0\hfill\\ 
&=&
-e_i^0e_i^1(c)f_i^0+e_i^0e_i^1(c)+e_i^1(c)e_i^0+e_i^1(c)f_i^0e_i^0.\hfill\\ \end{matrix}$$
Since $e_i^0e_i^1(c)e_i^0=0$, choosing $f_i^0:=1_M-2e_i^0$ we obtain the equality 
$e^1_{i+1}(c)=(h_{i+1}e_ig_{i+1})^1(c)=0$, as we wanted. \hfill$\square$
\medskip

From (1) and (2), we obtain, for $i\geq 1$ and $c\in \overline{C}_{i-1}$, the equality
$$(h_{i+1}\cdots h_2h_1)^1(c)=(h_i\cdots h_2h_1)^1(c).$$
Indeed, this is clear for $i=1$. For $i\geq 2$, we have $(h_i\cdots h_2h_1)^0=id_M$ and  $h^0_{i+1}=id_M$. By assumption, $\overline{\mu}(c)=\sum_tc^1_t\otimes c^2_t$, with $c^1_t,c^2_t\in \overline{C}_{i-2}$. Then, we obtain
$$\begin{matrix}
   (h_{i+1}\cdots h_2h_1)^1(c)
   &=&
   (h_{i+1}(h_i\cdots h_2h_1))^1(c)\hfill\\
   &=&
   h^1_{i+1}(c)+(h_i\cdots h_2h_1)^1(c)\hfill\\
   &&+\,\sum_th^1_{i+1}(c_t^2)(h_i\cdots h_2h_1)^1(c_t^1)\hfill\\
   &=&
   (h_i\cdots h_2h_1)^1(c).\hfill\\
  \end{matrix}$$
Then, we can consider the map $h^1:\overline{C}\rightmap{}\Hom_{\GM\g k}(M,M)$ defined by 
$$h^1(c)=(h_i\cdots h_2h_1)^1(c), \hbox{ for } c\in \overline{C}_{i-1}.$$
Since $h^1$ is a morphism of graded $S$-$S$-bimodules, we can consider the isomorphism $h=(id_M,h^1):M\rightmap{}M$ in $\GModit^0 \g{\cal B}$.

The statement (\ref{P: morfismos localmente nilpotentes en bocses})  gives us in this context that any morphism of the form $g=(0,g^1):M\rightmap{}M$ satisfies that for any $i\geq 1$ there is some $n_i\in \hueca{N}$ such that for $c\in \overline{C}_i$ and $n\geq n_i$ we have $(g^n)^1(c)=0$. So, we have a well defined morphism 
$\sum_{n=0}^\infty g^n:M\rightmap{}M$. So, consider $g:=(0,-h^1)$ and notice that $h=\hueca{I}_M-g$, so  
$h^{-1}=\sum_{n=0}^\infty g^n=\hueca{I}_M+ \sum_{n=1}^\infty g^n$. 
In particular, $(h^{-1})^1=\sum_{n=1}^\infty [(0,-h^1)^n]^1$.

Consider the iterated comultiplication 
$\overline{\mu}^n:\overline{C}\rightmap{}\overline{C}^{\otimes(n+1)}$, defined recursively, for $n\geq 1$, by $\overline{\mu}^0=id_{\overline{C}}$ and $\overline{\mu}^n=(id_{\overline{C}}\otimes \overline{\mu}^{(n-1)})\overline{\mu}$. 

\medskip
\emph{Claim 2: Given any morphism of the form $g=(0,g^1):M\rightmap{}M$ in $\GModit^0 \g{\cal B}$, the following holds 
\begin{enumerate}
 \item For $n\geq 1$ and $c\in \overline{C}$, we have 
$$(g^n)^1(c)=\sum_tg^1(c^n_t)\cdots g^1(c^1_t), \hbox{ where }
\overline{\mu}^{(n-1)}(c)=\sum_tc_t^1\otimes \cdots \otimes c_t^n.$$ 
\item Moreover, for $n\geq 2$, $i\geq 1$, and $c_i\in \overline{C}_i$, we have $c_t^1,\ldots,c_t^n\in \overline{C}_{i-1}$.
\end{enumerate}}
\medskip

\emph{Proof of the Claim 2:} We proceed by induction. Item (1) holds trivially for $n=1$. 
So assume item (1) holds for $n$. Then, for $c\in \overline{C}$, we have 
$$\begin{matrix}\overline{\mu}^n(c)
&=&
(id_{\overline{C}}\otimes \overline{\mu}^{(n-1)})\overline{\mu}(c)\hfill\\
&=&
(id_{\overline{C}}\otimes \overline{\mu}^{(n-1)})(\sum_sc_s^1\otimes c_s^2)\hfill\\
&=&
\sum_sc_s^1\otimes \overline{\mu}^{(n-1)}(c_s^2)\hfill\\
&=&
\sum_s\sum_{t_s}c_s^1\otimes c_{t_s}^2\otimes\cdots\otimes c_{t_s}^{n+1}\hfill\\
\end{matrix}$$
where $\overline{\mu}(c)=\sum_sc_s^1\otimes c_s^2$ and, for each $s$, 
$\overline{\mu}^{(n-1)}(c_s^2)=\sum_{t_s} c_{t_s}^2\otimes\cdots\otimes c_{t_s}^{n+1}$. 
Thus, 
$$ (g^{n+1})^1(c)=\sum_s(g^n)^1(c_s^2)g^1(c_s^1)=
    \sum_s\sum_{t_s}g^1(c_{t_s}^{n+1})\cdots g^1(c^2_{t_s})g^1(c^1_s).$$
Now, if $n\geq 2$, $i\geq 1$, and $c\in \overline{C}_i$, we have from the triangularity condition that $c_s^1\in \overline{C}_{i-1}$. 
We also have that $c_{t_s}^2,\ldots,c_{t_s}^{n+1}\in \overline{C}_{i-1}$ by the induction hypothesis for item (2). The Claim 2 is proved.\hfill$\square$ 
\medskip

Assume that $n\geq 1$, $c\in \overline{C}_{i-1}$, and $\overline{\mu}^{(n-1)}(c)=\sum_{t} c_t^1\otimes\cdots\otimes c_t^n$, then 
$$\begin{matrix}[(0,-h^1)^n]^1(c)
&=&
(-1)^n\sum_th^1(c_t^n)\cdots h^1(c^1_t)\hfill\\
&=&
(-1)^n\sum_t(h_i\cdots h_1)^1(c_t^n)\cdots (h_i\cdots h_1)^1(c^1_t)\hfill\\
&=&
[(0,-(h_i\cdots h_1)^1)^n]^1(c),\hfill\\
  \end{matrix}$$
and 
$$
  (h^{-1})^1(c)
  =
  \sum_{n=1}^\infty [(0,-h^1)^n]^1(c)
  =
    \sum_{n=1}^\infty [(0,-(h_i\cdots h_1)^1)^n]^1(c)
  =
  [(h_i\cdots h_1)^{-1}]^1(c).$$

  \medskip
\emph{Claim 3: Let $p,q:M\rightmap{}M$ be any morphisms in $\GModit^0 \g{\cal B}$. Then, 
\begin{enumerate}
 \item Given a family of morphisms $\{p_i:M\rightmap{}M\}_{i\geq 1}$ in $\GModit^0 \g{\cal B}$ such that $p_i^0=p^0$ and $p^1_i(c)=p^1(c)$, for $i\geq 1$ and $c\in \overline{C}_{i-1}$, we have 
 $$(qp_i)^1(c)=(qp)^1(c),\hbox{  for } c\in \overline{C}_{i-1}.$$
\item Given a family of morphisms $\{q_i:M\rightmap{}M\}_{i\geq 1}$ in $\GModit^0 \g{\cal B}$ such that $q_i^0=q^0$ and $q^1_i(c)=q^1(c)$, for $i\geq 1$ and $c\in \overline{C}_{i-1}$, we have  
$$(q_ip)^1(c)=(qp)^1(c),\hbox{ for } c\in \overline{C}_{i-1}.$$ 
\end{enumerate}}
\medskip

\emph{Proof of Claim 3:} (1): For $c\in \overline{C}_{i-1}$, we have $\overline{\mu}(c)=\sum_tc_t^1\otimes c_t^2$, with $c_t^1,c_t^2\in \overline{C}_{i-1}$, so
$$\begin{matrix}(qp)^1(c)
&=&
q^0p^1(c)+q^1(c)p^0+\sum_tq^1(c_t^2)p^1(c_t^1)\hfill\\
  &=&
q^0p_i^1(c)+q^1(c)p_i^0+\sum_tq^1(c_t^2)p_i^1(c_t^1)\hfill\\
&=&
(qp_i)^1(c).\hfill\\
  \end{matrix}$$
  The proof of (2) is similar. \hfill$\square$
  \medskip
  
Then, for $c\in \overline{C}$, say with $c\in \overline{C}_{i-1}$, we have 
$$(heh^{-1})^1(c)=(h_i\cdots h_1eh^{-1})^1(c)=(h_i\cdots h_1e(h_i\cdots h_1)^{-1})^1(c)=e^1_i(c)=0.$$
Therefore, $(heh^{-1})^1=0$, as we wanted to show. 
\end{proof}

\begin{definition}\label{D: cats de inducidos}
 Let ${\cal B}=(C,\mu,\epsilon)$ be a graded $S$-bocs. A  graded right ${\cal B}$-comodule $(M, \mu_M)$ is called \emph{induced} iff it is isomorphic to a right ${\cal B}$-comodule of the form $\Ind_{\cal B}(N)=(N\otimes C,id_N\otimes \mu)$ for some $N\in \GM\g S$. We denote by $\GInd\g{\cal B}$ the full subcategory of $\GC\g{\cal B}$ formed by the induced ${\cal B}$-comodules. So $\GInd\g{\cal B}$ is an additive subcategory of $\GC\g{\cal B}$.
 
 If ${\cal B}=(C,\mu,\epsilon,\delta)$ is a  differential graded $S$-bocs, a differential graded  right ${\cal B}$-comodule $(M,\mu_M,\delta_M)$ is called \emph{induced} iff 
 $(M,\mu_M)\in\GInd\g{\cal B}$.
The full subcategory of $\DGC\g{\cal B}$ formed by the differential graded induced ${\cal B}$-comodules 
will be denoted by $\DGInd\g{\cal B}$. So, $\DGInd\g{\cal B}$ is an additive subcategory of $\DGC\g{\cal B}$. 

Here again the superindex $0$ in $\GInd^0\g{\cal B}$ and  $\DGInd^0\g{\cal B}$ indicates the subcategory with all objects and only degree zero morphisms. 
\end{definition}

\begin{proposition}\label{P: split idempotents}
Let ${\cal B}=(C,\mu,\epsilon,\delta)$ be a triangular differential graded  $S$-bocs.  Then, 
in the categories $\GM^0\g{\cal B}$, $\GInd^0\g{\cal B}$, $\TGM^0\g{\cal B}$, and $\DGInd^0\g{\cal B}$  idempotents split.
\end{proposition}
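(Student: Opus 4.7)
The plan is to derive the four splittings from the single splitting in $\GModi^0\g{\cal B}$ already established in Lemma~\ref{L: idemptes se div en GM-B}, using the various equivalences and full and faithful functors built earlier.

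First, I would handle $\GM^0\g{\cal B}$. The equivalence of graded categories $F:\GM\g{\cal B}\rightmap{}\GModi\g{\cal B}$ of Lemma~\ref{L: equiv directa entre GMod-B y G hueca(M)od-B} clearly restricts to an equivalence $F:\GM^0\g{\cal B}\rightmap{}\GModi^0\g{\cal B}$ on the degree zero subcategories. Since any equivalence of categories transports split idempotents along it, Lemma~\ref{L: idemptes se div en GM-B} yields the splitting in $\GM^0\g{\cal B}$.

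Next, $\TGM^0\g{\cal B}$ is immediate: any idempotent $e:(M,u)\rightmap{}(M,u)$ in $\TGM^0\g{\cal B}$ gives an idempotent $e:M\rightmap{}M$ in $\GM^0\g{\cal B}$, which splits by the previous step, so the hypothesis of Lemma~\ref{L: idemps split in GMod-B => idemps split in TGMod-B} is satisfied and $e$ splits in $\TGM^0\g{\cal B}$.

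For the remaining two categories I would use Proposition~\ref{P: comods coalgebras <--F--> mods bocses}. The functor $\Phi:\GM\g{\cal B}\rightmap{}\GC\g{\cal B}$ is full and faithful, lands in $\GInd\g{\cal B}$ by construction, and is essentially surjective onto $\GInd\g{\cal B}$ by the very definition of an induced comodule; hence its restriction to degree zero morphisms gives an equivalence $\GM^0\g{\cal B}\rightmap{}\GInd^0\g{\cal B}$, transporting the splitting. For $\DGInd^0\g{\cal B}$ the same argument works for $\Phi:\TGM\g{\cal B}\rightmap{}\DGC\g{\cal B}$: given $(N,\mu_N,\delta_N)\in\DGInd^0\g{\cal B}$, choose an iso of graded comodules $\alpha:(N,\mu_N)\rightmap{}\Ind_{\cal B}(M)$, transport the differential to $\delta'=\alpha\delta_N\alpha^{-1}$, and apply the bijection $\Theta$ from the proof of Proposition~\ref{P: comods coalgebras <--F--> mods bocses} to write $\delta'=id_M\otimes\delta+\Phi(u)$; the identity $\delta'^2=0$ then translates, via the computation $q_M\delta'^2=\hat\delta(u)+u*u$, into the Maurer-Cartan equation, so $(N,\mu_N,\delta_N)\cong\Phi(M,u)$. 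Thus $\Phi$ is essentially surjective onto $\DGInd^0\g{\cal B}$ as well, giving the last splitting.

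The only step requiring some care is the essential surjectivity of $\Phi$ onto $\DGInd^0\g{\cal B}$; everything else is a routine transport along equivalences or a direct appeal to Lemma~\ref{L: idemps split in GMod-B => idemps split in TGMod-B}. I do not foresee any obstacle beyond spelling out that the isomorphism $\alpha$ can be taken to be of degree zero, which is automatic since inducedness is a condition in the graded (degree zero) comodule category.
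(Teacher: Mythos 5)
Your proposal is correct and follows essentially the same route as the paper: splitting in $\GModi^0\g{\cal B}$ via (\ref{L: idemptes se div en GM-B}), transport along the equivalence to $\GM^0\g{\cal B}$, then (\ref{L: idemps split in GMod-B => idemps split in TGMod-B}) for $\TGM^0\g{\cal B}$, and the functors of (\ref{P: comods coalgebras <--F--> mods bocses}) for the comodule categories. The only difference is that you spell out the essential surjectivity of $\Phi$ onto $\GInd^0\g{\cal B}$ and $\DGInd^0\g{\cal B}$ (transporting the differential and recovering the Maurer--Cartan equation), a point the paper simply asserts when invoking the equivalences.
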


\begin{proof} From (\ref{L: idemptes se div en GM-B}), 
idempotents split in $\GModi^0\g {\cal B}$. But $\GM^0\g{\cal B}$ is equivalent to this category, so idempotents split in $\GM^0\g{\cal B}$. By  (\ref{L: idemps split in GMod-B => idemps split in TGMod-B}), idempotents split in 
$\TGM^0\g{\cal B}$, 

By (\ref{P: comods coalgebras <--F--> mods bocses}), we know that the categories
$\GM^0\g{\cal B}$ and $\GInd^0\g {\cal B}$ are equivalent categories, and that 
$\TGM^0\g{\cal B}$ and $\DGInd^0\g {\cal B}$ are equivalent categories, 
so idempotents split in  $\GInd^0\g {\cal B}$ and in $\DGInd^0\g {\cal B}$. 
\end{proof}

\begin{remark}\label{R: varios canonical embeddings}
Given a triangular graded $S$-bocs ${\cal B}$, we have the equivalent 
categories $\GM\g{\cal B}\simeq\GModi\g{\cal B}\simeq \GModit \g{\cal B}$. Then, we have canonical embeddings $\GM\g S\rightmap{}\GModi\g{\cal B}$ and $\GM\g S\rightmap{}\GModit \g{\cal B}$ which we denote with the same symbol $L$, as in (\ref{L: el canonical embedding}). 

Notice that given $h:M\rightmap{}N$ in $\GM\g S$, we have $L(h)=(h,0)$ in $\GModi\g{\cal B}$ and in $\GModit \g{\cal B}$.
\end{remark}

The following statement can be proved as in \cite{BSZ}(6.2). 

\begin{lemma}\label{L: embedding preserves exactness}
Let ${\cal B}$ be a  triangular graded $S$-bocs. Then, any exact sequence 
$$0\rightmap{}M\rightmap{f^0}E\rightmap{g^0}N\rightmap{}0$$
in $\GM\g S$ determines an exact pair 
$M\rightmap{ \ (f^0,0) \ }E\rightmap{ \ (g^0,0) \ }N$
in $\GModit \g {\cal B}$. 
\end{lemma}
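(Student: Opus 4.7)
The plan is to verify directly that the pair $(f^0,0),(g^0,0)$ forms a kernel--cokernel pair in $\GModit\g{\cal B}$. By the isomorphism $\GModi\g{\cal B}\cong \GModit\g{\cal B}$ mentioned after (\ref{R: canonical embedding GM-S --> GM-B}), it is equivalent (and computationally cleaner) to verify this in $\GModi\g{\cal B}$, where a morphism is a pair $(\alpha^0,\alpha^1)$ with $\alpha^0\in\Hom_{\GM\g S}(X,Y)$ and $\alpha^1\in\Hom_{\GM\g S}(X\otimes_S\overline{C},Y)$, composed by the formula of (\ref{D: hueca(M)od-B}). That the composition $(g^0,0)*(f^0,0)$ vanishes is immediate: the zeroth component is $g^0f^0=0$, and each of the three summands in the first component contains a factor coming from $f^1=0$ or $g^1=0$.

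For the kernel property, I would take an arbitrary morphism $(h^0,h^1):X\to E$ in $\GModi^0\g{\cal B}$ satisfying $(g^0,0)*(h^0,h^1)=0$. Reading off components of this identity using (\ref{D: hueca(M)od-B}) yields $g^0h^0=0$ and $g^0h^1=0$ in $\GM\g S$. Since $\GM\g S$ is abelian and $0\to M\xrightarrow{f^0}E$ is exact, these equations produce unique factorizations $h^0=f^0\phi^0$ in $\Hom^0_{\GM\g S}(X,M)$ and $h^1=f^0\phi^1$ in $\Hom^0_{\GM\g S}(X\otimes_S\overline{C},M)$. Plugging the pair $(\phi^0,\phi^1)$ back into the composition formula with $(f^0,0)$ immediately recovers $(h^0,h^1)$, and uniqueness of $(\phi^0,\phi^1)$ follows from injectivity of $f^0$.

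For the cokernel property, I would take $(k^0,k^1):E\to Y$ satisfying $(k^0,k^1)*(f^0,0)=0$ and read off $k^0f^0=0$ together with $k^1(f^0\otimes id_{\overline{C}})=0$. Here the only nontrivial ingredient enters: since $S$ is semisimple, the functor $-\otimes_S\overline{C}$ is exact, so
\[
0\to M\otimes_S\overline{C}\xrightarrow{f^0\otimes id}E\otimes_S\overline{C}\xrightarrow{g^0\otimes id}N\otimes_S\overline{C}\to 0
\]
is exact in $\GM\g S$. The universal property of cokernels then provides unique $\psi^0\in\Hom^0_{\GM\g S}(N,Y)$ and $\psi^1\in\Hom^0_{\GM\g S}(N\otimes_S\overline{C},Y)$ with $k^0=\psi^0g^0$ and $k^1=\psi^1(g^0\otimes id_{\overline{C}})$, and $(\psi^0,\psi^1)*(g^0,0)=(k^0,k^1)$ by direct computation. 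The expected main (actually only) conceptual obstacle is precisely this use of semisimplicity of $S$ to factor the $k^1$-component through $g^0\otimes id_{\overline{C}}$; everything else is routine bookkeeping with the composition formula in $\GModi\g{\cal B}$.
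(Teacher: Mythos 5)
Your proof is correct and is essentially the intended argument: the paper itself gives no written proof but refers to \cite{BSZ}(6.2), which is the same direct componentwise verification that $(f^0,0)$ and $(g^0,0)$ satisfy the kernel and cokernel universal properties with respect to the composition formula of $\GModi\g{\cal B}$. One small remark: semisimplicity of $S$ is not actually needed in the cokernel step, since right-exactness of the functor $-\otimes_S\overline{C}$ already guarantees that $g^0\otimes id_{\overline{C}}$ is the cokernel of $f^0\otimes id_{\overline{C}}$, so $k^1$ factors uniquely through it without invoking that the sequence splits.
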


\begin{lemma}\label{L: B es acceptable}
Let ${\cal B}$ be a triangular graded  $S$-bocs. Then, for any morphism $f=(f^0,f^1):M\rightmap{}N$ in $\GModi^0\g{\cal B}$ the following holds.
\begin{enumerate}
 \item If $f^0$ is surjective, there is an automorphism  $h$ of $M$  such that $(f*h)^1=0$.
 \item If $f^0$ is injective, there is an automorphism  $h$ of $N$ such that $(h*f)^1=0$.
\end{enumerate}
\end{lemma}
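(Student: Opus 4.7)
The plan is to mimic the two steps in the proof of Proposition \ref{P: conflations in GMod-B}, which contains exactly the computations needed for each part of the lemma, and to invoke the local nilpotency results on triangular bocses to guarantee that the geometric-series-type automorphisms we write down make sense.

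For part (2), assume $f^0:M\rightmap{}N$ is injective. Since $S$ is semisimple (and hence $\GM\g S$ is semisimple), there is a retraction $p:N\rightmap{}M$ in $\GM\g S$ with $pf^0=id_M$. Set $\underline{p}=(p,0):N\rightmap{}M$ and $\underline{f}^1=(0,f^1):M\rightmap{}N$, both morphisms in $\GModi^0\g {\cal B}$, and let $u:=\underline{f}^1*\underline{p}\in \End_{\GModi^0\g{\cal B}}(N)$. Then $u^0=0$, so by (\ref{P: morfismos localmente nilpotentes en bocses}) the sum $v:=\sum_{n=1}^{\infty}(-1)^n u^n$ defines a morphism in $\GModi^0\g{\cal B}$, and by (\ref{C: f iso sii f0 es iso}) the morphism $h:=\hueca{I}_N+v$ is an automorphism of $N$. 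The verification that $(h*f)^1=0$ is exactly the computation carried out in Step~1 of the proof of (\ref{P: conflations in GMod-B}): one uses $\underline{p}*\underline{f}^0=\hueca{I}_M$ together with the telescoping
$$\underline{f}^1 + v*\underline{f}^0 + v*\underline{f}^1 = \underline{f}^1 - \underline{f}^1 + \sum_{n=1}^{\infty}\bigl[(-1)^{n+1}+(-1)^n\bigr](\underline{f}^1*\underline{p})^n*\underline{f}^1 = 0.$$

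For part (1), assume $f^0:M\rightmap{}N$ is surjective. Again by semisimplicity of $S$, there is a section $s:N\rightmap{}M$ with $f^0 s=id_N$; put $\underline{s}=(s,0)$ and $u':=\underline{s}*\underline{f}^1\in\End_{\GModi^0\g{\cal B}}(M)$, which has $(u')^0=0$. Proposition (\ref{P: morfismos localmente nilpotentes en bocses}) and Corollary (\ref{C: f iso sii f0 es iso}) then guarantee that $h:=\hueca{I}_M+\sum_{n=1}^{\infty}(-1)^n (u')^n$ is an automorphism of $M$ in $\GModi^0\g{\cal B}$, and the computation from Step~2 of (\ref{P: conflations in GMod-B}) (with the roles of $\underline{f}^0$ and the section swapped accordingly) yields $(f*h)^1=0$.

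The only point that requires any real thought is book-keeping: making sure that in each case $u$ is an endomorphism of the correct object (of $N$ in part (2), of $M$ in part (1)) so that the infinite series lives in the correct endomorphism space, and that $u^0=0$ so that (\ref{P: morfismos localmente nilpotentes en bocses}) applies. The actual cancellations are then formal telescoping, identical to those already performed in the proof of (\ref{P: conflations in GMod-B}), so no new obstacle is expected.
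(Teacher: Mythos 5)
Your proof is correct, but it takes a genuinely different route from the one in the paper. The paper proves this lemma (only part (1), leaving (2) as dual) by the inductive method of \cite{BSZ}(5.7): working in $\GModit^0\g{\cal B}$, it builds a sequence of correction isomorphisms $h_i=(id_M,h_i^1)$ with $h_{i+1}^1(c)=-t^0f_i^1(c)$ along the triangular filtration $\overline{C}_0\subseteq\overline{C}_1\subseteq\cdots$, checks that the compositions $h_1h_2\cdots h_i$ stabilize on each $\overline{C}_{i-1}$, and glues them into a single automorphism $h$ with $(fh)^1=0$. You instead observe that Steps 1 and 2 of the proof of (\ref{P: conflations in GMod-B}) already contain exactly the two required computations and never use the hypothesis $g*f=0$ of that proposition: splitting $f^0$ (semisimplicity of $S$), forming $u=\underline{f}^1*\underline{p}$ (resp.\ $u'=\underline{s}*\underline{f}^1$) with vanishing first component, invoking (\ref{P: morfismos localmente nilpotentes en bocses}) so that the alternating geometric series is defined, (\ref{C: f iso sii f0 es iso}) for invertibility, and the telescoping cancellation to kill the second component. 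Both arguments ultimately rest on triangularity — the paper through the stabilization of the $h_i$ on the filtration, yours through the local nilpotency that makes $\sum_n(-1)^nu^n$ meaningful — and both need $S$ semisimple to split $f^0$. Your version is shorter and produces the automorphism in closed form as a single geometric series; the paper's filtration induction is the template it reuses elsewhere (e.g.\ in the idempotent-splitting proof (\ref{L: idemptes se div en GM-B})), which is presumably why it is presented in that style. Your bookkeeping (that $u$ is an endomorphism of $N$ in part (2) and of $M$ in part (1), with $u^0=0$) is exactly the point that needed checking, and it is right.
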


\begin{proof} We will prove only (1). Assume that $f^0:M\rightmap{}N$ is surjective in $\GM\g S$. Since $S$ is semisimple, $f^0$ is a retraction. Choose a  right inverse $t^0:N\rightmap{}M$ for $f^0$. Adopt the notation of (\ref{D: triangular bocs}), skip the star in the notation for the composition in $\GModit^0 \g{\cal B}$, and make $\overline{C}_{-1}=0$. We first show the following. 
 
 \medskip
 \emph{Claim: There is a sequence of isomorphisms $\cdots \rightmap{h_3}M\rightmap{h_2}M\rightmap{h_1}M$ such that, for each $i\geq 1$, we have  
 \begin{enumerate}
  \item Each isomorphism has the form $h_i=(id_M,h^1_i)$;
  \item $h^1_{i}(\overline{C}_{i-2})=0$; and 
  \item For $f_i:=fh_1h_2\cdots h_i$, we have $f_i^1(\overline{C}_{i-1})=0$.
 \end{enumerate}}

\emph{Proof of the Claim.} The inductive argument is essentially the same as the one given in \cite{BSZ}(5.7), but we rephrase it here for the sake of the reader. At the base of the induction, we have the isomorphism  $h_1=(id_M,0):M\rightmap{}M$ such that $h_1^1(\overline{C}_{-1})=0$ and the morphism $f_1=f$ is such that $f_1^1(\overline{C}_0)=0$. 

Assume that we have constructed the isomorphisms $h_1,\ldots,h_i$ satisfying conditions \emph{1--3} and define 
$h_{i+1}^1(c):=-t^0f_i^1(c)$, for $c\in \overline{C}$.

Clearly $h_{i+1}^1(\overline{C}_{i-1})=0$ and $h_{i+1}^1\in\Hom^0_{\GM\g S\g S}(\overline{C},\Hom_{\GM\g k}(M,M))$. Then, the pair $h_{i+1}=(id_M,h_{i+1}^1):M\rightmap{} M$ is a morphism in $\GModit^0 \g {\cal B}$. 
From (\ref{C: f iso sii f0 es iso}), we know that $h_{i+1}$ is an isomorphism in $\GModit^0 \g{\cal B}$. 
Moreover, for $c\in \overline{C}_{i}$, we have $\overline{\mu}(c)=\sum_tc_t^1\otimes c_t^2$, with $c_t^1,c_t^2\in \overline{C}_{i-1}$. Notice also that $f_i^0=(fh_1\cdots h_i)^0=f^0$ and $h_{i+1}^0=id_M$. Then, we have 
$$\begin{matrix}
  f_{i+1}^1(c)&=&(f_ih_{i+1})^1(c)\hfill\\
  &=&
  f_i^0h_{i+1}^1(c)+f^1_i(c)h_{i+1}^0+\sum_tf_i^1(c_t^2)h_{i+1}^1(c_t^1)\hfill\\
  &=&
  -f^0t^0f_i^1(c)+f_i^1(c)=0,\hfill\\
  \end{matrix}$$ as we wanted. \hfill$\square$
\medskip

From (1) and (2), we obtain, for $i\geq 1$ and $c\in \overline{C}_{i-1}$, the equality
$$(h_1h_2\cdots h_{i+1})^1(c)=(h_1h_2\cdots h_i)^1(c).$$
Indeed, this is clear for $i=1$. For $i\geq 2$, we have $(h_1h_2\cdots h_i)^0=id_M$ and  $h^0_{i+1}=id_M$. By assumption, $\overline{\mu}(c)=\sum_tc^1_t\otimes c^2_t$, with $c^1_t,c^2_t\in \overline{C}_{i-2}$. Then, we obtain
$$\begin{matrix}
   (h_1h_2\cdots h_{i+1})^1(c)
   &=&
   ((h_1h_2\cdots h_i)h_{i+1}))^1(c)\hfill\\
   &=&
   (h_1h_2\cdots h_i)^1(c)+h_{i+1}^1(c)\hfill\\
   &&+\,\sum_t  (h_1h_2\cdots h_i)^1 (c_t^2)h^1_{i+1}(c_t^1)\hfill\\
   &=&
   (h_1h_2\cdots h_i)^1(c).\hfill\\
  \end{matrix}$$
Then, we can consider the map $h^1:\overline{C}\rightmap{}\Hom_{\GM\g k}(M,M)$ defined by 
$$h^1(c)=(h_1h_2\cdots h_i)^1(c), \hbox{ for } c\in \overline{C}_{i-1}.$$
Since $h^1$ is a morphism of graded $S$-$S$-bimodules, we can consider the isomorphism $h=(id_M,h^1):M\rightmap{}M$ in $\GModit^0 \g{\cal B}$. 
For $c\in \overline{C}$, say with $c\in \overline{C}_{i-1}$, we have 
$$(fh)^1(c)=(fh_1h_2\cdots h_i)^1(c)=f^1_i(c)=0.$$
\end{proof}

\begin{proposition}\label{P: estructura exacta de GMod-B}
Let ${\cal B}$ be a triangular differential  $S$-bocs. Denote by ${\cal E}_0$ the class of composable pairs $M\rightmap{f}E\rightmap{g}N$ in $\GModi^0\g {\cal B}$ such that $g*f=0$ and 
$$0\rightmap{}M\rightmap{f^0}E\rightmap{g^0}N\rightmap{}0$$
is an exact sequence in $\GM\g S$. Then we have:
\begin{enumerate}
 \item The pair  $(\GModi^0\g{\cal B},{\cal E}_0)$ is an exact category. The class ${\cal E}_0$ consists of the split exact pairs in $\GModi^0\g{\cal B}$. 
 \item A morphism $f:M\rightmap{}E$ in $\GModi^0\g{\cal B}$ is an ${\cal E}_0$-inflation iff $f^0:M\rightmap{}E$ is injective.
 \item A morphism $g:E\rightmap{}N$ in $\GModi^0\g {\cal B}$ is an  ${\cal E}_0$-deflation iff $g^0:E\rightmap{}N$ is surjective.
\end{enumerate}
\end{proposition}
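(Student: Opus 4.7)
The plan is to identify ${\cal E}_0$ with the class of split exact pairs in $\GModi^0\g{\cal B}$, and then invoke the general principle that in any additive category in which idempotents split, the class of split exact pairs forms an exact structure. The three ingredients are: Proposition \ref{P: conflations in GMod-B}, which says every pair in ${\cal E}_0$ is isomorphic, as a pair, to one in the image of the canonical embedding $L:\GM\g S\rightmap{}\GModi^0\g{\cal B}$; Lemma \ref{L: B es acceptable}, which will be used to handle morphisms whose first component is either injective or surjective; and Lemma \ref{L: idemptes se div en GM-B}, which gives splitting of idempotents.

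First I would show that every pair in ${\cal E}_0$ is split. Given $M\rightmap{f}E\rightmap{g}N$ in ${\cal E}_0$, Proposition \ref{P: conflations in GMod-B} produces an automorphism $h$ of $E$ fitting into a commutative diagram whose bottom row is $M\rightmap{L(f^0)} E\rightmap{L(g^0)} N$. Since $S$ is semisimple, the exact sequence $0\rightmap{}M\rightmap{f^0}E\rightmap{g^0}N\rightmap{}0$ splits in $\GM\g S$, so there exist $r:E\rightmap{}M$ and $s:N\rightmap{}E$ with $r f^0=id_M$ and $g^0 s=id_N$. Applying the functor $L$ gives $L(r)*L(f^0)=\hueca{I}_M$ and $L(g^0)*L(s)=\hueca{I}_N$, so the bottom row splits; composing with $h,h^{-1}$ transfers the splitting back to the original pair. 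In particular, every pair in ${\cal E}_0$ is a split exact pair.

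Next I would establish the characterizations (2) and (3). Necessity is immediate from the definition of ${\cal E}_0$. For sufficiency of (2), suppose $f:M\rightmap{}E$ has $f^0$ injective. By Lemma \ref{L: B es acceptable}(2) there is an automorphism $h$ of $E$ with $(h*f)^1=0$, so $h*f=L(f^0)$. Semisimplicity of $S$ produces a cokernel $g^0:E\rightmap{}N$ for $f^0$ yielding a short exact sequence in $\GM\g S$, and Lemma \ref{L: embedding preserves exactness} shows that $M\rightmap{L(f^0)}E\rightmap{L(g^0)}N$ lies in ${\cal E}_0$. Thus $L(f^0)=h*f$ is an ${\cal E}_0$-inflation, and since $f=h^{-1}*L(f^0)$ differs from $L(f^0)$ only by an automorphism of the middle term, $f$ itself is an ${\cal E}_0$-inflation. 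The argument for (3) is the symmetric use of Lemma \ref{L: B es acceptable}(1), together with the fact that a surjection in $\GM\g S$ admits a kernel by semisimplicity.

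Finally, the exactness axioms follow formally. By the previous paragraph, ${\cal E}_0$-inflations (resp.\ deflations) are precisely those morphisms whose first component is injective (resp.\ surjective); these classes are obviously closed under composition and contain all identities. For the pullback and pushout axioms, given an ${\cal E}_0$-deflation $g:E\rightmap{}N$ and an arbitrary morphism $\alpha:N'\rightmap{}N$ in $\GModi^0\g{\cal B}$, one builds the pullback on the first components (which exists in $\GM\g S$ because $\GM\g S$ is semisimple) and transports the second-component data using the splitting provided by Proposition \ref{P: conflations in GMod-B}; the pullback in $\GModi^0\g{\cal B}$ inherits the property that its first component is surjective, so it is again an ${\cal E}_0$-deflation. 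Dually for pushouts of inflations. Alternatively, having shown ${\cal E}_0$ equals the class of split exact pairs and that idempotents split by Lemma \ref{L: idemptes se div en GM-B}, one may invoke the standard fact that split exact pairs form an exact structure on any additive category with split idempotents. The main technical point, and the only place genuine work is done, is the reduction via Proposition \ref{P: conflations in GMod-B} and Lemma \ref{L: B es acceptable} to first components, where the semisimplicity of $S$ does the rest.
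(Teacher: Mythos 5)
Your proposal is correct and follows essentially the same route as the paper: it reduces pairs in ${\cal E}_0$ to their first components via (\ref{P: conflations in GMod-B}), uses (\ref{L: B es acceptable}), (\ref{L: embedding preserves exactness}) and the semisimplicity of $S$ to identify inflations and deflations, and uses (\ref{L: idemptes se div en GM-B}) for idempotent splitting. The only difference is one of presentation: where you verify the exact-category axioms directly (or invoke the standard fact that split exact pairs form an exact structure on an additive category with split idempotents), the paper simply refers to the analogous argument in \cite{BSZ}(6.6 and 6.7).
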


\begin{proof} By (\ref{L: idemptes se div en GM-B}), we already know in the category $\GModit^0 \g{\cal B}$ idempotents split.  Follow the argument of the proof of \cite{BSZ}(6.6 and 6.7), using 
(\ref{L: embedding preserves exactness}) and (\ref{L: B es acceptable}), to show that 
$(\GModi^0\g{\cal B},{\cal E}_0)$ is an exact category. From (\ref{P: conflations in GMod-B}), 
(\ref{C: f iso sii f0 es iso}), and (\ref{L: embedding preserves exactness}) we get that any composable pair in 
${\cal E}_0$ is a split exact pair in $\GModi^0\g{\cal B}$. 
\end{proof}

\section{The Frobenius category of twisted modules}

Given a triangular differential graded $S$-bocs ${\cal B}=(C,\mu,\epsilon,\delta)$ we will describe a natural structure of a Frobenius category on $\TGModi^0\g{\cal B}$, in the following sense. 

 \begin{definition}\label{D: Frobenius category}
 Let ${\cal A}$ be  an additive $k$-category  where idempotents split, 
  and let ${\cal E}$ be an exact structure on ${\cal A}$.   
  Then $({\cal A},{\cal E})$ is called a \emph{Frobenius category}\index{Frobenius category} iff 
  it has enough ${\cal E}$-projectives and enough ${\cal E}$-injectives and, moreover, 
  the class of ${\cal E}$-projectives coincides with the class of ${\cal E}$-injectives. 
 \end{definition}
 
 \begin{definition}\label{D: la clase de pares componibles cal E}
  Consider the class ${\cal E}$ of composable morphisms in $\TGModi^0\g {\cal B}$ 
 $$\begin{matrix}(M,u)&\rightmap{f}&(E,v)&\rightmap{g}&(N,w)\end{matrix}$$
 such that $g*f=0$ and the short sequence of first components  
 $$\begin{matrix}0&\rightmap{}&M&\rightmap{f^0}&E&\rightmap{g^0}&N&\rightmap{}&0\end{matrix}$$
 is an exact sequence in $\GM\g S$. Equivalently, 
 the composable pair  
 $$M\rightmap{f}E\rightmap{g}N$$
 is a split exact pair in $\GModi^0\g{\cal B}$. 
 \end{definition}

 \begin{lemma}\label{L: split underlying pairs produce exact pairs}
 Any composable pair of morphisms in $\TGModi^0\g{\cal B}$ of the form
 $$\begin{matrix}(M_1,u_1)&\rightmap{ \ s=(\hueca{I}_{M_1},0)^t \ }&(M_1\oplus M_2,v)&
 \rightmap{ \ p=(0,\hueca{I}_{M_2}) \ }&(M_2,u_2)\end{matrix}$$
 belongs to ${\cal E}$ and is an exact pair in $\TGModi^0\g{\cal B}$. 
 \end{lemma}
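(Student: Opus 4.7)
The plan is to proceed in three steps: first pin down the shape of $v$, then verify membership in $\mathcal{E}$, and finally establish the kernel–cokernel property.

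First, I observe that $s=L(\sigma_1)$ and $p=L(\pi_2)$ are images under the canonical embedding $L:\GM\g S\to\GM\g{\cal B}$ of the canonical injection and projection of the direct sum in $\GM\g S$. By the Remark following (\ref{R: matrices of morphisms in GM-B}), this gives $\widehat{\delta}(s)=0=\widehat{\delta}(p)$ and $p*s=L(\pi_2\sigma_1)=0$. Writing $v=\bigl(\begin{smallmatrix}v_{11}&v_{12}\\ v_{21}&v_{22}\end{smallmatrix}\bigr)$ in the matrix notation of (\ref{R: matrices of morphisms in GM-B}), the requirement that $s$ be a morphism in $\TGModi^0\g{\cal B}$ reduces (since $\widehat{\delta}(s)=0$) to $v*s=s*u_1$; computing columnwise this yields $v_{11}=u_1$ and $v_{21}=0$. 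Dually, the morphism condition on $p$ forces $v_{22}=u_2$ and again $v_{21}=0$. Hence $v$ is forced to be upper triangular:
$$v=\begin{pmatrix} u_1 & v_{12}\\ 0 & u_2\end{pmatrix}$$
for some $v_{12}\in\Hom^1_{\GM\g{\cal B}}(M_2,M_1)$ (subject to the Maurer–Cartan equation on $v$, which will not actually be needed below).

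Second, membership in $\mathcal{E}$ is immediate from Step 1: we already have $p*s=0$, and the first components of $s$ and $p$ are the canonical injection $\sigma_1:M_1\to M_1\oplus M_2$ and projection $\pi_2:M_1\oplus M_2\to M_2$, which form the (split) short exact sequence $0\to M_1\to M_1\oplus M_2\to M_2\to 0$ in $\GM\g S$.

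Third, I will show the pair is a kernel–cokernel pair in $\TGModi^0\g{\cal B}$. For the kernel property, let $q:(X,u_X)\to(M_1\oplus M_2,v)$ be a morphism in $\TGModi^0\g{\cal B}$ with $p*q=0$. Writing $q$ in matrix form as $\bigl(\begin{smallmatrix}q_1\\ q_2\end{smallmatrix}\bigr)$, the equation $p*q=0$ gives $q_2=0$, so $q=s*q_1$ with $q_1:X\to M_1$ in $\GM\g{\cal B}$. Applying Leibniz, $\widehat{\delta}(s)=0$, and the identity $v*s=s*u_1$ established above, the Maurer–Cartan condition $\widehat{\delta}(q)+v*q-q*u_X=0$ transforms into
$$s*\bigl[\widehat{\delta}(q_1)+u_1*q_1-q_1*u_X\bigr]=0.$$
Since $s=\underline{\sigma}_1$ has the left inverse $\underline{\pi}_1$ in $\GM\g{\cal B}$ (as $\underline{\pi}_1*\underline{\sigma}_1=L(\pi_1\sigma_1)=L(id_{M_1})=\hueca{I}_{M_1}$), precomposing with $\underline{\pi}_1$ yields $\widehat{\delta}(q_1)+u_1*q_1-q_1*u_X=0$, so $q_1:(X,u_X)\to(M_1,u_1)$ is a morphism in $\TGModi^0\g{\cal B}$. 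Uniqueness of the factorization follows from the same left inverse. The cokernel property for $p$ is proved dually, using the identity $p*v=u_2*p$, the vanishing $\widehat{\delta}(p)=0$, and the right inverse $\underline{\sigma}_2$ of $p$ in $\GM\g{\cal B}$.

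The argument is almost purely formal bookkeeping; the only subtle point is justifying the matrix calculus for $v$ (so that entries can be read off and $\widehat{\delta}$ acts entrywise), which is supplied by (\ref{R: matrices of morphisms in GM-B}). No appeal to triangularity of $\mathcal{B}$ nor to the Maurer–Cartan equation on $v_{12}$ is required.
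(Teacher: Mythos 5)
Your proof is correct and takes essentially the same route as the paper's: membership in ${\cal E}$ comes from the underlying split pair, and the kernel (resp.\ cokernel) property is obtained by factoring a test morphism through $s$ (resp.\ $p$), using $\widehat{\delta}(s)=\widehat{\delta}(p)=0$ together with $v*s=s*u_1$ (resp.\ $p*v=u_2*p$), and then cancelling the split monomorphism (resp.\ epimorphism) in $\GModi^0\g{\cal B}$. Your Step 1 determination of the upper-triangular shape of $v$ is correct but, as you yourself note, superfluous; the paper works directly with the identity $v*s=s*u_1$ without it.
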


 \begin{proof} Clearly $M_1\rightmap{s}M_1\oplus M_2\rightmap{p}M_2$ is a split exact pair in $\GModi^0\g{\cal B}$. 
 
 In order to show that $s$ is the kernel of $p$ in $\TGModi^0\g{\cal B}$, take any morphism 
 $t:(N,w)\rightmap{}(M_1\oplus M_2,v)$ in $\TGModi^0\g{\cal B}$ such that $p*t=0$. Then, there is a morphism 
 $t':N\rightmap{}M_1$ in $\GModi^0\g{\cal B}$ such that $t=s*t'$. 
 
 We have $\widehat{\delta}(t)=\widehat{\delta}(s*t')=s*\widehat{\delta}(t')$. Since $s$ is a morphism in $\TGModi^0\g{\cal B}$, we have 
 $\widehat{\delta}(s)+v*s-s*u_1=v*s-s*u_1$, so $v*s=s*u_1$. We also have that 
 $\widehat{\delta}(t)+v*t-t*w=0$. Then, 
  $$\begin{matrix}
    s*[\widehat{\delta}(t')+u_1*t'-t'*w]
    &=&
    s*\widehat{\delta}(t')+s*u_1*t'-s*t'*w\hfill\\
    &=&
    \widehat{\delta}(t)+v*s*t'-s*t'*w\hfill\\
    &=&
 \widehat{\delta}(t)+v*t-t*w=0.\hfill\\
    \end{matrix}$$
Since $s$ is a monomorphism in $\GModi^0\g {\cal B}$, we obtain $\widehat{\delta}(t')+u_1*t'-t'*w=0$. So $t':(N,w)\rightmap{}(M_1,u_1)$ is a morphism in $\TGModi^0\g{\cal B}$. Hence, $s$ is the kernel of $p$ in $\TGModi^0\g{\cal B}$. The proof of the fact that the cokernel of $s$ is $p$ in $\TGModi^0\g{\cal B}$ is dual.
 \end{proof}

 \begin{lemma}\label{L: deflations and inflations in TGMod-B}
 Let ${\cal B}$ be a triangular differential $S$-bocs. Then, we have:
 \begin{enumerate}
  \item  A morphism $g:(M,u)\rightmap{}(M_2,u_2)$ in $\TGModi^0\g {\cal B}$ is an $\cal E$-deflation iff $g:M\rightmap{}M_2$ is a retraction in $\GModi^0\g {\cal B}$. 
  \item A morphism $f:(M_1,u_1)\rightmap{}(M,u)$ in $\TGModi^0\g {\cal B}$ is an $\cal E$-inflation  iff $f:M_1\rightmap{}M$ is a  section in $\GModi^0\g {\cal B}$. 
  \item Every composable pair in ${\cal E}$ is an exact pair in $\TGModi^0\g {\cal B}$.
 \end{enumerate}
 \end{lemma}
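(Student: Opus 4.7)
The plan is to prove (3) first, then derive (1) and (2) by constructing explicit ${\cal E}$-pairs from retractions and sections.

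For part (3), start with a composable pair $(M,u)\rightmap{f}(E,v)\rightmap{g}(N,w)$ in ${\cal E}$. By Proposition \ref{P: conflations homog en TGMod-B}, after an isomorphism in $\TGModi^0\g{\cal B}$, both components of the two morphisms are reduced to $(f^0,0)=L(f^0)$ and $(g^0,0)=L(g^0)$. Semisimplicity of $S$ yields a splitting $\phi:E\to M\oplus N$ in $\GM\g S$ aligning $f^0$ with the canonical inclusion $(\hueca{I}_M,0)^t$ and $g^0$ with the canonical projection $(0,\hueca{I}_N)$; the promoted morphism $L(\phi)$ is an isomorphism in $\GModi^0\g{\cal B}$ by Corollary \ref{C: f iso sii f0 es iso}, and Lemma \ref{L: iso en GMod-B de M en N copia la u de M en una de N} lifts it uniquely to an isomorphism in $\TGModi^0\g{\cal B}$ onto some twisted module $(M\oplus N,v'')$. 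The transported pair is then exactly of the shape required by Lemma \ref{L: split underlying pairs produce exact pairs}, hence is an exact pair in $\TGModi^0\g{\cal B}$; exactness transports back to the original pair.

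For (1), the forward implication is direct: an ${\cal E}$-deflation $g$ has $g^0$ surjective, and combining Lemma \ref{L: B es acceptable}(1) (which produces an automorphism $h$ of $M$ in $\GModi^0\g{\cal B}$ with $g*h=L(g^0)$ and $h^0=\mathrm{id}_M$) with $L(t^0)$ for a right inverse $t^0$ of $g^0$ yields $g*h*L(t^0)=L(g^0t^0)=\hueca{I}_{M_2}$, so $g$ is a retraction in $\GModi^0\g{\cal B}$. For the converse, suppose $g$ is a retraction in $\GModi^0\g{\cal B}$, so $g^0$ is surjective; choose a splitting $\phi:M\to K\oplus M_2$ in $\GM\g S$ with $K=\ker g^0$. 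Using Lemma \ref{L: B es acceptable}(1) to get $h$ with $g*h=L(g^0)$ and then composing with $L(\phi^{-1})$, and lifting both stages by Lemma \ref{L: iso en GMod-B de M en N copia la u de M en una de N}, one obtains an isomorphism $\eta:(M,u)\to (K\oplus M_2,w)$ in $\TGModi^0\g{\cal B}$ under which $g$ becomes $L((0,\hueca{I}_{M_2}))$. Since $\widehat{\delta}(L((0,\hueca{I}_{M_2})))=0$ by Remark \ref{R: matrices of morphisms in GM-B}, the twisted-module identity for $L((0,\hueca{I}_{M_2}))$ reads $u_2*L((0,\hueca{I}_{M_2}))=L((0,\hueca{I}_{M_2}))*w$; a component computation forces $w$ to be block upper triangular with bottom-right block $u_2$, and the Maurer--Cartan equation for $w$ then restricts to one for the top-left block $u_K$. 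The split pair $(K,u_K)\rightmap{L((\hueca{I}_K,0)^t)}(K\oplus M_2,w)\rightmap{L((0,\hueca{I}_{M_2}))}(M_2,u_2)$ lies in ${\cal E}$, and transporting back through $\eta^{-1}$ produces an ${\cal E}$-pair ending at $g$. Part (2) is handled dually using Lemma \ref{L: B es acceptable}(2) in place of (1).

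The main obstacle is the converse in (1) (and dually in (2)): producing a twisted module structure $u_K$ on the kernel $K=\ker g^0$, which is not directly inherited from $(M,u)$. The device is to first normalize $g$ to the canonical form $L((0,\hueca{I}_{M_2}))$ using the tools from section 4, and then extract $u_K$ from the block-triangular structure that the transported Maurer--Cartan element $w$ on $K\oplus M_2$ is forced to have.
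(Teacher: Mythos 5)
Your proof is correct, and at its core it runs on the same mechanism as the paper's: transport the twist along an isomorphism of the middle term using (\ref{L: iso en GMod-B de M en N copia la u de M en una de N}), use $\widehat{\delta}(L((0,\hueca{I}_{M_2})))=0$ to force the transported Maurer--Cartan element into block upper-triangular form with the induced twist on the complement, and conclude with (\ref{L: split underlying pairs produce exact pairs}). It differs in two genuine respects. First, in (1) you manufacture the normalizing isomorphism from (\ref{L: B es acceptable})(1) plus an explicit $S$-linear splitting of $g^0$ (semisimplicity), whereas the paper gets the same diagram from the splitting of idempotents in $\GModi^0\g{\cal B}$, i.e. (\ref{L: idemptes se div en GM-B}); your route bypasses the idempotent-splitting theorem, while the paper's produces the kernel of the retraction directly inside $\GModi^0\g{\cal B}$. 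Second, you prove (3) independently by normalizing the whole pair --- killing second components via (\ref{P: conflations homog en TGMod-B}) and then straightening both first components at once with a single $L(\phi)$, so the pair becomes literally the canonical split pair --- whereas the paper derives (3) from the construction in (1), obtaining a comparison morphism $t$ from the kernel property of the canonical pair and checking it is an isomorphism on first components via (\ref{C: f iso sii f0 es iso para TwMod-B}); your version trades that comparison-morphism step for an appeal to (\ref{P: conflations homog en TGMod-B}). Two harmless imprecisions: after applying (\ref{P: conflations homog en TGMod-B}) the maps to be straightened have first components $h^0f^0$ and $g^0(h^0)^{-1}$ rather than $f^0$ and $g^0$ (the sequence is still exact, so the splitting argument is unaffected), and (\ref{L: B es acceptable}) only asserts $(g*h)^1=0$, the extra fact $h^0=id_M$ being visible in its proof --- without it one simply composes with $L((h^0)^{-1}t^0)$ instead.
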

 
 \begin{proof} (1): Assume that $g:M\rightmap{}M_2$ is a retraction in $\GModi^0\g {\cal B}$. Since in 
 $\GModi^0\g {\cal B}$ idempotents split, the retraction $g$ has a kernel and, moreover, 
 there is a commutative diagram in $\GModi^0\g {\cal B}$ 
 $$\begin{matrix}
 M_1  &\rightmap{ \ f \ }&M&\rightmap{ \ g \ }&M_2\\
  \parallel&& \rmapdown{h}&&\parallel\\
     M_1&\rightmap{s=(\hueca{I}_{M_1},0)^t}&  M_1\oplus M_2&\rightmap{p=(0,\hueca{I}_{M_2})}&M_2,\\
   \end{matrix}$$
 where 
 $h:M\rightmap{}M_1\oplus M_2$  is an isomorphism. 
 From (\ref{L: iso en GMod-B de M en N copia la u de M en una de N}), we know there is $v\in\Hom^1_{\GM\g {\cal B}}(M_1\oplus M_2,M_1\oplus M_2)$ such that 
 $h:(M,u)\rightmap{}(M_1\oplus M_2,v)$ is an isomorphism in $\TGModi^0\g{\cal B}$. Therefore, $g*h^{-1}:(M_1\oplus M_2,v)\rightmap{}(M_2,u_2)$ is a morphism in $\TGModi^0\g{\cal B}$, so  $\widehat{\delta}(g*h^{-1})+u_2*(g*h^{-1})-(g*h^{-1})*v=0$. 
 Since $\widehat{\delta}(g*h^{-1})=\widehat{\delta}(p)=0$, we obtain  
 $u_2*(g*h^{-1})=(g*h^{-1})*v$. This implies that $v=\begin{pmatrix}u_1&x\\ 0&u_2\end{pmatrix}$, where $u_1\in \Hom^1_{\GModi\g{\cal B}}(M_2,M_2)$ and $x\in \Hom^1_{\GModi\g{\cal B}}(M_2,M_1)$. Moreover, the equation 
 $\widehat{\delta}(v)+v*v=0$ implies that $(M_1,u_1)\in \TGModi\g {\cal B}$. Notice also that 
 $s:(M_1,u_1)\rightmap{}(M_1\oplus M_2,v)$ is a morphism in $\TGModi^0\g {\cal B}$, because 
 $$\widehat{\delta}(s)+v *s-s* u_1=\begin{pmatrix}u_1&x\\ 0&u_2\end{pmatrix}*
 \begin{pmatrix} \hueca{I}_{M_1}\\ 0\end{pmatrix}-\begin{pmatrix}\hueca{I}_{M_1}\\ 0\end{pmatrix}*u_1=0.$$
 Then, the composable pair 
$(M_1,u_1)  \rightmap{ \ f \ }(M,u)\rightmap{ \ g \ }(M_2,u_2)$
in the category $\TGModi^0\g {\cal B}$ belongs to ${\cal E}$ and $g:(M,u)\rightmap{ }(M_2,u_2)$ is an ${\cal E}$-deflation.
  
  The proof of (2) is similar. 
  
  (3): Consider any composable pair  
  $(L,w)\rightmap{f}(M,u)\rightmap{g}(M_2,u_2)$ in ${\cal E}$. 
  Then, $g:M\rightmap{}M_2$ is a retraction in $\GModi^0\g {\cal B}$, so we can apply the preceding argument to construct the following commutative diagram in  $\TGModi^0\g {\cal B}$
  $$\begin{matrix}
 (L,w)  &\rightmap{ \ f \ }&(M,u)&\rightmap{ \ g \ }&(M_2,u_2)\\
  && \rmapdown{h}&&\parallel\\
     (M_1,u_1)&\rightmap{s=(\hueca{I}_{M_1},0)^t}& (M_1\oplus M_2,v)&\rightmap{p=(0,\hueca{I}_{M_2})}&(M_2,u_2),\\
   \end{matrix}$$
   where $h$ is an isomorphism. From (\ref{L: split underlying pairs produce exact pairs}), we know that the second row is an exact pair in $\TGModi^0\g {\cal B}$. Then, since $p*h*f=0$, there is a morphism $t:(L,w)\rightmap{}(M_1,u_1)$ in $\TGModi^0\g{\cal B}$ such that $s*t=h*f$.  If we consider the first components of the underlying diagram in $\GM\g S$, we have the following commutative diagram where the vertical arrows are isomorphisms
   $$\begin{matrix}
 L &\rightmap{ \ f^0 \ }&M&\rightmap{ \ g^0 \ }&M_2\\
  \rmapdown{t^0}&& \rmapdown{h^0}&&\parallel\\
     M_1&\rightmap{s^0}& M_1\oplus M_2&\rightmap{p^0}&M_2.\\
   \end{matrix}$$
   Hence $t:(L,w)\rightmap{}(M_1,u_1)$ is an isomorphism in $\TGModi^0\g{\cal B}$. It follows that the composable pair 
   $(L,w)\rightmap{f}(M,u)\rightmap{g}(M_2,u_2)$ is exact. 
 \end{proof}

 \begin{proposition}\label{P: estructura exacta de TGMod-B}
 Let ${\cal B}$ be a triangular $S$-bocs, then we have the following.
\begin{enumerate}
 \item The pair  $(\TGModi^0\g{\cal B},{\cal E})$ is an exact category.
 \item A morphism $f:(M,u_M)\rightmap{}(E,u_E)$ in $\TGModi^0\g{\cal B}$ is an ${\cal E}$-inflation iff $f^0:M\rightmap{}E$ is injective.
 \item A morphism $g:(E,u_E)\rightmap{}(N,u_N)$ in $\TGModi^0\g {\cal B}$ is an ${\cal E}$-deflation iff $g^0:E\rightmap{}N$ is surjective.
\end{enumerate} 
 \end{proposition}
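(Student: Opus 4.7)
The plan is to derive (2) and (3) as direct consequences of earlier results, and then to verify the axioms of an exact category in order to obtain (1). Parts (2) and (3) follow immediately from Lemma \ref{L: deflations and inflations in TGMod-B} combined with Proposition \ref{P: estructura exacta de GMod-B}: a morphism $f$ is an $\mathcal{E}$-inflation iff $f:M\to E$ is a section in $\GModi^0\g\mathcal{B}$ (Lemma \ref{L: deflations and inflations in TGMod-B}(2)), iff $f^0$ is injective (Proposition \ref{P: estructura exacta de GMod-B}(2)); the case of deflations is dual.

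For (1), the easy axioms run as follows. Closure of $\mathcal{E}$ under isomorphism of composable pairs is immediate from Corollary \ref{C: f iso sii f0 es iso para TwMod-B}. The identity $\hueca{I}_{(M,u)}$ fits into the trivially split pair $0\to(M,u)\to(M,u)$ of $\mathcal{E}$ and into its dual, so it is both an inflation and a deflation. Closure of inflations (resp.\ deflations) under composition follows directly from (2) and (3), since composition of injections (resp.\ surjections) of graded right $S$-modules remains injective (resp.\ surjective). Lemma \ref{L: deflations and inflations in TGMod-B}(3) already guarantees that every composable pair in $\mathcal{E}$ is exact in $\TGModi^0\g\mathcal{B}$, so each inflation is the kernel of its deflation and vice versa.

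The main obstacle is the pullback-of-deflations axiom (with its dual for pushouts). Given an $\mathcal{E}$-deflation $g:(M,u_M)\to(N,u_N)$ and any morphism $h:(L,u_L)\to(N,u_N)$, my plan is to use the proof of Lemma \ref{L: deflations and inflations in TGMod-B}(1) to replace $g$, up to isomorphism in $\TGModi^0\g\mathcal{B}$, by a canonical split projection $p=(0,\hueca{I}_N):(M_1\oplus N,v)\to(N,u_N)$ with $v=\left(\begin{smallmatrix}u_1 & x \\ 0 & u_N\end{smallmatrix}\right)$ and $(M_1,u_1)\in\TGModi\g\mathcal{B}$. The candidate pullback is then $(M_1\oplus L,v')$ with $v'=\left(\begin{smallmatrix}u_1 & x*h \\ 0 & u_L\end{smallmatrix}\right)$, its deflation onto $(L,u_L)$ being the second projection, and the morphism to $(M_1\oplus N,v)$ being the block matrix with diagonal entries $\hueca{I}_{M_1}$ and $h$.

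The hardest part will be verifying both the Maurer--Cartan equation $\widehat{\delta}(v')+v'*v'=0$ for the prescribed $v'$ and the universal property of the pullback inside $\TGModi^0\g\mathcal{B}$. The identity $\widehat{\delta}(h)=h*u_L-u_N*h$ (because $h$ is a degree-zero morphism of twisted modules), combined with the Maurer--Cartan equations for $v$ and $u_L$ via the Leibniz rule of Proposition \ref{P: DG-B es cat dif graduada}, should collapse the off-diagonal entry of $\widehat{\delta}(v')+v'*v'$ back to a known identity. For the universal property, the underlying pullback in $\GModi^0\g\mathcal{B}$ produces a unique factorization thanks to Proposition \ref{P: estructura exacta de GMod-B}, and checking that this factorization respects the twist is again a Leibniz-rule computation; Lemma \ref{L: iso en GMod-B de M en N copia la u de M en una de N} can be invoked to transport twists across isomorphisms whenever required. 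The pushout-of-inflations axiom is handled dually using the analogous normal form from Lemma \ref{L: deflations and inflations in TGMod-B}(2).
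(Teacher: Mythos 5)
Your argument is correct, and parts (2), (3) together with the easy axioms (closure of ${\cal E}$ under isomorphism, composition of inflations/deflations, exactness of the pairs in ${\cal E}$) are handled exactly as in the paper, via (\ref{L: deflations and inflations in TGMod-B}) and (\ref{P: estructura exacta de GMod-B}). Where you genuinely diverge is the pullback axiom. The paper avoids any explicit construction: given a deflation $d:\underline{Y}\rightarrow\underline{Z}$ and an arbitrary $f:\underline{Z}'\rightarrow\underline{Z}$, it forms $(f,d):\underline{Z}'\oplus\underline{Y}\rightarrow\underline{Z}$, notes that this is a retraction in $\GModi^0\g{\cal B}$ because $d$ is, hence an ${\cal E}$-deflation by (\ref{L: deflations and inflations in TGMod-B})(1); the exact pair $\underline{Y}'\rightarrow\underline{Z}'\oplus\underline{Y}\rightarrow\underline{Z}$ containing it then exhibits $\underline{Y}'$ with the morphisms $(\widehat{d},-\widehat{f})^t$ as the pullback, and surjectivity of $\widehat{d}^0$ is read off from the induced pullback of first components in $\GM\g S$, so $\widehat{d}$ is again a deflation. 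Your route instead reduces $d$, up to isomorphism, to the split normal form $(0,\hueca{I}_N):(M_1\oplus N,v)\rightarrow(N,u_N)$ with upper-triangular $v$ from the proof of (\ref{L: deflations and inflations in TGMod-B})(1), and builds the pullback explicitly as $(M_1\oplus L,v')$ with off-diagonal entry $x*h$. The computations you flag as the hard part do go through: using $\widehat{\delta}(h)=h*u_L-u_N*h$ and $\widehat{\delta}(x)=-u_1*x-x*u_N$ the off-diagonal entry of $\widehat{\delta}(v')+v'*v'$ cancels, the structural morphisms (second projection, and the block-diagonal map with entries $\hueca{I}_{M_1}$ and $h$) satisfy their twisted-morphism equations, and the factorization through the underlying pullback automatically respects the twists. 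So both proofs are valid; the paper's kernel trick is shorter and computation-free, while your construction has the merit of giving an explicit formula for the pullback object and its twist (and dually for pushouts along inflations). One small omission: the paper's notion of exact category presupposes an additive category with split idempotents, which it secures for $\TGModi^0\g{\cal B}$ by citing (\ref{P: split idempotents}) and (\ref{R: matrices of morphisms in GM-B}); you should include that citation as well.
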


 \begin{proof} From the last section, we know that idempotents split in 
 $\TGModi^0\g {\cal B}$. 
 The description of the ${\cal E}$-inflations and the ${\cal E}$-deflations follows from (\ref{L: deflations and inflations in TGMod-B}) and (\ref{P: estructura exacta de GMod-B}). The fact that ${\cal E}$ is a class of exact pairs closed under isomorphisms, follows from (\ref{L: deflations and inflations in TGMod-B}). 
 
 Consider a  
 morphism $f:\underline{Z}'\rightmap{}\underline{Z}$ and a deflation $d:\underline{Y}\rightmap{}\underline{Z}$ in $\TGModi^0\g{\cal B}$.
 Consider the morphism $(f,d):\underline{Z}'\bigoplus \underline{Y}\rightmap{}\underline{Z}$ in $\TGModi^0\g {\cal B}$.
 Since $d:\underline{Y}\rightmap{}\underline{Z}$ is a deflation, the morphism $d:Y\rightmap{}Z$ is a retraction in $\GModi^0\g{\cal B}$ and, therefore, so is $(f,d):Z'\oplus Y\rightmap{}Z$. 
 Then, $(f,d):\underline{Z}'\bigoplus \underline{Y}\rightmap{}\underline{Z}$ is a deflation in $\TGModi^0\g{\cal B}$ and it 
 appears in an exact pair of  ${\cal E}$
 $$\begin{matrix} \underline{Y}'&\rightmap{ \ (\widehat{d},  -\widehat{f})^t \ }& \underline{Z}'\bigoplus \underline{Y}&\rightmap{ \ (f,d) \ }&\underline{Z}.\end{matrix}$$
 Therefore, we have a pull-back diagram in the category $\TGModi^0\g{\cal B}$
$$\begin{matrix}
 \underline{Y}'& \rightmap{\widehat{d}}& \underline{Z}'\\ 
 \lmapdown{\widehat{f}}&&\rmapdown{f}\\ 
\underline{Y}& \rightmap{d}& \underline{Z},\\ 
\end{matrix}$$
and an exact sequence in $\GM\g S$ given by the first components 
$$\begin{matrix} Y'&\rightmap{ \ (\widehat{d}^0,  -\widehat{f}^0)^t \ }&Z'\bigoplus Y&\rightmap{ \ (f^0,d^0) \ }&Z.\end{matrix}$$
So we have the pullback-diagram in the category $\GM\g S$ 
$$\begin{matrix}
 Y'& \rightmap{\widehat{d}^0}& Z'\\ 
 \lmapdown{\widehat{f}^0}&&\rmapdown{f^0}\\ 
Y& \rightmap{d^0}& Z.\\ 
\end{matrix}$$
Since $d^0$ is surjective, then  $\widehat{d}^0$ is surjective, thus $\widehat{d}:Y'\rightmap{}Z'$ is a retraction, and 
$\hat{d}:\underline{Y}'\rightmap{}\underline{Z}'$ is an ${\cal E}$-deflation. 

The other requirements in the definition of an exact structure are easy to verify for ${\cal E}$ using (\ref{L: deflations and inflations in TGMod-B}). 
 \end{proof}

 We shall see that $\TGM^0\g {\cal B}$  is a Frobenius category with  the
 following additional structure. 
 
 \begin{definition}\label{D: Especial Frobenius category}
 A Frobenius category  $({\cal A},{\cal E})$ is called \emph{special}\index{special Frobenius category} iff 
 there are an exact automorphism $T:{\cal A}\rightmap{}{\cal A}$ and 
 an endofunctor $J:{\cal A}\rightmap{}{\cal A}$ such that 
 \begin{enumerate}
  \item $J(M)$ is ${\cal E}$-projective, for any $M\in {\cal A}$.
  \item There are natural transformations $\alpha:id_{\cal A}\rightmap{}J$ 
  and $\beta: J\rightmap{}T$ such that, for each $M\in {\cal A}$ the pair 
  $$\begin{matrix}\xi_M&:&M&\rightmap{\alpha_M}&J(M)&\rightmap{\beta_M}&T(M)\\ \end{matrix},$$
  is an exact pair in ${\cal E}$. 
 \end{enumerate}
 \end{definition}

\begin{remark}\label{R: remark sobre sigmaM para GMod-B}
 In the following lemma, recall that we have already defined, for each graded $S$-module $M$ its 
  shifting $M[1]$. 
 Thus $M[1]=\bigoplus_{i\in \hueca{Z}}M[1]_i$, with  $M[1]_i=M_{i+1}$, for $i\in \hueca{Z}$. 
 We use also the canonical morphism $\sigma_M:M\rightmap{} M[1]$ of degree $-1$  determined by 
 the identity map on $M$. The isomorphism $\underline{\sigma}_M:=L(\sigma_M)\in \Hom^{-1}_{\GM\g{\cal B}}(M,M[1])$ plays an important role in the following.  
 \end{remark}

 \begin{lemma}\label{D: traslacion para GM-B y TGM-B} 
 Let ${\cal B}$ be a triangular differential $S$-bocs.
 Then, we have:
 \begin{enumerate}
 \item For each $M\in \GM\g {\cal B}$, make $T(M)=M[1]$ and, 
 for any morphism $f:M\rightmap{}N$ in $\GM\g {\cal B}$, make 
 $$T(f)=f[1]=\underline{\sigma}_N*f*\underline{\sigma}_M^{-1}:M[1]\rightmap{}N[1] \hbox{ in } \GM\g{\cal B}.$$
 Then, we have a degree preserving  autofunctor 
 $T:\GM\g{\cal B}\rightmap{}\GM\g{\cal B}$ with inverse $T^{-1}$ given by $T^{-1}(M)=M[-1]$ and 
 $$T^{-1}(f)=f[-1]=\underline{\sigma}_{N[-1]}^{-1}*f*\underline{\sigma}_{M[-1]}:M[-1]\rightmap{}N[-1] \hbox{ in } \GM\g{\cal B}.$$ 
 It is called \emph{the shifting autofunctor of $\GM\g{\cal B}$.} 
 \item  For each $(M,u)\in \TGM\g {\cal B}$, make $T(M,u)=(M[1],-u[1])$ and, 
 for any morphism $f:(M,u)\rightmap{}(N,v)$ in $\TGM\g {\cal B}$, make 
 $$T(f)=f[1]=\underline{\sigma}_N*f*\underline{\sigma}_M^{-1}:(M[1],-u[1])\rightmap{}(N[1],-v[1]) \hbox{ in } \TGM\g{\cal B}.$$
 Then, we have a degree preserving  autofunctor 
 $T:\TGM\g{\cal B}\rightmap{}\TGM\g{\cal B}$ with inverse $T^{-1}$ given by $T^{-1}(M,u)=(M[-1],-u[-1])$ and 
 $$T^{-1}(f)=f[-1]=\underline{\sigma}_{N[-1]}^{-1}*f*\underline{\sigma}_{M[-1]}:(M[-1],-u[-1])\rightmap{}(N[-1],-v[-1]) .$$ 
 The functor $T$ is called \emph{the shifting autofunctor of $\TGM\g {\cal B}$.} 
\item Furthermore, the functor $T:(\TGM^0\g {\cal B},{\cal E})\rightmap{}(\TGM^0\g {\cal B},{\cal E})$ is an  
 automorphism of exact categories, that is   
 $$T(M)\rightmap{ \ T(f) \ }T(E)\rightmap{ \ T(g) \ }T(N)\in {\cal E} \hbox{ \ if and only if \  } 
M\rightmap{f}E\rightmap{g}N\in {\cal E}.$$ 
As a consequence, the functors $T$ and $T^{-1}$ preserve the clases of ${\cal E}$-injectives and ${\cal E}$-projectives. 
\end{enumerate}
 \end{lemma}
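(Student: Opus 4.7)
The plan is to base everything on two basic observations about the canonical embedding $L:\GM\g S\rightmap{}\GM\g{\cal B}$ of (\ref{L: el canonical embedding}). First, since $L$ is a faithful functor and $\sigma_M:M\rightmap{}M[1]$ is an isomorphism in $\GM\g S$ (of degree $-1$), the morphism $\underline{\sigma}_M=L(\sigma_M)$ is an isomorphism in $\GM\g{\cal B}$ with inverse $\underline{\sigma}_M^{-1}=L(\sigma_M^{-1})$. Second, by the remark in (\ref{R: matrices of morphisms in GM-B}), everything in the image of $L$ is annihilated by $\widehat{\delta}$, so $\widehat{\delta}(\underline{\sigma}_M)=0=\widehat{\delta}(\underline{\sigma}_M^{-1})$.

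For part (1), the functoriality of $T$ on $\GM\g {\cal B}$ is then immediate: from the telescoping
$$T(g)*T(f)=\underline{\sigma}_L*g*\underline{\sigma}_N^{-1}*\underline{\sigma}_N*f*\underline{\sigma}_M^{-1}=\underline{\sigma}_L*(g*f)*\underline{\sigma}_M^{-1}=T(g*f),$$
and $T(\hueca{I}_M)=\underline{\sigma}_M*\underline{\sigma}_M^{-1}=\hueca{I}_{M[1]}$. The degree of $T(f)$ equals $|f|$ because $|\underline{\sigma}_N|+|f|+|\underline{\sigma}_M^{-1}|=-1+|f|+1$, so $T$ is a degree preserving autofunctor. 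The formula for $T^{-1}$ is checked in the same way, and the two are mutually inverse after substitution.

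For part (2), I first use the Leibniz rule (\ref{P: DG-B es cat dif graduada}) together with $\widehat{\delta}(\underline{\sigma}_M^{\pm 1})=0$ to derive the key conjugation formula
$$\widehat{\delta}(\underline{\sigma}_N*f*\underline{\sigma}_M^{-1})=(-1)^{|\underline{\sigma}_N|}\underline{\sigma}_N*\widehat{\delta}(f)*\underline{\sigma}_M^{-1}=-\underline{\sigma}_N*\widehat{\delta}(f)*\underline{\sigma}_M^{-1},$$
which explains the sign that forces the minus in $-u[1]$. Indeed, applying this to $u$ and using $\widehat{\delta}(u)+u*u=0$ gives $\widehat{\delta}(u[1])=-\underline{\sigma}_M*\widehat{\delta}(u)*\underline{\sigma}_M^{-1}=\underline{\sigma}_M*(u*u)*\underline{\sigma}_M^{-1}=u[1]*u[1]$, so
$$\widehat{\delta}(-u[1])+(-u[1])*(-u[1])=-\widehat{\delta}(u[1])+u[1]*u[1]=0,$$
showing $(M[1],-u[1])\in\TGM\g{\cal B}$. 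For a morphism $f:(M,u)\rightmap{}(N,v)$, the same conjugation formula gives
$$\widehat{\delta}(T(f))+(-v[1])*T(f)-(-1)^{|T(f)|}T(f)*(-u[1])=-\underline{\sigma}_N*\bigl[\widehat{\delta}(f)+v*f-(-1)^{|f|}f*u\bigr]*\underline{\sigma}_M^{-1}=0,$$
so $T(f)$ is indeed a morphism of twisted modules. Functoriality and the existence of $T^{-1}$ then transfer from part (1).

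For part (3), I observe that if $F:\GM\g {\cal B}\rightmap{}\GModi\g {\cal B}$ is the equivalence of (\ref{L: equiv directa entre GMod-B y G hueca(M)od-B}), then a direct computation using $\epsilon(1)=1$ and the definition of $L$ shows $F(T(f))^0=\sigma_N\circ f^0\circ \sigma_M^{-1}$, i.e.\ the first component of $T(f)$ is nothing but the shift of $f^0$ in $\GM\g S$. Because ordinary shift is an autoequivalence of $\GM\g S$ preserving exact sequences, a composable pair in ${\cal E}$ is carried by $T$ to a composable pair whose first components still form an exact sequence in $\GM\g S$, and whose composition is $T(g)*T(f)=T(g*f)=T(0)=0$. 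Hence $T$ preserves ${\cal E}$, and the same argument applied to $T^{-1}$ shows that $T$ reflects it, so $T$ is an exact automorphism. Since ${\cal E}$-projectives and ${\cal E}$-injectives are defined purely in terms of $\cal E$, they are preserved by $T$ and $T^{-1}$. The only delicate point, and the main place where care is needed, is keeping the signs and degrees straight in the Leibniz computations of part (2); everything else reduces to routine conjugation identities.
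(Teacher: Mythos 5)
Your proposal is correct and follows essentially the same route as the paper: part (1) by telescoping conjugation, part (2) via the key identity $\widehat{\delta}(h[1])=-\widehat{\delta}(h)[1]$ obtained from the Leibniz rule and $\widehat{\delta}(\underline{\sigma}_M^{\pm 1})=0$, and part (3) by observing that the first component of $T(f)$ is the ordinary shift $\sigma_N f^0\sigma_M^{-1}$ in $\GM\g S$. If anything, your verification of the twisted-morphism condition keeps the general sign $(-1)^{\vert f\vert}$ explicitly, which is slightly more careful than the paper's displayed computation.
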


\begin{proof} The proof of (1) is easy. In order to prove (2), notice that  
given $h:M\rightmap{}N$ in $\GM\g{\cal B}$, using that $\widehat{\delta}(\underline{\sigma}_M)=0$ and Leibniz rule, we have 
$$\widehat{\delta}(h[1])=\widehat{\delta}[ \underline{\sigma}_N*h*\underline{\sigma}_M^{-1}]=
 -\underline{\sigma}_N*\widehat{\delta}(h)*\underline{\sigma}_M^{-1}=-\widehat{\delta}(h)[1].$$
 Then, given $(M,u)\in \TGM\g{\cal B}$ we have $\widehat{\delta}(u)+u*u=0$. Thus, 
 $$\widehat{\delta}(-u[1])+(-u[1])*(-u[1])=\widehat{\delta}(u)[1]+u[1]*u[1]=(\widehat{\delta}(u)+u*u)[1]=0,$$
 and $(M[1],-u[1])\in \TGM\g {\cal B}$. Moreover, given a morphism 
 $f:(M,u) \rightmap{}(N,v)$ in $\TGM\g {\cal B}$ we have 
 $$\begin{matrix}
\widehat{\delta}(f[1])+(-v[1])*f[1]-f[1]*(-u[1])
&=&
-\widehat{\delta}(f)[1]-v[1]*f[1]+f[1]*u[1]\hfill\\
&=&
-(\widehat{\delta}(f)+v*f-f*u)[1]=0,\hfill\\
 \end{matrix}$$
 so $f[1]:(M[1],-u[1])\rightmap{}(N[1],-v[1])$ is a morphism in $\TGM\g{\cal B}$. Now, it is clear that $T$ is an endofunctor of $\TGM\g{\cal B}$. It is easy to see that $T$ is an autofunctor with inverse functor $T^{-1}$. 
 
 (3): Consider a composable pair $(M,u)\rightmap{f}(E,v)\rightmap{g}(N,w)$ in the category $\TGM^0\g{\cal B}$ and its image  
 $$(M[1],-u[1])\rightmap{f[1]}(E[1],-v[1])\rightmap{g[1]}(N[1],-w[1])$$
 under the functor $T$. Clearly $g*f=0$ iff $g[1]*f[1]=0$. Notice that for any morphism $h:M\rightmap{}N$ in 
 $\GM\g S$, the first component $L(h)^0$ of $L(h)$ is precisely $h$. So, $\underline{\sigma}_M^0=\sigma_M$, for any $M\in \GM\g S$. Hence, we have 
 $(f[1])^0
 =(\underline{\sigma}_E*f*\underline{\sigma}_M^{-1})^0
 =\underline{\sigma}^0_Ef^0(\underline{\sigma}^0_M)^{-1}
 =\sigma_Ef^0\sigma_M^{-1}$ and similarly for $g$. So we have the commutative diagram in $\GM\g S$
 $$\begin{matrix}
   0&\rightmap{}&M&\rightmap{f^0}&E&\rightmap{g^0}&N&\rightmap{}&0\\
   &&\rmapdown{\sigma_M}&&\rmapdown{\sigma_E}&&\rmapdown{\sigma_N}&&\\
    0&\rightmap{}&M[1]&\rightmap{f[1]^0}&E[1]&\rightmap{g[1]^0}&N[1]&\rightmap{}&0,\\
   \end{matrix}$$
   where the vertical morphisms are isomorphisms. Hence, the first row is exact iff the second one is so.
\end{proof}

 \begin{proposition}\label{P: construction of J for TGMod-B} 
 For $(M,u_M)\in \TGM\g {\cal B}$, we write $u_{M[1]}:=-u_M[1]$, so we have $T(M,u_M)=(M[1],u_{M[1]})$. 
 There is a degree preserving endofunctor  $$J: \TGM\g {\cal B}\rightmap{}\TGM\g {\cal B}$$ 
 such that, for any $(M,u_M)\in \TGM\g{\cal B}$ we have 
 $$J(M,u_M)=(M\oplus M[1],\begin{pmatrix}
                            u_M&\underline{\sigma}^{-1}_M\\ 0&u_{M[1]}\\
                           \end{pmatrix})$$
 and,  given a morphism $f:(M,u_M)\rightmap{}(N,u_N)$ in $\TGM\g {\cal B}$, the morphism 
 $J(f)$ is   given by the matrix 
 $$J(f)=\begin{pmatrix}
               f& 0\\ 0& f[1]
              \end{pmatrix}: J(M,u_M)\rightmap{}J(N,u_N).$$
 \end{proposition}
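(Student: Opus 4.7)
The plan is to verify three requirements in sequence, exploiting the matrix conventions for morphisms between direct sums established in Remark~\ref{R: matrices of morphisms in GM-B}: first that $J(M,u_M)$ is a genuine object of $\TGM\g{\cal B}$; second that $J(f)$ is a morphism of twisted modules of the same degree as $f$; and third that $J$ preserves identities and composition. Throughout, I would use freely that $\widehat{\delta}$ acts entrywise on matrices, that composition is the usual matrix product, and that $\widehat{\delta}(\underline{\sigma}_M)=\widehat{\delta}(\underline{\sigma}_M^{-1})=0$: the first because $\underline{\sigma}_M=L(\sigma_M)$ lies in the image of the canonical embedding from $\GM\g S$ (see Lemma~\ref{L: el canonical embedding} and Remark~\ref{R: matrices of morphisms in GM-B}), and the second by applying Leibniz to $\underline{\sigma}_M*\underline{\sigma}_M^{-1}=\hueca{I}_M$.

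For the object part, writing $U_M=\begin{pmatrix} u_M & \underline{\sigma}_M^{-1}\\ 0 & u_{M[1]}\end{pmatrix}$, the Maurer–Cartan equation $\widehat{\delta}(U_M)+U_M*U_M=0$ splits into four entry equations. The $(1,1)$ entry is the MC equation for $u_M$, which holds by hypothesis; the $(2,2)$ entry is the MC equation for $u_{M[1]}$, which holds because $T(M,u_M)=(M[1],u_{M[1]})\in\TGM\g{\cal B}$ by Lemma~\ref{D: traslacion para GM-B y TGM-B}. The $(2,1)$ entry is trivially zero. The crucial $(1,2)$ entry reduces to
\[
u_M*\underline{\sigma}_M^{-1}+\underline{\sigma}_M^{-1}*u_{M[1]}=0,
\]
and substituting $u_{M[1]}=-\underline{\sigma}_M*u_M*\underline{\sigma}_M^{-1}$ makes the two terms cancel.

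For the morphism part, one checks that $\widehat{\delta}(J(f))+U_N*J(f)-(-1)^{\vert f\vert}J(f)*U_M=0$ entrywise. The $(1,1)$ entry is precisely the morphism condition for $f$; the $(2,2)$ entry follows by applying $T$ to that same condition, using the shift identities $u_{N[1]}*f[1]=-(u_N*f)[1]$ and $f[1]*u_{M[1]}=-(f*u_M)[1]$ together with $\widehat{\delta}(f[1])=-\widehat{\delta}(f)[1]$ to pull a global sign outside. The off-diagonal $(2,1)$ entry vanishes trivially, while the $(1,2)$ entry collapses through the cancellation $\underline{\sigma}_N^{-1}*f[1]=f*\underline{\sigma}_M^{-1}$, which is immediate from $f[1]=\underline{\sigma}_N*f*\underline{\sigma}_M^{-1}$. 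Degree preservation is automatic since $\vert f\vert=\vert f[1]\vert$. Functoriality is then a short matrix computation: $J(\hueca{I}_{(M,u_M)})$ is the identity matrix on $M\oplus M[1]$, hence $\hueca{I}_{J(M,u_M)}$; and $J(g*f)=J(g)*J(f)$ follows from ordinary matrix multiplication combined with the functoriality $(g*f)[1]=g[1]*f[1]$ given by Lemma~\ref{D: traslacion para GM-B y TGM-B}.

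The principal obstacle is the $(1,2)$ entry of the Maurer–Cartan equation for $U_M$: it is the one place where the off-diagonal term $\underline{\sigma}_M^{-1}$ couples the upper and lower parts of $J(M,u_M)$, and it is the step that justifies calling $J(M,u_M)$ a \emph{cone on the identity} of $(M,u_M)$. Once that identity is in hand, together with the basic facts $\widehat{\delta}(\underline{\sigma}_M^{\pm 1})=0$ and the explicit form of $u_{M[1]}$, the rest is routine matrix bookkeeping made clean by the conventions in Remark~\ref{R: matrices of morphisms in GM-B}.
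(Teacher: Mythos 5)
Your proposal is correct and follows essentially the same route as the paper: an entrywise verification of the Maurer--Cartan equation for the matrix $\begin{pmatrix} u_M&\underline{\sigma}_M^{-1}\\ 0&u_{M[1]}\end{pmatrix}$ and of the twisted-morphism condition for $\mathrm{diag}(f,f[1])$, using the shift lemma, $\widehat{\delta}(\underline{\sigma}_M^{\pm 1})=0$, and the matrix conventions, with functoriality inherited from $T$. The only difference is that you spell out a few points the paper leaves implicit (the cancellation $u_M*\underline{\sigma}_M^{-1}+\underline{\sigma}_M^{-1}*u_{M[1]}=0$ via $u_{M[1]}=-\underline{\sigma}_M*u_M*\underline{\sigma}_M^{-1}$, and the justification of $\widehat{\delta}(\underline{\sigma}_M^{\pm1})=0$), which is harmless.
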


\begin{proof} Given $(M,u)\in \TGM\g {\cal B}$, we have 
$$\widehat{\delta}\begin{pmatrix}
  u_M&\underline{\sigma}^{-1}_M\\ 0&u_{M[1]}\\
\end{pmatrix}
+
\begin{pmatrix}
  u_M&\underline{\sigma}^{-1}_M\\ 0&u_{M[1]}\\
\end{pmatrix}*
\begin{pmatrix}
  u_M&\underline{\sigma}^{-1}_M\\ 0&u_{M[1]}\\
  \end{pmatrix}$$
$$=\begin{pmatrix}
 \widehat{\delta}(u_M)+ u_M*u_M&u_M*\underline{\sigma}_M^{-1}+\underline{\sigma}_M^{-1}*u_{M[1]}\\
 0&\widehat{\delta}(u_{M[1]})+u_{M[1]}*u_{M[1]}\\
 \end{pmatrix}=\begin{pmatrix}
 0&0\\ 0&0\end{pmatrix}.
$$
So, indeed, we have  $J(M,u)\in \TGM\g{\cal B}$. 

Given a morphism $f:(M,u_M)\rightmap{}(N,u_N)$ in $\TGM\g{\cal B}$, we have 
$$\widehat{\delta}\begin{pmatrix}
  f&0\\ 0&f[1]\\
\end{pmatrix}
+
\begin{pmatrix}
  u_N&\underline{\sigma}^{-1}_N\\ 0&u_{N[1]}\\
\end{pmatrix}*\begin{pmatrix}
  f&0\\ 0&f[1]\\
\end{pmatrix}-\begin{pmatrix}
  f&0\\ 0&f[1]\\
\end{pmatrix}*
\begin{pmatrix}
  u_M&\underline{\sigma}^{-1}_M\\ 0&u_{M[1]}\\
  \end{pmatrix}$$
$$=\begin{pmatrix}
 \widehat{\delta}(f)+ u_N*f-f*u_M&
 \underline{\sigma}_N^{-1}*f[1]-f*\underline{\sigma}_M^{-1}\\
 0&\widehat{\delta}(f[1])+u_{N[1]}*f[1]-f[1]*u_{M[1]}\\
 \end{pmatrix}=\begin{pmatrix}
 0&0\\ 0&0\end{pmatrix}.
$$
So, indeed, we have that $J(f):J(M,u_M)\rightmap{}J(N,u_N)$ is a morphism in $\TGM\g{\cal B}$. Since $T: \GM\g{\cal B}\rightmap{}\GM\g{\cal B}$ is a $k$-functor, so is $J$. 
\end{proof}

 \begin{proposition}\label{P: isos naturales de morfismos en TGMod-B en GMod-B}
 For $(M,u_M),(N,u_N)\in \TGM\g {\cal B}$, we have natural isomorphisms  
 $$\eta_1:\Hom^0_{\TGM\g {\cal B}}((M,u_M),J(N,u_N))\rightmap{}\Hom^0_{\GM\g {\cal B}}(M,N)$$
 and 
 $$\eta_2:\Hom^0_{\TGM\g {\cal B}}(J(M,u_M),(N,u_N))\rightmap{}\Hom^0_{\GM\g {\cal B}}(M[1],N).$$
 Any morphism of twisted graded ${\cal B}$-modules $f=(f_1,f_2)^t:(M,u_M)\rightmap{}J(N,u_N)$, with underlying codomain 
 $N\oplus N[1]$ in $\GM\g {\cal B}$  
 is mapped on  $\eta_1(f)=f_1$, and any morphism $g=(g_1,g_2):J(M,u_M)\rightmap{}(N,u_N)$ of twisted graded ${\cal B}$-modules, 
 with undelying domain 
 $M\oplus M[1]$  in $\GM\g {\cal B}$  is mapped on  $\eta_2(g)=g_2$.
 \end{proposition}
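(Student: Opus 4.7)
\medskip\noindent\textbf{Proof proposal.}
The plan is to treat this as a direct matrix calculation exploiting the upper-triangular form of the differential attached to $J(N,u_N)$. At the level of the underlying graded ${\cal B}$-modules, any homogeneous morphism $f\in \Hom^0_{\GM\g{\cal B}}(M,N\oplus N[1])$ decomposes canonically as $f=(f_1,f_2)^t$ with $f_1\in \Hom^0_{\GM\g{\cal B}}(M,N)$ and $f_2\in \Hom^0_{\GM\g{\cal B}}(M,N[1])$, and the candidate map $\eta_1$ is simply the projection $(f_1,f_2)^t\mapsto f_1$. Thus to establish the bijection it suffices to show that, for each $f_1$, there is a unique $f_2$ making $(f_1,f_2)^t$ a morphism of twisted modules.

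Writing out the Maurer-Cartan equation
$\widehat{\delta}(f)+u_{J(N,u_N)}*f-f*u_M=0$ coordinate-wise, using the matrix of $u_{J(N,u_N)}$ given in (\ref{P: construction of J for TGMod-B}), one gets the two equations
$$(A)\ \ \widehat{\delta}(f_1)+u_N*f_1+\underline{\sigma}_N^{-1}*f_2-f_1*u_M=0, \quad
(B)\ \ \widehat{\delta}(f_2)+u_{N[1]}*f_2-f_2*u_M=0.$$
Since $\underline{\sigma}_N^{-1}$ is an isomorphism, $(A)$ is equivalent to the explicit formula
$f_2=\underline{\sigma}_N*(f_1*u_M-u_N*f_1-\widehat{\delta}(f_1))$,
so $f_2$ is uniquely determined by $f_1$. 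The content of the proof is to verify that $(B)$ then holds automatically. Writing $x:=f_1*u_M-u_N*f_1-\widehat{\delta}(f_1)$, so that $f_2=\underline{\sigma}_N*x$, the identity $\widehat{\delta}(\underline{\sigma}_N)=0$ noted in (\ref{R: matrices of morphisms in GM-B}), Leibniz, and the relation $u_{N[1]}*\underline{\sigma}_N=-\underline{\sigma}_N*u_N$ (immediate from $u_{N[1]}=-\underline{\sigma}_N*u_N*\underline{\sigma}_N^{-1}$) reduce $(B)$ to
$\underline{\sigma}_N*(\widehat{\delta}(x)+u_N*x+x*u_M)=0$.
Expanding $\widehat{\delta}(x)$ via Leibniz and substituting the Maurer-Cartan equations $\widehat{\delta}(u_M)=-u_M*u_M$ and $\widehat{\delta}(u_N)=-u_N*u_N$, all nine resulting terms cancel in pairs, which is the key (if bookkeeping-heavy) step.

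For naturality, given morphisms $a:(M',u_{M'})\to (M,u_M)$ and $b:(N,u_N)\to (N',u_{N'})$ in $\TGM^0\g{\cal B}$, the matrix form $J(b)=\mathrm{diag}(b,b[1])$ from (\ref{P: construction of J for TGMod-B}) gives immediately $\eta_1(J(b)*f*a)=b*f_1*a=b*\eta_1(f)*a$, so $\eta_1$ is a natural isomorphism. The construction of $\eta_2$ is dual: a morphism $g=(g_1,g_2):J(M,u_M)\to (N,u_N)$ in $\TGM^0\g{\cal B}$ satisfies two Maurer-Cartan rows, of which the second, $\widehat{\delta}(g_2)+u_N*g_2-g_1*\underline{\sigma}_M^{-1}-g_2*u_{M[1]}=0$, determines $g_1=(\widehat{\delta}(g_2)+u_N*g_2-g_2*u_{M[1]})*\underline{\sigma}_M$ uniquely from $g_2$, and the remaining row---which is precisely the condition that $g_1$ be a morphism of twisted modules---follows by the same type of cancellation using Leibniz, Maurer-Cartan, and $\underline{\sigma}_M*u_M=-u_{M[1]}*\underline{\sigma}_M$. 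The main obstacle, in both cases, is the cancellation verifying that the ``redundant'' Maurer-Cartan row is forced; but the upper-triangular form of $J$ ensures that the combinatorics works out cleanly.
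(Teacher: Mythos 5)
Your proposal is correct and follows essentially the same route as the paper: write the degree-$0$ twisted-morphism condition against $J$ in two matrix rows, solve the row containing $\underline{\sigma}^{-1}$ to express the redundant component ($f_2$, resp. $g_1$) in terms of the other, and check that the remaining row holds automatically using Leibniz, $\widehat{\delta}(\underline{\sigma})=0$, the relation $u_{N[1]}*\underline{\sigma}_N=-\underline{\sigma}_N*u_N$, and the Maurer--Cartan equations for $u_M$ and $u_N$ --- which is exactly the computation the paper carries out, together with the same observation that naturality is immediate from the diagonal form of $J(b)$. The cancellation you defer does go through (your count of ``nine'' terms is slightly off: three pairs cancel outright and the remaining four vanish via the two Maurer--Cartan identities), but this is immaterial.
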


\begin{proof} (1): Let $f:(M,u_M)\rightmap{}J(N,u_N)$ be a homogeneous morphism of twisted graded ${\cal B}$-modules with degree $0$. Thus,   $f=(f_1,f_2)^t:M\rightmap{}N\oplus N[1]$ is a morphism in $\GM\g{\cal B}$ which satisfies the equation
$$\widehat{\delta}\begin{pmatrix}f_1\\ f_2\end{pmatrix}+\begin{pmatrix}u_N&\underline{\sigma}_N^{-1}\\
                                                        0&u_{N[1]}\\ \end{pmatrix}*\begin{pmatrix}
                                                        f_1\\ f_2\end{pmatrix}  -\begin{pmatrix}
                                                        f_1\\ f_2\end{pmatrix}*u_M=\begin{pmatrix}0\\ 0\end{pmatrix}.$$
Equivalently, it satisfies the equations:
$$\begin{matrix}
   (a_1) &\ & 0&=&\widehat{\delta}(f_1)+u_N*f_1+\underline{\sigma}_N^{-1}*f_2-f_1*u_M\hfill \\
   (b_1) & \ & 0&=& \widehat{\delta}(f_2)+u_{N[1]}*f_2-f_2*u_M\hfill\\
  \end{matrix}$$
From the equality $(a_1)$, we obtain the following expression of $f_2$ in terms of $f_1$
$$f_2=\underline{\sigma}_N*f_1*u_M-\underline{\sigma}_N*\widehat{\delta}(f_1)-\underline{\sigma}_N*u_N*f_1.$$
Thus, the linear map $\eta_1$ is injective.

Let us see that $\eta_1$ is surjective. For this, take $f_1\in \Hom^0_{\GM\g {\cal B}}(M,N)$ and define $f_2$ by the equality $(a_1)$, so  $f:=(f_1,f_2)^t\in \Hom^0_{\GM\g{\cal B}}(M,N\oplus N[1])$. Then, 
$$f\in \Hom^0_{\TGM\g {\cal B}}((M,u_M),J(N,u_N)) \hbox{ iff equality } (b_1) \hbox{ holds. }$$ 
We have the following  
$$\begin{matrix}
          f_2*u_M&=&
          \underline{\sigma}_N*f_1*(u_M*u_M)-\underline{\sigma}_N*\widehat{\delta}(f_1)*u_M-\underline{\sigma}_N*u_N*f_1*u_M\hfill\\          
          &=&
           -\underline{\sigma}_N*[f_1*\widehat{\delta}(u_M)+\widehat{\delta}(f_1)*u_M+u_N*f_1*u_M]\hfill\\      
          \end{matrix}$$
$$\begin{matrix}
  u_{N[1]}*f_2&=&
 u_{N[1]}*\underline{\sigma}_N*f_1*u_M-u_{N[1]}*\underline{\sigma}_N*\widehat{\delta}(f_1)-u_{N[1]}*\underline{\sigma}_N*u_N*f_1\hfill\\
 &=&
 -\underline{\sigma}_N*u_N*f_1*u_M
 +\underline{\sigma}_N*u_N*\widehat{\delta}(f_1)+\underline{\sigma}_N*u_{N}*u_N*f_1\hfill\\
  &=&
 \underline{\sigma}_N*[-u_N*f_1*u_M
 +u_N*\widehat{\delta}(f_1)-\widehat{\delta}(u_N)*f_1]\hfill\\
 
  \end{matrix}$$
and 
$$\begin{matrix}\widehat{\delta}(f_2)
&=&
   -\underline{\sigma}_N*\widehat{\delta}(f_1)*u_M-\underline{\sigma}_N*f_1*\widehat{\delta}(u_M)
   +\underline{\sigma}_N*\widehat{\delta}(u_N)*f_1-\underline{\sigma}_N*u_N*\widehat{\delta}(f_1)\hfill\\
&=&
   \underline{\sigma}_N*[-\widehat{\delta}(f_1)*u_M-f_1*\widehat{\delta}(u_M)
   +\widehat{\delta}(u_N)*f_1-u_N*\widehat{\delta}(f_1)].\hfill\\
  \end{matrix}$$
Then, we have that equation $(b_1)$ holds and $\eta_1(f)=f_1$. So $\eta_1$ is an isomorphism, which is clearly a natural transformation. 
\medskip

(2) Let $g:J(M,u_M)\rightmap{}(N,u_N)$ be a homogeneous morphism of twisted graded ${\cal B}$-modules with degree $0$. Thus,   $g=(g_1,g_2):M\oplus M[1]\rightmap{}N$ is a morphism in $\GM\g{\cal B}$ which satisfies the equation
$$\widehat{\delta}\begin{pmatrix}g_1& g_2\end{pmatrix}+u_N*\begin{pmatrix}
                                                        g_1 & g_2\end{pmatrix}-\begin{pmatrix}
                                                        g_1 & g_2\end{pmatrix}*\begin{pmatrix}u_M&\underline{\sigma}_M^{-1}\\
                                                        0&u_{M[1]}\\ \end{pmatrix}=\begin{pmatrix}0 & 0\end{pmatrix}  .$$
Equivalently, it satisfies the equations:
$$\begin{matrix}
   (a_2) &\ & 0&=&\widehat{\delta}(g_1)+u_N*g_1-g_1*u_M\hfill \\
   (b_2) & \ & 0&=& \widehat{\delta}(g_2)+u_N*g_2-g_1*\underline{\sigma}_M^{-1}-g_2*u_{M[1]}\hfill\\
  \end{matrix}$$
From the equality $(b_2)$, we obtain the following expression of $g_1$ in terms of $g_2$
$$g_1=\widehat{\delta}(g_2)*\underline{\sigma}_M+u_N*g_2*\underline{\sigma}_M-g_2*u_{M[1]}*\underline{\sigma}_M.$$
Thus, the linear map $\eta_2$ is injective.

Let us see that $\eta_2$ is surjective. For this, take $g_2\in \Hom^0_{\GM\g {\cal B}}(M[1],N)$ and define $g_1$ by the equality $(b_2)$, so  $g:=(g_1,g_2)\in \Hom^0_{\GM\g{\cal B}}(M\oplus M[1],N)$. Then, $$g\in \Hom^0_{\TGM\g {\cal B}}(J(M,u_M),(N,u_N)) \hbox{ iff equality } (a_2) \hbox{ holds. }$$ 
We have the equalities 
$$\begin{matrix}
          g_1*u_M
          &=&
          \widehat{\delta}(g_2)*\underline{\sigma}_M*u_M
          +
          u_N*g_2*\underline{\sigma}_M*u_M
          -
          g_2*u_{M[1]}*\underline{\sigma}_M*u_M\hfill\\     
          &=&
             -\widehat{\delta}(g_2)*u_{M[1]}*\underline{\sigma}_M
             -
             u_N*g_2*u_{M[1]}*\underline{\sigma}_M
             +
             g_2*u_{M[1]}*u_{M[1]}*\underline{\sigma}_M\hfill\\     
           &=&
             -[\widehat{\delta}(g_2)*u_{M[1]}
             +
             u_N*g_2*u_{M[1]}
             +
             g_2*\widehat{\delta}(u_{M[1]})]*\underline{\sigma}_M\hfill\\            
          \end{matrix}$$
$$\begin{matrix}
  u_N*g_1
  &=&
  u_N*\widehat{\delta}(g_2)*\underline{\sigma}_M
  +
  u_N*u_N*g_2*\underline{\sigma}_M
  -
  u_N*g_2*u_{M[1]}*\underline{\sigma}_M\hfill\\
   &=&
  [u_N*\widehat{\delta}(g_2)
  -
  \widehat{\delta}(u_N)*g_2
  -
  u_N*g_2*u_{M[1]}]*\underline{\sigma}_M\hfill\\
  \end{matrix}$$
and 
$$\begin{matrix}\widehat{\delta}(g_1)
&=&
\widehat{\delta}(u_N)*g_2*\underline{\sigma}_M
-u_N*\widehat{\delta}(g_2)*\underline{\sigma}_M
-\widehat{\delta}(g_2)*u_{M[1]}*\underline{\sigma}_M
-g_2*\widehat{\delta}(u_{M[1]})*\underline{\sigma}_M\hfill\\
&=& 
[\widehat{\delta}(u_N)*g_2
-u_N*\widehat{\delta}(g_2)
-\widehat{\delta}(g_2)*u_{M[1]}
-g_2*\widehat{\delta}(u_{M[1]})]*\underline{\sigma}_M.\hfill\\
  \end{matrix}$$
Then, we have that equation $(a_2)$ holds and $\eta_2(g)=g_2$. So $\eta_2$ is an isomorphism, which is clearly a natural transformation. 
\end{proof}

\begin{corollary}\label{P: J(M,u) es E-proy y es E-iny}
 For each $(M,u)\in \TGM\g {\cal B}$, the twisted graded ${\cal B}$-module 
 $J(M,u)$ is ${\cal E}$-projective and ${\cal E}$-injective in the exact category 
 $(\TGM^0\g {\cal B},{\cal E})$. 
\end{corollary}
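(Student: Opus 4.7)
The approach is to reduce each of the two statements to a trivial lifting problem in the underlying graded category $\GM^0\g{\cal B}$ via the natural Hom-isomorphisms $\eta_1$ and $\eta_2$ of (\ref{P: isos naturales de morfismos en TGMod-B en GMod-B}). The key underlying observation is that, by (\ref{P: estructura exacta de TGMod-B}), an ${\cal E}$-inflation (resp.\ ${\cal E}$-deflation) in $\TGM^0\g{\cal B}$ is precisely a morphism whose first component is injective (resp.\ surjective); since $S$ is semisimple, (\ref{P: estructura exacta de GMod-B}) (together with the equivalence $\GM^0\g{\cal B}\simeq\GModi^0\g{\cal B}$) then implies that such a morphism is already a section (resp.\ retraction) in $\GM^0\g{\cal B}$ itself, and not merely up to a twisting.

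For the ${\cal E}$-injectivity of $J(N,u_N)$, I would take an arbitrary ${\cal E}$-inflation $f:(M,u_M)\to(E,u_E)$ and arbitrary $g:(M,u_M)\to J(N,u_N)$ in $\TGM^0\g{\cal B}$. By the paragraph above, choose a retraction $r\in\Hom^0_{\GM\g{\cal B}}(E,M)$ with $r*f=\hueca{I}_M$, set $h_1:=\eta_1(g)*r\in\Hom^0_{\GM\g{\cal B}}(E,N)$, and define $h:=\eta_1^{-1}(h_1):(E,u_E)\to J(N,u_N)$. Naturality of $\eta_1$ in the first variable gives $\eta_1(h\circ f)=h_1*f=\eta_1(g)$, so $h\circ f=g$, as required. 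The ${\cal E}$-projectivity of $J(M,u_M)$ is then handled by the exactly dual argument with $\eta_2$: given an ${\cal E}$-deflation $g:(E,u_E)\to(N,u_N)$ and $\phi:J(M,u_M)\to(N,u_N)$, I pick a section $s\in\Hom^0_{\GM\g{\cal B}}(N,E)$ with $g*s=\hueca{I}_N$ and take $\psi:=\eta_2^{-1}(s*\eta_2(\phi))$; naturality of $\eta_2$ in the second variable yields $g\circ\psi=\phi$.

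The only point that is not a direct invocation of a previously proved result is the naturality of $\eta_1$ in the first variable and of $\eta_2$ in the second variable. This is not spelled out explicitly in (\ref{P: isos naturales de morfismos en TGMod-B en GMod-B}), but it is immediate from the formulas $\eta_1(f_1,f_2)^t=f_1$ and $\eta_2(g_1,g_2)=g_2$ given there: both maps are projections on one component of a pair of morphisms, and these projections manifestly commute with pre- and post-composition by morphisms of $\TGM^0\g{\cal B}$ (which, on the component $M\oplus M[1]$ or $N\oplus N[1]$ carried by $J$, are given by diagonal matrices $J(f)=\mathrm{diag}(f,f[1])$). I do not foresee any genuine obstacle beyond this routine verification.
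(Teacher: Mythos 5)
Your proposal is correct and follows essentially the same route as the paper: the paper also deduces both statements from the natural isomorphisms $\eta_1,\eta_2$ of (\ref{P: isos naturales de morfismos en TGMod-B en GMod-B}) together with the fact that pairs in ${\cal E}$ split in $\GM^0\g{\cal B}$, only phrasing the argument as exactness (with surjective $g_*$, injective $f_*$) of the sequences obtained by applying $\Hom^0_{\TGM\g{\cal B}}(J(\underline{L}),-)$ and $\Hom^0_{\TGM\g{\cal B}}(-,J(\underline{L}))$ to an exact pair, rather than constructing the lift or extension explicitly as you do. The naturality of $\eta_1$ and $\eta_2$ that you verify by hand is exactly the point the paper invokes, so there is no gap.
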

 
 \begin{proof} Let us write $\underline{M}=(M,u_M)$ for the objects of $\TGM\g{\cal B}$. 
 For each exact pair
 $\underline{M}\rightmap{f}\underline{E}\rightmap{g}\underline{N}$ in ${\cal E}$, we have a commutative diagram 
 $$\begin{matrix}
  \Hom^0_{\TGM\g {\cal B}}(J(\underline{L}),\underline{M})&\rightmap{f_*}  &\Hom^0_{\TGM\g {\cal B}}(J(\underline{L}),\underline{E})&\rightmap{g_*} &\Hom^0_{\TGM\g {\cal B}}(J(\underline{L}),\underline{N})\hfill\\
  \shortlmapdown{\eta_2}&&\shortlmapdown{\eta_2}&&\shortlmapdown{\eta_2}\\
  \Hom^0_{\GM\g {\cal B}}(L[1],M)&\rightmap{f_*}  &\Hom^0_{\GM\g {\cal B}}(L[1],E)&\rightmap{g_*} &\Hom^0_{\GM\g {\cal B}}(L[1],N),\hfill\\
  \end{matrix}$$
where the second row is exact with $g_*$ surjective and $f_*$ injective. 
Since $\eta_2$ is a natural isomorphism, the first row
is exact with $g_*$ surjective and $f_*$ injective. 
So $J(\underline{L})$ is ${\cal E}$-projective.  

The argument showing that $J(\underline{L})$ is ${\cal E}$-injective is similar,
now using $\eta_1$.   
 \end{proof}

For the rest of this section, we consider the restriction functors  
$$T:\TGM^0\g{\cal B}\rightmap{} \TGM^0\g{\cal B} \hbox{ \  and \ } J:\TGM^0\g{\cal B}\rightmap{} \TGM^0\g{\cal B}$$ denoted with the same symbols used for the functors $T$ and $J$ considered before.
 
\begin{proposition}\label{P: pares exactos con termino medio J(M)}
There are natural transformations $\alpha: id_{\TGM^0\g{\cal B} }\rightmap{}J$ and $\beta:J\rightmap{}T$ such that for each $\underline{M}\in \TGM\g{\cal B}$ we have an ${\cal E}$-conflation 
$$\begin{matrix}\underline{M}&
\rightmap{ \ \alpha_{\underline{M}} \ }&J(\underline{M})&
\rightmap{ \ \beta_{\underline{N}} \ }&
T(\underline{M}).\end{matrix}$$
For $\underline{M}\in  \GModi\g{\cal B}$, these morphisms are given by
$$\alpha_{\underline{M}}=(\hueca{I}_{M},0)^t: \underline{M}\rightmap{}J(\underline{M}) \hbox{  \ and  \ }
 \beta_{\underline{M}}=(0,\hueca{I}_{M[1]}):J(\underline{M})\rightmap{}\underline{N}.$$
\end{proposition}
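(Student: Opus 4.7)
The plan is to take, for each twisted graded ${\cal B}$-module $\underline{M}=(M,u_M)$, the structural injection $\sigma_1:M\rightmap{}M\oplus M[1]$ and projection $\pi_2:M\oplus M[1]\rightmap{}M[1]$ of the biproduct in $\GM\g S$, and transport them via the canonical embedding $L:\GM\g S\rightmap{}\GM\g{\cal B}$ of (\ref{L: el canonical embedding}). Concretely, I set $\alpha_{\underline{M}}:=L(\sigma_1)$ and $\beta_{\underline{M}}:=L(\pi_2)$; written in block-matrix form with respect to the decomposition underlying $J(\underline{M})=\underline{M}\oplus T(\underline{M})$, these are precisely $(\hueca{I}_M,0)^t$ and $(0,\hueca{I}_{M[1]})$ as claimed in the statement.

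First I verify that $\alpha_{\underline{M}}$ and $\beta_{\underline{M}}$ are morphisms in $\TGM^0\g{\cal B}$. By (\ref{R: matrices of morphisms in GM-B}) we have $\widehat{\delta}(\alpha_{\underline{M}})=0$ and $\widehat{\delta}(\beta_{\underline{M}})=0$, so the Maurer--Cartan-type compatibility demanded by (\ref{D: category of twisted graded modules of a bocs}) reduces to matrix identities. Writing $v_{J(\underline{M})}=\begin{pmatrix}u_M&\underline{\sigma}_M^{-1}\\ 0&u_{M[1]}\end{pmatrix}$ as in (\ref{P: construction of J for TGMod-B}), a direct block computation yields $v_{J(\underline{M})}*\alpha_{\underline{M}}=(u_M,0)^t=\alpha_{\underline{M}}*u_M$ and $u_{M[1]}*\beta_{\underline{M}}=(0,u_{M[1]})=\beta_{\underline{M}}*v_{J(\underline{M})}$. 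Moreover $\beta_{\underline{M}}*\alpha_{\underline{M}}=L(\pi_2)*L(\sigma_1)=L(\pi_2\sigma_1)=L(0)=0$, and the sequence of first components is the split short exact biproduct sequence $0\rightmap{}M\rightmap{}M\oplus M[1]\rightmap{}M[1]\rightmap{}0$ in $\GM\g S$. Hence the composable pair lies in ${\cal E}$ by (\ref{D: la clase de pares componibles cal E}), and is an ${\cal E}$-conflation by (\ref{L: split underlying pairs produce exact pairs}) and (\ref{P: estructura exacta de TGMod-B}).

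It remains to establish naturality. Given a degree-zero morphism $f:\underline{M}\rightmap{}\underline{N}$ in $\TGM\g{\cal B}$, using $J(f)=\begin{pmatrix}f&0\\ 0&f[1]\end{pmatrix}$ from (\ref{P: construction of J for TGMod-B}), the two required squares reduce to
$$J(f)*(\hueca{I}_M,0)^t=(f,0)^t=(\hueca{I}_N,0)^t*f\quad\text{and}\quad (0,\hueca{I}_{N[1]})*J(f)=(0,f[1])=f[1]*(0,\hueca{I}_{M[1]}),$$
both of which are immediate from the matrix-composition rule recorded in (\ref{R: matrices of morphisms in GM-B}).

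I do not anticipate any genuine obstacle: once $\widehat{\delta}\circ L=0$ (from (\ref{R: matrices of morphisms in GM-B})) and the functoriality of $J$ (from (\ref{P: construction of J for TGMod-B})) are in hand, every required identity collapses to a routine block-matrix computation. The only subtle point worth flagging is the sign convention $u_{M[1]}=-u_M[1]$ inherited from (\ref{D: traslacion para GM-B y TGM-B}); however this sign enters only through the diagonal entries of $v_{J(\underline{M})}$ and never interacts with the zero slots of $\alpha_{\underline{M}}$ and $\beta_{\underline{M}}$, so it plays no role in the verifications above.
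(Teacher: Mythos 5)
Your proof is correct, but it takes a more hands-on route than the paper. The paper defines $\alpha_{\underline{M}}:=\eta_1^{-1}(\hueca{I}_M)$ and $\beta_{\underline{M}}:=\eta_2^{-1}(\hueca{I}_{M[1]})$ using the natural isomorphisms $\eta_1,\eta_2$ of (\ref{P: isos naturales de morfismos en TGMod-B en GMod-B}); with that choice, membership in $\Hom^0_{\TGM\g{\cal B}}$ and naturality in $\underline{M}$ come for free from the fact that $\eta_1,\eta_2$ are natural isomorphisms, and the explicit inverse formulas (together with $\widehat{\delta}(\hueca{I}_M)=0$, which forces the complementary component to vanish) show these morphisms are exactly the matrices $(\hueca{I}_M,0)^t$ and $(0,\hueca{I}_{M[1]})$ of the statement. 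You instead start from $L(\sigma_1)$ and $L(\pi_2)$ and verify everything directly: the Maurer--Cartan compatibility via $\widehat{\delta}\circ L=0$ and the block computations $v_{J(\underline{M})}*\alpha_{\underline{M}}=\alpha_{\underline{M}}*u_M$, $u_{M[1]}*\beta_{\underline{M}}=\beta_{\underline{M}}*v_{J(\underline{M})}$; membership in ${\cal E}$ and exactness via (\ref{L: split underlying pairs produce exact pairs}); and naturality from the diagonal form of $J(f)$. Both arguments are sound; the paper's is shorter and makes naturality structural, while yours is self-contained, does not depend on the computations hidden in the proof of (\ref{P: isos naturales de morfismos en TGMod-B en GMod-B}), and spells out explicitly why the pair is an ${\cal E}$-conflation, which the paper's one-line proof leaves implicit. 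Your remark about the sign in $u_{M[1]}=-u_M[1]$ being irrelevant here is also accurate.
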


\begin{proof} Consider the $0$-degree homogeneous morphisms of twisted ${\cal B}$-modules $$\alpha_{\underline{M}}:=\eta^{-1}_1(\hueca{I}_M){ \   \ and \  \  }\beta_{\underline{M}}:=\eta_2^{-1}(\hueca{I}_{M[1]}),$$
which are described explicitely by the formulas for the inverses of $\eta_1$ and $\eta_2$ given in  the proof of (\ref{P: isos naturales de morfismos en TGMod-B en GMod-B}).
\end{proof}

From (\ref{P: J(M,u) es E-proy y es E-iny}) and (\ref{P: pares exactos con termino medio J(M)}), we immediately obtain the following. 

\begin{corollary}\label{C: proy e iny en TGMod-B son sumandos dir de J(M)'s}
The ${\cal E}$-projective and the ${\cal E}$-injective objects in the exact category $(\TGModi^0\g{\cal B},{\cal E})$ are the direct summands of the objects $J(\underline{M})$, for $\underline{M}\in \TGModi\g{\cal B}$. 
\end{corollary}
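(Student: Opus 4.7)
\begin{proofof}{C: proy e iny en TGMod-B son sumandos dir de J(M)'s}
The plan is to prove the two inclusions. In one direction, direct summands of $J(\underline{M})$ are both ${\cal E}$-projective and ${\cal E}$-injective; in the other, every ${\cal E}$-projective (resp.\ ${\cal E}$-injective) object is a direct summand of some $J(\underline{M})$.

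For the easy direction, Proposition \ref{P: J(M,u) es E-proy y es E-iny} says that $J(\underline{M})$ is both ${\cal E}$-projective and ${\cal E}$-injective for every $\underline{M}\in \TGM\g{\cal B}$. In any exact category in which idempotents split, direct summands of ${\cal E}$-projectives (resp.\ ${\cal E}$-injectives) are again ${\cal E}$-projective (resp.\ ${\cal E}$-injective): the section associated to a summand permits one to transport lifts against deflations (resp.\ extensions along inflations). The hypothesis on idempotents is given by Proposition \ref{P: split idempotents}, so this direction is immediate.

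For the non-trivial direction, the key is to use the canonical conflation produced by Proposition \ref{P: pares exactos con termino medio J(M)}. If $\underline{I}$ is ${\cal E}$-injective, apply that proposition to $\underline{M}=\underline{I}$ to obtain the ${\cal E}$-conflation
$$\underline{I}\rightmap{\alpha_{\underline{I}}}J(\underline{I})\rightmap{\beta_{\underline{I}}}T(\underline{I}).$$
Here $\alpha_{\underline{I}}$ is an ${\cal E}$-inflation, and ${\cal E}$-injectivity of $\underline{I}$ allows one to extend $\hueca{I}_{\underline{I}}$ along $\alpha_{\underline{I}}$, producing a retraction $J(\underline{I})\to \underline{I}$. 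This splits the conflation and exhibits $\underline{I}$ as a direct summand of $J(\underline{I})$. Dually, if $\underline{P}$ is ${\cal E}$-projective, apply the proposition to $\underline{M}=T^{-1}(\underline{P})$, obtaining the ${\cal E}$-conflation
$$T^{-1}(\underline{P})\rightmap{\alpha}J(T^{-1}(\underline{P}))\rightmap{\beta}\underline{P},$$
where we used that $T\circ T^{-1}=id$ (Lemma \ref{D: traslacion para GM-B y TGM-B}). Now $\beta$ is an ${\cal E}$-deflation and ${\cal E}$-projectivity of $\underline{P}$ lifts $\hueca{I}_{\underline{P}}$ to a section $\underline{P}\to J(T^{-1}(\underline{P}))$, splitting this conflation. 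Thus $\underline{P}$ is a direct summand of $J(T^{-1}(\underline{P}))$.

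I expect no serious obstacle: the argument is essentially formal once one has the canonical conflations from Proposition \ref{P: pares exactos con termino medio J(M)}, the injectivity/projectivity of every $J(\underline{M})$ from Proposition \ref{P: J(M,u) es E-proy y es E-iny}, and the exact automorphism $T$ from Lemma \ref{D: traslacion para GM-B y TGM-B}. The only minor subtlety is remembering to apply the proposition to $T^{-1}(\underline{P})$ rather than to $\underline{P}$ itself when handling the projective case, so that the given object appears as the right-hand term of a conflation whose middle term is of the form $J(-)$.
\end{proofof}
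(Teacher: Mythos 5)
Your argument is correct and follows the same route the paper intends: the corollary is stated there as an immediate consequence of Propositions (\ref{P: J(M,u) es E-proy y es E-iny}) and (\ref{P: pares exactos con termino medio J(M)}), i.e.\ splitting the canonical conflation $\underline{M}\rightmap{\alpha_{\underline{M}}}J(\underline{M})\rightmap{\beta_{\underline{M}}}T(\underline{M})$ using injectivity, and the shifted one for projectivity via the automorphism $T$. Your attention to applying the conflation to $T^{-1}(\underline{P})$ in the projective case is exactly the small point that makes the formal argument go through.
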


 \begin{proposition}\label{P: homotopically trivial=factors through E-proj}
 Let $f:\underline{M}\rightmap{}\underline{N}$ be a morphism in $\TGModi^0\g{\cal B}$. Then, $f$ is homotopically trivial iff it factors through an ${\cal E}$-projective twisted ${\cal B}$-module. 
 \end{proposition}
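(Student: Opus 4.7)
The key tool is Corollary \ref{C: proy e iny en TGMod-B son sumandos dir de J(M)'s}, which identifies the ${\cal E}$-projective objects of $\TGModi^0\g{\cal B}$ as the direct summands of the modules $J(\underline{L})$; hence $f$ factors through an ${\cal E}$-projective iff it factors through some $J(\underline{L})$. Both implications then amount to manipulating the explicit formulas for $\eta_1,\eta_2$ obtained in the proof of Proposition \ref{P: isos naturales de morfismos en TGMod-B en GMod-B}, together with two structural facts: the shifting identity $\underline{\sigma}_L * u_L = -u_{L[1]} * \underline{\sigma}_L$ (which is the off-diagonal compatibility making $J(\underline{L})$ a twisted module, and which encodes $u_{L[1]} = -u_L[1]$) and the vanishing $\hat{\delta}(\underline{\sigma}_L) = 0 = \hat{\delta}(\underline{\sigma}_L^{-1})$ noted in Remark \ref{R: matrices of morphisms in GM-B}.

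For the implication \emph{factors through an ${\cal E}$-projective} $\Rightarrow$ \emph{null-homotopic}, I would write $f = g * h$ with $h = (h_1,h_2)^t : \underline{M} \to J(\underline{L})$ and $g = (g_1,g_2) : J(\underline{L}) \to \underline{N}$, and substitute $h_2 = \underline{\sigma}_L * h_1 * u_M - \underline{\sigma}_L * \hat{\delta}(h_1) - \underline{\sigma}_L * u_L * h_1$ and $g_1 = \hat{\delta}(g_2) * \underline{\sigma}_L + u_N * g_2 * \underline{\sigma}_L - g_2 * u_{L[1]} * \underline{\sigma}_L$ (coming from the proof of Proposition \ref{P: isos naturales de morfismos en TGMod-B en GMod-B}) into $g * h = g_1 * h_1 + g_2 * h_2$. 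Using $\underline{\sigma}_L * u_L = -u_{L[1]} * \underline{\sigma}_L$ the terms containing $u_{L[1]}$ cancel, and Leibniz's rule (applied to $\hat{\delta}(g_2 * \underline{\sigma}_L * h_1)$, noting $\hat{\delta}(\underline{\sigma}_L)=0$) recognises the surviving expression as $\hat{\delta}(t) + u_N * t + t * u_M$ with $t := g_2 * \underline{\sigma}_L * h_1 \in \Hom^{-1}_{\GM\g{\cal B}}(M,N)$. By Definition \ref{D: homotopy for morphisms of twisted cal B-modules}, $t$ is a homotopy from $f$ to $0$.

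For the converse, given $t \in \Hom^{-1}_{\GM\g{\cal B}}(M,N)$ with $f = \hat{\delta}(t) + u_N * t + t * u_M$, I would set $g_2 := t * \underline{\sigma}_M^{-1} \in \Hom^0_{\GM\g{\cal B}}(M[1],N)$ and check, by the symmetric calculation (using $\hat{\delta}(\underline{\sigma}_M^{-1})=0$ and $\underline{\sigma}_M^{-1} * u_{M[1]} * \underline{\sigma}_M = -u_M$), that $\hat{\delta}(g_2) * \underline{\sigma}_M + u_N * g_2 * \underline{\sigma}_M - g_2 * u_{M[1]} * \underline{\sigma}_M = f$. By the inverse of $\eta_2$ described in the proof of Proposition \ref{P: isos naturales de morfismos en TGMod-B en GMod-B}, the pair $g := (f, g_2)$ is then a well-defined morphism $J(\underline{M}) \to \underline{N}$ in $\TGModi^0\g{\cal B}$; composing with the inflation $\alpha_{\underline{M}} = (\hueca{I}_M,0)^t$ of Proposition \ref{P: pares exactos con termino medio J(M)} gives $g * \alpha_{\underline{M}} = g_1 = f$, the desired factorization through the ${\cal E}$-projective object $J(\underline{M})$.

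The main obstacle is purely bookkeeping: getting the signs in Leibniz's rule right for the negative-degree morphisms $\underline{\sigma}_L,\underline{\sigma}_M^{-1}$ and handling the interaction $u_{M[1]} = -u_M[1]$ consistently; once those compatibilities are set up, both directions reduce to the same piece of algebra, transported across the isomorphism $\underline{\sigma}_L$ (respectively $\underline{\sigma}_M$).
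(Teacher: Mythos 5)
Your proof is correct, and it runs on the same ingredients as the paper's own argument: the equations $(a_1)$ and $(b_2)$ established in the proof of (\ref{P: isos naturales de morfismos en TGMod-B en GMod-B}), the vanishing $\widehat{\delta}(\underline{\sigma})=0$ from (\ref{R: matrices of morphisms in GM-B}), Leibniz's rule, and the compatibility $\underline{\sigma}_L*u_L=-u_{L[1]}*\underline{\sigma}_L$. Your converse direction is essentially verbatim the paper's: set $h_2:=g*\underline{\sigma}_M^{-1}$, use the explicit inverse of $\eta_2$ to see that $(f,h_2):J(\underline{M})\rightmap{}\underline{N}$ is a morphism in $\TGModi^0\g{\cal B}$, and compose with $\alpha_{\underline{M}}$ from (\ref{P: pares exactos con termino medio J(M)}). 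The only genuine divergence is in the forward implication: the paper first uses the coincidence of ${\cal E}$-projectives and ${\cal E}$-injectives to reduce ``factors through an ${\cal E}$-projective'' to ``factors through the inflation $\alpha_{\underline{M}}$'', after which only the $\eta_2$-equation for the single morphism $h=(h_1,h_2):J(\underline{M})\rightmap{}\underline{N}$ is needed and the homotopy is $h_2*\underline{\sigma}_M$; you instead reduce, via (\ref{C: proy e iny en TGMod-B son sumandos dir de J(M)'s}) and splitting of idempotents, to a factorization $f=g*h$ through an arbitrary $J(\underline{L})$ and combine the $\eta_1$-equation for $h$ with the $\eta_2$-equation for $g$, landing on the homotopy $t=g_2*\underline{\sigma}_L*h_1$ of degree $-1$. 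I checked your cancellation of the $u_{L[1]}$-terms and the Leibniz identification of the remainder with $\widehat{\delta}(t)+u_N*t+t*u_M$; the signs work out. Your route costs a slightly longer two-sided computation with a general middle object, but avoids invoking the injectivity of the projectives at that step, needing only their classification as summands of the $J(\underline{L})$'s; the two arguments are otherwise interchangeable.
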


 \begin{proof} Since the ${\cal E}$-projective twisted ${\cal B}$-modules coincide with the  ${\cal E}$-injective twisted ${\cal B}$-modules, a morphism $f:\underline{M}\rightmap{}\underline{N}$ factors through an ${\cal E}$-projective iff it factors through $\alpha_{\underline{M}}$. So we have to show that $f$ factors through $\alpha_{\underline{M}}$ iff it is homotopically trivial. 
 
 Suppose first that $f$ factors through $\alpha_{\underline{M}}$ Then there is 
 $h=(h_1,h_2)\in \Hom^0_{\TGModi\g{\cal B}}(J(\underline{M}),\underline{N})$ such that $f=h*\alpha_{\underline{M}}$. 
 Then, $f=h_1$ and the first component of $h$ is given by 
 $$h_1=\widehat{\delta}(h_2)*\underline{\sigma}_M+u_N*h_2*\underline{\sigma}_M
 -h_2*u_{M[1]}*\underline{\sigma}_M.$$
 Hence,
 $$f=\widehat{\delta}(h_2*\underline{\sigma}_M)+u_N*(h_2*\underline{\sigma}_M)+(h_2*\underline{\sigma}_M)*u_M,$$
 with $h_2*\underline{\sigma}_M\in \Hom^{-1}_{\GModi\g{\cal B}}(M,N)$. So, $f$ is homotopically trivial. 
 
 Assume now that $f:\underline{M}\rightmap{}\underline{N}$ is homotopically trivial and take a morphism 
 $g\in \Hom^{-1}_{\GModi\g{\cal B}}(M,N)$ such that $f=\widehat{\delta}(g)+u_N*g+g*u_M$. Now, consider the morphism $h_2:=g*\underline{\sigma}_M^{-1}\in \Hom^0_{\GModi\g{\cal B}}(M[1],N)$. Then, we have 
 $$\begin{matrix}
   f&=& \widehat{\delta}(h_2*\underline{\sigma}_M)+u_N*(h_2*\underline{\sigma}_M)+(h_2*\underline{\sigma}_M)*u_M\hfill\\
   &=&
   \widehat{\delta}(h_2)*\underline{\sigma}_M+u_N*h_2*\underline{\sigma}_M
 -h_2*u_{M[1]}*\underline{\sigma}_M.\hfill\\
   \end{matrix}$$
   Then, $h=(f,h_2)\in \Hom^0_{\TGModi\g{\cal B}}(J(\underline{M}),\underline{N})$ satisfies $f=h*\alpha_{\underline{M}}$. 
 \end{proof}

\begin{definition}\label{D: estructura triangular de TGMod-B}
  Let ${\cal B}$ be a triangular differential graded $S$-bocs. 
  Then, a sequence of morphisms 
  $$(M,u_M)\rightmap{\underline{f}}(E,u_E)\rightmap{\underline{g}}(N,u_N)\rightmap{\underline{h}}T(M,u_M)$$ 
  in the homotopic category $\underline{\TGM}^0\g {\cal B}$ 
 such that 
 $$\xi: (M,u_M)\rightmap{f}(E,u_E)\rightmap{g}(N,u_N)$$
 is an exact pair in ${\cal E}$ and we have a commutative diagram of the form 
 $$\begin{matrix}
   \xi&:& (M,u_M)&\rightmap{f}&(E,u_E)&\rightmap{g}&(N,u_N)\\
    &&\parallel&&\lmapdown{}&&\lmapdown{h}\\
     \xi_M&:&(M,u_M)&
     \rightmap{\alpha_{\underline{M}}}&
     J(M,u_M)&
     \rightmap{\beta_{\underline{M}}}&T(M,u_M)\\
   \end{matrix}$$
   in $\TGM^0\g {\cal B}$ is called a 
   \emph{canonical triangle} in  $\underline{\TGM}^0\g {\cal B}$.
   Notice that, in this case, we have 
   $\xi_Mh=\xi$. Now, consider the class ${\cal T}$ of  sequences  of morphisms 
   $$X\rightmap{u}Y\rightmap{v}Z\rightmap{w}TX$$ 
   in $\underline{\TGM}^0\g {\cal B}$ which are isomorphic to   canonical triangles. That is, 
   there is a canonical triangle 
  $M\rightmap{\underline{f}}E\rightmap{\underline{g}}N\rightmap{\underline{h}}TM$
   and isomorphisms $a,b,c$ such that the following diagram commutes 
    $$\begin{matrix}X&\rightmap{u}&Y&\rightmap{v}&Z&\rightmap{w}&TX\\
    \lmapdown{a}&&\lmapdown{b}&&\lmapdown{c}&&\lmapdown{T(a)}\\
    M&\rightmap{\underline{f}}&E&\rightmap{\underline{g}}&N&
    \rightmap{\underline{h}}&TM.\\
    \end{matrix}
    $$
    The elements of ${\cal T}$ are called the 
    \emph{triangles of $\underline{\TGM}^0\g {\cal B}$}.
 \end{definition}
 
 The following result follows from the general theory for special Frobenius categories, see \cite{H} and \cite{BS}. 

 \begin{theorem}\label{T: TGMod-A estable es triangulada} 
 Let ${\cal B}$ be a triangular differential graded $S$-bocs. Then the category $\TGM^0\g {\cal B}$ is a special Frobenius category with the automorphism $T$ of (\ref{D: traslacion para GM-B y TGM-B}) and the endofunctor $J$ described in (\ref{P: construction of J for TGMod-B}), both restricted to $\TGM^0\g {\cal B}$.  
 
 The stable category $\underline{\TGM}^0\g {\cal B}$ with the automorphism $T$ and the class of triangles ${\cal T}$ 
 defined above is a triangulated category. 
 \end{theorem}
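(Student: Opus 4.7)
My plan is to simply collect and verify the axioms of a special Frobenius category from the results already established in the paper, and then invoke the standard theorem that the stable category of a Frobenius category is triangulated.

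First I would check that $(\TGM^0\g{\cal B},{\cal E})$ is an exact category in which idempotents split: this is exactly the content of \ref{P: estructura exacta de TGMod-B} combined with \ref{P: split idempotents}. Next, for the special Frobenius structure, I would assemble: the exact autofunctor $T:\TGM^0\g{\cal B}\rightmap{}\TGM^0\g{\cal B}$ (the shift) given by \ref{D: traslacion para GM-B y TGM-B}(3); the endofunctor $J$ of \ref{P: construction of J for TGMod-B}, which is $\cal E$-projective (and $\cal E$-injective) valued by \ref{P: J(M,u) es E-proy y es E-iny}; and the natural transformations $\alpha:id\rightmap{}J$ and $\beta:J\rightmap{}T$ fitting each $\underline{M}$ into the $\cal E$-conflation $\underline{M}\rightmap{\alpha_{\underline{M}}}J(\underline{M})\rightmap{\beta_{\underline{M}}}T(\underline{M})$ from \ref{P: pares exactos con termino medio J(M)}. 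This establishes all the requirements of \ref{D: Especial Frobenius category}.

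To confirm that $(\TGM^0\g{\cal B},{\cal E})$ is in fact a Frobenius category, I would observe that for any $\underline{M}\in\TGM\g{\cal B}$ the morphism $\alpha_{\underline{M}}:\underline{M}\rightmap{}J(\underline{M})$ is an ${\cal E}$-inflation into an $\cal E$-injective, and dually $\beta_{\underline{M}}:J(T^{-1}\underline{M})\rightmap{}\underline{M}$ is an $\cal E$-deflation from an $\cal E$-projective. Hence there are enough of both, and by \ref{C: proy e iny en TGMod-B son sumandos dir de J(M)'s} the two classes coincide (they are precisely the direct summands of the $J(\underline{M})$'s). This gives assertion one.

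For the second assertion, I would first note that by \ref{P: homotopically trivial=factors through E-proj} the homotopy relation on $\TGM^0\g{\cal B}$ coincides with the classical stable-category relation for the Frobenius category $(\TGM^0\g{\cal B},{\cal E})$: a morphism is null-homotopic iff it factors through an ${\cal E}$-projective. Therefore the category $\underline{\TGM}^0\g{\cal B}$ constructed in \ref{D: homotopy for morphisms of twisted cal B-modules} is the stable category of the Frobenius category in the classical sense. The translation functor $T$ descends to an automorphism of the stable category because $T$ preserves the classes of $\cal E$-projectives and $\cal E$-injectives (\ref{D: traslacion para GM-B y TGM-B}(3)), and the class of triangles $\cal T$ described in \ref{D: estructura triangular de TGMod-B} is precisely the standard class of canonical triangles associated with the conflations $\xi_{\underline{M}}:\underline{M}\rightmap{\alpha_{\underline{M}}}J(\underline{M})\rightmap{\beta_{\underline{M}}}T\underline{M}$. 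With all these pieces in place, the conclusion is a direct application of Happel's theorem, that the stable category of a Frobenius category carries a canonical triangulated structure, as formulated in \cite{H} and in the version for special Frobenius categories in \cite{BS}; no further verification of the octahedral or rotation axioms is needed beyond citing that general result. The only real work, which has already been carried out in the previous sections, is the construction of $J$, $T$, $\alpha$, $\beta$ and the verification that $T$ is an exact automorphism and that $J$-objects are simultaneously $\cal E$-projective and $\cal E$-injective; the main subtlety there was the triangularity-driven argument in \ref{L: idemptes se div en GM-B} and \ref{P: estructura exacta de TGMod-B} allowing idempotents to split, which is the most delicate hypothesis the abstract theorem requires.
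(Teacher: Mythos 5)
Your proposal is correct and follows essentially the same route as the paper: the paper likewise treats the theorem as a direct consequence of the general theory of (special) Frobenius categories from \cite{H} and \cite{BS}, after assembling exactly the ingredients you cite, namely (\ref{P: estructura exacta de TGMod-B}), (\ref{P: split idempotents}), (\ref{P: construction of J for TGMod-B}), (\ref{P: J(M,u) es E-proy y es E-iny}), (\ref{P: pares exactos con termino medio J(M)}), (\ref{C: proy e iny en TGMod-B son sumandos dir de J(M)'s}), (\ref{P: homotopically trivial=factors through E-proj}) and (\ref{D: traslacion para GM-B y TGM-B}). Your explicit remark that (\ref{P: homotopically trivial=factors through E-proj}) identifies the homotopy relation with the stable-category relation is a useful point the paper leaves implicit, but it is not a departure from the paper's argument.
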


\section{Quasi-isomorphisms of twisted modules}

Before, the proof of our theorem (\ref{P: (M,u0) aciclico sii (M,u) homotopicamente trivial}), we fix some useful notation.

\begin{remark}\label{R: transferencia de dif en GMod-B a GMod'-B}
Given a graded differential $S$-bocs ${\cal B}=(C,\mu,\epsilon,\delta)$, 
 the equivalence of categories $F:\GM\g{\cal B}\rightmap{}\GModi\g {\cal B}$ described in 
 (\ref{L: equiv directa entre GMod-B y G hueca(M)od-B}) permits to transfer the differential $\widehat{\delta}$ of the category 
 $\GM\g{\cal B}$ to a differential $\widehat{\delta}$ on the graded category $\GModit\g {\cal B}$. 
 
 Thus, given a homogeneous morphism $f=(f^0,f^1):M\rightmap{}N$ in $\GModit\g {\cal B}$, we have 
 $\widehat{\delta}(f)=(0,\widehat{\delta}(f)^1)$, where for any homogeneous elements $m\in M$ and $c\in \overline{C}$, we have $$\widehat{\delta}(f)^1(c)[m]=(-1)^{\vert f\vert +\vert m\vert+1}f^1(\delta(c))[m].$$
 If $g:M\rightmap{}N$ is a morphism in $\GM\g{\cal B}$, then $\widehat{\delta}(F(g))=F(\widehat{\delta}(g))$. 
 So $F$ determines an equivalence of the differential graded categories $\GM\g{\cal B}$ and $\GModit\g{\cal B}$. 
 
 Then, the category of twisted ${\cal B}$-modules $\TGModit\g{\cal B}$ is formed by the pairs $\underline{M}=(M,u_M)$, with $u_M\in \Hom^1_{\GModitsub\g{\cal B}}(M,N)$ such that $\widehat{\delta}(u_M)+u_M*u_M=0$. 
 
 A homogeneous morphism $f:\underline{M}\rightmap{}\underline{N}$ of degree $d$ in $\TGModit\g{\cal B}$
 is a homogeneous 
 morphism 
 $f:M\rightmap{}N$ of degree $d$ in $\GModit\g{\cal B}$ such that the equality  $\widehat{\delta}(f)+u_N*f-(-1)^df*u_M=0$ holds. 
 
 A homogeneous morphism $f:\underline{M}\rightmap{}\underline{N}$ in $\TGModit^0\g{\cal B}$ is called \emph{homotopically trivial} iff there is a homogeneous morphism $h:M\rightmap{}N$ in $\GModit\g{\cal B}$ of degree $-1$ such that $f=\widehat{\delta}(h)+u_N*h+h*u_M$.
 \medskip
 
 For the sake of notational simplicity, from now on, given $f=(f^0,f^1):M\rightmap{}N$ and $g=(g^0,g^1):N\rightmap{}L$ in $\GModit\g{\cal B}$, we write
 $$g^0f^1:=[\overline{C}\rightmap{f^1}\Hom_{\GM\g k}(M,N)\rightmap{g^0_*}\Hom_{\GM\g k}(M,L)],$$
 $$g^1f^0:=[\overline{C}\rightmap{g^1}\Hom_{\GM\g k}(N,L)\rightmap{f^0_*}\Hom_{\GM\g k}(M,L)],$$
 and
 $g^1\cdot f^1:= (g*f)^1-g^0f^1-g^1f^0:\overline{C}\rightmap{}\Hom_{\GM\g k}(M,L)$. Thus, the maps $g^0f^1$, $g^1f^0$ and $g^1\cdot f^1$ are morphisms of graded $S$-$S$-bimodules. Then, as mentioned in 
 (\ref{R: canonical embedding GM-S --> GM-B}), there is a formula to compute $g^1\cdot f^1$. Namely, given $c\in \overline{C}$ with $\overline{\mu}(c)=\sum_sc^1_s\otimes c_s^2$, with $c^1_s,c^2_s\in \overline{C}$, we have 
 $$(g^1\cdot f^1)(c)=\sum_sg^1(c_s^2)f^1(c_s^1).$$
 Therefore, $(g*f)^0=g^0f^0$ and $(g*f)^1= g^0f^1+g^1f^0+g^1\cdot f^1$.
 \medskip
 
 Then, the equality $\widehat{\delta}(u_M)+u_M*u_M=0$ is equivalent to 
 $$(A):\hbox{\hskip1.5cm}\begin{cases}
      0=(u_M^0) ^2\\
      0=\widehat{\delta}(u_M)^1+u^0_Mu^1_M+u^1_Mu^0_M+u^1_M\cdot u^1_M.\\
      \end{cases}$$
      
 A homogeneous morphism $f:\underline{M}\rightmap{}\underline{N}$ of degree $0$ in $\TGModit\g{\cal B}$ is a morphism 
 $f=(f^0,f^1):M\rightmap{}N$ of degree $0$ in $\GModit\g{\cal B}$ such that 
 $$(D):\hbox{\hskip1.5cm}\begin{cases}
      0=u_N^0f^0-f^0u_M^0\\
      0=\widehat{\delta}(f)^1+u^0_Nf^1+ u_N^1f^0+u_N^1\cdot f^1- f^0u^1_M-f^1u^0_M-f^1\cdot u^1_M.\\
      \end{cases}$$
  In particular, $f^0:(M,u_M^0)\rightmap{}(N,u^0_N)$ is a morphism of complexes of right $S$-modules.     
\end{remark}

For the proof of the following result, it is convenient to extend the preceding notations as follows.

\begin{remark}\label{R: notacion producto de h1 con u1}
Let ${\cal B}=(C,\mu,\epsilon,\delta)$ be a triangular differential graded $S$-bocs and consider its triangular filtration
$0=\overline{C}_0\subseteq \overline{C}_1\subseteq \cdots\subseteq \overline{C}_i\subseteq \overline{C}_{i+1}\subseteq \cdots$

 For any $M,N, L\in \GM\g S$, $f_i^1\in \Hom_{\GM \g S}(M\otimes_S\overline{C}_i,N)$, and 
$g^1_i\in \Hom_{\GM \g S}(N\otimes_S\overline{C}_i,L)$ we can consider the morphism of graded right $S$-modules
$g_i^1*f_i^1\in \Hom_{\GM \g S}(M\otimes_S\overline{C}_{i+1},L)$
 defined as the composition 
$$M\otimes_S\overline{C}_{i+1}\rightmap{ \ id_M\otimes\overline{\mu} \ }
M\otimes_S \overline{C}_i\otimes_S \overline{C}_i\rightmap{ \  f_i^1 \otimes id_{\overline{C}_i}\ }N\otimes_S \overline{C}_i\rightmap{g^1_i}L.$$
For each $i\geq 0$, we have the canonical isomorphism
$$\eta_i:\Hom_{\GM \g S}(M\otimes_S\overline{C}_i,L)\rightmap{}\Hom_{\GM\g S\g S}(\overline{C}_i,\Hom_{\GM\g k}(M,L)).$$
We can define the product morphism:  $\eta_i(g^1_i)\cdot \eta_i(f^1_i):=\eta_{i+1}(g^1_i*f^1_i)$.

Then, given any two morphisms $f_i^1\in \Hom_{\GM\g S\g S}(\overline{C}_i,\Hom_{\GM\g k}(M,N))$ and $g_i^1\in \Hom_{\GM\g S\g S}(\overline{C}_i,\Hom_{\GM\g k}(N,L))$, we have their product  morphism $g_i^1\cdot f_i^1\in
\Hom_{\GM\g S\g S}(\overline{C}_{i+1},\Hom_{\GM\g k}(M,L))$ which is computed on each element  
$c\in \overline{C}_{i+1}$ as  
$$(g_i^1\cdot f_i^1)(c)=\sum_jg^1_i(c^2_j)f^1_i(c^1_j),
\hbox{ where } \overline{\mu}(c)=\sum_jc_j^1\otimes c_j^2 \hbox{ with } c^1_j,c_j^2\in \overline{C}_i.$$
Given $g^0\in \Hom_{\GM\g S}(N,L)$ and $f_i^1\in \Hom_{\GM\g S\g S}(\overline{C}_i,\Hom_{\GM\g k}(M,N))$, 
the morphism  
$g^0f_i^1\in \Hom_{\GM\g S\g S}(\overline{C}_i,\Hom_k(M,L))$ is defined by $(g^0f^1_i)(c)=g^0f^1_i(c)$. 
Similarly, given morphisms $g_i^1\in \Hom_{\GM\g S\g S}(\overline{C}_i,\Hom_{\GM\g k}(N,L))$ and 
$f^0\in \Hom_{\GM\g S}(M,N)$, 
$g_i^1f^0\in \Hom_{\GM\g S\g S}(\overline{C}_i,\Hom_{\GM\g k}(M,L))$ is defined by $(g^1_if^0)(c)=g_i^1(c)f^0$.

The following formulas hold:
$$ h^1_i\cdot g^0f_i^1=h_i^1g^0\cdot f_i^1, \hbox{ \ \ \ }
   h^0(f^1_i\cdot g^1_i)=h^0f^1_i\cdot g^1_i,\hbox{ and  }
   (h^1_i\cdot g^1_i)f^0= h^1_i\cdot g^1_if^0.$$   
If the morphism $h^1_{i+1}$ restricts to $h^1_i$ and the morphism $f^1_{i+1}$ restricts to $f^1_i$, from the coassociativity of the comultiplication $\overline{\mu}$, we have the \emph{associativity formula} 
 $$\begin{matrix}h^1_{i+1}\cdot(g^1_i\cdot f^1_i)&=&(h^1_i\cdot g^1_i)\cdot f^1_{i+1}.\hfill\\
  \end{matrix}$$

Since the bocs ${\cal B}$ is triangular, we have $\delta(\overline{C}_{i+1})\subseteq Z(\overline{C}_{i+1})$. Then, for a homogeneous element  
$\widehat{f}^1_{i}\in \Hom_{\GM\g S\g S}(\overline{C}_i+Z(\overline{C}_{i+1}),\Hom_{\GM\g k}(M,N))$, 
we can define the morphism 
$\widehat{d}(\widehat{f}^1_i)\in 
\Hom_{\GM\g S\g S}(\overline{C}_{i+1},\Hom_{\GM\g k}(M,N))$ by  
$$\widehat{d}(\widehat{f}^1_{i})(c)[m]=(-1)^{\vert m\vert+\vert \widehat{f}_{i}^1\vert +1}\widehat{f}^1_i(\delta(c))[m],$$
for any $c\in \overline{C}_{i+1}$  and  any homogeneous element $m\in M$. 

Given a homogeneous  $\widehat{g}^1_{i}\in \Hom_{\GM\g S\g S}(\overline{C}_i+Z(\overline{C}_{i+1}),\Hom_{\GM\g k}(N,L))$,  the following \emph{Leibniz formula} for morphisms $\overline{C}_{i+1}\rightmap{} \Hom_{\GM\g k}(M,L)$ holds
$$\widehat{d}(g^1_i\cdot f^1_i)=\widehat{d}(\widehat{g}^1_i)\cdot f^1_i
+
(-1)^{\vert g_i^1\vert}g_i^1\cdot \widehat{d}(\widehat{f}^1_i),$$
where $f^1_i$ and $g^1_i$ denote the restrictions of $\widehat{f}^1_i$ and $\widehat{g}^1_i$ 
from their common domain 
$\overline{C}_i+Z(\overline{C}_{i+1})$ to  $\overline{C}_i$; thus the morphism $g_i^1\cdot f_i^1$ is defined on 
$\overline{C}_{i+1}$, hence it is defined on $\overline{C}_i+Z(\overline{C}_{i+1})$; the morphisms $\widehat{d}(\widehat{g}^1_i)$ and $\widehat{d}(\widehat{f}^1_i)$ are defined on $\overline{C}_{i+1}$, hence on $\overline{C}_i$.
\end{remark}

 \begin{theorem}\label{P: (M,u0) aciclico sii (M,u) homotopicamente trivial} Assume that $S$ is a finite product of copies of the field $k$. 
 Let $(M,u)\in \TGModi\g{\cal B}$. Then, the twisted ${\cal B}$-module $(M,u)$ is homotopically trivial in $\TGModit^0\g{\cal B}$ iff the complex of right $S$-modules $(M,u^0)$ is \emph{acyclic}, that is $H^i(M,u^0)=0$, for all $i\in \hueca{Z}$.
 \end{theorem}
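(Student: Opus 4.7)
The forward implication is straightforward. If $\hueca{I}_M=\widehat\delta(h)+u*h+h*u$ for some $h\in\Hom^{-1}_{\GModit\g{\cal B}}(M,M)$, reading off the $0$-component (recall from Remark \ref{R: transferencia de dif en GMod-B a GMod'-B} that $\widehat\delta$ kills the $0$-component, and that in $\GModit$ the $0$-component of composition is just composition of the $0$-components) gives $id_M=u^0h^0+h^0u^0$. Hence $h^0$ is a contracting homotopy for the complex $(M,u^0)$, and the latter is acyclic.

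For the converse, assume $(M,u^0)$ is acyclic. Because $S$ is a finite product of copies of $k$, the right $S$-module $M$ decomposes as a direct sum of complexes of graded $k$-vector spaces, each acyclic and hence contractible; so there exists $h^0\in\Hom^{-1}_{\GM\g S}(M,M)$ with $u^0h^0+h^0u^0=id_M$, solving the $0$-component of the null-homotopy equation. It remains to construct $h^1\in\Hom^{-1}_{\GM\g S\g S}(\overline C,\Hom_{\GM\g k}(M,M))$ for which the full null-homotopy equation holds, that is
\[
(**)\qquad \widehat\delta(h)^1+u^0h^1+h^1u^0+u^1h^0+h^0u^1+u^1\cdot h^1+h^1\cdot u^1=0.
\]

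I build $h^1$ by induction along the triangular filtration $\overline C=\bigcup_i\overline C_i$. Pick, once and for all, a splitting $\overline C_i=\overline C_{i-1}\oplus V_i$ as graded $S$-$S$-bimodules (available since $S\otimes_k S^{\mathrm{op}}$ is semisimple). The condition $\overline\mu(\overline C_i)\subseteq\overline C_{i-1}\otimes\overline C_{i-1}$ ensures that the products $u^1\cdot h^1$ and $h^1\cdot u^1$ evaluated on $\overline C_i$ depend only on $h^1|_{\overline C_{i-1}}$. The real obstacle is that $\widehat\delta(h)^1(c)=\pm h^1(\delta(c))$ with $\delta(\overline C_i)\subseteq\overline C_i$: the differential term at stage $i$ involves $h^1$ on $V_i$ itself, so the equation cannot be solved pointwise. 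I therefore treat $(**)|_{V_i}$ as a single linear equation $D(h^1|_{V_i})=g_i$, where $g_i$ gathers all terms already determined at previous stages, $X:=\Hom_{\GM\g k}(M,M)$ is given the differential $L(\psi):=u^0\psi-(-1)^{|\psi|}\psi u^0$, and $D$ is the total differential on $\Hom_{\GM\g S\g S}(V_i,X)$ built from $L$ together with the differential $\delta_{V_i}$ induced on $V_i$ by the splitting (squaring to zero by tracking the splitting of $\delta^2=0$).

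The operator $H(\psi):=h^0\psi$ is an $S$-$S$-bilinear contracting homotopy for $L$, that is $LH+HL=id_X$, as a direct check from $u^0h^0+h^0u^0=id_M$ confirms; the standard formula $K(\Phi):=H\circ\Phi$ then produces a contracting homotopy for $D$, so $(\Hom_{\GM\g S\g S}(V_i,X),D)$ is acyclic. The inductive step thus reduces to verifying that $g_i$ is $D$-closed; this is the main technical obstacle and combines the Maurer--Cartan identity $(A)$ for $u$, the inductive hypothesis $(**)$ on $\overline C_{i-1}$, coassociativity of $\overline\mu$, and the Leibniz rule for $\widehat d$ from Remark \ref{R: notacion producto de h1 con u1}, with triangularity guaranteeing that all relevant products live in previously controlled filtration pieces. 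Once $g_i$ is known to be a cocycle, acyclicity of $(\Hom_{\GM\g S\g S}(V_i,X),D)$ delivers $h^1|_{V_i}$, and the induction proceeds, producing the required $h^1$ and hence the null-homotopy.
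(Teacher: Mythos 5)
Your argument is correct in both directions, and the hard direction is organized genuinely differently from the paper. The paper refines each filtration step further, writing $\overline{C}_{i+1}=\overline{C}_i\oplus V_{i+1}\oplus W_{i+1}$ with $V_{i+1}$ inside the $\delta$-cycles, chooses homogeneous bases of the spaces $e_t\overline{C}_{i+1}e_s$ (this is where the hypothesis $S\cong k\times\cdots\times k$ enters), and builds $h^1$ basis element by basis element: on cycles the troublesome term $h^1(\delta(c))$ disappears, on $W_{i+1}$ it is already known because $\delta(c)$ lies in $\overline{C}_i+Z(\overline{C}_{i+1})$, and in either case the equation becomes an anticommutator equation with $u^0$, solved by null-homotoping a chain map out of the contractible complex $(Me_t,u^0)$. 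You instead keep a single complement $V_i$, retain the induced differential $\delta_{V_i}$, and solve one equation in the Hom-complex $(\Hom_{\GM\g S\g S}(V_i,X),D)$, where $X=\Hom_{\GM\g k}(M,M)$, contracted by $\Phi\mapsto h^0\circ\Phi$. This is cleaner: it needs no bases or idempotents (post-composition with $h^0$ is automatically $S$-$S$-bilinear), and it only uses semisimplicity of $S$ and of $S\otimes_kS^{op}$, so it would in fact prove the statement for any semisimple $S$. One small point of care: by (\ref{R: transferencia de dif en GMod-B a GMod'-B}) the term $\widehat{\delta}(h)^1(c)[m]$ carries the sign $(-1)^{\vert h\vert+\vert m\vert+1}$, so the precomposition part of your $D$ must carry this twist; your contraction $\Phi\mapsto h^0\circ\Phi$ still works.

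The one substantive claim you assert rather than prove is that $g_i$ is $D$-closed; this is exactly where the paper spends its long computations, so it is not mere bookkeeping, but it is true, and in your formulation it is nearly automatic. Put $P_i:=\Hom_{\GM\g S}(M,M)\times\Hom_{\GM\g S\g S}(\overline{C}_i,X)$ with the twisted differential $d(f)=\widehat{\delta}(f)+u*f-(-1)^{\vert f\vert}f*u$; triangularity ($\delta(\overline{C}_i)\subseteq\overline{C}_i$ and $\overline{\mu}(\overline{C}_i)\subseteq\overline{C}_{i-1}\otimes_S\overline{C}_{i-1}$) makes $d$ well defined on $P_i$, and $d^2=0$ and $d(\hueca{I}_M)=0$ hold by the Maurer--Cartan equation and the Leibniz rule of (\ref{P: DG-B es cat dif graduada}). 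Extend $h^1_{i-1}$ by zero on $V_i$ to $\tilde{h}\in P_i$. The defect $r:=\hueca{I}_M-d(\tilde{h})$ has zero first component (by $id_M=u^0h^0+h^0u^0$) and vanishes on $\overline{C}_{i-1}$ (inductive hypothesis), so it lies in the kernel of the restriction $P_i\rightmap{}P_{i-1}$; this kernel is a subcomplex whose induced differential is exactly your $D$, and $d(r)=d(\hueca{I}_M)-d^2(\tilde{h})=0$. Up to sign, $r$ is your $g_i$, so $g_i$ is a $D$-cocycle and $h^1\vert_{V_i}:=h^0\circ g_i$ (with the corresponding sign) solves the step. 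With this argument supplied, or with the explicit verification you sketch (which does go through along the lines of the paper's computations), your plan yields a complete proof.
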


 \begin{proof} If $(M,u)$ is homotopically trivial, then $\hueca{I}_M=(id_M,0)$ is homotopic to the zero map in $\TGModit^0\g{\cal B}$. Then, there is a morphism $h\in \Hom^{-1}_{\GModitsub\g{\cal B}}(M,M)$ such that $\hueca{I}_M=\widehat{\delta}(h)+u*h+h*u$. Looking at the first components, we obtain 
 $id_M=u^0h^0+h^0u^0$. So, $(M,u^0)$ is a homotopically trivial complex of right $S$-modules, which implies that $H^i(M,u^0)=0$, for all $i\in \hueca{Z}$. 
 
 Now, assume that $(M,u^0)$ is an acyclic complex of right $S$-modules. Then, for instance from \cite{Brown}(0.3), we know that $(M,u^0)$ is homotopically trivial. So, there is a homogeneous morphism $h^0:M\rightmap{}M$ of right $S$-modules of degree $-1$ such that the following equality holds
  $$(\Delta^0):\hbox{\hskip1.cm}id_M=u^0h^0+h^0u^0.$$
  We want to show that $(M,u)$ is homotopically trivial, so we are looking for a homogeneous morphism of  $S$-$S$-bimodules $h^1:\overline{C}\rightmap{}\Hom_{\GM\g k}(M,M)$ of degree $-1$ such that $h=(h^0,h^1)$ satisfies $\widehat{\delta}(h)+u*h+h*u=\hueca{I}_M$. That is such that the following equality holds
 $$\Delta^1:=\widehat{\delta}(h)^1+u^0h^1+u^1h^0+u^1\cdot h^1+ 
 h^0u^1 +h^1u^0+h^1\cdot u^1=0.$$
 
 Write $E_M:=\Hom_{\GM\g k}(M,M)$ and make $h^1_0=0\in \Hom_{\GM\g S\g S}^{-1}(\overline{C}_0,E_M)$. 
 We will construct, for each $i\geq 0$, a morphism 
 $h^1_{i+1}\in \Hom_{\GM\g S\g S}^{-1}(\overline{C}_{i+1},E_M)$ such that 
 $$\Delta^1_i:=\widehat{d}(h^1_{i+1})+u^0h_{i+1}^1+u_{i+1}^1h^0+u_i^1\cdot h_i^1+ 
 h^0u_{i+1}^1 +h_{i+1}^1u^0+h_i^1\cdot u_i^1=0$$
 and $h^1_{i+1}$ restricts to $h^1_i$, for all $i$. Here, $u^1_i$ denotes the restriction of $u^1$ to $\overline{C}_i$. Once we evaluate $u_i^1$, we can skip the subindex:  $u^1_i(c)=u^1(c)$, for $c\in \overline{C}_i$. 
 
 Once we have done this, we can define $h^1\in \Hom_{\GM\g S\g S}^{-1}(\overline{C},E_M)$, by $h^1(c)=h^1_i(c)$, whenever $c \in \overline{C}_i$. So $\Delta^1(c)=\Delta^1_i(c)=0$, for $c\in \overline{C}_{i+1}$. Hence $\Delta^1=0$ and we are done.

We require for this construction  a special vector space basis $\hueca{B}_{i+1}$ of $\overline{C}_{i+1}$ consisting of homogeneous elements. 
 In order to describe this basis, we will consider, for each $i\geq 0$, decompositions of graded $S$-$S$-bimodules of the form 
 $$\overline{C}_{i+1}=\overline{C}_{i}\oplus V_{i+1}\oplus W_{i+1},$$
 where
 $\overline{C}_{i}+Z(\overline{C}_{i+1})=\overline{C}_{i}\oplus V_{i+1}$ and 
 $V_{i+1}\subseteq Z(\overline{C}_{i+1})$. 
 By assumption, we have a decomposition $1=\sum_{s=1}^ne_s$ of the unit element of the algebra $S$ as a sum of central primitive orthogonal idempotents.  
 The special basis we are interested in has the form  $\hueca{B}_{i+1}=\bigcup_{s,t\in [1,n]}\hueca{B}_{i+1}(s,t)$, where each subset $\hueca{B}_{i+1}(s,t)$ is the basis of $e_t\overline{C}_{i+1}e_s$ defined recursively as follows. At the base $i=0$, we have  $\hueca{B}_1(s,t):=\hueca{B}^v_1(s,t)\cup\hueca{B}^w_1(s,t)$, where 
 $\hueca{B}^v_1(s,t)$ and $\hueca{B}^w_1(s,t)$ are basis formed by homogeneous elements of $e_tV_1e_s$ and $e_tW_1e_s$, respectively. Once we have defined 
 a basis $\hueca{B}_i$ of $\overline{C}_i$, for $i \geq 1$,  we define 
 $$\hueca{B}_{i+1}(s,t):=\hueca{B}_i(s,t)\cup\hueca{B}^v_{i+1}(s,t)\cup\hueca{B}^w_{i+1}(s,t),$$
 where $\hueca{B}^v_{i+1}(s,t)$ and $\hueca{B}^w_{i+1}(s,t)$ are basis consisting of homogeneous elements 
 of $e_tV_{i+1}e_s$ and $e_tW_{i+1}e_s$, respectively  
 
 \medskip
 \emph{Step 1:  The construction of $h_1^1:\overline{C}_1\rightmap{}E_M$.}
 \medskip
 
 In order to define the homogeneous morphism  of $S$-$S$-bimodules $h_1^1$ we want,  it will be enough to give, for each homogeneous basic element $c\in \hueca{B}_1(s,t)$, a homogeneous element $h_1^1(c)\in 
 \Hom_{\GM\g k}(Me_t,Me_s)$ of degree $\vert c\vert-1$ satisfying the equation $\Delta^1_0(c)=0$.

 Start with a basic element $c\in \hueca{B}_1^v(s,t)$, so $c\in Z(\overline{C}_1)$. 
 Consider the  homogeneous morphism $f_0(c):Me_t\rightmap{}Me_s$ of degree $\vert c\vert$ given by $f_0(c):=u_1^1(c)h^0+h^0u_1^1(c)$. 
 Since $id_M= u^0h^0 + h^0u^0 $, we have 
 $$\begin{matrix}
 u^0f_0(c)-f_0(c)u^0
 &=&
 u^0u^1(c)h^0+u^0h^0u^1(c)-u^1(c)h^0u^0-h^0u^1(c)u^0\hfill\\
   &=&
 u^0u^1(c)h^0 + u^1(c) - h^0u^0u^1(c)\hfill\\
 &&\, 
 -u^1(c) +u^1(c)u^0h^0-h^0u^1(c)u^0\hfill\\
   &=&  
  [u^0u_1^1(c)+u_1^1(c)u^0]h^0-h^0[u^0u_1^1(c)+u_1^1(c)u^0].\hfill\\
   \end{matrix}$$
Since $\overline{\mu}(c)=0$ and $\overline{\delta}(c)=0$, from 
(\ref{R: transferencia de dif en GMod-B a GMod'-B})(A), we get 
$u^0u^1(c)+u^1(c)u^0=0$. Therefore, 
$$u^0f_0(c)=f_0(c)u^0.$$

 In the following, given $N\in \GM\g k$, we denote by $\sigma^{[i]}:N\rightmap{}N[i]$ the homogeneous morphism of degree $-i$ which acts as the identity on the underlying non-graded spaces. 
 
 Consider the morphism $\tau:=\sigma^{[\vert c\vert]}:Me_s\rightmap{}Me_s[\vert c\vert]$. Then, 
the homogeneous  morphism 
$\tau f_0(c):Me_t\rightmap{}Me_s[\vert c\vert]$ of degree $0$ satisfies  
$$\tau f_0(c)u^0-\tau u^0\tau^{-1}\tau f_0(c)=\tau(f_0(c)u^0-u^0f_0(c))=0.$$
Thus $\tau f_0(c)$ is a morphism of complexes $(Me_t,u^0_{\vert Me_t})\rightmap{}(Me_s[\vert c\vert],\tau(u^0_{\vert Me_s})\tau^{-1})$. 

   The complex $(Me_t,u^0_{\vert Me_t})$ is acyclic, so it is homotopically trivial. Then, so is the morphism $-\tau f_0(c)$.  Hence,  there is a homogeneous morphism 
   $\underline{h}_0^1(c)\in \Hom^{-1}_{\GM\g k}(Me_t,Me_s[\vert c\vert])$ such that 
   $$-\tau f_0(c)=\tau u^0\tau^{-1}\underline{h}_0^1(c)+
   \underline{h}^1_0(c)u^0.$$
   Then, we have the homogeneous morphism $\widehat{h}^1_0(c):=\tau^{-1}\underline{h}_0^1(c):Me_t\rightmap{}Me_s$, with  degree $\vert c\vert-1$, such that 
   $-f_0(c)=u^0\widehat{h}_0^1(c)+\widehat{h}^1_0(c)u^0$. Hence, 
   $$\widehat{\Delta}_0^1(c):=u_1^1(c)h^0+h^0u_1^1(c)+u^0\widehat{h}^1_0(c)+\widehat{h}^1_0(c)u^0=0.$$
 
 We have defined $\widehat{h}^1_0(c)$ for any $c\in \hueca{B}_1^v$ such that the preceding equality holds. Then, we can consider the homogeneous  morphism  
 $$\widehat{h}^1_0:Z(\overline{C}_1)=V_1\rightmap{}\Hom_{\GM\g k}(M,M)$$
 of degree $-1$ determined by the given values $\widehat{h}^1_0(c)$ on the basic elements $c\in \hueca{B}^v_1$. Thus $\widehat{h}^1_0\in \Hom_{\GM\g S\g S}^{-1}(Z(\overline{C}_1),E_M)$ extends $h^1_0:\overline{C}_0\rightmap{}E_M$ and satisfies $\widehat{\Delta}^1_0(c)=0$, for all $c\in Z(\overline{C}_1)$. 
 \medskip
 
   Now, take an element $c\in \hueca{B}_1^w(s,t)$, so $\overline{\mu}(c)=0$ and $\delta(c)\in Z(\overline{C}_1)$. 
   Consider the homogeneous morphism $g_0(c):Me_t\rightmap{}Me_s$ of degree $\vert c\vert$ given by 
   $$g_0(c):= \widehat{d}(\widehat{h}_0^1)(c)+u_1^1(c)h^0+h^0u_1^1(c).$$
Write $\lambda(c):= u_1^1(c)h^0+h^0u_1^1(c)$ and $\gamma(c):=\widehat{d}(\widehat{h}_0^1)(c)$.  
Since $id_M=u^0h^0+h^0u^0$, we have 
 $$\begin{matrix}
 u^0\lambda(c)-\lambda(c)u^0
 &=&
u^0u^1(c)h^0+u^0h^0u^1(c)
 -u^1(c)h^0u^0-h^0u^1(c)u^0\hfill\\
   &=&
  u^0u^1(c)h^0+u^1(c)-h^0u^0u^1(c)\hfill\\
 && -\,
 u^1(c)+ u^1(c)u^0h^0 -h^0u^1(c)u^0\hfill\\
   &=&  
   [u^0u_1^1(c)+u_1^1(c)u^0]h^0-h^0[u^0u_1^1(c)+u_1^1(c)u^0].\hfill\\
   \end{matrix}$$
  
  Since $\delta(c)\in Z(\overline{C}_1)$, we have $\widehat{\Delta}_0^1(\delta(c))=0$. 
   Then, for any homogeneous element  $m\in Me_t$,  we have 
   $$\begin{matrix}
    [u^0\gamma(c)-\gamma(c)u^0](m)
    &=&
    [u^0\widehat{d}(\widehat{h}_0^1)(c)-\widehat{d}(\widehat{h}_0^1)(c)u^0](m)\hfill\\
    &=&
    (u^0\widehat{d}(\widehat{h}_0^1)(c))[m]-(\widehat{d}(\widehat{h}_0^1)(c))(u^0[m])\hfill\\
    &=&
    (-1)^{\vert m\vert}u^0\widehat{h}_0^1(\delta(c))[m]-
    (-1)^{\vert m\vert+1}\widehat{h}_0^1(\delta(c))(u^0[m])\hfill\\
    &=&
     (-1)^{\vert m\vert}[u^0\widehat{h}_0^1(\delta(c))
    +\widehat{h}_0^1(\delta(c))u^0](m)\hfill\\
    &=&
    (-1)^{\vert m\vert +1}[u^1(\delta(c))h^0+h^0u^1(\delta(c))](m)\hfill\\
    &=&
    [\widehat{d}(u_1^1)(c)h^0-h^0\widehat{d}(u_1^1)(c)](m).\hfill\\
     \end{matrix}$$
Thus, $u^0\gamma(c)-\gamma(c)u^0= [\widehat{d}(u_1^1)h^0-h^0\widehat{d}(u_1^1)](c)$. 
Then, from (\ref{R: transferencia de dif en GMod-B a GMod'-B})(A), we have 
$$\begin{matrix}u^0g_0(c)-g_0(c)u^0
&=&
u^0\lambda(c)-\lambda(c)u^0+u^0\gamma(c)-\gamma(c)u^0\hfill\\
&=&
[\widehat{d}(u_1^1)(c)+u^0u_1^1(c)+u_1^1(c)u^0]h^0\hfill\\
&&-\, 
h_0[\widehat{d}(u_1^1)(c)+u^0u_1^1(c)+u_1^1(c)u^0]=0.\hfill\\
   \end{matrix}$$
Thus $\tau g_0(c)$ is a morphism of complexes $(Me_t,u^0_{\vert Me_t})\rightmap{}(Me_s[\vert c\vert]),\tau(u^0_{\vert Me_s})\tau^{-1}).$ 
Proceeding as before, we get a homogeneous morphism $\widetilde{h}_0^1(c):Me_t\rightmap{}Me_s$ with degree $\vert c\vert-1$ such that 
$$g_0(c)=-u^0\widetilde{h}_0^1(c)-\widetilde{h}_0^1(c)u^0.$$
Then, we have the equality
$$\widetilde{\Delta}^1_0(c):=\widehat{d}(\widehat{h}^1_0)(c)+u_1^1(c)h^0+h^0u_1^1(c)+u^0\widetilde{h}^1_0(c)+\widetilde{h}^1_0(c)u^0=0.$$
We have the homogeneous morphism $\widetilde{h}^1_0:W_1\rightmap{}\Hom_{\GM\g k}(M,M)$ of degree $-1$ determined by the given values $\widetilde{h}_0^1(c)$ on the basic elements $c\in \hueca{B}^w_1$. 
Then, the morphisms $\widehat{h}_0^1$ and $\widetilde{h}_0^1$ determine a homogeneous morphism 
$h^1_1:\overline{C}_1=Z(\overline{C}_1)\oplus W_1\rightmap{}E_M$ of degree $-1$ such that the equation $\Delta^1_0(c)=0$ is satisfied for all $c\in \overline{C}_1$, because either $\widehat{\Delta}^1_0(c)=0$ or  $\widetilde{\Delta}_0^1(c)$ hold on basic elements $c\in \hueca{B}_1$. Clearly, $h^1_1$ extends $h^1_0$. 
 
 \medskip
 \emph{Step 2:  The construction of $h_{i+1}^1:\overline{C}_{i+1}\rightmap{}E_M$, from $h_i^1:\overline{C}_i\rightmap{}E_M$.}
 \medskip
 
 Assume we have already defined the homogeneous morphism $h_i^1:\overline{C}_i\rightmap{}E_M$ of degree $-1$ such that $\Delta^1_{i-1}(c)=0$ holds for all $c\in \overline{C}_i$. 
 
 Given a basic element $c\in \hueca{B}^v_{i+1}(s,t)$, we consider the homogeneous morphism 
 $f_i(c):Me_t\rightmap{}Me_s$ of degree $\vert c\vert$ given by 
 $$f_i(c):= u^1_{i+1}(c)h^0+h^0u_{i+1}^1(c)+(u^1_i\cdot h^1_i)(c)+(h^1_i\cdot u_{i}^1)(c).$$
 Write 
 $\lambda(c):=u^1_{i+1}(c)h^0+h^0u_{i+1}^1(c)$ and $\rho(c)=(u^1_i\cdot h^1_i)(c)+(h^1_i\cdot u_{i}^1)(c)$.  Then, from (\ref{R: transferencia de dif en GMod-B a GMod'-B})(A),  we have 
 $$\begin{matrix}
    u^0\lambda(c)-\lambda(c)u^0
    &=&
    u^0u^1(c)h^0+u^0h^0u^1(c)-u^1(c)h^0u^0-h^0u^1(c)u^0\hfill\\
    &=&
    u^0u^1(c)h^0-h^0u^0u^1(c)+u^1(c)+u^1(c)u^0h^0-u^1(c)-h^0u^1(c)u^0\hfill\\
    &=&
     [u^0u^1(c)+u^1(c)u^0]h^0 -h^0[u^0u^1(c)+u^1(c)u^0]\hfill\\
    &=&
    -(\widehat{d}(u_{i+1}^1)h^0)(c)-(u_i^1\cdot u_i^1)(c)h^0+(h^0\widehat{d}(u_{i+1}^1))(c)+h^0(u_i^1\cdot u_i^1)(c)\hfill\\
    &=&
    (h^0\widehat{d}(u_{i+1}^1))(c)+(h^0u^1_i\cdot u_i^1)(c)
    -(\widehat{d}(u_{i+1}^1)h^0)(c)-(u^1_i\cdot u^1_ih^0)(c)\hfill\\
   \end{matrix}$$
  and, since $c\in Z(\overline{C}_{i+1})$, we obtain 
    $$u^0\lambda(c)-\lambda(c)u^0=(h^0u^1_i\cdot u_i^1)(c)-(u^1_i\cdot u^1_ih^0)(c).$$
  Moreover, we have 
   $$\begin{matrix}
     u^0\rho(c)-\rho(c) u^0&=&
     u^0(u^1_i\cdot h^1_i)(c)+u^0(h^1_i\cdot u_{i}^1)(c)
     -(u^1_i\cdot h^1_i)(c)u^0-(h^1_i\cdot u_{i}^1)(c)u^0. 
     \hfill\\
     &=&
    (u^0u^1_i\cdot h^1_i)(c)+(u^0h^1_i\cdot u_{i}^1)(c)
     -(u^1_i\cdot h^1_iu^0)(c)-(h^1_i\cdot u_{i}^1u^0)(c). 
     \hfill\\  
     \end{matrix}$$
Now, we have $\Delta^1_{i-1}=0$. That is the following equality of morphisms from $\overline{C}_i$ to $E_M$ holds
$$\widehat{d}(h^1_i)+u^0h_i^1+u_i^1h^0+u_{i-1}^1\cdot h_{i-1}^1+ 
 h^0u_i^1 +h_i^1u^0+h_{i-1}^1\cdot u_{i-1}^1=0.$$
Mutiplying the equation $\Delta^1_{i-1}=0$ on the right by $u_i^1$, we have the following  equality of morphisms from $\overline{C}_{i+1}$ to $E_M$ 
$$u^0h_i^1\cdot u^1_i
=-\widehat{d}
(h^1_i)\cdot u^1_i-u_i^1h^0\cdot u^1_i-[u_{i-1}^1\cdot h_{i-1}^1]\cdot u^1_i -h^0u^1_i\cdot u^1_i 
 -h_i^1u^0\cdot u^1_i-[h_{i-1}^1\cdot u_{i-1}^1]\cdot u^1_i.$$
Evaluating at our fixed element $c$, we obtain 
$$\begin{matrix}
 (u^0h^1_i\cdot u^1_i)(c)
 &=&
 -(\widehat{d}(h^1_i)\cdot u^1_i)(c)
 - (u^1_ih^0\cdot u_i^1)(c)
  -([u_{i-1}^1\cdot h_{i-1}^1]\cdot u^1_i)(c)
 \hfill\\
 &&-\,
 (h^0u_i^1\cdot u^1_i)(c) 
 -(h_i^1u^0\cdot u^1_i)(c)
 -([h_{i-1}^1\cdot u_{i-1}^1]\cdot u_i^1)(c).\hfill\\
 \end{matrix}
 $$
 Similarly, multiplying the equation $\Delta_{i-1}^1=0$ on the left by $u^1_i$, we have  
 $$-u_i^1\cdot h_i^1u^0 
 =
 u_i^1\cdot \widehat{d}(h^1_i)
 +
 u_i^1\cdot u^0h_i^1
 +u_i^1\cdot u_i^1h^0+
 u_i^1\cdot [u_{i-1}^1\cdot h_{i-1}^1]
 +
 u_i^1\cdot h^0u_i^1
 +u_i^1\cdot [h_{i-1}^1\cdot u_{i-1}^1].$$ 
 
 Evaluating at the element $c$, we get
 $$\begin{matrix}
 -(u^1_i\cdot h_i^1u^0)(c)
 &=&
 (u^1_i\cdot\widehat{d}(h^1_i))(c)
 +
 (u^1_i\cdot u^0h_i^1)(c)+ (u^1_i\cdot u_i^1h^0)(c)\hfill\\
 &&+\,
 (u^1_i\cdot[u_{i-1}^1\cdot h_{i-1}^1])(c)+
 (u^1_i\cdot h^0u_i^1)(c) +(u^1_i\cdot[h_{i-1}^1\cdot u_{i-1}^1])(c).\hfill\\
 \end{matrix}$$

 Then,  since $u^1_ih^0\cdot u^1_i=u^1_i\cdot h^0u^1_i$, we have 
 $$\begin{matrix}
 u^0\rho(c)-\rho(c)u^0
 &=&
 (u^0u^1_i\cdot h^1_i)(c)-(h^1_i\cdot u_{i}^1u^0)(c)\hfill\\
 &&-\,
 (\widehat{d}(h^1_i)\cdot u^1_i)(c)
  -([u_{i-1}^1\cdot h_{i-1}^1]\cdot u^1_i)(c)
 \hfill\\
 &&-\,
 (h^0u_i^1\cdot u^1_i)(c) 
 -(h_i^1u^0\cdot u^1_i)(c)
 -([h_{i-1}^1\cdot u_{i-1}^1]\cdot u_i^1)(c)\hfill\\
 &&+\,
 (u^1_i\cdot\widehat{d}(h^1_i))(c)
 +
 (u^1_i\cdot u^0h_i^1)(c)+ (u^1_i\cdot u_i^1h^0)(c)\hfill\\
 &&+\,
 (u^1_i\cdot[u_{i-1}^1\cdot h_{i-1}^1])(c) +(u^1_i\cdot[h_{i-1}^1\cdot u_{i-1}^1])(c).\hfill\\
   \end{matrix}$$

  Moreover, from the associativity of (\ref{R: notacion producto de h1 con u1}), we have 
 $$\begin{matrix}
 u^0\rho(c)-\rho(c)u^0
 &=&
 -(  \widehat{d}(h^1_i)\cdot u_i^1)(c)-(h^1_i\cdot[u^0u^1_i+u^1_iu^0+u^1_{i-1}\cdot u^1_{i-1}])(c)\hfill\\
 &&+\,
 (u^1_i\cdot \widehat{d}(h^1_i))(c)+([u^0u^1_i+u^1_iu^0+u^1_{i-1}\cdot u^1_{i-1}]\cdot h^1_i)(c)\hfill\\
 &&+\,
 (u^1_i\cdot u^1_ih^0)(c)-(h^0u^1_i\cdot u^1_i)(c).\hfill \\
 \end{matrix}$$
 
 Since $h_i^1,u_i^1$ are defined on $\overline{C}_i$, they are defined on $\overline{C}_{i-1}+Z(\overline{C}_i)$, then the morphisms  $\widehat{d}(h^1_i)$ and $\widehat{d}(u^1_i)$ are defined on $\overline{C}_i$. Moreover, the morphisms $h^1_i$ and $u^1_i$ are defined on $\overline{C}_i$, hence their product $h_i^1\cdot u^1_i$ is defined on $\overline{C}_{i+1}$, hence it is defined on $\overline{C}_i+Z(\overline{C}_{i+1})$. Therefore, by the Leibniz formula of 
  (\ref{R: notacion producto de h1 con u1}), we have 
  $$0=\widehat{d}(h_i^1\cdot u^1_i)(c)=[\widehat{d}(h^1_i)\cdot u^1_i-h^1_i\cdot \widehat{d}(u^1_i)](c),$$
  where the left equality is due to the fact that $c\in \hueca{B}_{i+1}^v(s,t)\subseteq V_{i+1}\subseteq Z(\overline{C}_{i+1})$. Thus, $(\widehat{d}(h^1_i)\cdot u^1_i)(c)=(h^1_i\cdot \widehat{d}(u^1_i))(c)$ and, similarly, we have  $(\widehat{d}(u^1_i)\cdot h_i^1)(c)=(u^1_i\cdot \widehat{d}(h^1_i))(c)$. 
  Then, we have 
 $$\begin{matrix}
 u^0\rho(c)-\rho(c)u^0
 &=&
 -(h^1_i\cdot[\widehat{d}(u^1_i)+u^0u^1_i+u^1_iu^0+u^1_{i-1}\cdot u^1_{i-1}])(c)\hfill\\
 &&+\,
 ([\widehat{d}(u^1_i)+u^0u^1_i+u^1_iu^0+u^1_{i-1}\cdot u^1_{i-1}]\cdot h^1_i)(c)\hfill\\
 &&+\,
 (u^1_i\cdot u^1_ih^0)(c)-(h^0u^1_i\cdot u^1_i)(c).\hfill \\
 \end{matrix}$$
 
  From (\ref{R: transferencia de dif en GMod-B a GMod'-B})(A), we know that $\widehat{d}(u^1_i)+u^0u^1_i+u^1_iu^0+u^1_{i-1}\cdot u^1_{i-1}=0$ on   $\overline{C}_i$. It follows that
 $u^0f_i(c)-f_i(c)u^0=u^0\lambda(c)-\lambda(c) u^0+u^0\rho(c)-\rho(c) u^0=0$. 
 As before, we have a morphism of complexes $-\tau f_i(c):(Me_t,u^0_{\vert Me_t})\rightmap{}(Me_s[\vert c\vert],\tau u^0_{\vert Me_s}\tau^{-1})$ which is homotopically trivial. As a consequence, there is a homogeneous morphism $\underline{h}_i^1(c)\in\Hom_{\GM\g k}^{-1}(Me_t,Me_s[\vert c\vert])$ such that 
 $$-\tau f_i(c)=\tau u^0\tau^{-1}\underline{h}_i^1(c)+\underline{h}_i^1(c)u^0.$$
 The morphism $\widehat{h}^1_i(c):=\tau^{-1}\underline{h}_i^1(c):
 Me_t\rightmap{}Me_s$ is homogeneous of degree $\vert c\vert-1$ such that $-f_i(c)=u^0\widehat{h}_i^1(c)+\widehat{h}_i^1(c)u^0$. Hence, 
 $$\widehat{\Delta}^1_i(c):=u_{i+1}^1(c)h^0+h^0u_{i+1}^1(c)+u^0\widehat{h}_i^1(c)+\widehat{h}_i^1(c)u^0+
(u^1_i\cdot h^1_i)(c)+(h^1_i\cdot u_i^1)(c)=0.$$
 
 We have defined $\widehat{h}^1_i(c)$ for any $c\in \hueca{B}_{i+1}^v$ such that the preceding equality holds. Then, we can consider the homogeneous morphism 
 $$\widehat{h}^1_i:\overline{C}_i\oplus V_{i+1}\rightmap{}\Hom_{\GM\g k}(M,M)$$
 of degree $-1$ determined by the given values $\widehat{h}^1_i(c)$ on the basic elements $c\in \hueca{B}^v_{i+1}$ and by $\widehat{h}^1_i(c):=h^1_i(c)$ on the basic elements  $c\in \hueca{B}_i$. Therefore, the morphism $\widehat{h}^1_i\in \Hom_{\GM\g S\g S}^{-1}(\overline{C}_i+Z(\overline{C}_{i+1}),E_M)$ extends $h^1_i:\overline{C}_i\rightmap{}E_M$ and satisfies $\widehat{\Delta}^1_i(c)=0$, for all $c\in Z(\overline{C}_{i+1})$. 
 \medskip

  Now, take an element $c\in \hueca{B}_{i+1}^w(s,t)$. 
   Consider the homogeneous morphism $g_i(c):Me_t\rightmap{}Me_s$ of degree $\vert c\vert$ given by 
   $$g_i(c):=
   \widehat{d}(\widehat{h}_i^1)(c)+u_{i+1}^1(c)h^0+h^0u_{i+1}^1(c)
   +(u^1_i\cdot h^1_i)(c)+(h^1_i\cdot u^1_i)(c).$$
   Write $\lambda(c):=u^1_{i+1}(c)h^0+h^0u_{i+1}^1(c)$, $\rho(c)=(u^1_i\cdot h^1_i)(c)+(h^1_i\cdot u_{i}^1)(c)$ and $\gamma(c)=\widehat{d}(\widehat{h}^1_i)(c)$ . Then,
   the same calculations used at the beginining of the preceding case  show that the following two equalities hold
 $$\begin{matrix}
 u^0\lambda(c)-\lambda(c)u^0
 &=& (h^0\widehat{d}(u^1_{i+1}))(c)+(h^0u^1_i\cdot u_i^1)(c)- (\widehat{d}(u^1_{i+1})h^0)(c)
    -(u^1_i\cdot u^1_ih^0)(c)\hfill\\
   \end{matrix}$$
 and 
 $$\begin{matrix}
 u^0\rho(c)-\rho(c)u^0
 &=&
 -(  \widehat{d}(h^1_i)\cdot u_i^1)(c)-(h^1_i\cdot[u^0u^1_i+u^1_iu^0+u^1_{i-1}\cdot u^1_{i-1}])(c)\hfill\\
 &&+\,
 (u^1_i\cdot \widehat{d}(h^1_i))(c)+([u^0u^1_i+u^1_iu^0+u^1_{i-1}\cdot u^1_{i-1}]\cdot h^1_i)(c)\hfill\\
 &&+\,
 (u^1_i\cdot u^1_ih^0)(c)-(h^0u^1_i\cdot u^1_i)(c).\hfill \\
 \end{matrix}$$
 Applying again Leibniz formula of  (\ref{R: notacion producto de h1 con u1}), we have 
  $$(\widehat{d}(h^1_i)\cdot u^1_i)(c)=
  (h^1_i\cdot \widehat{d}(u^1_i))(c)+
  \widehat{d}(h^1_i\cdot u^1_i)(c)$$
  and 
  $$(u^1_i\cdot\widehat{d}(h^1_i))(c)=
  (\widehat{d}(u^1_i)\cdot h^1_i)(c)-
  \widehat{d}(u^1_i\cdot h^1_i)(c).$$
 Thus,      
  $$\begin{matrix}
 u^0\rho(c)-\rho(c)u^0
 &=&
 -(h^1_i\cdot[\widehat{d}(u^1_i)+u^0u^1_i+u^1_iu^0+u^1_{i-1}\cdot u^1_{i-1}])(c)\hfill\\
 &&+\,
 ([\widehat{d}(u^1_i)+u^0u^1_i+u^1_iu^0+u^1_{i-1}\cdot u^1_{i-1}]\cdot h^1_i)(c)\hfill\\
 &&+\,
 (u^1_i\cdot u^1_ih^0)(c)-(h^0u^1_i\cdot u^1_i)(c)\hfill\\
 &&-\,\widehat{d}[h^1_i\cdot u^1_i+u^1_i\cdot h^1_i](c).\hfill\\
  \end{matrix}$$
From  (\ref{R: transferencia de dif en GMod-B a GMod'-B})(A), we obtain  
 $$u^0\rho(c)-\rho(c)u^0=
 (u^1_i\cdot u^1_ih^0)(c)-(h^0u^1_i\cdot u^1_i)(c)
 -\widehat{d}[h^1_i\cdot u^1_i+u^1_i\cdot h^1_i](c).$$
 
Since  $u^0\gamma(c)-\gamma(c) u^0=u^0\widehat{d}(\widehat{h}^1_i)(c)-\widehat{d}(\widehat{h}^1_i)(c)u^0$,
we have 
 $$\begin{matrix}
 u^0g_i(c)-g_i(c)u^0
 &=&
 u^0\lambda(c)-\lambda(c)u^0+ u^0\rho(c)-\rho(c)u^0+u^0\gamma(c)-\gamma(c) u^0
 \hfill\\
 &=&
  [u^0\widehat{d}(\widehat{h}^1_i)-\widehat{d}(\widehat{h}^1_i)u^0](c)-\widehat{d}[h^1_i\cdot u^1_i+u^1_i\cdot h^1_i](c)\hfill\\
  &&+\,
  h^0\widehat{d}(u_{i+1}^1)(c)-\widehat{d}(u_{i+1}^1)h^0(c).\hfill\\
 \end{matrix}$$

Moreover, $\delta(c)\in Z(\overline{C}_{i+1})$, so  $\widehat{\Delta}_i^1(\delta(c))=0$. Hence, 
$$\begin{matrix}
\widehat{d}[u^1_i\cdot h^1_i+h^1_i\cdot u_i^1](c)[m]
&=&
(-1)^{\vert m\vert +1}[u^1_i\cdot h^1_i+h^1_i\cdot u_i^1](\delta(c))[m]\hfill\\
&=&
(-1)^{\vert m\vert}[u^1_{i+1}h^0+h^0u^1_{i+1}+u^0\widehat{h}^1_i+\widehat{h}^1_iu^0](\delta(c))[m]\hfill\\
&=&
(-1)^{\vert m\vert} (u_{i+1}^1h^0)(\delta(c))[m]+(-1)^{\vert m\vert}(h^0u_{i+1}^1)(\delta(c))[m]\hfill\\
&&+\,
(-1)^{\vert m\vert}(u^0\widehat{h}_i^1)(\delta(c))[m]+(-1)^{\vert m \vert}(\widehat{h}_i^1u^0)(\delta(c))[m]\hfill\\
&=&
[-\widehat{d}(u^1_{i+1}h^0)-\widehat{d}(h^0u^1_{i+1})-\widehat{d}(u^0\widehat{h}_i^1) -
\widehat{d}(\widehat{h}^1_iu^0)](c)[m].\hfill\\
\end{matrix}$$
Thus, 
$$\begin{matrix}
\widehat{d}[u^1_i\cdot h^1_i+h^1_i\cdot u_i^1](c)
&=&
[-\widehat{d}(u^1_{i+1})h^0+h^0\widehat{d}(u^1_{i+1})+u^0\widehat{d}(\widehat{h}_i^1) -
\widehat{d}(\widehat{h}^1_i)u^0](c).\hfill\\
\end{matrix}$$

Therefore, 
$u^0g_i(c)-g_i(c)u^0
=0$ and  $\tau g_i(c)$ is a morphism of complexes $(Me_t,u^0_{\vert Me_t})\rightmap{}(Me_s[\vert c\vert]),\tau(u^0_{\vert Me_s})\tau^{-1}).$ 
Proceeding as before, we get a homogeneous morphism $\widetilde{h}_i^1(c):Me_t\rightmap{}Me_s$ with degree $\vert c\vert-1$ such that 
$$g_i(c)=-u^0\widetilde{h}_i^1(c)-\widetilde{h}_i^1(c)u^0.$$
Then, we have the equality
$$\widetilde{\Delta}^1_i(c):=\widehat{d}(\widehat{h}^1_i)(c)+u_{i+1}^1(c)h^0+h^0u_{i+1}^1(c)+(u_i^1\cdot h_i^1)(c)+(h_i^1\cdot u_i^1)(c)
+u^0\widetilde{h}^1_i(c)+\widetilde{h}^1_i(c)u^0=0.$$
Consider the homogeneous morphism $\widetilde{h}^1_i:W_{i+1}\rightmap{}\Hom_{\GM\g k}(M,M)$ of degree $-1$  determined by the given values $\widetilde{h}_i^1(c)$ on the basic elements $c\in \hueca{B}^w_{i+1}$.
Then, the morphisms $h^1_i$,  $\widehat{h}_i^1$ and $\widetilde{h}_i^1$ determine a homogeneous morphism 
$$h^1_{i+1}:\overline{C}_{i+1}=\overline{C}_i\oplus V_{i+1}\oplus W_{i+1}\rightmap{}E_M$$
of degree $-1$ such that the equation $\Delta^1_i(c)=0$ is satisfied for all $c\in \overline{C}_{i+1}$, because either $\Delta^1_{i-1}(c)=0$, $\widehat{\Delta}^1_{i}(c)=0$ or  $\widetilde{\Delta}_{i}^1(c)=0$ hold on basic elements $c\in \hueca{B}_{i+1}$. Clearly, $h^1_{i+1}$ extends $h^1_i$. 
 \end{proof}

 \begin{theorem}\label{T: f0 quasi iso <-> f equiv homotopica}
 Assume that $S$ is a finite product of copies of the field $k$. 
 Let  ${\cal B}$ be a triangular differential graded $S$-bocs and $f=(f^0,f^1):(M,u_M)\rightmap{}(N,u_N)$ a morphism in $\TGModi^0\g{\cal B}$. Then $f$ is a homotopy equivalence iff the morphism of complexes of right $S$-modules $f^0:(M,u^0_M)\rightmap{}(N,u^0_N)$ is a quasi-isomorphism.
 \end{theorem}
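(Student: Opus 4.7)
The easy direction is formal. The assignment $(M, u_M) \mapsto (M, u_M^0)$ and $(f^0, f^1) \mapsto f^0$ defines a $k$-functor from $\TGModit^0 \g {\cal B}$ to the category of complexes of graded right $S$-modules which preserves homotopies: the relation $f^0 u_M^0 = u_N^0 f^0$ is the first equation of system $(D)$ in Remark \ref{R: transferencia de dif en GMod-B a GMod'-B}, composition works by $(g*f)^0 = g^0 f^0$, and the first component $h^0$ of a null-homotopy $h$ for $f-g$ is a chain homotopy from $f^0$ to $g^0$. Hence any homotopy equivalence $f$ descends to a homotopy equivalence $f^0$ of complexes of right $S$-modules, which is in particular a quasi-isomorphism.

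For the converse, the plan is to realise $f$ as the base of a distinguished triangle in the triangulated stable category $\underline{\TGM}^0\g{\cal B}$ of Theorem \ref{T: TGMod-A estable es triangulada}, and to show that the third vertex vanishes. Define the \emph{mapping cone} $C(f) := (M[1] \oplus N,\, v_f)$ in $\TGM\g{\cal B}$, where
$$
v_f = \begin{pmatrix} -u_M[1] & 0 \\ f * \underline{\sigma}_M^{-1} & u_N \end{pmatrix} \in \Hom^1_{\GM\g{\cal B}}(M[1]\oplus N,\, M[1]\oplus N).
$$
A direct computation using the Maurer--Cartan equations for $u_M$ and $u_N$, the identities $\widehat{\delta}(\underline{\sigma}_M^{\pm 1}) = 0$ from Remark \ref{R: matrices of morphisms in GM-B}, the Leibniz rule, the definitional identity $u_M[1] = \underline{\sigma}_M * u_M * \underline{\sigma}_M^{-1}$, and the equation $\widehat{\delta}(f) + u_N * f - f * u_M = 0$ expressing that $f$ is a morphism in $\TGM^0\g{\cal B}$ shows $\widehat{\delta}(v_f) + v_f * v_f = 0$, so $C(f) \in \TGM\g{\cal B}$. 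The matrix injection $\underline{\iota} = (0, \hueca{I}_N)^t : (N, u_N) \to C(f)$ and projection $\underline{\pi} = (\hueca{I}_{M[1]}, 0) : C(f) \to T(M, u_M)$ are morphisms in $\TGM^0\g{\cal B}$ by analogous short calculations, and together with $f$ they assemble into a distinguished triangle
$$
(M, u_M) \rightmap{f} (N, u_N) \rightmap{\underline{\iota}} C(f) \rightmap{\underline{\pi}} T(M, u_M)
$$
in $\underline{\TGM}^0\g{\cal B}$, realised as the push-out of $\alpha_{(M,u_M)}$ along $f$ in the Frobenius category $(\TGM^0\g{\cal B}, {\cal E})$.

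Computing first components, $v_f^0 = \begin{pmatrix} -u_M^0[1] & 0 \\ f^0 \sigma_M^{-1} & u_N^0 \end{pmatrix}$, which is precisely the differential of the classical mapping cone of the chain map $f^0: (M, u_M^0) \to (N, u_N^0)$. Since $f^0$ is a quasi-isomorphism, this cone is acyclic, and Theorem \ref{P: (M,u0) aciclico sii (M,u) homotopicamente trivial} then forces $C(f)$ to be homotopically trivial. By Proposition \ref{P: homotopically trivial=factors through E-proj} the identity of $C(f)$ factors through an ${\cal E}$-projective-injective object, so $C(f) \cong 0$ in $\underline{\TGM}^0\g{\cal B}$. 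The standard triangulated-category fact that a morphism whose cone vanishes is an isomorphism now yields invertibility of $f$ in $\underline{\TGM}^0\g{\cal B}$; equivalently, $f$ is a homotopy equivalence in $\TGM^0\g{\cal B}$.

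The main obstacle is the signed bookkeeping for the cone: verifying the Maurer--Cartan equation for $v_f$ with the correct signs, checking that $\underline{\iota}$ and $\underline{\pi}$ intertwine the twists correctly, and confirming that the resulting sequence fits the canonical-triangle schema of Definition \ref{D: estructura triangular de TGMod-B}. Once this formal cone construction is in place, the analytic heart of the argument is supplied entirely by Theorem \ref{P: (M,u0) aciclico sii (M,u) homotopicamente trivial}, and the remainder is routine Frobenius/triangulated category manipulation.
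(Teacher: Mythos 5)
Your proposal is correct, and it rests on the same two pillars as the paper's argument---the triangulated structure of $\underline{\TGM}^0\g{\cal B}$ from (\ref{T: TGMod-A estable es triangulada}) and the acyclicity criterion (\ref{P: (M,u0) aciclico sii (M,u) homotopicamente trivial})---but the implementation is genuinely different. The paper never writes down a cone: it completes $\underline{f}$ to an arbitrary triangle, replaces it by an isomorphic triangle built on an exact pair $(K,u_K)\rightmap{\phi}(E,u_E)\rightmap{\gamma}(Q,u_Q)$ of ${\cal E}$, transports the quasi-isomorphism question from $f^0$ to $\phi^0$ through the homotopy equivalences $s,t$ supplied by the triangle isomorphism, and then reads off acyclicity of $(Q,u_Q^0)$ from the homology long exact sequence of the conflation; this settles both directions at once. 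You instead construct the mapping cone $C(f)=(M[1]\oplus N,v_f)$ explicitly, which buys you that its first-component complex is literally the classical cone of $f^0$, so no comparison morphisms or long exact sequence of a conflation are needed, and your signs do check out: entrywise, the Maurer--Cartan equation for $v_f$ reduces to those for $u_M$ and $u_N$ and to $\widehat{\delta}(f)+u_N*f-f*u_M=0$, using $\widehat{\delta}(\underline{\sigma}_M^{\pm 1})=0$. What your route pays for is the step you only gesture at: in this paper the triangles of ${\cal T}$ are, by definition, those isomorphic to canonical triangles built on conflations, and $f$ itself is in general not an inflation, so to place $(M,u_M)\rightmap{f}(N,u_N)\rightmap{}C(f)\rightmap{}T(M,u_M)$ in ${\cal T}$ one must pass through the conflation $(M,u_M)\rightmap{}J(M,u_M)\oplus(N,u_N)\rightmap{}C(f)$, whose inflation $(\alpha_{\underline{M}},-f)^t$ has split-injective first component, and then use that $J(M,u_M)$ is killed in the stable category; this is the standard Frobenius-category completion-of-a-morphism argument from \cite{H}, and it does go through here, but it is precisely the bookkeeping that the paper's more abstract replacement trick avoids. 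Your easy direction and the final step (the third vertex is homotopically trivial, hence by (\ref{P: homotopically trivial=factors through E-proj}) isomorphic to zero in the stable category, hence $f$ is invertible there) are fine as written.
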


 \begin{proof} Consider the homotopy category $\underline{\TGModi}^0\g{\cal B}$ with its triangulated structure as in (\ref{T: TGMod-A estable es triangulada}). Then, there is a triangle of the form 
 $$(M,u_M)\rightmap{\underline{f}}(N,u_N)\rightmap{\underline{g}}(L,u_L)\rightmap{}(M,u_M)[1]$$
 of $\underline{\TGModi}^0\g{\cal B}$. 
 By definition of this triangular structure, the preceding triangle is 
 isomorphic to a triangle of the form 
 $$(K,u_K)\rightmap{\underline{\phi}}(E,u_E)\rightmap{\underline{\gamma}}(Q,u_Q)\rightmap{}(K,u_K)[1],$$
 where 
 $$\xi\hbox{ \ }:\hbox{ \ }(K,u_K)\rightmap{\phi}(E,u_E)\rightmap{\gamma}(Q,u_Q)$$ is an exact pair in the exact structure ${\cal E}$ of $\TGModi^0\g{\cal B}$. Consider  an   isomorphism of triangles 
 $$\begin{matrix}
    (M,u_M)&\rightmap{\underline{f}}&(N,u_N)&\rightmap{\underline{g}}&(L,u_L)&\rightmap{}&(M,u_M)[1]\\
    \lmapdown{\underline{s}}&&\lmapdown{\underline{t}}&&\lmapdown{\underline{r}}&&\lmapdown{\underline{s}[1]}\\
    (K,u_K)&\rightmap{\underline{\phi}}&(E,u_E)&
    \rightmap{\underline{\gamma}}&(Q,u_Q)&\rightmap{}&(K,u_K)[1].\\
   \end{matrix}$$
Then, the morphisms $s=(s^0,s^1):(M,u_M)\rightmap{}(K,u_K)$ and $t=(t^0,t^1):(N,u_N)\rightmap{}(E,u_E)$ are homotopy equivalences such that $\phi s$ is homotopic to $tf$. Therefore, the morphisms of complexes $\phi^0s^0$ and $t^0f^0$ from $(M,u^0_M)$ to $(E,u^0_E)$ are homotopic. As a consequence 
$$H^i(\phi^0)H^i(s^0)=H^i(t^0)H^i(f^0), \hbox{ for all } i\in \hueca{Z}.$$
Since $H^i(s^0)$ and $H^i(t^0)$ are isomorphisms for all $i\in \hueca{Z}$, then $H^i(f^0)$ is an isomorphism if and only if $H^i(\phi^0)$ is an isomorphism. Thus $f^0$ is a  quasi-isomorphism  iff $\phi^0$ is a quasi-isomorphism. Clearly, $f$ is a homotopy equivalence iff $\phi$ is so. 

Since the exact pair $\xi$ belongs to ${\cal E}$, we have the exact sequence of graded right  $S$-modules
$$0\rightmap{}K\rightmap{\phi^0}E\rightmap{\gamma^0}Q\rightmap{}0.$$
Then, we have the exact sequence of complexes of right $S$-modules 
$$0\rightmap{}(K,u^0_K)\rightmap{\phi^0}(E,u^0_E)\rightmap{\gamma^0}(Q,u^0_Q)\rightmap{}0.$$ 

In the homology long exact sequence associated to the preceding exact sequence of complexes, we see that $H^i(\phi^0)$ is an isomorphism for all $i\in \hueca{Z}$ iff  $H^i(Q,u^0_Q)=0$ for all $i\in \hueca{Z}$. So $\phi^0$ is a quasi-isomorphism iff $(Q,u^0_Q)$ is acyclic. Then, by  (\ref{P: (M,u0) aciclico sii (M,u) homotopicamente trivial}), we obtain that $\phi^0$ is a quasi-isomorphism iff $(Q,u_Q)$ is homotopically trivial in $\TGModi^0\g{\cal B}$, which is equivalent to the fact that $\underline{\phi}:(K,u_K)\rightmap{}(E,u_E)$ is an isomorphism in $\underline{\TGModi}^0\g{\cal B}$. 
 \end{proof}

 \section{The Frobenius category of $A_\infty$-modules}
 
 Given a fixed $A_\infty$-algebra $A$, denote by ${\cal B}_A$  the differential tensor $S$-coalgebra (or differential tensor $S$-bocs) ${\cal B}_A=(T_S(A[1]),\mu,\epsilon,\delta)$ given by the bar construction. 
 In order to describe precisely  the connection of  the category $\Mod_\infty\g A$ of right $A_\infty$-modules over $A$ with the category $\TGModi\g{\cal B}_A$ of twisted  ${\cal B}_A$-modules, it is convenient to introduce the following categories. 
 
 \begin{definition}\label{D: GMod-A con A A-infinito-algebra}
  We will denote with $\GM\g A$ the following $k$-category. Its class of objects coincides with the class of objects of $\GM\g S$. Given two graded right $S$-modules $M$ and $N$, and $d\in \hueca{Z}$,  a \emph{homogeneous morphism $f:M\rightmap{}N$ of degree $d$ in $\GM\g A$} is a collection of morphisms  $f=\{f_n\}_{n\in \hueca{N}}$, where each 
  $$f_n:M\otimes_S A^{\otimes(n-1)}\rightmap{}N$$
  is a homogeneous morphism of graded right $S$-modules of 
  degree $\vert f_n\vert=d+1-n$. We denote by $\Hom^d_{\GM\g A}(M,N)$ the space of homogeneous morphisms from $M$ to $N$ in $\GM\g A$, and we make 
  $$\Hom_{\GM\g A}(M,N)=\bigoplus_{d\in \hueca{Z}} \Hom^d_{\GM\g A}(M,N).$$
  
  If $f\in \Hom_{\GM\g A}(M,N)$  and 
 $g\in \Hom_{\GM\g A}(N,L)$ are homogeneous morphisms,
 their composition 
 $$g\circ f=\{(g\circ f)_n\}_{n\in \hueca{N}}:M\rightmap{}L$$ 
 is defined, for each  $n\in \hueca{N}$, by 
 $$(g\circ f)_n=\sum_{\scriptsize\begin{matrix}r+s=n\\ r\geq 1;
 s\geq 0\end{matrix}}(-1)^{(\vert f\vert +r+1)s}
 g_{1+s}( f_r\otimes id^{\otimes s}).$$
 Given $M\in \GM\g A$, the identity morphism 
 $\I_M=\{h_n\}:M\rightmap{}M$ is given by $h_1=id_M$ and $h_n=0$,
 for all $n\geq 2$.

 The category $\GM\g A$ is a graded category with differential 
 $\delta_\infty$ defined for any homogeneous $f\in \Hom_{\GM\g A}(M,N)$ by 
 $$\delta_\infty(f)_n=\sum_{\scriptsize\begin{matrix}r+s+t=n\\ 
 t\geq 0;
 r,s\geq 1\end{matrix}}(-1)^{\vert f\vert +r+st+1}
 f_{n-s+1}(id^{\otimes r}\otimes m_s\otimes id^{\otimes t}),$$
 for $n\geq 1$, thus $\delta_{\infty}(f)_1=0$. 
 
 A morphism $f=\{f_n\}_{n\in \hueca{N}}:M\rightmap{}N$ in $\GM\g A$ is called \emph{strict} iff $f_n=0$, for all $n\geq 2$. 
 \end{definition}

 The fact that the preceding notions give rise indeed to a differential graded category is a consequence of the following.
 
 \begin{proposition}\label{P: equiv entre GMod-A y GMod-B(A)}
  Let ${\cal B}_A=(T_S(A[1]),\mu,\epsilon,\delta)$ denote the differential tensor $S$-bocs
 associated to the $A_\infty$-algebra $A$.
 Then there is an equivalence of differential graded $k$-categories
 $$\hueca{G}:\GM\g A\rightmap{}\GM\g {\cal B}_A.$$
 Given $M\in \GM\g A$, by definition $\hueca{G}(M)=M[1]$, so it  acts as the usual translation on objects. 
  Given two graded right $S$-modules $M$, $N$ and  a homogeneous morphism $f=\{f_n\}_{n\in \hueca{N}}:M\rightmap{}N$ in $\GM\g A$ with degree $\vert f\vert=d$, 
 we have the family of morphisms of right $S$-modules 
  $$\{\hat{f}_n:M[1]\otimes_S A[1]^{\otimes n}\rightmap{}N[1]\}_{n\geq 0}$$ 
  determined 
  by the commutativity of the following diagrams
   $$\begin{matrix}
  M\otimes_S A^{\otimes n}&
  \rightmap{ \ \sigma_M\otimes \sigma^{\otimes n} \ }&
  M[1]\otimes_S A[1]^{\otimes n}\\
     \lmapdown{f_{n+1}}&&\lmapdown{\hat{f}_{n}}\\
  N&\rightmap{ \  \ \ \sigma_N \ \  \ }&N[1]\\
  \end{matrix} \hbox{ and  \  \ \  } 
  \begin{matrix}
  M&\rightmap{ \ \zeta\sigma_M \ \ }&M[1]\otimes_S S\\
     \rmapdown{f_1}&&\rmapdown{\hat{f}_0}\\
  N&\rightmap{ \ \ \  \sigma_N \   \ \ }&N[1],\\
  \end{matrix}$$
  where $n$ runs in $\hueca{N}$ and $\zeta:M[1]\rightmap{}M[1]\otimes_SS$ is the canonical isomorphism. Each morphism 
   $\hat{f}_n$ is homogeneous with degree $\vert\hat{f}_n\vert=d$. The family of maps 
  $\{\hat{f}_n\}_{n\geq 0}$ extends to a homogeneous morphism of right $S$-modules  of degree $d$
  $$\hat{f}:M[1]\otimes T_S(A[1])\rightmap{}N[1].$$
  By definition, we have $\hueca{G}(f)=\hat{f}\in \Hom^d_{\GM\g{\cal B}_A}(M[1],N[1])$. 
 \end{proposition}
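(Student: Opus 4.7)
The plan is to verify, in order, that $\hueca{G}$ is well defined and degree-preserving on morphisms, fully faithful and essentially surjective, preserves identities and composition, and intertwines the two differentials. Essential surjectivity is immediate since any $N\in\GM\g{\cal B}_A$ equals $\hueca{G}(N[-1])$; so the bulk of the work concerns Hom spaces, composition, and the differential.

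For the Hom-space bijection, I would rephrase the defining diagrams as the explicit formula
$$\hat{f}_n=\sigma_N\circ f_{n+1}\circ(\sigma_M\otimes\sigma^{\otimes n})^{-1},\qquad n\geq 0.$$
Since every suspension is an isomorphism of degree $-1$ and $|f_{n+1}|=d-n$, a degree count gives $|\hat{f}_n|=-1+(d-n)+(n+1)=d$, matching the required degree. The assignment $f_{n+1}\mapsto\hat{f}_n$ is therefore a linear bijection onto the space of degree-$d$ right-$S$-module maps $M[1]\otimes_SA[1]^{\otimes n}\to N[1]$; by the universal property of $T_S(A[1])=\bigoplus_{n\geq 0}A[1]^{\otimes n}$ the family $\{\hat{f}_n\}$ assembles into a unique degree-$d$ morphism $\hat{f}\in\Hom^d_{\GM\g{\cal B}_A}(M[1],N[1])$. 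Thus $\hueca{G}$ is a degree-preserving linear bijection on Hom spaces, so it will automatically be fully faithful once it is checked to be a functor.

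For functoriality and compatibility with the differential, the key mechanism is the bar construction: the comultiplication $\mu$ on $T_S(A[1])$ is deconcatenation, and the differential $\delta$ is the unique coderivation extending the family of degree-$1$ maps $b_s:=\sigma_A\circ m_s\circ(\sigma_A^{-1})^{\otimes s}:A[1]^{\otimes s}\to A[1]$. Evaluating $\hueca{G}(g)*\hueca{G}(f)=\hat{g}(\hat{f}\otimes id_C)(id_{M[1]}\otimes\mu)$ on a pure tensor $m\otimes\sigma(a_1)\otimes\cdots\otimes\sigma(a_n)$ and then commuting each $\sigma$ past the appropriate tensor factor by the Koszul rule $(\phi\otimes\psi)(x\otimes y)=(-1)^{|\psi||x|}\phi(x)\otimes\psi(y)$ unravels to the sum defining $(g\circ f)_n$ in $\GM\g A$; the Koszul contributions from the passing $\sigma$'s collapse exactly to $(-1)^{(|f|+r+1)s}$. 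Identities are the trivial case. The differential is handled analogously: $\hat{\delta}(\hat{f})=(-1)^{|f|+1}\hat{f}(id_{M[1]}\otimes\delta)$ unpacks, through the coderivation formula for $\delta$ on $T_S(A[1])$, into exactly the sum defining $\delta_\infty(f)_n$, with signs $(-1)^{|f|+r+st+1}$.

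The main obstacle is therefore the sign bookkeeping in the previous paragraph. Conceptually nothing deep happens beyond sliding suspensions past morphisms and tensor factors, but the combinatorics of Koszul signs combined with the signs introduced by the bar construction (each $b_s$ contributing a cumulative shift) has to be tracked carefully. This is the point where the particular signs chosen in Definitions \ref{A-infinite algebra} and \ref{D: A-infinto modulos derechos}, together with those in (\ref{D: GMod-A con A A-infinito-algebra}), become \emph{forced} rather than conventional: they are precisely what is needed so that $\hueca{G}$ matches the natural signs on the $\GM\g{\cal B}_A$ side, and so that the equivalence is an equivalence of differential graded $k$-categories.
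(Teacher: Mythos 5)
Your proposal follows essentially the same route as the paper's proof: define $\hat{f}_n$ by conjugating $f_{n+1}$ with the suspensions, observe that this is a degree-preserving linear bijection on hom spaces, verify composition and identities by evaluating the deconcatenation comultiplication of $T_S(A[1])$ on each tensor component and sliding the $\sigma$'s past homogeneous maps via the Koszul rule, and verify $\hueca{G}(\delta_\infty(f))=\hat{\delta}(\hueca{G}(f))$ through the coderivation description of the bar differential. The only remark is that the Koszul-sign bookkeeping, which is the actual content of the paper's two-step computation, is asserted rather than executed in your sketch, but the signs you claim, namely $(-1)^{(\vert f\vert+r+1)s}$ for the composition and $(-1)^{\vert f\vert+r+st+1}$ for the differential, are exactly the ones the paper obtains.
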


 \begin{proof} \emph{Step 1:  We have indeed a graded $k$-category $\GM\g A$ and $\hueca{G}$ is an equivalence of graded $k$-categories.}
 \medskip
 
Given composable morphisms $f:M\rightmap{}N$ and $g:N\rightmap{}L$
in $\GM\g A$, we know that the composition 
$\hueca{G}(g)*\hueca{G}(f)=\hat{g}*\hat{f}:M[1]\rightmap{}L[1]$ 
in $\GM\g{\cal B}_A$ is given by the composition of maps 
$$\hat{g}*\hat{f}=
\left(M[1]\otimes_SC\rightmap{ \  \ id_{M[1]}\otimes \mu \  \ }M[1]\otimes_SC\otimes_SC\rightmap{ \  \ \hat{f}\otimes id_C \  \ }N[1]\otimes_SC\rightmap{\hat{g}}L[1]\right),$$
where $C=T_S(A[1])$. 
Consider the restrictions $(\hat{g}*\hat{f})_n:M[1]\otimes A[1]^{\otimes n}\rightmap{}L[1]$, for $n\geq 0$, and let us verify that the following diagrams commute 
  $$\begin{matrix}
  M\otimes_S A^{\otimes n}&
  \rightmap{ \ \sigma_M\otimes \sigma^{\otimes n} \ }&
  M[1]\otimes_S A[1]^{\otimes n}\\
     \lmapdown{(g\circ f)_{n+1}}&&\lmapdown{(\hat{g}*\hat{f})_{n}}\\
  L&\rightmap{ \  \ \ \sigma_L \ \  \ }&L[1]\\
  \end{matrix} \hbox{ and  \  \ \  } 
  \begin{matrix}
  M&\rightmap{ \ \zeta\sigma_M \ \ }&M[1]\otimes_S S\\
     \rmapdown{(g\circ f)_1}&&\rmapdown{(\hat{g}*\hat{f})_0}\\
  L&\rightmap{ \ \ \  \sigma_L \   \ \ }&L[1],\\
  \end{matrix}$$
 where $n$ runs in $\hueca{N}$. 
 For this, it is convenient to write the restriction $\mu_n:A[1]^{\otimes n}\rightmap{}C\otimes C$ of the bar comultiplication $\mu$ on $T_S(A[1])$ as follows 
 $$\mu_n=\sum_{\scriptsize\begin{matrix} r+s=n\\ r,s\geq 1;
  \end{matrix}}id^{\otimes r}\otimes id^{\otimes s} + 1\otimes id^{\otimes n}+id^{\otimes n}\otimes 1.$$
 Then, for $n\geq 1$, we have 
 $$\begin{matrix}
(\hat{g}*\hat{f})_n
&=&
\sum_{\scriptsize\begin{matrix} r+s=n\\ r,s\geq 1;
  \end{matrix}}\hat{g}_s(\hat{f}_r(id_{M[1]}\otimes id^{\otimes r})\otimes id^{\otimes s})\hfill\\
  &&+\,
  \hat{g}_n(\hat{f}_0(id_{M[1]}\otimes 1)\otimes id^{\otimes n})
  +\hat{g}_0(\hat{f}_n(id_{M[1]}\otimes id^{\otimes n})\otimes 1).\hfill\\
  &=&
 \sum_{\scriptsize\begin{matrix} r+s=n\\ r,s\geq 1;
  \end{matrix}}\hat{g}_s(\hat{f}_r\otimes id^{\otimes s})
  +
  \hat{g}_n(\hat{f}_0\zeta\otimes id^{\otimes n})
  +\hat{g}_0(\zeta\hat{f}_n).\hfill\\ 
  \end{matrix}$$
  Write $(\hat{g}*\hat{f})_n=X_n^{(1)}+X_n^{(2)}+X_n^{(3)}$ with 
  $X_n^{(1)}:=
 \sum_{\scriptsize\begin{matrix} r+s=n\\ r,s\geq 1;
  \end{matrix}}\hat{g}_s(\hat{f}_r\otimes id^{\otimes s})$, 
  $X_n^{(2)}=\hat{g}_0(\zeta\hat{f}_n)$ and  $X_n^{(3)}=\hat{g}_n(\hat{f}_0\zeta\otimes id^{\otimes n})$.
  We want to  show that, for $n\geq 1$, we have   
  $$(\hat{g}*\hat{f})_n(\sigma_M\otimes \sigma^{\otimes n})=\sigma_L (g\circ f)_{n+1}.$$
  By definition of $g\circ f$,  for $n\geq 1$, we have $(g\circ f)_n=Y_n^{(1)}+Y_n^{(2)}+Y_n^{(3)}$, where 
  $$Y_n^{(1)}=\sum_{\scriptsize\begin{matrix}r+s=n\\ r\geq 2;
 s\geq 1\end{matrix}}
 (-1)^{(\vert f\vert +r+1)s}
 g_{1+s}( f_r\otimes id^{\otimes s}),$$
 $Y_n^{(2)}=g_1(f_n)$, $Y_1^{(3)}=0$ and, for $n\geq 2$, $Y_n^{(3)}=(-1)^{\vert f\vert (n-1)}g_n(f_1\otimes id^{\otimes(n-1)})$. 
We have 
  $$\begin{matrix}
X_n^{(1)}(\sigma_M\otimes \sigma^{\otimes n})
&=&
    \sum_{\scriptsize\begin{matrix}r+s=n\\ r,s\geq 1\end{matrix}}
    \hat{g}_s(\hat{f}_{r}\otimes id^{\otimes s})
 (\sigma_M\otimes \sigma^{\otimes n})\hfill\\
   &=&
   \sum_{\scriptsize\begin{matrix}r+s=n\\ r,s\geq 1\end{matrix}}
   \hat{g}_s(\hat{f}_{r}\otimes id^{\otimes s})
 (\sigma_M\otimes \sigma^{\otimes r}\otimes \sigma^{\otimes s})\hfill\\
  
 &=&
   \sum_{\scriptsize\begin{matrix}r+s=n\\ r,s\geq 1\end{matrix}}
 \hat{g}_s(\hat{f}_r
 (\sigma_M\otimes \sigma^{\otimes r})\otimes \sigma^{\otimes s})\hfill\\
  &=&
    \sum_{\scriptsize\begin{matrix}r+s=n\\ r,s\geq 1\end{matrix}}
\hat{g}_s
 (\sigma_Nf_{r+1}\otimes \sigma^{\otimes s})\hfill\\
  &=&
    \sum_{\scriptsize\begin{matrix}r+s=n\\ r,s\geq 1\end{matrix}}
    (-1)^{(\vert f\vert -r)s}
 \hat{g}_s(\sigma_N\otimes \sigma^{\otimes s})
 (f_{r+1}\otimes id^{\otimes s})\hfill\\
 &=&
 \sum_{\scriptsize\begin{matrix}r+s=n\\ r,s\geq 1\end{matrix}}
 (-1)^{(\vert f\vert -r)s}\sigma_Lg_{1+s}(f_{r+1}\otimes id^{\otimes s})\hfill\\
  &=&
 \sum_{\scriptsize\begin{matrix}r'+s=n+1\\ r'\geq 2; s\geq 1\end{matrix}}
 (-1)^{(\vert f\vert +r'-1)s}\sigma_Lg_{1+s}(f_{r'}\otimes id^{\otimes s})\hfill\\
  &=&
 \sigma_L Y_{n+1}^{(1)}.\hfill\\
   \end{matrix}$$
We also  have 
$$\begin{matrix}
   X_n^{(2)}(\sigma_M\otimes \sigma^{\otimes n})
    &=& 
  \hat{g}_0\zeta\hat{f}_n(\sigma_M\otimes \sigma^{\otimes n})\hfill\\
  &=&
  \hat{g}_0\zeta \sigma_Nf_{n+1}=\sigma_L g_1(f_{n+1})=\sigma_LY_{n+1}^{(2)}\hfill\\
   \end{matrix}$$
and 
$$\begin{matrix}
   X_n^{(3)}(\sigma_M\otimes \sigma^{\otimes n})
  &=&
  \hat{g}_n(\hat{f}_0\zeta\otimes id^{\otimes n})(\sigma_M\otimes\sigma^{\otimes n})\hfill\\
    &=&
   \hat{g}_n(\hat{f}_0\zeta\sigma_M\otimes \sigma^{\otimes n})\hfill\\
    &=&
  \hat{g}_n(\sigma_N f_1\otimes \sigma^{\otimes n})\hfill\\
    &=&
 (-1)^{\vert f\vert n}\hat{g}_n(\sigma_N \otimes \sigma^{\otimes n})(f_1\otimes id^{\otimes n})\hfill\\
    &=&
(-1)^{\vert f\vert n}\sigma_Lg_{n+1}(f_1\otimes id^{\otimes n})=\sigma_LY_{n+1}^{(3)}.\hfill\\
   \end{matrix}$$

Moreover, we have 
$$\begin{matrix}
   (\hat{g}*\hat{f})_0\zeta\sigma_M
   &=&
    (\hat{g}*\hat{f})_0(\sigma_M\otimes 1)\hfill
    &=&
    \hat{g}(\hat{f}\otimes id_C)(id_{M[1]}\otimes \mu)(\sigma_M\otimes 1)\hfill\\
    &=&
    \hat{g}(\hat{f}\otimes id_C)(\sigma_M\otimes 1\otimes 1)\hfill
    &=&
    \hat{g}(\hat{f}(\sigma_M\otimes 1)\otimes 1)\hfill\\
    &=&
    \hat{g}_0\zeta\hat{f}_0\zeta\sigma_M\hfill
    &=&
    \hat{g}_0\zeta\sigma_Nf_1\hfill\\
    &=&
    \sigma_Lg_1f_1\hfill
    &=&
    \sigma_L(g\circ f)_1.\hfill\\
  \end{matrix}$$

 From the preceding calculations, we obtain that  
 $\hueca{G}(g\circ f)=\hueca{G}(g)*\hueca{G}(f)$. Take any $M\in \GM\g A$ and consider the morphism 
 $h=\{h_n\}_{n\in \hueca{N}}:M\rightmap{}M$ in $\GM\g A$ given by $h_1=id_M$ and $h_n=0$, for $n\geq 2$. 
 Let us show that $\hueca{G}(h)=\hueca{I}_{M[1]}$, the identity morphism on the object $\hueca{G}(M)\in \GM\g {\cal B}_A$. 
  
Clearly, we have  $\hat{h}_0=\sigma_M h_1(\zeta\sigma_M)^{-1}=
\sigma_M \sigma_M^{-1}\zeta^{-1}=\zeta^{-1}$ and, for $n\geq 1$, we have that 
$\hat{h}_n=\sigma_M h_{n+1}(\sigma_M\otimes \sigma^{\otimes n})^{-1}=0$. This means that the components $(\hat{h}^0,\hat{h}^1)$ of the morphism $\hueca{G}(h)=\hat{h}$  are $(id_{M[1]},0)$, so $\hueca{G}(h)=\hueca{I}_{M[1]}$. 

It is clear that the association $f\longmapsto\hat{f}$ is an isomorphism of graded vector spaces 
$\Hom_{\GM\g A}(M,N)\rightmap{}\Hom_{\GM\g{\cal B}_A}(M[1],N[1])$. It follows that $\GM\g A$ is a graded $k$-category and that $\hueca{G}:\GM\g A\rightmap{}\GM\g {\cal B}_A$ is an equivalence of categories.

 \medskip
 \emph{ Step 2: We have indeed a differential $\delta_\infty$ on the graded category $\GM\g A$ and $\hueca{G}$ is an equivalence of differential graded $k$-categories.}
 \medskip
 
 It will be enough to show that for any homogeneous  morphism $f:M\rightmap{}N$ in $\GM\g A$, we have 
$$\hueca{G}(\delta_\infty(f))=\widehat{\delta}(\hueca{G}(f)).$$ 

Consider the restrictions $\widehat{\delta}(\hat{f})_n:M[1]\otimes A[1]^{\otimes n}\rightmap{}N[1]$, for $n\geq 0$.  Let us verify that the following diagrams commute 
  $$\begin{matrix}
  M\otimes_S A^{\otimes n}&
  \rightmap{ \ \sigma_M\otimes \sigma^{\otimes n} \ }&
  M[1]\otimes_S A[1]^{\otimes n}\\
     \lmapdown{\delta_\infty(f)_{n+1}}&&\lmapdown{\widehat{\delta}(\hat{f})_n}\\
  N&\rightmap{ \  \ \ \sigma_N \ \  \ }&N[1]\\
  \end{matrix} \hbox{ and  \  \ \  } 
  \begin{matrix}
  M&\rightmap{ \ \zeta\sigma_M \ \ }&M[1]\otimes_S S\\
     \rmapdown{\delta_\infty(f)_1}&&\rmapdown{\widehat{\delta}(\hat{f})_0}\\
  N&\rightmap{ \ \ \  \sigma_N \   \ \ }&N[1],\\
  \end{matrix}$$
  where $n$ runs in $\hueca{N}$. 
  
By definition,  the differential $\widehat{\delta}$ of the category $\GM\g {\cal B}_A$ applied to the morphism  $\hueca{G}(f)=\hat{f}$ is  
$$\widehat{\delta}(\hat{f})=(-1)^{\vert \hat{f}\vert+1}\hat{f}(id_{M[1]}\otimes \delta).$$

 Recall that the differential $\delta$ on the tensor $S$-coalgebra $T_S(A[1])$ satisfies that $\delta(S)=0$ and  is determined by the morphisms of right $S$-modules 
 $$\delta_n:A[1]^{\otimes n}\rightmap{}
 \bigoplus_{1\leq s\leq n}A[1]^{\otimes (n-s+1)}$$
 given, for $n\geq 1$, by the following formula  
 $\delta_n=
\sum_{\scriptsize\begin{matrix} r+s+t=n\\ r,t\geq 0; s\geq 1 \end{matrix}} 
id^{\otimes r}\otimes \hat{m}_s\otimes id^{\otimes t},$
where $\hat{m}_s$ is defined, for $s\geq 1$, by the following commutative square
$$\begin{matrix}
  A^{\otimes s}&\rightmap{ \ \ \sigma^{\otimes s} \ \ }&A[1]^{\otimes s}\\
     \rmapdown{m_s}&&\rmapdown{\hat{m}_s}\\
  A&\rightmap{ \ \ \  \sigma \   \ \ }&A[1].\\
  \end{matrix}$$
Here, if $r=0$ (or $t=0$) then $id^{\otimes r}$ (resp. $id^{\otimes t}$) is omitted.  

Then, for $n\geq 1$, making   
$\Delta_n:=\widehat{\delta}(\hat{f})_n(\sigma_M\otimes \sigma^{\otimes n})$ we have  
$$\begin{matrix}
\Delta_n 
  &=&
  \sum_{\scriptsize\begin{matrix} r+s+t=n\\ r,t\geq 0; s\geq 1 \end{matrix}} 
(-1)^{\vert \hat{f}\vert+1}\hat{f}(id_{M[1]}\otimes id^{\otimes r}\otimes \hat{m}_s\otimes id^{\otimes t})(\sigma_M\otimes \sigma^{\otimes n})\hfill\\
&=&
 \sum_{\scriptsize\begin{matrix} r+s+t=n\\ r,t\geq 0; s\geq 1 \end{matrix}} 
(-1)^{\vert f\vert+r}\hat{f}_{n-s+1}(\sigma_M\otimes \sigma^{\otimes r}\otimes \hat{m}_s\sigma^{\otimes s}\otimes \sigma^{\otimes t})\hfill\\
&=&
\sum_{\scriptsize\begin{matrix} r+s+t=n\\ r,t\geq 0; s\geq 1 \end{matrix}} 
(-1)^{\vert f\vert+r}\hat{f}_{n-s+1}(\sigma_M\otimes \sigma^{\otimes r}\otimes \sigma m_s\otimes \sigma^{\otimes t})\hfill\\
&=&
\sum_{\scriptsize\begin{matrix} r+s+t=n\\ r,t\geq 0; s\geq 1 \end{matrix}} 
(-1)^{\vert f\vert+r+st}\hat{f}_{n-s+1}(\sigma_M\otimes \sigma^{\otimes n})(id_{M}\otimes id^{\otimes r}\otimes  m_s\otimes id^{\otimes t})\hfill\\
&=&
\sum_{\scriptsize\begin{matrix} r+s+t=n\\ r,t\geq 0; s\geq 1 \end{matrix}} 
(-1)^{\vert f\vert+r+st}\sigma_Nf_{n-s+2}(id_{M}\otimes id^{\otimes r}\otimes  m_s\otimes id^{\otimes t})\hfill\\
&=&
\sum_{\scriptsize\begin{matrix} r+s+t=n+1\\ t\geq 0; r,s\geq 1 \end{matrix}} 
(-1)^{\vert f\vert+r+st+1}\sigma_Nf_{(n+1)-s+1}(id^{\otimes r}\otimes  m_s\otimes id^{\otimes t})\hfill\\
&=&
\sigma_N\delta_\infty(f)_{n+1}.\hfill\\
  \end{matrix}$$

Finally, we have 
$$\begin{matrix}
  \widehat{\delta}(\hat{f})_0\zeta\sigma_M
  &=& 
   \widehat{\delta}(\hat{f})_0(\sigma_M\otimes 1)\hfill\\
   &=&
   (-1)^{\vert \hat{f}\vert +1}\hat{f}(id_{M[1]}\otimes \delta)(\sigma_M\otimes 1)\hfill\\
   &=&0=\sigma_N\delta_{\infty}(f)_1.\hfill\\
  \end{matrix}$$
 This finishes the proof. 
\end{proof}

\begin{definition}\label{D: TGMod-A con A A-infinito-algebra}
 Given an $A_\infty$-algebra $A=(A,\{m_n\})$,  we will denote by $\TGM\g A$ the following graded  $k$-category. 
 Its objects $(M,m^M)$ are graded right $S$-modules $M$ equipped with a morphism $m^M:M\rightmap{}M$ in $\GM\g A$ with degree $\vert m^M\vert=1$ such that the following Maurer Cartan equation holds
 $$\delta_\infty(m^M)+m^M\circ m^M=0.$$
 A  \emph{homogeneous morphism $f:(M,m^M)\rightmap{}(N,m^N)$ in $\TGM\g A$ with degree $\vert f\vert=d$} is a homogeneous morphism 
 $f:M\rightmap{}N$ is $\GM\g A$ with degree $d$ such that the following  equation 
 holds
 $$\delta_\infty(f)+m^N\circ f-(-1)^{\vert f\vert}f\circ m^M=0.$$
 The composition in $\TGM\g A$ is the same composition of $\GM\g A$.
 \end{definition}
 
  Again, the fact that the preceding notions give rise indeed to a  graded $k$-category follows from the following statement, which is quite clear. 
 
 \begin{proposition}\label{P: equiv entre TGMod-A y TGMod-B(A)}
  Let ${\cal B}_A$ denote the differential tensor $S$-bocs
 associated to the $A_\infty$-algebra $A$, then there is an 
 equivalence of graded $k$-categories
 $$\hueca{G}:\TGM\g A\rightmap{}\TGM\g {\cal B}_A.$$
 Given $(M,m^M)\in \GM\g A$, by definition $\hueca{G}(M,m^M)=
 (M[1],\hueca{G}(m^M))$.  
 
  Given   a homogeneous  morphism $f:(M,m^M)\rightmap{}(N,m^N)$ in $\TGM\g A$  with degree $d$,  
  we look at the underlying morphism $f:M\rightmap{}N$ in $\GM\g A$ and apply $\hueca{G}$ to obtain a morphism $\hueca{G}(f):\hueca{G}(M)\rightmap{}\hueca{G}(N)$ in $\GM\g {\cal B}_A$, such that 
  $\hueca{G}(f):\hueca{G}(M,m^M)\rightmap{}\hueca{G}(N,m^N)$ is a morphism in $\TGM\g {\cal B}_A$. 
 \end{proposition}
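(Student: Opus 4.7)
The plan is to leverage the fact, just established in Proposition~\ref{P: equiv entre GMod-A y GMod-B(A)}, that $\hueca{G}:\GM\g A\rightmap{}\GM\g{\cal B}_A$ is an equivalence of differential graded $k$-categories. Since the definitions of the twisted categories $\TGM\g A$ and $\TGM\g{\cal B}_A$ are obtained from $\GM\g A$ and $\GM\g{\cal B}_A$, respectively, by the same formal recipe (Maurer--Cartan equation for objects, twisted compatibility for morphisms, composition inherited from the underlying graded category), an equivalence of differential graded categories between the base categories should induce an equivalence of the associated twisted categories essentially by transport of structure.

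First, I would verify that $\hueca{G}$ sends objects to objects. Given $(M,m^M)\in \TGM\g A$, the morphism $\hueca{G}(m^M):M[1]\rightmap{}M[1]$ is homogeneous of degree $1$ in $\GM\g{\cal B}_A$ because $\hueca{G}$ preserves degrees. Using that $\hueca{G}$ is a functor of differential graded categories,
$$\widehat{\delta}(\hueca{G}(m^M))+\hueca{G}(m^M)*\hueca{G}(m^M)=\hueca{G}(\delta_\infty(m^M))+\hueca{G}(m^M\circ m^M)=\hueca{G}(\delta_\infty(m^M)+m^M\circ m^M)=0,$$
so $\hueca{G}(M,m^M):=(M[1],\hueca{G}(m^M))\in \TGM\g{\cal B}_A$. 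Next, for a homogeneous morphism $f:(M,m^M)\rightmap{}(N,m^N)$ of degree $d$ in $\TGM\g A$, applying $\hueca{G}$ to the defining equation $\delta_\infty(f)+m^N\circ f-(-1)^df\circ m^M=0$ yields
$$\widehat{\delta}(\hueca{G}(f))+\hueca{G}(m^N)*\hueca{G}(f)-(-1)^d\hueca{G}(f)*\hueca{G}(m^M)=0,$$
which is exactly the condition for $\hueca{G}(f):\hueca{G}(M,m^M)\rightmap{}\hueca{G}(N,m^N)$ to be a morphism of twisted ${\cal B}_A$-modules of degree $d$. Functoriality of the assignment $\hueca{G}$ on $\TGM\g A$ (preservation of composition and identities) is inherited verbatim from the corresponding property in $\GM\g A$, since both twisted categories use the composition of the underlying graded categories unchanged.

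It remains to show $\hueca{G}:\TGM\g A\rightmap{}\TGM\g{\cal B}_A$ is fully faithful and essentially surjective. For fullness and faithfulness, fix objects $(M,m^M),(N,m^N)\in \TGM\g A$ and note that, for each $d\in \hueca{Z}$, the bijection $\hueca{G}:\Hom_{\GM\g A}^d(M,N)\rightmap{}\Hom_{\GM\g{\cal B}_A}^d(M[1],N[1])$ restricts to a bijection between the subspaces cut out by the twisted compatibility condition on each side, because that condition on $f$ is equivalent to the condition on $\hueca{G}(f)$ by the displayed equation above; this is where the fact that $\hueca{G}$ preserves $\delta_\infty$ is used essentially. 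For essential surjectivity, given $(X,u)\in \TGM\g{\cal B}_A$, by essential surjectivity of $\hueca{G}$ on the underlying categories we may pick $M\in \GM\g A$ with an isomorphism $\varphi:M[1]\rightmap{}X$ in $\GM\g{\cal B}_A$; by Lemma~\ref{L: iso en GMod-B de M en N copia la u de M en una de N} we may replace $u$ by the unique $v$ on $M[1]$ making $\varphi:(M[1],v)\rightmap{}(X,u)$ an isomorphism in $\TGM\g{\cal B}_A$, and then pull $v$ back to a Maurer--Cartan element $m^M$ on $M$ via the inverse of $\hueca{G}$ on $\Hom^1(M,M)$, which is a bijection. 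The main (and only) point requiring care is keeping track of sign conventions and the degree shift: all signs in the twisted conditions on each side must match up under $\hueca{G}$, but since these conditions are literally the image under $\hueca{G}$ of the same formal expressions (thanks to Proposition~\ref{P: equiv entre GMod-A y GMod-B(A)}), no new computations beyond those already carried out there are needed.
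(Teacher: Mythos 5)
Your proposal is correct and is exactly the transport-of-structure argument the paper has in mind: the paper gives no proof at all, remarking only that the statement is ``quite clear'' as a consequence of the differential graded equivalence $\hueca{G}:\GM\g A\rightmap{}\GM\g{\cal B}_A$, and your verification (Maurer--Cartan and twisted-morphism conditions transported via preservation of $\delta_\infty$ and composition, plus faithfulness for the converse) fills in precisely those routine checks. One small simplification: since $\hueca{G}$ is bijective on objects ($M\mapsto M[1]$, with inverse $X\mapsto X[-1]$), essential surjectivity of the twisted functor is immediate by pulling $u$ back through the hom-space bijection on $\Hom^1(X[-1],X[-1])$ and using faithfulness, so the detour through (\ref{L: iso en GMod-B de M en N copia la u de M en una de N}) is not needed.
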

 
 \begin{proposition}\label{P: translation to A-infinte modules}
  Let ${\cal B}_A$ denote the differential tensor $S$-bocs
 associated to an $A_\infty$-algebra $A$. Then, we have:
 \begin{enumerate}
  \item  The preceding requirements for a pair $(M,m^M)\in \TGM\g A$  mean that $(M,m^M)$  is a right $A_\infty$-module over $A$ in the usual sense of (\ref{D: A-infinto modulos derechos}).

 \item The preceding requirements for a family of morphisms 
 $f=\{f_n\}_{n\in \hueca{N}}$ to be a homogeneous morphism of degree $d$ from 
 $(M,m^M)$ to $(N,m^N)$ in $\TGM\g A$  translate into the following. They mean that $f$ is a family 
  $f=\{f_n\}_{n\in \hueca{N}}$, where each 
  $$f_n:M\otimes_S A^{\otimes (n-1)}\rightmap{}N$$
  is a homogeneous morphism of graded right $S$-modules of degree $\vert f_n\vert=d+1-n$ such that, for each $n\in \hueca{N}$, 
  the equality $\Sigma_n^{f +}+\Sigma_n^{f 0}+\Sigma_n^{f -}=0$ holds, where 
  $$\Sigma_n^{f +}=\sum_{\scriptsize\begin{matrix}r+s+t=n\\ t\geq 0; r,s\geq 1\end{matrix}}
  (-1)^{\vert f\vert+ r+st}
 f_{r+1+t}(id^{\otimes r}\otimes m_s\otimes id^{\otimes t}),$$
$$\Sigma_n^{f 0}=\sum_{\scriptsize\begin{matrix}s+t=n\\ t\geq 0; s\geq 1\end{matrix}}
(-1)^{\vert f\vert +st}
 f_{1+t}( m^M_s\otimes id^{\otimes t}),$$
and
$$\Sigma_n^{f-}=\sum_{\scriptsize\begin{matrix}r+s=n\\ r\geq 1; s\geq 0\end{matrix}}
(-1)^{(\vert f\vert+r+1)s+1}
 m^N_{1+s}(f_r\otimes id^{\otimes s}).$$
 The condition in case $n=1$ is equivalent to $m_1^Nf_1=f_1m_1^M$, 
 that is, to the requirement that the map $f_1:M\rightmap{}N$ is a morphism 
 of complexes of right $S$-modules.

 Then,  if we restrict to the case where $f$ has degree $\vert f\vert =0$, we have that $f:(M,m^M)\rightmap{}(N,m^N)$ is a morphism 
 of right $A_\infty$-modules in the usual sense of (\ref{D: A-infinto modulos derechos}). 
 \end{enumerate}
 Therefore, the usual category $\Mod_\infty\g A$ of right $A_\infty$-modules over the $A_\infty$-algebra $A$ is the subcategory $\TGM^0\g A$ of 
 $\TGM\g A$ with the same objects and the homogeneous morphisms of degree $0$ between them. Then, we can restrict the functor $\hueca{G}$ to an equivalence functor $\hueca{G}:\Mod_\infty\g A\rightmap{}\TGM^0\g{\cal B}_A$. 
\end{proposition}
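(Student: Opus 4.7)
The plan is to prove both parts by a direct component-wise unwinding of the Maurer--Cartan equation and the morphism equation in $\TGM\g A$, using the explicit formulas for the composition $\circ$ and the differential $\delta_\infty$ given in (\ref{D: GMod-A con A A-infinito-algebra}), and then to match the resulting identities to the operations $\Sigma_n^{(\cdot)}$ appearing in (\ref{D: A-infinto modulos derechos}). The final equivalence of categories will then follow automatically by restricting the equivalence $\hueca{G}$ of (\ref{P: equiv entre TGMod-A y TGMod-B(A)}) to degree-zero morphisms.

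For part (1), I would start from an object $(M,m^M)\in \TGM\g A$, so $m^M=\{m^M_n\}$ satisfies $|m^M|=1$ (hence $|m^M_n|=2-n$) and the Maurer--Cartan equation $\delta_\infty(m^M)+m^M\circ m^M=0$. Specializing the definition of $\delta_\infty$ with $|f|=1$ gives
$$\delta_\infty(m^M)_n=\sum_{\substack{r+s+t=n\\ r,s\geq 1,\,t\geq 0}}(-1)^{r+st}m^M_{r+1+t}(id^{\otimes r}\otimes m_s\otimes id^{\otimes t})=\Sigma_n^+,$$
while $(m^M\circ m^M)_n=\sum_{r+s=n, r\geq 1, s\geq 0}(-1)^{(r+2)s}m^M_{1+s}(m^M_r\otimes id^{\otimes s})$, which after relabeling $(r,s)\to(s,t)$ and using $(r+2)s\equiv rs \pmod 2$ coincides with $\Sigma_n^0$. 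Hence the Maurer--Cartan equation at degree $n$ reads $\Sigma_n^++\Sigma_n^0=0$, which is exactly the defining condition of a right $A_\infty$-module in (\ref{D: A-infinto modulos derechos}).

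For part (2), I would perform the analogous bookkeeping for a homogeneous morphism $f:(M,m^M)\to(N,m^N)$ of degree $d$ in $\TGM\g A$, i.e.\ satisfying $\delta_\infty(f)+m^N\circ f-(-1)^d f\circ m^M=0$. Expanding each term componentwise yields
$$\delta_\infty(f)_n=-\Sigma_n^{f+},\qquad (m^N\circ f)_n=-\Sigma_n^{f-},\qquad (-1)^d(f\circ m^M)_n=\Sigma_n^{f0},$$
the last identity using $|m^M|=1$ so that the sign $(-1)^{(|m^M|+r+1)s}$ in the composition formula becomes $(-1)^{rs}$, and then absorbing the overall $(-1)^d$ into the $(-1)^{|f|+st}$ of $\Sigma_n^{f0}$ after relabeling. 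Summing the three expressions and multiplying by $-1$ produces exactly $\Sigma_n^{f+}+\Sigma_n^{f0}+\Sigma_n^{f-}=0$, which is the condition stated in the proposition.

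Finally, specializing to $d=0$, the formulas $\Sigma_n^{f+}$, $\Sigma_n^{f0}$, $\Sigma_n^{f-}$ with $|f|=0$ are literally those defining a morphism of right $A_\infty$-modules in (\ref{D: A-infinto modulos derechos}), and the composition rule inherited from $\GM\g A$ reduces to the usual composition of $A_\infty$-module morphisms (same relabeling, now with $|f|=0$). Therefore $\TGM^0\g A$ coincides with $\Mod_\infty\g A$ as a $k$-category, and restricting the equivalence $\hueca{G}:\TGM\g A\to \TGM\g {\cal B}_A$ of (\ref{P: equiv entre TGMod-A y TGMod-B(A)}) to degree-zero morphisms produces the desired equivalence $\hueca{G}:\Mod_\infty\g A\to \TGM^0\g{\cal B}_A$. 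The only delicate point throughout is the careful tracking of signs, particularly the systematic appearance of the extra $+1$ exponent distinguishing $\delta_\infty(f)_n$ from $\Sigma_n^{f+}$ and $\Sigma_n^{f-}$ from $(m^N\circ f)_n$; once these are handled consistently everything else reduces to routine reindexing.
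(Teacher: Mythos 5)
Your proposal is correct and follows essentially the same route as the paper: both proofs reduce to the componentwise identities $\delta_\infty(m^M)_n=\Sigma_n^+$, $(m^M\circ m^M)_n=\Sigma_n^0$, and $\delta_\infty(f)_n=-\Sigma_n^{f+}$, $(m^N\circ f)_n=-\Sigma_n^{f-}$, $(-1)^{\vert f\vert}(f\circ m^M)_n=\Sigma_n^{f0}$, followed by restricting $\hueca{G}$ to degree-zero morphisms. The only difference is that you spell out the sign bookkeeping and reindexing that the paper leaves as an observation.
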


\begin{proof} Item (1) follows from the observation that, for $n\geq 1$, we have 
$$\delta_\infty(m^M)_n=\Sigma_n^+\hbox{ \  and \ }(m^M\circ m^M)_n=\Sigma_n^0.$$
Item (2) follows from the observation that, for $n\geq 1$, we have
$$\begin{matrix}\delta_\infty(f)_n=-\sum_n^{f +}, \hbox{ \ } (m^N\circ f)_n=-\sum_n^{f -}, \hbox{ and } (-1)^{\vert f\vert}(f\circ m^M)_n=\sum_n^{f 0}.\end{matrix}$$
\end{proof}

 \begin{proposition}\label{P: estructura exacta de GMod-A}
Let $A$ be an $A_\infty$-algebra. Then, the category $\GM^0\g A$ is additive with split idempotents. 
Denote by ${\cal E}_0$ the class of composable morphisms $M\rightmap{f}E\rightmap{g}N$ in $\GM^0\g A$ such that $g\circ f=0$ and 
$$0\rightmap{}M\rightmap{f_1}E\rightmap{g_1}N\rightmap{}0$$
is an exact sequence in $\GM\g S$. Then we have:
\begin{enumerate}
 \item The pair  $(\GM^0\g A,{\cal E}_0)$ is an exact category. The class ${\cal E}_0$ consists of the split exact pairs in 
 $\GM^0\g A$. 
 \item A morphism $f:M\rightmap{}E$ in $\GM^0\g A$ is an ${\cal E}_0$-inflation iff $f_1:M\rightmap{}E$ is injective.
 \item A morphism $g:E\rightmap{}N$ in $\GM^0\g A$ is an  ${\cal E}_0$-deflation iff $g_1:E\rightmap{}N$ is surjective.
\end{enumerate}
\end{proposition}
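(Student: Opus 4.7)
The plan is to transfer everything through the chain of equivalences
$$\GM\g A\xrightarrow{\hueca{G}}\GM\g{\cal B}_A\xrightarrow{F}\GModi\g{\cal B}_A$$
provided by (\ref{P: equiv entre GMod-A y GMod-B(A)}) and (\ref{L: equiv directa entre GMod-B y G hueca(M)od-B}), and then invoke the corresponding statement (\ref{P: estructura exacta de GMod-B}) for triangular bocses. The first thing to check is therefore that ${\cal B}_A=(T_S(A[1]),\mu,\epsilon,\delta)$ is a triangular differential graded $S$-bocs. For this one sets
$$\overline{C}_i:=\bigoplus_{n=1}^{i}A[1]^{\otimes n}\subseteq \overline{C}=\bigoplus_{n\geq 1}A[1]^{\otimes n},$$
and verifies that $\overline{\mu}(A[1]^{\otimes n})\subseteq\bigoplus_{r+s=n,\;r,s\geq 1}A[1]^{\otimes r}\otimes A[1]^{\otimes s}\subseteq \overline{C}_{n-1}\otimes_S\overline{C}_{n-1}$ from the bar comultiplication, and that $\overline{\delta}(A[1]^{\otimes n})\subseteq \bigoplus_{k\leq n}A[1]^{\otimes k}=\overline{C}_n$ from the explicit description of $\delta$ in terms of the $\hat m_s$. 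Thus ${\cal B}_A$ is triangular, so (\ref{P: estructura exacta de GMod-B}) applies.

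Next I would show that the composed equivalence $F\circ\hueca{G}:\GM^0\g A\rightmap{}\GModi^0\g {\cal B}_A$ sends the class ${\cal E}_0$ of the statement onto the class ${\cal E}_0$ of (\ref{P: estructura exacta de GMod-B}). The key calculation is at the level of first components: for a morphism $f:M\rightmap{}N$ in $\GM\g A$ with family $\{f_n\}_{n\in\hueca{N}}$, the component $\hat f_0:M[1]\otimes_S S\rightmap{}N[1]$ of $\hueca{G}(f)$ is $\hat f_0=\sigma_N f_1(\zeta\sigma_M)^{-1}$, and by (\ref{L: equiv directa entre GMod-B y G hueca(M)od-B}) the $0$-component of $F(\hueca{G}(f))$ is obtained by evaluating at $1\in C$, i.e.\ it equals $\sigma_N f_1\sigma_M^{-1}:M[1]\rightmap{}N[1]$. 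Since the shift $\sigma_{-}$ is a $\GM\g S$-isomorphism, $f_1:M\rightmap{}N$ is injective (respectively surjective, respectively an isomorphism) in $\GM\g S$ if and only if $(F\hueca{G}(f))^0:M[1]\rightmap{}N[1]$ is so. Moreover composition in $\GM\g A$ reduces to ordinary composition on first components (the sum defining $(g\circ f)_n$ collapses for $n=1$ to $g_1f_1$), so $g\circ f=0$ in $\GM^0\g A$ corresponds via $F\hueca{G}$ to the analogous vanishing condition in $\GModi^0\g{\cal B}_A$. Hence ${\cal E}_0$ is preserved in both directions by the equivalence.

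Once this matching is established, items (1)-(3) of the proposition follow formally: the equivalence transports the exact structure of (\ref{P: estructura exacta de GMod-B}) to an exact structure on $\GM^0\g A$ whose conflations are exactly those of ${\cal E}_0$, whose inflations are the morphisms with injective first component, and whose deflations are the morphisms with surjective first component; and the fact that the conflations are split exact follows from the corresponding fact on the bocs side. Additivity of $\GM^0\g A$ and splitting of idempotents are likewise inherited from the corresponding properties of $\GModi^0\g{\cal B}_A$ established in (\ref{L: idemptes se div en GM-B}) via the equivalence. The main (and only slightly subtle) obstacle in this plan is the bookkeeping that identifies the first component $f_1$ of a morphism in $\GM\g A$ with the $0$-component of the associated morphism in $\GModi\g{\cal B}_A$ up to the shift isomorphism; everything else is a direct transfer through equivalences of categories.
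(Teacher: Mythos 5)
Your proposal is correct and follows essentially the same route as the paper: the paper's proof simply recalls that ${\cal B}_A$ is a triangular differential graded $S$-bocs and translates (\ref{P: split idempotents}) and (\ref{P: estructura exacta de GMod-B}) through the equivalence $\hueca{G}:\GM^0\g A\rightmap{}\GM^0\g{\cal B}_A$. Your extra verifications (the triangular filtration $\overline{C}_i=\bigoplus_{n\leq i}A[1]^{\otimes n}$ and the identification of $f_1$ with the $0$-component up to the shift isomorphism) are exactly the bookkeeping the paper leaves implicit.
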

 
  \begin{proof}
  Recall that the coalgebra  ${\cal B}_A=(T_S(A[1]),\mu,\epsilon,\delta)$ associated  by the bar construction to the $A_\infty$-algebra $A$ is a triangular differential graded $S$-bocs. Then, we have (\ref{P: split idempotents}) and  
  (\ref{P: estructura exacta de GMod-B}), and we can translate them to $\GM^0\g A$ with the equivalence $\hueca{G}:\GM^0\g A\rightmap{}\GM^0\g {\cal B}_A$. 
  \end{proof}

 \begin{proposition}\label{D: estructura exacta de TGMod-A}
 Let $A$ be an $A_\infty$-algebra.  Then, the category $\Mod_\infty\g A$ is additive with split idempotents. Denote by ${\cal E}_\infty$ the class of composable morphisms in $\Mod_\infty\g A$ 
 $$\begin{matrix}(M,m^M)&\rightmap{f}&(E,m^E)&\rightmap{g}&(N,m^N)\end{matrix}$$
 such that $g\circ f=0$ and the short sequence of first components  
 $$\begin{matrix}0&\rightmap{}&M&\rightmap{f_1}&E&\rightmap{g_1}&N&\rightmap{}&0\end{matrix}$$
 is an exact sequence in $\GM\g S$. Then we have the following.
\begin{enumerate}
 \item The pair  $(\Mod_\infty\g A,{\cal E}_\infty)$ is an exact category. The class ${\cal E}_\infty$  coincides with the class of composable morphisms such that when we
 forget the second components of the objects, we have a split exact pair in $\GM^0\g A$. 
 \item A morphism $f:(M,m^M)\rightmap{}(E,m^E)$ in $\Mod_\infty\g A$ is an ${\cal E}_\infty$-inflation iff $f_1:M\rightmap{}E$ is injective.
 \item A morphism $g:(E,m^E)\rightmap{}(N,m^N)$ in $\Mod_\infty\g A$ is an ${\cal E}_\infty$-deflation iff $g_1:E\rightmap{}N$ is surjective.
 \item The functor $\hueca{G}:(\Mod_\infty\g A,{\cal E}_\infty)\rightmap{}(\TGM^0\g {\cal B}_A,{\cal E})$ is an equivalence of exact categories. 
\end{enumerate} 
 \end{proposition}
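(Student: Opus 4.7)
The plan is to transport everything along the equivalence of categories $\hueca{G}:\Mod_\infty\g A\rightmap{}\TGM^0\g{\cal B}_A$ of (\ref{P: translation to A-infinte modules}), composed with the equivalence $F:\TGM^0\g{\cal B}_A\rightmap{}\TGModi^0\g{\cal B}_A$ of (\ref{L: equiv directa entre GMod-B y G hueca(M)od-B}), thereby reducing everything to (\ref{P: estructura exacta de TGMod-B}). The whole point of the paper is that such transports work cleanly once the bar coalgebra is recognized as a triangular differential graded $S$-bocs.

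First I would verify that ${\cal B}_A=(T_S(A[1]),\mu,\epsilon,\delta)$ is a triangular differential graded $S$-bocs in the sense of (\ref{D: triangular bocs}), using the standard filtration $\overline{C}_i=\bigoplus_{n=1}^i A[1]^{\otimes n}$. The reduced comultiplication splits tensors, so $\overline{\mu}(\overline{C}_i)\subseteq \overline{C}_{i-1}\otimes_S\overline{C}_{i-1}$, and the differential $\delta$ induced by the operations $\{m_n\}$ of $A$ via the bar construction does not increase tensor length, so $\overline{\delta}(\overline{C}_i)\subseteq \overline{C}_i$. With this in hand, all the structural results of the preceding sections apply to ${\cal B}={\cal B}_A$: in particular, idempotents split in $\TGModi^0\g{\cal B}_A$ by (\ref{P: split idempotents}), and the pair $(\TGModi^0\g{\cal B}_A,{\cal E})$ is an exact category by (\ref{P: estructura exacta de TGMod-B}).

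Next I would identify the image of the class ${\cal E}_\infty$ under the composite equivalence $F\circ \hueca{G}$. The key observation is that, tracing through (\ref{P: equiv entre GMod-A y GMod-B(A)}), for a homogeneous degree-zero $A_\infty$-morphism $f=\{f_n\}$ the component $\hat{f}_0$ of $\hueca{G}(f)$ is given by $\hat{f}_0=\sigma_N f_1(\zeta\sigma_M)^{-1}$, and applying $F$ extracts exactly this piece as the underlying $S$-module map $(F\hueca{G}(f))^0=\sigma_N f_1\sigma_M^{-1}$. Since $\sigma$ is an isomorphism in $\GM\g S$, a composable pair in $\Mod_\infty\g A$ has exact sequence of first components $0\rightmap{}M\rightmap{f_1}E\rightmap{g_1}N\rightmap{}0$ if and only if its image in $\TGModi^0\g{\cal B}_A$ has exact first-component sequence. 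Thus ${\cal E}_\infty$ corresponds bijectively to ${\cal E}$ under $F\circ\hueca{G}$, which immediately gives items (2) and (3) via (\ref{L: deflations and inflations in TGMod-B}), and the characterization of ${\cal E}_\infty$ as the pairs that become split exact after forgetting second components follows by combining this with (\ref{P: estructura exacta de GMod-B}).

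With this correspondence established, item (1) is obtained by transporting the exact structure of (\ref{P: estructura exacta de TGMod-B}) along the equivalence, and the additivity and splitting of idempotents of $\Mod_\infty\g A$ are inherited from $\TGModi^0\g{\cal B}_A$ by (\ref{P: split idempotents}). Item (4) then becomes a tautology: $\hueca{G}$ is an equivalence of categories which, by the very setup of ${\cal E}_\infty$ and the previous paragraph, matches the declared exact pairs on both sides. The main obstacle will be the bookkeeping of the second step: carefully tracking the shift $\sigma$ and the sign conventions of the bar construction so that the correspondence between $f_1$ and the degree-zero underlying map of $F\hueca{G}(f)$ is established rigorously. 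Once that identification is in place, everything else is formal transport along equivalences of graded categories.
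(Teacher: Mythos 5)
Your proposal follows essentially the same route as the paper: recognize ${\cal B}_A$ as a triangular differential graded $S$-bocs and transport the exact structure and splitting of idempotents from $(\TGM^0\g{\cal B}_A,{\cal E})$ along the equivalence $\hueca{G}$ (the paper cites (\ref{P: split idempotents}) and (\ref{P: estructura exacta de TGMod-B}) and simply ``translates''). Your extra care in checking triangularity of the bar coalgebra and in matching $f_1$ with the first component of $F\hueca{G}(f)$ via $\sigma$ only makes explicit what the paper leaves implicit, and is correct.
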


\begin{proof}   As before, the $S$-coalgebra  ${\cal B}_A$  associated  by the bar construction to the $A_\infty$-algebra $A$ is a triangular differential graded $S$-bocs. Then, we can apply 
(\ref{P: split idempotents}) to $\TGM^0\g{\cal B}_A$. Moreover, we can translate  
  (\ref{P: estructura exacta de TGMod-B}) to $\Mod_\infty\g A=\TGM^0\g A$ with the equivalence functor $\hueca{G}:\Mod_\infty\g A\rightmap{}\TGM^0\g {\cal B}_A$. 
  \end{proof}
  
 \begin{remark}\label{R: obs sobre sigmaM en GMod-A}
 Given an $A_\infty$-algebra $A$, as we did in (\ref{R: remark sobre sigmaM para GMod-B}), we can associate to any 
 given $M\in \GM\g A$ the homogeneous isomorphism  $\underline{\sigma}_M:M\rightmap{}M[1]$ in $\GM\g A$ of degree $-1$ such that $\underline{\sigma}_M=\{\sigma_n\}_{n\in \hueca{N}}$, where $\sigma_1:=\sigma_M:M\rightmap{}M[1]$ and, for $n\geq 2$, 
 $\sigma_n:=0:M\otimes A^{\otimes(n-1)}\rightmap{}M[1]$. 
 \end{remark}

\begin{lemma}\label{D: traslacion para GM-A y TGM-A} Let $A$ be an $A_\infty$-algebra. Then, we have:
 \begin{enumerate}
 \item For each $M\in \GM\g A$, make $T(M)=M[1]$ and, 
 for any morphism $f:M\rightmap{}N$ in $\GM\g A$, make 
 $$T(f)=f[1]=\underline{\sigma}_N\circ f\circ \underline{\sigma}_M^{-1}:M[1]\rightmap{}N[1] \hbox{ in } 
 \GM\g A.$$
 Then, we have an autofunctor 
 $T:\GM\g A\rightmap{}\GM\g A$ with inverse $T^{-1}$ given by $T^{-1}(M)=M[-1]$ and 
 $$T^{-1}(f)=f[-1]=\underline{\sigma}_{N[-1]}^{-1}\circ f\circ \underline{\sigma}_{M[-1]}:M[-1]\rightmap{}N[-1] \hbox{ in } \GM\g A.$$ 
 It is called the shifting autofunctor of $\GM\g A$. 
 \item  For each $(M,m^M)\in \TGM\g A$, make $T(M,m^M)=(M[1],-m^M[1])$ and, 
 for any morphism $f:(M,m^M)\rightmap{}(N,m^N)$ in $\GM\g A$, make 
 $$T(f)=f[1]=\underline{\sigma}_N\circ f\circ \underline{\sigma}_M^{-1}:(M[1],-m^M[1])\rightmap{}(N[1],-m^N[1]) \hbox{ in } \TGM\g A.$$
 Then, we have an autofunctor 
 $T:\TGM\g A\rightmap{}\TGM\g A$ with inverse $T^{-1}$ given by $T^{-1}(M,m^M)=(M[-1],-m^M[-1])$ and 
 $$T^{-1}(f)=f[-1]=\underline{\sigma}_{N[-1]}^{-1}\circ f\circ \underline{\sigma}_{M[-1]}:(M[-1],-m^M[-1])\rightmap{}(N[-1],-m^N[-1]) .$$ 
 It is called the shifting autofunctor of $\TGM\g A$. 
\item The functor $\hueca{G}:\GM\g A\rightmap{}\GM\g {\cal B}_A$ commutes with the corresponding shifting autofunctors, and the functor $\hueca{G}:\TGM\g A\rightmap{}\TGM\g {\cal B}_A$ 
commutes with the corresponding shifting autofunctors.
\item Furthermore, the shifting functor $T:(\Mod_\infty\g A,{\cal E}_\infty)\rightmap{}(\Mod_\infty\g A,{\cal E}_\infty)$ is an  
 automorphism of exact categories. 
As a consequence, $T$ and $T^{-1}$ preserve the classes of ${\cal E}_\infty$-injectives and ${\cal E}_\infty$-projectives. 
\end{enumerate}
 \end{lemma}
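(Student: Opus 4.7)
The plan is to transfer everything from the bocs setting through the equivalences $\hueca{G}:\GM\g A\rightmap{}\GM\g{\cal B}_A$ and $\hueca{G}:\TGM\g A\rightmap{}\TGM\g{\cal B}_A$ of (\ref{P: equiv entre GMod-A y GMod-B(A)}) and (\ref{P: equiv entre TGMod-A y TGMod-B(A)}), and appeal to the corresponding statement for the bocs ${\cal B}_A$ already established in (\ref{D: traslacion para GM-B y TGM-B}). The pivotal compatibility to verify is item (3), since once we know $\hueca{G}$ intertwines the candidate shift on the $A_\infty$ side with the known shift on the bocs side, items (1), (2), and (4) follow automatically.

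First I would prove item (3) for $\GM\g A$ by showing the equality $\hueca{G}(\underline{\sigma}_M)=\underline{\sigma}_{\hueca{G}(M)}=L(\sigma_{M[1]})$ in $\GM\g{\cal B}_A$, for each $M\in\GM\g A$. This is a direct check: the morphism $\underline{\sigma}_M=\{\sigma_n\}_{n\in \hueca{N}}$ of (\ref{R: obs sobre sigmaM en GMod-A}) is strict, so from the defining squares of $\hueca{G}$ in (\ref{P: equiv entre GMod-A y GMod-B(A)}) only the component $\widehat{\sigma}_0$ is nonzero, and it is exactly the canonical embedding of $\sigma_{M[1]}$, i.e.\ $L(\sigma_{M[1]})$. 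Then, for any homogeneous $f:M\rightmap{}N$ in $\GM\g A$, functoriality of $\hueca{G}$ gives
$$\hueca{G}(f[1])=\hueca{G}(\underline{\sigma}_N\circ f\circ \underline{\sigma}_M^{-1})=
\underline{\sigma}_{N[1]}*\hueca{G}(f)*\underline{\sigma}_{M[1]}^{-1}=\hueca{G}(f)[1],$$
so $\hueca{G}\circ T=T\circ \hueca{G}$. Since $\hueca{G}$ on $\TGM\g A$ acts on objects by $\hueca{G}(M,m^M)=(M[1],\hueca{G}(m^M))$ and on morphisms by $\hueca{G}(f)=\hat{f}$, the analogous identity $\hueca{G}(T(M,m^M))=T(\hueca{G}(M,m^M))$ on the twisted side comes down to the equality $\hueca{G}(-m^M[1])=-\hueca{G}(m^M)[1]$, which is $\hueca{G}$-linearity applied to what we just proved for $\GM\g A$.

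Once the intertwining is in place, items (1) and (2) are immediate: $T$ as defined on $\GM\g A$ (resp.\ $\TGM\g A$) is the transport under the equivalence $\hueca{G}$ of the corresponding shifting autofunctor of $\GM\g{\cal B}_A$ (resp.\ $\TGM\g{\cal B}_A$) of (\ref{D: traslacion para GM-B y TGM-B}), so it is an autofunctor, the formulas for $T^{-1}$ are inherited, and the Maurer-Cartan equation for $-m^M[1]$ follows from the bocs statement (or, equivalently, by applying $\delta_\infty+({-})\circ({-})$ to $-m^M[1]=\underline{\sigma}_M*(-m^M)*\underline{\sigma}_M^{-1}$ and using Leibniz). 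For item (4), recall from (\ref{D: estructura exacta de TGMod-A}) that a composable pair lies in ${\cal E}_\infty$ exactly when its sequence of first components is short exact in $\GM\g S$. The first component of $T(f)=f[1]$ is $\sigma_N f_1\sigma_M^{-1}$, and since the shift $\sigma_{(-)}:M\mapsto M[1]$ is an exact auto-equivalence of $\GM\g S$, applying $T$ sends a composable pair in ${\cal E}_\infty$ to a composable pair in ${\cal E}_\infty$; the same argument applied to $T^{-1}$ gives the converse. Being an automorphism of exact categories, $T$ and $T^{-1}$ preserve ${\cal E}_\infty$-projectives and ${\cal E}_\infty$-injectives.

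The only real bookkeeping obstacle is the first step: matching $\hueca{G}(\underline{\sigma}_M)$ with $L(\sigma_{M[1]})$, because on the $A_\infty$ side $\underline{\sigma}_M$ is recorded as a strict family $\{\sigma_n\}$ while on the bocs side $\underline{\sigma}_{M[1]}$ is recorded as a map $M[1]\otimes_S C\rightmap{}M[2]$ obtained from the canonical embedding. This is purely a computation with the defining squares of $\hueca{G}$, involving the canonical isomorphism $\zeta$ and the shift $\sigma$; once it is carried out, everything else is formal transport along an equivalence of (differential) graded categories.
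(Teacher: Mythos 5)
Your proposal is correct and follows essentially the same route as the paper, whose proof is simply to transfer the shifting autofunctor statement (\ref{D: traslacion para GM-B y TGM-B}) for ${\cal B}_A$ through the equivalences $\hueca{G}$ of (\ref{P: equiv entre GMod-A y GMod-B(A)}) and (\ref{P: equiv entre TGMod-A y TGMod-B(A)}). Your explicit verification that $\hueca{G}(\underline{\sigma}_M)=L(\sigma_{M[1]})=\underline{\sigma}_{\hueca{G}(M)}$, which makes the intertwining $\hueca{G}T=T\hueca{G}$ precise, is exactly the detail the paper leaves implicit.
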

 
\begin{proof} It follows from 
(\ref{D: traslacion para GM-B y TGM-B}).  
\end{proof}

 \begin{proposition}\label{P: construction of J for TGMod-A} 
 For $(M,m^M)\in \TGM\g A$, we write $m^{M[1]}:=-m^M[1]$, so we have $T(M,m^M)=(M[1],m^{M[1]})$. 
 There is an endofunctor  $$J: \TGM\g A\rightmap{}\TGM\g A$$ 
 such that, for any $(M,m^M)\in \TGM\g A$ we have 
 $$J(M,m^M)=(M\oplus M[1],\begin{pmatrix}
                            m^M&\underline{\sigma}^{-1}_M\\ 0&m^{M[1]}\\
                           \end{pmatrix})$$
 and,  given a morphism $f:(M,m^M)\rightmap{}(N,m^N)$ in $\TGM\g A$, the morphism 
 $J(f)$ is   given by the matrix 
 $$J(f)=\begin{pmatrix}
               f& 0\\ 0& f[1]
              \end{pmatrix}: J(M,m^M)\rightmap{}J(N,m^N).$$
The equivalence $\hueca{G}:\TGM\g A\rightmap{}\TGM\g {\cal B}_A$ satisfies $\hueca{G}J=J\hueca{G}$. 
 
 Moreover, there are natural transformations 
 $\alpha: id_{\Mod_\infty\g A }\rightmap{}J$ and $\beta:J\rightmap{}T$ of endofunctors of $\Mod_\infty\g A$ such that, for each $\underline{M}=(M,m^M)\in \Mod_\infty\g A$, we have an ${\cal E}_\infty$-conflation 
$$\begin{matrix}\underline{M}&\rightmap{ \ \alpha_{\underline{M}} \ }&J(\underline{M})&\rightmap{ \ \beta_{\underline{N}} \ }&
T(\underline{M}).\end{matrix}$$
For $\underline{M}\in  \Mod_\infty\g A$, the morphisms $\alpha_{\underline{M}}$ and $\beta_{\underline{M}}$ are strict morphisms with 
$$(\alpha_{\underline{M}})_1=(id_M,0)^t: M\rightmap{}M\oplus M[1]$$
and  
 $$(\beta_{\underline{M}})_1=(0,id_{M[1]}):M\oplus M[1]\rightmap{}M[1].$$   
 Then, the category $\Mod_\infty\g A$ has enough ${\cal E}_\infty$-projectives and enough ${\cal E}_\infty$-injectives and, moreover, 
  the class of ${\cal E}_\infty$-projectives coincides with the class of ${\cal E}_\infty$-injectives. So, we obtain that $\Mod_\infty\g A$ is a special Frobenius category.
 \end{proposition}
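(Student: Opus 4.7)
The plan is to transport the Frobenius structure from $\TGM^0\g{\cal B}_A$ (already constructed in Section 6) to $\Mod_\infty\g A$ via the equivalence $\hueca{G}:\TGM\g A\rightmap{}\TGM\g{\cal B}_A$ of \ref{P: equiv entre TGMod-A y TGMod-B(A)}, which by \ref{D: estructura exacta de TGMod-A}(4) identifies the exact structures and by \ref{D: traslacion para GM-A y TGM-A}(3) commutes with the shifting autofunctor $T$. Since ${\cal B}_A$ is a triangular differential graded $S$-bocs by the bar construction, every result of Section 6 applies to $\TGM^0\g{\cal B}_A$; it remains to match the data on the $A_\infty$-side to the bocs-side under $\hueca{G}$.

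First I would verify that the matrix formula for $J$ on $\TGM\g A$ agrees with the bocs $J$ through $\hueca{G}$. By additivity of $\hueca{G}$, we have $\hueca{G}(M\oplus M[1])=\hueca{G}(M)\oplus\hueca{G}(M[1])$, and by \ref{D: traslacion para GM-A y TGM-A}(3) together with \ref{R: obs sobre sigmaM en GMod-A} the strict morphism $\underline{\sigma}_M^{-1}$ in $\GM\g A$ maps under $\hueca{G}$ to the morphism $L(\sigma_M^{-1})$ that appears in the bocs definition \ref{P: construction of J for TGMod-B}. Combining these, $\hueca{G}J=J\hueca{G}$ as equalities of assignments on objects and morphisms; since $J$ is a well-defined functor on $\TGM\g{\cal B}_A$ and $\hueca{G}$ is an equivalence, this transport forces $J$ to be a well-defined endofunctor on $\TGM\g A$.

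Next, I would define $\alpha_{\underline{M}}$ and $\beta_{\underline{M}}$ on $\Mod_\infty\g A$ as the strict morphisms with first components $(id_M,0)^t$ and $(0,id_{M[1]})$, respectively, and verify that under $\hueca{G}$ they coincide with the natural transformations of \ref{P: pares exactos con termino medio J(M)} for ${\cal B}_A$. This amounts to checking that $\hueca{G}$ sends a strict morphism $\{h_1,0,0,\ldots\}$ to its degree-zero shifted counterpart $L(h_1)$ — immediate from the defining diagrams of $\hueca{G}$ in \ref{P: equiv entre GMod-A y GMod-B(A)}. Naturality of $\alpha,\beta$ on $\Mod_\infty\g A$ then transfers from naturality on the bocs side, and because $\hueca{G}$ is an equivalence of exact categories by \ref{D: estructura exacta de TGMod-A}(4), the ${\cal E}$-conflations $\hueca{G}(\underline{M})\rightmap{}J(\hueca{G}(\underline{M}))\rightmap{}T(\hueca{G}(\underline{M}))$ pull back to ${\cal E}_\infty$-conflations $\underline{M}\rightmap{\alpha_{\underline{M}}}J(\underline{M})\rightmap{\beta_{\underline{M}}}T(\underline{M})$.

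To finish, by \ref{P: J(M,u) es E-proy y es E-iny} every $J(\underline{N})$ is simultaneously ${\cal E}$-projective and ${\cal E}$-injective in $\TGM^0\g{\cal B}_A$. Transporting via the equivalence of exact categories $\hueca{G}$, each $J(\underline{M})$ is both ${\cal E}_\infty$-projective and ${\cal E}_\infty$-injective in $\Mod_\infty\g A$. The conflations above then produce enough projectives and enough injectives, and these two classes coincide, so $(\Mod_\infty\g A,{\cal E}_\infty)$ is a Frobenius category; the tuple $(T,J,\alpha,\beta)$ satisfies \ref{D: Especial Frobenius category}, whence \emph{special} Frobenius. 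The only delicate step is confirming that the strict morphisms $\alpha_{\underline{M}},\beta_{\underline{M}}$ indeed satisfy the Maurer--Cartan compatibility required to lie in $\TGM^0\g A$ — this is not a new computation, however, since that compatibility is precisely the translation under $\hueca{G}$ of the identities already verified in \ref{P: isos naturales de morfismos en TGMod-B en GMod-B} and \ref{P: pares exactos con termino medio J(M)} on the bocs side.
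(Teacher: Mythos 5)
Your proposal is correct and follows essentially the same route as the paper: the paper's proof is precisely the transport of (\ref{P: construction of J for TGMod-B}) and (\ref{P: pares exactos con termino medio J(M)}) to $\Mod_\infty\g A$ via the equivalence $\hueca{G}$, combined with (\ref{D: estructura exacta de TGMod-A}) and (\ref{D: traslacion para GM-A y TGM-A}). You merely spell out the compatibility checks (additivity of $\hueca{G}$, image of strict morphisms, $\hueca{G}J=J\hueca{G}$) that the paper leaves implicit.
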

 
\begin{proof} It follows from (\ref{P: construction of J for TGMod-B}), and 
(\ref{P: pares exactos con termino medio J(M)}), which are translated into $\Mod_\infty\g A$ with the help of the functor $\hueca{G}$. Then, we use (\ref{D: estructura exacta de TGMod-A}) and (\ref{D: traslacion para GM-A y TGM-A}). 
\end{proof}
 
  \begin{lemma}\label{L: T y J de GMod-A con familias}
  Let $A$ be an $A_\infty$-algebra. Then, we have the following explicit descriptions for the shifting functor $T$ and the functor $J$.
  \begin{enumerate}
   \item Given $(M, m^M)\in \TGM\g A$, its  translate $(M,m^M)[1]=(M[1],m^{M[1]})$ is given for each $n\in \hueca{N}$, by 
$$m^{M[1]}_n=
(-1)^n \sigma_ M m_n^M(\sigma_M^{-1}\otimes id^{\otimes(n-1)}).$$  
\item Given a morphism $f:(M, m^ M)\rightmap{}(N, m^N)$ in $\TGM\g A$, its translate $f[1]:(M[1],m^{M[1]})\rightmap{}(N[1],m^{N[1]})$ is given, for each $n\in \hueca{N}$, by 
$$f[1]_n=(-1)^{n-1} \sigma_ N f_n(\sigma_M^{-1}\otimes id^{\otimes (n-1)}).$$ 
\item Given $(M,m^M)\in \TGM\g A$, we have $J(M,m^M)=(M\oplus M[1],m^{J(M)})$, where 
$$m_n^{J(M)}:M\otimes A^{\otimes (n-1)}\oplus M[1]\otimes A^{\otimes(n-1)}\rightmap{}M\oplus M[1]$$
with 
$$m_1^{J(M)}=\begin{pmatrix}
             m_1^M&\sigma_M^{-1}\\
             0&-m_1^M[1]\\
             \end{pmatrix} \hbox{ \  and \  }
        m_n^{J(M)}=\begin{pmatrix}
             m_n^M&0\\
             0&-m_n^M[1]\\
             \end{pmatrix}   \hbox{ for } n\geq 2.
             $$
\item Given a morphism $f:(M, m^ M)\rightmap{}(N, m^N)$ in 
$\TGM\g A$, its translate $J(f):(M\oplus M[1],m^{J(M)})\rightmap{}(N\oplus N[1],m^{J(N)})$ is given, for each $n\in \hueca{N}$, by 
$$J(f)_n:M\otimes A^{\otimes (n-1)}\oplus M[1]\otimes A^{\otimes(n-1)}\rightmap{}M\oplus M[1]$$
with matrix 
$$J(f)_n=\begin{pmatrix}
          f_n&0\\ 0&(-1)^{n-1} \sigma_ N f_n(\sigma_M^{-1}\otimes id^{(n-1)})\\
         \end{pmatrix}.$$
  \end{enumerate}
 \end{lemma}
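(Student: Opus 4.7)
The plan is essentially computational: each formula follows by unfolding the definitions of $T$ and $J$ in (\ref{D: traslacion para GM-A y TGM-A}) and (\ref{P: construction of J for TGMod-A}) using the composition formula of $\GM\g A$ from (\ref{D: GMod-A con A A-infinito-algebra}). The key observation, recorded in (\ref{R: obs sobre sigmaM en GMod-A}), is that the shifting morphism $\underline{\sigma}_M$ is strict, namely $(\underline{\sigma}_M)_1 = \sigma_M$ and $(\underline{\sigma}_M)_n=0$ for $n\geq 2$; the same strictness holds for the canonical injections and projections of a direct sum in $\GM\g A$.

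For item (2), I would compute $f[1] = \underline{\sigma}_N\circ f\circ \underline{\sigma}_M^{-1}$ in two steps. Applying the composition formula to $g\circ \underline{\sigma}_M^{-1}$ collapses to the single summand with $r=1$, $s=n-1$, because $(\underline{\sigma}_M^{-1})_r$ vanishes for $r\geq 2$; the resulting sign is $(-1)^{n-1}$, since $|\underline{\sigma}_M^{-1}|=1$. Dually, $(\underline{\sigma}_N\circ h)_n = \sigma_N h_n$ because strictness of $\underline{\sigma}_N$ forces $s=0$. Composing the two steps yields the formula in item (2). Item (1) is then immediate by applying the same computation to $m^M$ and multiplying by $-1$, in view of the definition $m^{M[1]}:=-m^M[1]$, which converts the sign $-(-1)^{n-1}$ into $(-1)^n$.

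For items (3) and (4), I would invoke the matrix-morphism convention of (\ref{R: matrices of morphisms in GM-B}), transferred to $\GM\g A$ either directly (from the strictness of $\underline{\sigma}_i,\underline{\pi}_j$ and the same one-term collapse of the composition formula) or via the equivalence $\hueca{G}$ of (\ref{P: equiv entre GMod-A y GMod-B(A)}). This convention says that the $n$-th component of a matrix morphism in $\GM\g A$ is the matrix of $n$-th components, taken in $\GM\g S$. Substituting into the matrix formulas
$$m^{J(M)} = \begin{pmatrix} m^M & \underline{\sigma}_M^{-1}\\ 0 & m^{M[1]}\end{pmatrix} \qquad \text{and} \qquad J(f) = \begin{pmatrix} f & 0\\ 0 & f[1]\end{pmatrix}$$
from (\ref{P: construction of J for TGMod-A}), and using $(\underline{\sigma}_M^{-1})_1 = \sigma_M^{-1}$, $(\underline{\sigma}_M^{-1})_n=0$ for $n\geq 2$, together with the componentwise expressions for $m^{M[1]}_n=-m^M[1]_n$ and $f[1]_n$ obtained in the first part, yields items (3) and (4) directly.

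The proof should present no serious obstacle; everything reduces to sign bookkeeping in the composition formula and to the strictness of the shift and biproduct morphisms. The only step that warrants care is the matrix-morphism convention for $\GM\g A$, but this is either inherited from $\GM\g {\cal B}_A$ via $\hueca{G}$ or verified by the same one-term collapse argument used for $\underline{\sigma}_M$.
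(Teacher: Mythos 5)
Your proposal is correct and follows essentially the same route as the paper: both reduce everything to the single-term collapse of the composition formula forced by the strictness of $\underline{\sigma}_M$ and $\underline{\sigma}_N$, yielding $f[1]_n=(-1)^{n-1}\sigma_N f_n(\sigma_M^{-1}\otimes id^{\otimes(n-1)})$, from which (1) follows via $m^{M[1]}=-m^M[1]$ and (3)--(4) by reading off matrix entries componentwise. The only difference is cosmetic: the paper dismisses (3) and (4) as immediate, while you spell out the matrix-morphism convention, which is a harmless (indeed slightly more careful) elaboration.
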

 
 \begin{proof} (1) and (2): It will be enough to show that for any  
 morphism $f:M\rightmap{}N$ in $\GM\g A$, its translate $f[1]:M[1]\rightmap{}N[1]$ in $\GM\g A$ is given, for each $n\in \hueca{N}$, by 
$$f[1]_n=(-1)^{n-1} \sigma_ N f_n(\sigma_M^{-1}\otimes id^{\otimes(n-1)}).$$
 Indeed, from the definition of $f[1]$ and of the composition in $\GM\g A$, we have 
 $$\begin{matrix}
   f[1]_n
   &=&
   (\underline{\sigma}_N\circ f\circ \underline{\sigma}_M^{-1})_n\hfill\\
   &=&
  \sum_{\scriptsize\begin{matrix}r+s=n\\ r\geq 1;
 s\geq 0\end{matrix}}(-1)^{(\vert f\circ \underline{\sigma}_M^{-1}\vert +r+1)s}
 (\underline{\sigma}_N)_{1+s}( (f\circ\underline{\sigma}^{-1}_M)_r\otimes id^{\otimes s}) \hfill\\
 &=&
 \sigma_N(f\circ\underline{\sigma}^{-1}_M)_n\hfill\\
 &=&
 \sigma_N
 (\sum_{\scriptsize\begin{matrix}r+s=n\\ r\geq 1;
 s\geq 0\end{matrix}}(-1)^{(\vert \underline{\sigma}_M^{-1}\vert +r+1)s}
 f_{1+s}( (\underline{\sigma}_M^{-1})_r\otimes id^{\otimes s})\hfill\\
 &=&
 (-1)^{n-1}\sigma_Nf_n(\sigma_M^{-1}\otimes id^{\otimes (n-1)}).\hfill\\
   \end{matrix}$$

 Items (3) and (4) clearly follow from (1) and (2). 
 \end{proof}

 \begin{remark}\label{D: homotopia para GMod0-A es la original}
  Let $A$ be an $A_\infty$-algebra and consider a pair of morphisms  $f,g\in \Hom_{\Mod_\infty\g A}((M,m^M),(N,m^N))$. Then,  there is a homotopy $h=\{h_n\}_{n\in \hueca{N}}$ from $f$ to $g$, as in definition (\ref{D: homotopy for Mod-inf-A}), 
  iff there is a morphism $h\in \Hom^{-1}_{\GM\g A}(M,N)$ satisfying 
  $$f-g=\delta_\infty(h)+m^N\circ h+h\circ m^M.$$ 
  Indeed, the preceding formula translates into the usual formulas given in   (\ref{D: homotopy for Mod-inf-A}):   
  For $n\geq 1$, we have $\delta_\infty(h)_n=H_n^{(3)}$, $(m^N\circ h)_n=H^{(1)}_n$, and $(h\circ m^M)_n=H_n^{(2)}$. 
 \end{remark}
 
 \begin{proposition}\label{P: homotopias en TGM-A}
  Let $A$ be an $A_\infty$-algebra. Then, the functor 
  $$\hueca{G}:\Mod_\infty\g A\rightmap{}\TGM^0\g {\cal B}_A$$ preserves and reflects homotopies. Therefore, it induces an equivalence of these categories modulo homotopy
  $$\hueca{G}:\underline{\Mod}_\infty\g A\rightmap{}\underline{\TGM}^0\g {\cal B}_A$$
 \end{proposition}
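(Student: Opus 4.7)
The proof is essentially a combination of the equivalence results already established for $\hueca{G}$ together with the compatibility of the two notions of homotopy. My plan is as follows.

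First I would invoke (\ref{P: equiv entre GMod-A y GMod-B(A)}) and (\ref{P: equiv entre TGMod-A y TGMod-B(A)}), which tell us that $\hueca{G}:\GM\g A\rightmap{}\GM\g{\cal B}_A$ is an equivalence of differential graded $k$-categories, and (\ref{P: translation to A-infinte modules})(2), which identifies $\Mod_\infty\g A$ with $\TGM^0\g A$ so that $\hueca{G}$ restricts to an equivalence $\hueca{G}:\Mod_\infty\g A\rightmap{}\TGM^0\g{\cal B}_A$. In particular, for any $M,N\in \GM\g A$ and any $d\in \hueca{Z}$, $\hueca{G}$ induces a linear isomorphism $\Hom^d_{\GM\g A}(M,N)\rightmap{}\Hom^d_{\GM\g {\cal B}_A}(M[1],N[1])$ which converts $\delta_\infty$ into $\widehat{\delta}$ and $\circ$ into $*$.

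Next, I would use remark (\ref{D: homotopia para GMod0-A es la original}) to rewrite the homotopy relation on $\Mod_\infty\g A$ in the compact form: $f\sim g$ iff there is $h\in \Hom^{-1}_{\GM\g A}(M,N)$ with
$$f-g=\delta_\infty(h)+m^N\circ h+h\circ m^M,$$
and the definition (\ref{D: homotopy for morphisms of twisted cal B-modules}) for the analogous characterization in $\TGM^0\g{\cal B}_A$: $f'\sim g'$ iff there is $h'\in \Hom^{-1}_{\GM\g{\cal B}_A}(M[1],N[1])$ with
$$f'-g'=\widehat{\delta}(h')+\hueca{G}(m^N)*h'+h'*\hueca{G}(m^M).$$

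The main verification is then a direct translation. If $h$ is a homotopy witnessing $f\sim g$ in $\Mod_\infty\g A$, applying $\hueca{G}$ and using that it commutes with differentials and compositions immediately yields
$$\hueca{G}(f)-\hueca{G}(g)=\widehat{\delta}(\hueca{G}(h))+\hueca{G}(m^N)*\hueca{G}(h)+\hueca{G}(h)*\hueca{G}(m^M),$$
so $\hueca{G}(h)$ is a homotopy from $\hueca{G}(f)$ to $\hueca{G}(g)$. Conversely, if $h'$ is a homotopy from $\hueca{G}(f)$ to $\hueca{G}(g)$, use that $\hueca{G}:\Hom^{-1}_{\GM\g A}(M,N)\rightmap{}\Hom^{-1}_{\GM\g{\cal B}_A}(M[1],N[1])$ is bijective to find $h$ with $\hueca{G}(h)=h'$; then applying the inverse bijection to the homotopy equation above (or equivalently, using faithfulness of $\hueca{G}$) recovers $f-g=\delta_\infty(h)+m^N\circ h+h\circ m^M$, so $h$ is a homotopy from $f$ to $g$.

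Since $\hueca{G}:\Mod_\infty\g A\rightmap{}\TGM^0\g{\cal B}_A$ is an equivalence of categories and both preserves and reflects homotopies (i.e.\ identifies the homotopy congruences on both sides), it passes to the quotients to give an equivalence $\hueca{G}:\underline{\Mod}_\infty\g A\rightmap{}\underline{\TGM}^0\g{\cal B}_A$, as claimed. There is no real obstacle here; everything is formal once the dictionary between the two sets of formulas has been set up, which was already done in (\ref{P: equiv entre GMod-A y GMod-B(A)}) and (\ref{D: homotopia para GMod0-A es la original}).
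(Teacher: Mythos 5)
Your argument is correct and is exactly the intended one: the paper states this proposition without a written proof, relying on precisely the ingredients you cite, namely the dg-equivalence $\hueca{G}$ of (\ref{P: equiv entre GMod-A y GMod-B(A)}) together with the compact homotopy formulas of (\ref{D: homotopia para GMod0-A es la original}) and (\ref{D: homotopy for morphisms of twisted cal B-modules}). Since $\hueca{G}$ is linear and bijective on graded hom-spaces, intertwines $\delta_\infty$ with $\widehat{\delta}$ and $\circ$ with $*$, your translation of the homotopy equations in both directions, and the passage to the quotient categories, are all valid.
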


 \begin{definition}\label{D: estructura triangular de TGMod-A}
  Let $A$ be an $A_\infty$-algebra. Then, 
  a sequence of morphisms 
  $$(M,m^M)\rightmap{\underline{f}}(E,m^E)\rightmap{\underline{g}}(N,m^N)\rightmap{\underline{h}}T(M,m^M)$$ 
  in the homotopy category $\underline{\Mod}_\infty\g A$ 
 such that 
 $$\xi: (M,m^M)\rightmap{f}(E,m^E)\rightmap{g}(N,m^N)$$
 is an exact pair in ${\cal E}_\infty$ and we have a commutative diagram of the form 
 $$\begin{matrix}
   \xi&:& (M,m^M)&\rightmap{f}&(E,m^E)&\rightmap{g}&(N,m^N)\\
    &&\parallel&&\lmapdown{}&&\lmapdown{h}\\
     \xi_M&:&(M,m^M)&\rightmap{\alpha_M}&J(M,m^M)&
     \rightmap{\beta_M}&T(M,m^M)\\
   \end{matrix}$$
   in $\Mod_\infty\g A$ is called a 
   \emph{canonical triangle} in  $\underline{\Mod}_\infty\g A$.
    Now, consider the class ${\cal T}_\infty$ of  sequences  of morphisms 
   $$X\rightmap{u}Y\rightmap{v}Z\rightmap{w}TX$$ 
   in $\underline{\Mod}_\infty\g A$ which are isomorphic to   canonical triangles. That is, 
   there is a canonical triangle 
  $M\rightmap{\underline{f}}E\rightmap{\underline{g}}N\rightmap{\underline{h}}TM$
   and isomorphisms $a,b,c$ such that the following diagram commutes 
    $$\begin{matrix}X&\rightmap{u}&Y&\rightmap{v}&Z&\rightmap{w}&TX\\
    \lmapdown{a}&&\lmapdown{b}&&\lmapdown{c}&&\lmapdown{T(a)}\\
    M&\rightmap{\underline{f}}&E&\rightmap{\underline{g}}&N&
    \rightmap{\underline{h}}&TM.\\
    \end{matrix}
    $$
    The elements of ${\cal T}_\infty$ are called the 
    \emph{triangles}\index{triangles of $A_\infty$-modules} of $\underline{\Mod}_\infty\g A$.
 \end{definition}
 
 As in (\ref{T: TGMod-A estable es triangulada}), the next statement follows from the general result for special Frobenius categories. 

 \begin{theorem}\label{T: TGMod-A estable es triangulada equiv a la DIGC-C} 
 Let $A$ be an $A_\infty$-algebra  and consider its associated differential tensor $S$-bocs 
 ${\cal B}_A$.
 The stable category $\underline{\Mod}_\infty\g A$ with the automorphism $T$ and the class of triangles ${\cal T}_\infty$ 
 defined above is a triangulated category. The functor $\hueca{G}:\Mod_\infty\g A\rightmap{}\TGM^0\g {\cal B}_A$ induces an equivalence of triangulated categories 
 $$\underline{\hueca{G}}:\underline{\Mod}_\infty\g A\rightmap{}\underline{\TGM}^0\g {\cal B}_A.$$
 \end{theorem}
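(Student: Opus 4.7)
The plan is to transport the triangulated structure along the equivalence $\hueca{G}$, since essentially everything required has been assembled in the preceding sections. First I would invoke Theorem \ref{T: TGMod-A estable es triangulada} applied to the triangular differential graded $S$-bocs ${\cal B}_A$ produced by the bar construction: this already gives that $(\TGM^0\g{\cal B}_A,{\cal E})$ is a special Frobenius category and that $\underline{\TGM}^0\g{\cal B}_A$ is triangulated with shift $T$ and class of triangles ${\cal T}$ defined in (\ref{D: estructura triangular de TGMod-B}).

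Next I would combine the statements already proved for $\hueca{G}$:  by (\ref{D: estructura exacta de TGMod-A}) the functor $\hueca{G}:(\Mod_\infty\g A,{\cal E}_\infty)\rightmap{}(\TGM^0\g{\cal B}_A,{\cal E})$ is an equivalence of exact categories; by (\ref{D: traslacion para GM-A y TGM-A})(3) and (\ref{P: construction of J for TGMod-A}) the functors $T$ and $J$ on both sides are intertwined by $\hueca{G}$, and  the natural transformations $\alpha$ and $\beta$ on $\Mod_\infty\g A$ are sent by $\hueca{G}$ to the corresponding ones on $\TGM^0\g{\cal B}_A$; finally by (\ref{P: homotopias en TGM-A}) the functor $\hueca{G}$ preserves and reflects homotopy, hence descends to an equivalence $\underline{\hueca{G}}:\underline{\Mod}_\infty\g A\rightmap{}\underline{\TGM}^0\g{\cal B}_A$.

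With these ingredients in hand, I would check that under $\underline{\hueca{G}}$ the canonical triangle $\xi_{\underline{M}}:(M,m^M)\rightmap{\alpha_{\underline{M}}}J(M,m^M)\rightmap{\beta_{\underline{M}}}T(M,m^M)$ in $\underline{\Mod}_\infty\g A$ is sent to the canonical triangle $\xi_{\hueca{G}(\underline{M})}$ in $\underline{\TGM}^0\g{\cal B}_A$; and more generally, since the exact pairs of ${\cal E}_\infty$ correspond bijectively to those of ${\cal E}$, the class ${\cal T}_\infty$ defined in (\ref{D: estructura triangular de TGMod-A}) is exactly the preimage of ${\cal T}$ under $\underline{\hueca{G}}$. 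Thus $\underline{\hueca{G}}$ carries the class of triangles of the target onto the class of triangles of the source bijectively and intertwines the shift autofunctors.

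The conclusion is then purely formal: transport of structure along an equivalence of additive categories preserves the axioms of a triangulated category, so the triangulated structure on $\underline{\TGM}^0\g{\cal B}_A$ furnished by Theorem \ref{T: TGMod-A estable es triangulada} pulls back to a triangulated structure on $\underline{\Mod}_\infty\g A$ with shift $T$ and triangles ${\cal T}_\infty$, and $\underline{\hueca{G}}$ becomes an equivalence of triangulated categories. There is no real obstacle; the only step requiring genuine care is the identification $\hueca{G}(\xi_{\underline{M}})=\xi_{\hueca{G}(\underline{M})}$, which however reduces to the already recorded formulas $\hueca{G}(\alpha_{\underline{M}})=\alpha_{\hueca{G}(\underline{M})}$ and $\hueca{G}(\beta_{\underline{M}})=\beta_{\hueca{G}(\underline{M})}$ coming from the explicit descriptions of $\alpha$ and $\beta$ as the strict morphisms $(id_M,0)^t$ and $(0,id_{M[1]})$ on both sides.
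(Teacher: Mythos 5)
Your proposal is correct, and it assembles exactly the ingredients the paper has prepared, but it takes a slightly different route to the first half of the statement. The paper does not transport the triangulated structure along $\underline{\hueca{G}}$: having already shown in (\ref{D: estructura exacta de TGMod-A}) and (\ref{P: construction of J for TGMod-A}) that $(\Mod_\infty\g A,{\cal E}_\infty)$ is itself a special Frobenius category (with the shift $T$ of (\ref{D: traslacion para GM-A y TGM-A}), the functor $J$, and the conflations $\underline{M}\rightmap{\alpha_{\underline{M}}}J(\underline{M})\rightmap{\beta_{\underline{M}}}T(\underline{M})$), it simply invokes the same general theorem on special Frobenius categories (\cite{H}, \cite{BS}) that was used for (\ref{T: TGMod-A estable es triangulada}), so the triangulation of $\underline{\Mod}_\infty\g A$ is obtained intrinsically rather than by pullback; the equivalence $\underline{\hueca{G}}$ then comes from (\ref{P: homotopias en TGM-A}) together with the compatibilities $\hueca{G}T=T\hueca{G}$, $\hueca{G}J=J\hueca{G}$ and the matching of $\alpha,\beta$. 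Your transport-of-structure argument is a legitimate alternative: it spares you from re-running the abstract Frobenius machinery on the $A_\infty$ side, at the price of having to check carefully that ${\cal T}_\infty$ is precisely the preimage of ${\cal T}$ under $\underline{\hueca{G}}$ — which, as you note, reduces to $\underline{\hueca{G}}(\xi_{\underline{M}})=\xi_{\hueca{G}(\underline{M})}$ via the strict descriptions of $\alpha$ and $\beta$, plus the fact that $\hueca{G}$ is essentially surjective (it is $M\mapsto M[1]$ on objects), so every canonical triangle in $\underline{\TGM}^0\g{\cal B}_A$ is isomorphic to the image of one from $\underline{\Mod}_\infty\g A$. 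Note also that this triangle-class identification is not extra work peculiar to your route: the paper needs the same check to justify that $\underline{\hueca{G}}$ is an equivalence of \emph{triangulated} categories, so in the end the two arguments differ only in which side the general theorem is applied to.
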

 
 From the preceding theorem and (\ref{T: f0 quasi iso <-> f equiv homotopica}), we immediately obtain the following important statement, see \cite{K1}(5.2). 
 
 \begin{theorem}\label{T: quasi-iso de A-inf-modules es homotopy equivalence} 
  Assume that $S$ is a finite product of copies of the field $k$ and let $A$ be an $A_\infty$-algebra.
 Then, every quasi-isomorphism of $A_\infty$-modules is a homotopy equivalence, hence an invertible morphism in $\underline{\Mod}_\infty\g A$.  
 \end{theorem}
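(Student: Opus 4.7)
The plan is to reduce the statement to Theorem~\ref{T: f0 quasi iso <-> f equiv homotopica} by transporting the situation along the equivalence $\hueca{G}:\Mod_\infty\g A \rightmap{} \TGM^0\g{\cal B}_A$. Let $f:M\rightmap{}N$ be a quasi-isomorphism of right $A_\infty$-modules over $A$; by definition this means that $f_1:(M,m_1^M)\rightmap{}(N,m_1^N)$ is a quasi-isomorphism of complexes of right $S$-modules. We want to conclude that $f$ has an inverse modulo homotopy in $\Mod_\infty\g A$.

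First I would check that the equivalence $\hueca{G}:\Mod_\infty\g A\rightmap{} \TGM^0\g{\cal B}_A$ sends quasi-isomorphisms of $A_\infty$-modules to morphisms in $\TGModi^0\g{\cal B}_A$ whose first component (in the sense of (\ref{L: equiv directa entre GMod-B y G hueca(M)od-B})) is a quasi-isomorphism of complexes of right $S$-modules. Using the defining square in (\ref{P: equiv entre GMod-A y GMod-B(A)}), one has $\hat f_0\,\zeta\sigma_M=\sigma_N f_1$, so evaluating the morphism $\hat f:M[1]\otimes_S T_S(A[1])\rightmap{}N[1]$ at $\sigma_M(m)\otimes 1$ gives $\sigma_N f_1(m)$. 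Consequently the first component of $F(\hueca{G}(f))$ equals $\sigma_N f_1\sigma_M^{-1}$, which is a quasi-isomorphism iff $f_1$ is. Moreover, the differentials on the underlying complexes of $\hueca{G}(M,m^M)$ and $\hueca{G}(N,m^N)$ are obtained from $m_1^M$ and $m_1^N$ by the shift, so the quasi-isomorphism property transfers cleanly.

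Next, since ${\cal B}_A$ is a triangular differential graded $S$-bocs, I can invoke Theorem~\ref{T: f0 quasi iso <-> f equiv homotopica} (which requires $S$ to be a finite product of copies of $k$, exactly the hypothesis we have) to conclude that $\hueca{G}(f)$ is a homotopy equivalence in $\TGModi^0\g{\cal B}_A$, that is, an isomorphism in $\underline{\TGM}^0\g{\cal B}_A$. By (\ref{P: homotopias en TGM-A}), the equivalence $\hueca{G}$ descends to an equivalence $\underline{\hueca{G}}:\underline{\Mod}_\infty\g A\rightmap{}\underline{\TGM}^0\g {\cal B}_A$ and preserves and reflects homotopy; hence $f$ itself is an isomorphism in $\underline{\Mod}_\infty\g A$, i.e.\ a homotopy equivalence in $\Mod_\infty\g A$, which is the statement we want.

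The only point that requires any care is the verification that $\hueca{G}$ carries quasi-isomorphisms to morphisms whose first component (in the sense used in Theorem~\ref{T: f0 quasi iso <-> f equiv homotopica}) is a quasi-isomorphism of complexes, since the definitions of ``first component'' in the two categories are formally different. Once this bookkeeping is done, the rest is a direct application of the previously established results, and no further induction or homotopy construction is needed.
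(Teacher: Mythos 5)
Your proposal is correct and follows essentially the same route as the paper: the paper also obtains this theorem immediately from the equivalence $\underline{\hueca{G}}:\underline{\Mod}_\infty\g A\rightmap{}\underline{\TGM}^0\g {\cal B}_A$ together with Theorem (\ref{T: f0 quasi iso <-> f equiv homotopica}), the only content beyond citation being precisely the bookkeeping you carry out, namely that the first component of $\hueca{G}(f)$ is $\sigma_N f_1\sigma_M^{-1}$ and the underlying complexes of $\hueca{G}(M,m^M)$, $\hueca{G}(N,m^N)$ are shifts of $(M,m_1^M)$, $(N,m_1^N)$, so quasi-isomorphism transfers. No gap.
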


  \section{Restriction functors for $A_\infty$-modules}

 In this section we show how the result (\ref{C: equiv homotopica de bocses triangulares induce equiv de cats homotópicas}) on restriction functors for triangular differential graded $S$-bocses 
 translates into the corresponding result for restriction functors for $A_\infty$-algebras. We start by recalling the following link between homotopies in $\Alg_\infty$ and $\DGCoalg$, as defined in (\ref{D: homotopy for Alg-inf}) and (\ref{D: para morfismos de bocses homotopia}). 
 
 \begin{lemma}\label{L: Psi:Alg-infty--> DGCoalg preserva homotopias}
  The functor $\Psi:\Alg_\infty\rightmap{}\DGCoalg$ preserves and reflects null-homotopic morphisms.
 \end{lemma}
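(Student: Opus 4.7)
The plan is to establish a bijection between homotopies $\{h_n\}_{n\in\hueca{N}}$ from $f$ to $g$ in $\Alg_\infty$ and $(\Psi(f),\Psi(g))$-coderivations $\overline{h}$ in $\DGCoalg$, and then to specialize to $g=0$. The case $\psi = \Psi(0) = 0$ is legitimate on the reduced side since the counit is not present, so the coalgebra condition $\overline{\mu}_B\overline{\psi}=(\overline{\psi}\otimes\overline{\psi})\overline{\mu}_A$ is trivially satisfied by $\overline{\psi}=0$.

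First, I would recall (or re-derive) the universal property of the reduced tensor coalgebra: for coalgebras of the form $\overline{T}_S(V)$, any $S$-$S$-bimodule map $\widehat{\phi}:\overline{T}_S(V)\rightmap{}W$ lifts to a unique coalgebra morphism $\phi:\overline{T}_S(V)\rightmap{}\overline{T}_S(W)$ with $\pi_W\phi=\widehat{\phi}$, and this lift is given on $V^{\otimes n}$ by the sum over concatenations $\sum\widehat{\phi}_{i_1}\otimes\cdots\otimes\widehat{\phi}_{i_r}$ with $i_1+\cdots+i_r=n$. Applied to $V=A[1]$, $W=B[1]$, this recovers the definition of $\Psi(f)$ on morphisms and fixes the sign conventions which match the formula defining composition in (\ref{A-infinite algebra}) via the shift map $\sigma$.

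Second, I would prove the analogous universal property for coderivations: given coalgebra morphisms $\phi,\psi:\overline{T}_S(A[1])\rightmap{}\overline{T}_S(B[1])$, the restriction
$$\Coder^{-1}_{\phi\g\psi}(\overline{T}_S(A[1]),\overline{T}_S(B[1]))\rightmap{\pi_{B[1]}\circ(-)}\prod_{n\geq 1}\Hom^{-1}_{\GM\g S\g S}(A[1]^{\otimes n},B[1])$$
is a bijection, with inverse sending $\{\widehat{h}_n\}$ to the coderivation
$$\overline{h}|_{A[1]^{\otimes n}}=\sum_{r,t\geq 0,\,s\geq 1}\pm\,\widehat{\phi}_{i_1}\otimes\cdots\otimes\widehat{\phi}_{i_r}\otimes\widehat{h}_s\otimes\widehat{\psi}_{j_1}\otimes\cdots\otimes\widehat{\psi}_{j_t},$$
where the sum is over $i_1+\cdots+i_r+s+j_1+\cdots+j_t=n$ and the signs are exactly the Koszul signs obtained when pulling the odd-degree map $\widehat{h}_s$ past the tensor factors. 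This is the standard fact that coderivations on the cofree (reduced) coalgebra are cofreely determined by their projection to cogenerators, adapted to the $(\phi,\psi)$-bicomodule setting.

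Third, I would verify that, under the bijection of Step 2, the differential condition $\overline{\phi}-\overline{\psi}=\overline{\delta}_B\overline{h}+\overline{h}\,\overline{\delta}_A$ projected to $B[1]$ becomes precisely the $A_\infty$-homotopy identity $f_n-g_n=H(h)_n+H_{f,g}(h)_n$ of (\ref{D: homotopy for Alg-inf}) for every $n$. The term $\pi_{B[1]}\overline{h}\,\overline{\delta}_A$ produces contributions where a single $\widehat{m}^A_s$ acts on $r+s+t=n$ inputs and the result is fed to $\widehat{h}_{r+1+t}$, yielding $H(h)_n$; the term $\pi_{B[1]}\overline{\delta}_B\overline{h}$ produces the $\widehat{m}^B_{r+1+t}$ applied to the tensor $\widehat{\phi}\otimes\cdots\otimes\widehat{h}_s\otimes\widehat{\psi}\otimes\cdots$, yielding $H_{f,g}(h)_n$. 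The sign match works as in (\ref{L: S ene y S' ene}).

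Specializing to $g=0$ and $\psi=0$, the bijection restricts to one between null-homotopies in the two senses, which proves the lemma. The main obstacle is the meticulous sign bookkeeping in Steps 2 and 3: one must show that the Koszul signs produced by transporting a degree $-1$ map $\widehat{h}_s$ across a tensor word of shifted $\phi$'s and $\psi$'s reproduce exactly the exponent $\sgn(i_1,\ldots,i_r,s,j_1,\ldots,j_t)$ in (\ref{D: homotopy for Alg-inf}), and that the bar differential's internal signs $(-1)^{r+st}$ reappear on the $A_\infty$-side. This is delicate but purely combinatorial, and no new conceptual input is required beyond the shifts by $\sigma$ already used throughout the paper.
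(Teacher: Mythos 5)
Your plan is correct and follows essentially the same route as the paper: the paper's own argument (its Lemma on $\Delta_{f,g}$) establishes precisely your bijection between families $\{h_n\}$ and $(\Psi(f),\Psi(g))$-coderivations via the cofree property of $\overline{T}_S(A[1])$, and then checks by the same $\sigma$-shift sign bookkeeping that $p_B\overline{\delta}_B\overline{\Delta}(\widehat{h})$ and $p_B\overline{\Delta}(\widehat{h})\overline{\delta}_A$ reproduce $H_{f,g}(h)_n$ and $H(h)_n$, respectively. The only cosmetic difference is that the paper proves the statement for an arbitrary pair $f,g$ and deduces the null-homotopic case, whereas you specialize to $g=0$, $\psi=0$ at the end, which is equally valid on the reduced (non-counital) side.
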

 
 For the sake of completeness, we notice that this follows, for instance, from the following statement.

 \begin{lemma}\label{D: conjunto de premorfismos}
  Given morphisms of $A_\infty$-algebras $f,g:A\rightmap{}B$ and $d\in \hueca{Z}$, we consider the set ${\cal H}^d_\infty(A,B)$ formed by the families 
  $h=\{h_n\}_{n\in \hueca{N}}$, such that, for each $n\in \hueca{N}$, the map 
  $h_n:A^{\otimes n}\rightmap{}B$ is a homogeneous morphism of $S$-$S$-bimodules of degree $\vert h_n\vert=d+1-n$. Make $\phi:=\Psi(f)$ and $\psi=\Psi(g)$. 
  Then, 
  \begin{enumerate}
   \item We have a linear isomorphism  
  $$\Delta=\Delta_{f,g}:
  {\cal H}^d_\infty(A,B)\rightmap{}
  \Coder^d_{\phi,\psi}({\cal B}_A,{\cal B}_B)$$ 
  constructed as follows. Given $h\in {\cal H}_\infty^{d}(A,B)$, we consider the morphism of $S$-$S$-bimodules $\widehat{h}:\overline{T}_S(A[1])\rightmap{}B[1]$ of degree $\vert \widehat{h}\vert=d$ determined by the commutativity of the squares 
  $$\begin{matrix} 
     A^{\otimes n}&\rightmap{\sigma_A^{\otimes n}}&A[1]^{\otimes n}\\
     \lmapdown{h_n}&&\rmapdown{\widehat{h}_n}\\
     B&\rightmap{\sigma_B}&B[1].\\
    \end{matrix}$$
Then,  consider the $\overline{\Psi}(f)\g \overline{\Psi}(g)$-coderivation $\overline{\Delta}(\widehat{h}):\overline{T}_S(A[1])\rightmap{}\overline{T}_S(B[1])$ given by $\widehat{h}$ and the universal property of the reduced tensor $S$-coalgebra $\overline{T}_S(A[1])$. Finally,  extend $\overline{\Delta}(\widehat{h})$ to a $\Psi(f)\g \Psi(g)$-coderivation of degree $d$ 
$$\Delta(h):T_S(A[1])\rightmap{}T_S(B[1]).$$ 
\item If $h\in {\cal H}^{-1}_\infty(A,B)$, with the notation of (\ref{D: homotopy for Alg-inf}), we have  
$$h^{\odot}:= H(h)-H_{f,g}(h)\in {\cal H}_\infty^0(A,B).$$
\item We have  $\Psi(f)-\Psi(g)\in\Coder^0_{\phi,\psi}({\cal B}_A,{\cal B}_B)$.
\item For any $\phi\g \psi$-coderivation $\xi\in \Coder^{-1}_{\phi,\psi}({\cal B}_A,{\cal B}_B)$, we have $$\xi^{\odot}:=\delta_B\xi+\xi\delta_A\in \Coder^{0}_{\phi,\psi}({\cal B}_A,{\cal B}_B).$$
\item For any $h\in {\cal H}^{-1}_\infty(A,B)$, we have $\Delta(h^{\odot})=\Delta(h)^{\odot}$.
\item We have $\Delta(f-g)=\phi-\psi$.
\item The map $\Delta$ induces a bijection between the homotopy sets 
${\cal H}(f,g)$ and ${\cal H}(\phi,\psi)$. 
\end{enumerate}
As a consequence, the morphisms $f$ and $g$ are homotopic in $\Alg_\infty$  if and only if the morphisms $\phi$ and $\psi$ are homotopic in $\DGCoalg$. 
 \end{lemma}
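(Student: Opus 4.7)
The plan is to construct $\Delta$ via the universal property of the reduced tensor $S$-coalgebra, handle the auxiliary items (2), (3), (4), (6) by direct computation, establish the key compatibility (5), and then deduce (7) formally. My construction of $\Delta$ exploits the standard fact that, once the coalgebra morphisms $\phi=\Psi(f)$ and $\psi=\Psi(g)$ and their reduced restrictions $\overline\phi,\overline\psi$ are fixed, the space of reduced $\overline\phi$-$\overline\psi$-coderivations $\overline T_S(A[1])\to\overline T_S(B[1])$ of degree $d$ is in natural bijection with $\Hom^d_{\GM\g S\g S}(\overline T_S(A[1]),B[1])$ via postcomposition with the cogenerator projection $\pi_{B[1]}$. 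Given $h=\{h_n\}\in{\cal H}^d_\infty(A,B)$, the suspensions $\sigma_A,\sigma_B$ convert it into homogeneous $S$-$S$-bimodule maps $\widehat h_n:A[1]^{\otimes n}\to B[1]$ of degree $d$ that assemble into $\widehat h$; the universal property then produces the reduced coderivation, which I extend by zero on the unit factor of $T_S(A[1])$ to obtain $\Delta(h)$. Linearity and bijectivity of $\Delta$ are immediate. Item (6) follows because $\pi_{B[1]}(\Psi(f)-\Psi(g))$ restricted to $A[1]^{\otimes n}$ is $\sigma_B(f_n-g_n)\sigma_A^{-\otimes n}$, so by uniqueness $\Delta(f-g)=\Psi(f)-\Psi(g)$. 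Items (3) and (4) are routine: (3) uses $\phi\otimes\phi-\psi\otimes\psi=\phi\otimes(\phi-\psi)+(\phi-\psi)\otimes\psi$, while (4) expands $\mu_B(\delta_B\xi+\xi\delta_A)$ via the coderivation properties of $\delta_A,\delta_B,\xi$, with the Koszul signs from $|\xi|=-1$ and $|\delta|=1$ producing $(\phi\otimes\xi^\odot+\xi^\odot\otimes\psi)\mu_A$. Item (2) is a degree count: $|h_n|=-n$ and $|m_s|=2-s$ together force both $H(h)_n$ and $H_{f,g}(h)_n$ into degree $1-n$.

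The heart of the argument is item (5). Both $\Delta(h^\odot)$ and $\Delta(h)^\odot=\delta_B\Delta(h)+\Delta(h)\delta_A$ are $\phi$-$\psi$-coderivations of degree $0$, so by the universal property it suffices to check that their projections $\pi_{B[1]}\circ(-)$ agree on each $A[1]^{\otimes n}$. The left-hand side is the suspension of $H(h)_n-H_{f,g}(h)_n$ by construction. For the right-hand side I would split into two pieces. The term $\pi_{B[1]}\Delta(h)\delta_A$ exploits that $\delta_A$ is the coderivation on $T_S(A[1])$ whose corestriction encodes the family $\{\hat m_s^A\}$: one step of the coderivation inserts all blocks $id^{\otimes r}\otimes\hat m_s^A\otimes id^{\otimes t}$, which are then detected by $\widehat h_{r+1+t}$ via the coderivation formula for $\Delta(h)$, producing the suspended $H(h)$-terms. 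The term $\pi_{B[1]}\delta_B\Delta(h)$ exploits the dual fact that $\pi_{B[1]}\circ\delta_B$ equals $\hat m_k^B$ on $B[1]^{\otimes k}$; applying it to terms of $\Delta(h)$ landing in $B[1]^{\otimes k}$ via $\phi_{i_1}\otimes\cdots\otimes\widehat h_s\otimes\cdots\otimes\psi_{j_t}$ (after the iterated coproduct on $A[1]^{\otimes n}$) produces the suspended $H_{f,g}(h)$-terms. Collecting the two contributions and tracking the global sign $(-1)^{|h|}$ yields the claimed equality.

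Item (7) is then formal: a homotopy from $f$ to $g$ in $\Alg_\infty$ is by definition an $h\in{\cal H}^{-1}_\infty(A,B)$ expressing $f-g$ through the formula in (\ref{D: homotopy for Alg-inf}), and a homotopy from $\phi$ to $\psi$ in $\DGCoalg$ is a $\xi\in\Coder^{-1}_{\phi,\psi}$ with $\phi-\psi=\xi^\odot$. Combining (5) and (6) with the bijection $\Delta$, the two conditions are equivalent under $\xi=\Delta(h)$, so $\Delta$ restricts to a bijection ${\cal H}(f,g)\cong{\cal H}(\phi,\psi)$ and yields the stated consequence. I expect the main obstacle to lie in the careful Koszul-sign bookkeeping of (5): the $(-1)^{r+st}$ factors in $H$ and $H_{f,g}$, together with the exponent $\sgn(i_1,\ldots,i_r,s,j_1,\ldots,j_t)$, must be matched exactly by the signs produced by unfolding the iterated $\overline\mu$ composed with $\delta_A$, $\delta_B$, and the suspensions $\sigma^{\pm 1}$; each individual check is mechanical, but the complete verification is delicate.
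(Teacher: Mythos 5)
Your proposal follows the paper's proof essentially step for step: $\Delta$ is built from the bijection with $\Hom^d_{\GM\g S\g S}(\overline{T}_S(A[1]),B[1])$ via the cogenerator projection and the universal property, items (2)--(4) and (6) are handled by the same direct/uniqueness arguments, item (5) is proved by comparing corestrictions of the two degree-zero $\phi\g\psi$-coderivations and splitting $\Delta(h)^{\odot}$ into the $\Delta(h)\delta_A$-part (yielding the $H(h)$-terms) and the $\delta_B\Delta(h)$-part (yielding the $H_{f,g}(h)$-terms), and (7) is the same formal consequence of (5) and (6). The only piece you defer, the Koszul-sign bookkeeping in (5), is precisely what the paper verifies explicitly (via the intermediate sign $\sgn'=\sum_u i_u$ matching the exponent $\sgn(i_1,\ldots,i_r,s,j_1,\ldots,j_t)$ of Definition (\ref{D: homotopy for Alg-inf})), and it works out as you anticipate.
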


 \begin{proof} The map ${\cal H}^d_\infty(A,B)\rightmap{}\Hom^d_{\GM\g S\g S}(\overline{T}_S(A[1]),B[1])$ is clearly an isomorphism, and so is the map 
  $$\overline{\Delta}:\Hom^d_{\GM\g S\g S}(\overline{T}_S(A[1]),B[1])\rightmap{}
  \Coder^d_{\overline{\Psi}(f),\overline{\Psi}(g)}(\overline{T}_S(A[1]),\overline{T}_S(B[1]))$$
  given by the universal property of the reduced tensor $S$-coalgebra. Recall that the inverse of $\overline{\Delta}$ maps each coderivation $\xi$ onto $p_B\xi$, where $p_B:\overline{T}_S(B[1])\rightmap{}B[1]$ is the projection. 
  Finally, the extension $\overline{\Delta}(\widehat{h})\mapsto \Delta(h)$ is also bijective. 
  
  (2)--(4) are easy to show. In order to show (5), take $h\in {\cal H}^{-1}_\infty(A,B)$. Make $\overline{\Delta}(\widehat{h})^\odot:=\overline{\delta}_B\overline{\Delta}(\hat{h})+\overline{\Delta}(\hat{h})\overline{\delta}_A$. Notice that it will be enough to show the equality  
  $\overline{\Delta}(\widehat{h^\odot})=\overline{\Delta}(\widehat{h})^{\odot}$. Since both terms are $\phi\g \psi$-coderivations, it will be enough to show that 
  $p_B\overline{\Delta}(\widehat{h^\odot})_n=p_B\overline{\Delta}(\widehat{h})^{\odot}_n$, for all $n\geq 1$. 
  This means that 
  $$(\widehat{h^{\odot}})_n=p_B(\overline{\delta}_B\overline{\Delta}(\widehat{h})+\overline{\Delta}(\widehat{h})\overline{\delta}_A)_n, \hbox{ for all } n\geq 1.$$
   We have 
 $p_B(\overline{\delta}_B\overline{\Delta}(\hat{h})
 +\overline{\Delta}(\hat{h})\overline{\delta}_A)_n=D_n+R_n$
 where
 $$D_n=p_B\overline{\delta}_B\overline{\Delta}(\hat{h})_n \hbox{ \ and \  }
 R_n=p_B\overline{\Delta}(\hat{h})(\overline{\delta}_A)_n.$$
From explicit description of $\overline{\Delta}(\widehat{h})$, we have  
 $$\begin{matrix}
   D_n
      &=& \sum_{\scriptsize
  \begin{matrix}
            r,t\geq 0;s\geq 1\\
            i_1+\cdots+i_r+s\\
            +j_1+\cdots +j_t=n\\
         \end{matrix}}  p_B\overline{\delta}_B
         (\hat{f}_{i_1}\otimes\cdots\otimes \hat{f}_{i_r}\otimes 
         \hat{h}_s\otimes
         \hat{g}_{j_1}\otimes\cdots\otimes \hat{g}_{j_t})\hfill\\
       &=& \sum_{\scriptsize
  \begin{matrix}
            r,t\geq 0;s\geq 1\\
            i_1+\cdots+i_r+s\\
            +j_1+\cdots +j_t=n\\
         \end{matrix}} \widehat{m}_{r+1+t}^{B}
         (\hat{f}_{i_1}\otimes\cdots\otimes \hat{f}_{i_r}\otimes 
         \hat{h}_s\otimes
         \hat{g}_{j_1}\otimes\cdots\otimes \hat{g}_{j_t})\hfill\\
   \end{matrix}$$
and 
 $$\begin{matrix}
   R_n
   &=&
   \sum_{\scriptsize\begin{matrix} r+s+t=n\\ r,t\geq 0; s\geq 1 \end{matrix}} 
p_B\overline{\Delta}(\hat{h})(id^{\otimes r}\otimes \widehat{m}_s^{A}\otimes id^{\otimes t})\hfill\\

 &=&
   \sum_{\scriptsize\begin{matrix} r+s+t=n\\ r,t\geq 0; s\geq 1 \end{matrix}} 
\hat{h}_{r+1+t}(id^{\otimes r}\otimes \widehat{m}_s^A\otimes id^{\otimes t}).\hfill\\
 \end{matrix}$$

 We have  
 $$\sigma_B^{-1}R_n\sigma_A^{\otimes n}=\sum_{\scriptsize
  \begin{matrix} r+s+t=n\\
            r,t\geq 0; s\geq 1\\
         \end{matrix}}\hbox{\,}(-1)^{r+st}h_{r+1+t}
         (id^{\otimes r}\otimes m^A_s\otimes id^{\otimes t})
         =H(h)_n.$$
Moreover, we have 
$$ \sigma_B^{-1}D_n\sigma_A^{\otimes n} =
   \sum_{\scriptsize
  \begin{matrix}
            r,t\geq 0;s\geq 1\\
            i_1+\cdots+i_r+s\\
            +j_1+\cdots +j_t=n\\
         \end{matrix}} \sigma_B^{-1}\widehat{m}_{r+1+t}^BZ_n(i_1,\ldots,i_r,s,j_1,\ldots,j_t)$$
 where 
 $$Z_n(i_1,\ldots,i_r,s,j_1,\ldots,j_t)=               
         (\hat{f}_{i_1}\otimes\cdots\otimes \hat{f}_{i_r}\otimes 
         \hat{h}_s\otimes
         \hat{g}_{j_1}\otimes\cdots\otimes \hat{g}_{j_t})\sigma_A^{\otimes n}.
         $$
 Fix a vector $v=(i_1,\ldots,i_r,s,j_1,\ldots,j_t)$ and let us examine $Z_n(v)$. 
 We have 
 $$
(\hat{f}_{i_1}\otimes\cdots\otimes \hat{f}_{i_r}\otimes 
         \hat{h}_s\otimes
         \hat{g}_{j_1}\otimes\cdots\otimes \hat{g}_{j_t})(\sigma_A^{\otimes i_1}\otimes\cdots\otimes\sigma_A^{\otimes i_r}\otimes
         \sigma_A^{\otimes s}\otimes\sigma_A^{\otimes j_1}\otimes \cdots\otimes\sigma_A^{\otimes j_t})\hfill\\
 $$
 equals
 $$(-1)^{\sgn'}(\hat{f}_{i_1}\sigma_A^{\otimes i_1}\otimes\cdots\otimes
 \hat{f}_{i_r}\sigma_A^{\otimes i_r}\otimes 
         \hat{h}_s\sigma_A^{\otimes k}\otimes
         \hat{g}_{j_1}\sigma_A^{\otimes j_1}\otimes\cdots\otimes 
         \hat{g}_{j_t}\sigma_A^{\otimes j_t})$$
 and 
  $$(-1)^{\sgn'}(\sigma_Bf_{i_1}\otimes\cdots\otimes
 \sigma_Bf_{i_r}\otimes 
         \sigma_Bh_s\otimes
         \sigma_Bg_{j_1}\otimes\cdots\otimes \sigma_Bg_{j_t}),$$
 where $\sgn'=\sum_{u=1}^r i_u$. The last term equals 
 $$(-1)^{\sgn(i_1,\ldots,i_r,s,j_1,\ldots,j_t)}\sigma_B^{\otimes (r+1+t)}(f_{i_1}\otimes\cdots\otimes
 f_{i_r}\otimes h_s\otimes g_{j_1}\otimes\cdots\otimes g_{j_t}),$$
 where the sign was defined in (\ref{D: homotopy for Alg-inf}). Hence,
 $ \sigma_B^{-1}D_n\sigma_A^{\otimes n} =H_{f,g}(h)_n$
  
 Therefore,   for $n\geq 1$, we get 
   $\sigma_{B}^{-1}(D_n+R_n)\sigma_A^{\otimes n}=(h^{\odot})_n,$
or, equivalently, 
$$p_B(\overline{\delta}_B\overline{\Delta}(\hat{h})
 +\overline{\Delta}(\hat{h})\overline{\delta}_A)_n=(\widehat{h^{\odot}})_n.$$
  
(6): Since $\overline{\Psi}(f)-\overline{\Psi}(g)$ is a $\overline{\Psi}(f)\g \overline{\Psi}(g)$-coderivation, 
 from (1), 
in order to prove that $\overline{\Psi}(f)-\overline{\Psi}(g)=\overline{\Delta}(\widehat{f-g})$, we have to show that 
$$p_B(\overline{\Psi}(f)-\overline{\Psi}(g))_n=p_B\overline{\Delta}(\widehat{f-g})_n,
\hbox{  \ for all \ } n\in \hueca{N}.$$
This is clear, because, for $n\in \hueca{N}$, both terms coincide with $\widehat{f}_n-\widehat{g}_n$.

(7): If $h$ is a homotopy from $f$ to $g$, then $f-g=h^{\odot}$. Then, we have  
$$\overline{\Psi}(f)-\overline{\Psi}(g)=\overline{\Delta}(\widehat{f-g})=
\overline{\Delta}(\widehat{h^{\odot}})=\overline{\Delta}(\widehat{h})^{\odot}=
\overline{\delta}_B\xi+\xi\overline{\delta}_A,$$
where $\xi=\overline{\Delta}(\widehat{h})$ is a homotopy from $\overline{\Psi}(f)$ to $\overline{\Psi}(g)$. 

Conversely, if $\xi:\overline{C}_A\rightmap{}\overline{C}_B$ is a  
homotopy from $\overline{\Psi}(f)$ to $\overline{\Psi}(g)$, 
we have $\xi=\overline{\Delta}(\widehat{h})$, for some $h\in {\cal H}_\infty^{-1}(A,B)$ and 
$$\overline{\Delta}(\widehat{f-g})=\overline{\Psi}(f)-\overline{\Psi}(g)=
\overline{\delta}_A\xi+\xi\overline{\delta}_B=
\overline{\Delta}(\widehat{h})^{\odot}=\overline{\Delta}(\widehat{h^{\odot}}).$$
It follows that $f-h=h^{\odot}$, so $h$ is a homotopy from $f$ to $g$. 
 \end{proof}

\begin{proposition}\label{P: funtor restriccion de un infty-morfismo A-->B}
 Let $A$ and $B$ be $A_\infty$-algebras and consider a morphism 
 $\phi:A\rightmap{}B$ of $A_\infty$-algebras. Then, 
 \begin{enumerate}
  \item The morphism $\phi$ induces a 
 \emph{restriction functor} of differential graded categories 
 $$R_\phi:\GM\g B\rightmap{}\GM\g A$$
 defined as follows. Given $M\in \GM\g B$,
 the $A$-module $R_\phi(M)=M$.  
 Given a morphism $f:M\rightmap{}N$ in $\GM\g B$, the morphism 
$R_\phi(f)=\{R_\phi(f)_n\}_{n\in \hueca{N}}:R_\phi(M)\rightmap{}R_\phi(N)$ in $\GM\g A$ 
is given, for $n\geq 2$, by 
$$ R_\phi(f)_n= \sum_{\scriptsize
  \begin{matrix} 1\leq r\leq n-1\\
           i_1+\cdots +i_r=n-1\\
          \end{matrix}}
   (-1)^{\sgn(i_1,\ldots,i_r)}f_{r+1}(id_M\otimes \phi_{i_1}\otimes\cdots\otimes \phi_{i_r}),$$ 
   and $R_\phi(f)_1=f_1$, where $\sgn(i_1,\ldots,i_r)$ is as in (\ref{A-infinite algebra}). 
   
\item  The morphism $\phi$ induces a 
 \emph{restriction functor} of graded categories
 $$R_\phi:\TGM\g B\rightmap{}\TGM\g A.$$
 If $(M,\{m_n^M\})\in \TGM\g B$,
 by definition, $R_\phi(M,m^M)=(M,R_\phi(m^M))$.  
 Given a morphism $f:(M,m^M)\rightmap{}(N,m^N)$ in $\TGM\g B$, the morphism 
$R_\phi(f)=\{R_\phi(f)_n\}_{n\in \hueca{N}}:R_\phi(M)\rightmap{}R_\phi(N)$ in $\GM\g A$ 
is in fact a morphism $R_\phi(M,m^M)\rightmap{}R_\phi(N,m^N)$ in $\TGM\g A$. 
   
\item Let $\overline{{\cal B}}_A$ and $\overline{{\cal B}}_B$ denote the reduced tensor $S$-coalgebras  
associated to $A$ and $B$ respectively, and denote by ${\cal B}_A$ and ${\cal B}_B$ the corresponding 
tensor $S$-coalgebras. Then the morphism of $A_\infty$-algebras $\phi:A\rightmap{}B$ determines
a morphism of differential $S$-coalgebras $\overline{\Psi}(\phi):\overline{{\cal B}}_A\rightmap{}\overline{{\cal B}}_B$, 
which extends to a morphism 
$\Psi(\phi):{\cal B}_A\rightmap{}{\cal B}_B$, as in 
(\ref{L: homotopias de morfismos de overline C a C}). Then, we have  commutative 
squares of functors 
$$\begin{matrix}
   \GM\g B&\rightmap{\hueca{G}_B}&\GM\g {\cal B}_B\\
   \lmapdown{R_\phi}&&\lmapdown{R_{\Psi(\phi)}}\\
   \GM\g A&\rightmap{\hueca{G}_A}&\GM\g {\cal B}_A\\
  \end{matrix} \hbox{  \ and  \ } 
  \begin{matrix}
   \TGM\g B&\rightmap{\hueca{G}_B}&\TGM\g {\cal B}_B\\
   \rmapdown{R_\phi}&&\rmapdown{R_{\Psi(\phi)}}\\
   \TGM\g A&\rightmap{\hueca{G}_A}&\TGM\g {\cal B}_A.\\
  \end{matrix} 
  $$
where $\hueca{G}_A$ and $\hueca{G}_B$ are the functors defined in (\ref{P: equiv entre GMod-A y GMod-B(A)}) and  (\ref{P: equiv entre TGMod-A y TGMod-B(A)}).   
 \item Moreover, $R_\phi$ maps null-homotopic morphisms of $\Mod_\infty\g B$ onto 
 null-homotopic morphisms of $\Mod_\infty\g A$. So, it induces a  functor 
 $\underline{R}_\phi$ and a commutative 
 square of functors 
 $$\begin{matrix}
   \underline{\Mod}_\infty\g B&\rightmap{\underline{\hueca{G}}_B}&\underline{\TGM}^0\g {\cal B}_B\\
   \lmapdown{\underline{R}_\phi}&&\rmapdown{\underline{R}_{\Psi(\phi)}}\\
   \underline{\Mod}_\infty\g A&\rightmap{\underline{\hueca{G}}_A}&\underline{\TGM}^0\g {\cal B}_A.\\
  \end{matrix}$$
  \end{enumerate} 
\end{proposition}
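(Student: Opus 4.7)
The plan is to \emph{define} the restriction functor at the $A_\infty$-level by transport through the equivalences $\hueca{G}_A$ and $\hueca{G}_B$ of (\ref{P: equiv entre GMod-A y GMod-B(A)}) and (\ref{P: equiv entre TGMod-A y TGMod-B(A)}), setting
$$R_\phi:=\hueca{G}_A^{-1}\circ R_{\Psi(\phi)}\circ \hueca{G}_B,$$
both at the $\GM$-level and at the $\TGM$-level, where $R_{\Psi(\phi)}$ is the restriction functor for differential graded $S$-bocses supplied by (\ref{P: restrictions on graded modules}) and (\ref{P: restrictions on twisted graded modules}). Functoriality, preservation of the differential, and preservation of the Maurer-Cartan equation all come for free from the corresponding facts at the bocs level, and the commutativity of the squares in (3) is automatic by construction. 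The only statements that then require honest verification are: (i) that this transported functor agrees on morphisms with the explicit formula in (1); and (ii) that it preserves null-homotopy as asserted in (4).

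For (i), I would start from $\hueca{G}_B(f)=\hat f\in \Hom_{\GM\g{\cal B}_B}(M[1],N[1])$ and substitute into the formula $R_{\Psi(\phi)}(\hat f)=\hat f(id_{M[1]}\otimes \Psi(\phi))$ of (\ref{P: restrictions on graded modules}). Since $\Psi(\phi):T_S(A[1])\to T_S(B[1])$ is the coalgebra morphism produced by the bar construction from $\{\hat\phi_n:A[1]^{\otimes n}\to B[1]\}$, its restriction to $A[1]^{\otimes(n-1)}$ decomposes as $\sum \hat\phi_{i_1}\otimes\cdots\otimes\hat\phi_{i_r}$ summed over compositions $i_1+\cdots+i_r=n-1$. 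After restricting to $M[1]\otimes A[1]^{\otimes(n-1)}$ one then transports back through $\hueca{G}_A^{-1}$ by precomposing with $\sigma_M\otimes\sigma^{\otimes(n-1)}$ and postcomposing with $\sigma_N^{-1}$, using the defining relations $\hat f_r(\sigma_M\otimes\sigma^{\otimes(r-1)})=\sigma_N f_r$ and $\hat\phi_i\sigma^{\otimes i}=\sigma\phi_i$. The signs generated by commuting shift isomorphisms past tensors should coalesce to the sign $\sgn(i_1,\ldots,i_r)$ of (\ref{A-infinite algebra}).

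For (ii) and part (4), the fact that $R_{\Psi(\phi)}$ takes null-homotopic morphisms to null-homotopic ones is exactly (\ref{L: restriction functors and homotopy}), while (\ref{P: homotopias en TGM-A}) states that $\hueca{G}$ preserves and reflects null-homotopy; combining these two facts with the defining equality $R_\phi=\hueca{G}_A^{-1}R_{\Psi(\phi)}\hueca{G}_B$ immediately yields that $R_\phi$ descends to the homotopy category and makes the last square commute. The only small additional check is that the explicit formula for $R_\phi(m^M)$ indeed defines an $A_\infty$-module structure on $M$, but this is read off from the Maurer-Cartan equation $\widehat{\delta}(R_{\Psi(\phi)}(u))+R_{\Psi(\phi)}(u)*R_{\Psi(\phi)}(u)=0$ which is ensured by (\ref{P: restrictions on twisted graded modules}).

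The main obstacle, therefore, is entirely the Koszul sign bookkeeping in step (i): one must verify that the sign $\sgn(i_1,\ldots,i_r)=(r-1)(i_1-1)+(r-2)(i_2-1)+\cdots+(i_{r-1}-1)$ matches the cumulative sign obtained by applying the Koszul rule $(\alpha\otimes\beta)(x\otimes y)=(-1)^{\vert\beta\vert\vert x\vert}\alpha(x)\otimes\beta(y)$ along the composite $\hueca{G}_A^{-1}\circ R_{\Psi(\phi)}\circ \hueca{G}_B$. Equivalently, it counts the total sign produced as each $\sigma^{-1}$ (degree $+1$) is moved across the maps $\hat\phi_{i_j}$ (each a degree-zero map of shifted bimodules, but whose unshifted avatar $\phi_{i_j}$ has degree $1-i_j$). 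This is the same type of sign calculation carried out for the composition formula in the proof of (\ref{P: equiv entre GMod-A y GMod-B(A)}), applied summand by summand to the bar decomposition of $\Psi(\phi)$; no new idea is needed, only a careful accounting.
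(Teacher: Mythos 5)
Your proposal is correct and takes essentially the same route as the paper: the paper also reduces everything to the commutativity of the first square, i.e.\ to the identity $\hueca{G}_A R_\phi(f)=R_{\Psi(\phi)}\hueca{G}_B(f)$, verified by expanding $\Psi(\phi)$ on $A[1]^{\otimes n}$ as $\sum\widehat{\phi}_{i_1}\otimes\cdots\otimes\widehat{\phi}_{i_r}$ and tracking the Koszul signs created when the shift maps are moved past the unshifted $\phi_{i_j}$ (your step (i), up to a harmless indexing shift in the relation $\hat f_n(\sigma_M\otimes\sigma^{\otimes n})=\sigma_N f_{n+1}$), with parts (2)--(3) obtained by the same transport. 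For part (4) the paper argues a bit more directly, noting via (\ref{D: homotopia para GMod0-A es la original}) that the dg functor $R_\phi$ sends a homotopy to a homotopy, but your route through (\ref{L: restriction functors and homotopy}) and (\ref{P: homotopias en TGM-A}) is an equally valid minor variant.
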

 
 \begin{proof} In order to prove (1), it is enough to show the commutativity of the first diagram in (3), because we already know that $R_{\Psi(\phi)}$ is a functor and $\hueca{G}_A$ and $\hueca{G}_B$ are equivalences, and they all preserve the differentials of the categories.  Clearly, $\hueca{G}_A R_\phi$ and $R_{\Psi(\phi)}\hueca{G}_B$
coincide on objects. Take a morphism $f:M\rightmap{}N$  in $\GM\g B$ and let us show that $\hueca{G}_A R_\phi(f)= R_{\Psi(\phi)}\hueca{G}_B(f)$. For this we have to show that, for each $n\geq 0$, we have 
$\hueca{G}_A(R_\phi(f))_n=R_{\Psi(\phi)}\hueca{G}_B(f)_n$. Since $R_{\Psi(\phi)}\hueca{G}_B(f)_n=R_{\Psi(\phi)}(\widehat{f})_n=\widehat{f}(id_{M[1]}\otimes \Psi(\phi))_n$, the former is equivalent to show 
 that $\widehat{R_\phi(f)}_n=\widehat{f}(id_{M[1]}\otimes \Psi(\phi))_n$. For this we will show that the following diagrams commute 
  $$\begin{matrix}
  M\otimes_S A^{\otimes n}&
  \rightmap{ \ \sigma_M\otimes \sigma^{\otimes n} \ }&
  M[1]\otimes_S A[1]^{\otimes n}\\
     \lmapdown{R_\phi(f)_{n+1}}&&\lmapdown{\hat{f}(id_{M[1]}\otimes \Psi(\phi))_{n}}\\
  N&\rightmap{ \  \ \ \sigma_N \ \  \ }&N[1]\\
  \end{matrix} \hbox{ and  \  \   } 
  \begin{matrix}
  M&\rightmap{ \ \zeta\sigma_M \ \ }&M[1]\otimes_S S\\
     \rmapdown{R_\phi(f)_1}&&\rmapdown{\hat{f}(id_{M[1]}\otimes \Psi(\phi))_0}\\
  N&\rightmap{ \ \ \  \sigma_N \   \ \ }&N[1],\\
  \end{matrix}$$
  where $n$ runs in $\hueca{N}$ and $\zeta:M[1]\rightmap{}M[1]\otimes_SS$ is the canonical isomorphism.
 
 Recall that, for $n\geq 1$,  we have  
  $$\Psi(\phi)_n=\sum_{\scriptsize
  \begin{matrix} 1\leq r\leq n\\
           i_1+\cdots +i_r=n\\
          \end{matrix}}\widehat{\phi}_{i_1}\otimes \cdots\otimes \widehat{\phi}_{i_r},$$
  where $\widehat{\phi}_{i}\sigma^{\otimes i}=\sigma \phi_{i}$, for all $i\in \hueca{N}$.         
 Then, if  $Q:=\widehat{f}(id_{M[1]}\otimes \Psi(\phi))_n(\sigma_M\otimes \sigma^{\otimes n})$, we have 
 $$\begin{matrix}
Q
&=&
\sum_{\scriptsize
  \begin{matrix} 1\leq r\leq n\\
           i_1+\cdots +i_r=n\\
          \end{matrix}}\widehat{f}_r(id_{M[1]}\otimes\widehat{\phi}_{i_1}\otimes \cdots\otimes \widehat{\phi}_{i_r})(\sigma_M\otimes \sigma^{\otimes n})\hfill\\
&=&  
\sum_{\scriptsize
  \begin{matrix} 1\leq r\leq n\\
           i_1+\cdots +i_r=n\\
          \end{matrix}}\widehat{f}_r(\sigma_{M}\otimes\widehat{\phi}_{i_1}\sigma^{\otimes {i_1}}\otimes \cdots\otimes \widehat{\phi}_{i_r}\sigma^{\otimes i_r})\hfill\\
&=& 
\sum_{\scriptsize
  \begin{matrix} 1\leq r\leq n\\
           i_1+\cdots +i_r=n\\
          \end{matrix}}\widehat{f}_r(\sigma_{M}\otimes \sigma\phi_{i_1}\otimes \cdots\otimes \sigma\phi_{i_r})\hfill\\
&=& 
\sum_{\scriptsize
  \begin{matrix} 1\leq r\leq n\\
           i_1+\cdots +i_r=n\\
          \end{matrix}}(-1)^{\sgn}\widehat{f}_r(\sigma_{M}\otimes \sigma^{\otimes r})(id_M\otimes \phi_{i_1}\otimes \cdots\otimes \phi_{i_r})\hfill\\
&=& 
\sum_{\scriptsize
  \begin{matrix} 1\leq r\leq n\\
           i_1+\cdots +i_r=n\\
          \end{matrix}}(-1)^{\sgn}\sigma_Nf_{r+1}(id_M\otimes \phi_{i_1}\otimes \cdots\otimes \phi_{i_r})\hfill\\
&=& 
\sigma_NR_\phi(f)_{n+1}.\hfill\\
\end{matrix}$$          
For $n=0$, we have 
$$\widehat{f}(id_{M[1]}\otimes \Psi(\phi))_0\zeta\sigma_M=\widehat{f}_0(\sigma_M\otimes 1)=\widehat{f}_0\zeta\sigma_M=\sigma_Nf_1=\sigma_NR_\phi(f)_1.$$

(2) and the commutativity of the second square in (3) follow from the commutativity of the first square and the 
fact that $\TGM\g A$ is constructed from $\GM\g A$ in a similar way that $\TGM\g {\cal B}_A$ is constructed from 
$\GM\g {\cal B}_A$, using the corresponding differentials, and the  functor 
$R_\phi:\TGM\g B\rightmap{}\TGM\g A$ is constructed from 
$R_\phi:\GM\g B\rightmap{}\GM\g A$ in a similar way that the functor 
$R_{\Psi(\phi)}:\TGM\g {\cal B}_B\rightmap{}\TGM\g {\cal B}_A$ is constructed from the functor 
$R_{\Psi(\phi)}:\GM\g {\cal B}_B\rightmap{}\GM\g {\cal B}_A$. 

(4) Given morphisms $f,g:(M,m^M)\rightmap{}(N,m^N)$ in $\Mod_\infty\g A$, hence in $\TGM^0\g A$, any homotopy $h$ from $f$ to $g$ is a homogeneous morphism in $\GM\g A$ with degree $\vert h\vert=-1$ and is mapped by the restriction functor $R_\phi:\GM\g B\rightmap{}\GM\g A$ onto the homotopy $R_\psi(h)$ from $R_\psi(f)$ to $R_\psi(g)$, see (\ref{D: homotopia para GMod0-A es la original}). Hence, there is an induced functor $\underline{R}_\phi$ which clearly makes the diagram of (4) to commute. 
 \end{proof}

 \begin{corollary}\label{C: teo sobre homotopia y restricciones para morf de A-infinito algs}
Let $f,g:A\rightmap{}B$ be homotopic morphisms of $A_\infty$-algebras. Then,
there is an isomorphism of functors 
$$ \underline{R}_f\cong \underline{R}_g$$
where $ \underline{R}_f, \underline{R}_g: 
\underline{\Mod}_\infty\g B\rightmap{}\underline{\Mod}_\infty\g A$ are the functors 
induced on the homotopy categories by the restriction functors 
$R_f,R_g:\Mod_\infty\g B\rightmap{}\Mod_\infty\g A$, respectively. Any homotopy equivalence $f:A\rightmap{}B$ of $A_\infty$-algebras determines an equivalence of categories $\underline{R}_f:\underline{\Mod}_\infty\g B\rightmap{}\underline{\Mod}_\infty\g A$. 
\end{corollary}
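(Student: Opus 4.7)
The plan is to reduce the statement to the corresponding result already established for restriction functors between triangular differential graded $S$-bocses, namely Corollary (\ref{C: equiv homotopica de bocses triangulares induce equiv de cats homotópicas}) together with Proposition (\ref{P: equiv de funtores restriccion de morfismos homotopicos}), by transporting everything through the bar construction functor $\Psi$ and the equivalences $\underline{\hueca{G}}_A,\underline{\hueca{G}}_B$ of Theorem (\ref{T: TGMod-A estable es triangulada equiv a la DIGC-C}).

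A preliminary observation is that for any $A_\infty$-algebra $A$ the associated differential graded $S$-bocs ${\cal B}_A=(T_S(A[1]),\mu,\epsilon,\delta)$ is triangular: the filtration $\overline{C}_i:=\bigoplus_{1\leq j\leq i}A[1]^{\otimes j}$ of $\overline{T}_S(A[1])$ satisfies (\ref{D: triangular bocs}), because the reduced bar comultiplication $\overline{\mu}$ carries $A[1]^{\otimes j}$ into $\overline{C}_{j-1}\otimes_S\overline{C}_{j-1}$ and the differential $\delta$ preserves each $\overline{C}_i$.

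Now, since $f$ and $g$ are homotopic in $\Alg_\infty$, Lemma (\ref{L: Psi:Alg-infty--> DGCoalg preserva homotopias}) gives that $\Psi(f)$ and $\Psi(g)$ are homotopic as morphisms of triangular differential graded $S$-bocses ${\cal B}_A\rightarrow{\cal B}_B$. Applying Proposition (\ref{P: equiv de funtores restriccion de morfismos homotopicos}) furnishes an isomorphism of functors $\underline{\eta}:\underline{R}_{\Psi(f)}\rightarrow\underline{R}_{\Psi(g)}$ from $\underline{\TGM}^0\g{\cal B}_B$ to $\underline{\TGM}^0\g{\cal B}_A$. On the other hand, item (4) of Proposition (\ref{P: funtor restriccion de un infty-morfismo A-->B}) asserts the equalities $\underline{\hueca{G}}_A\,\underline{R}_f=\underline{R}_{\Psi(f)}\,\underline{\hueca{G}}_B$ and $\underline{\hueca{G}}_A\,\underline{R}_g=\underline{R}_{\Psi(g)}\,\underline{\hueca{G}}_B$. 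Since $\underline{\hueca{G}}_A$ and $\underline{\hueca{G}}_B$ are equivalences, conjugating $\underline{\eta}$ through them produces the desired isomorphism $\underline{R}_f\cong\underline{R}_g$.

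For the second assertion, if $f$ admits a homotopy inverse $h\in \Alg_\infty(B,A)$, then functoriality of $\Psi$ together with Lemma (\ref{L: Psi:Alg-infty--> DGCoalg preserva homotopias}) gives that $\Psi(f)$ is a homotopy equivalence of triangular differential graded $S$-bocses with inverse $\Psi(h)$. Corollary (\ref{C: equiv homotopica de bocses triangulares induce equiv de cats homotópicas}) then ensures that $\underline{R}_{\Psi(f)}$ is an equivalence of categories, and the same commuting square combined with the equivalences $\underline{\hueca{G}}_A$, $\underline{\hueca{G}}_B$ yields that $\underline{R}_f$ is itself an equivalence. The only potentially delicate point is already absorbed into Proposition (\ref{P: funtor restriccion de un infty-morfismo A-->B})(4), namely that the restriction functors on both sides are genuinely intertwined by $\underline{\hueca{G}}$ on the homotopy categories; with that in hand, the proof is a clean assembly of the translation lemmas with the previously proved bocs results, requiring no further computation.
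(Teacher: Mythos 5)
Your proposal is correct and follows essentially the same route as the paper: transport the homotopy $f\sim g$ through $\Psi$ to get $\Psi(f)\sim\Psi(g)$, invoke (\ref{P: equiv de funtores restriccion de morfismos homotopicos}) (resp.\ (\ref{C: equiv homotopica de bocses triangulares induce equiv de cats homotópicas}) for the equivalence statement), and conjugate through the commuting squares of (\ref{P: funtor restriccion de un infty-morfismo A-->B})(4) using the equivalences $\underline{\hueca{G}}_A,\underline{\hueca{G}}_B$. Your explicit check that ${\cal B}_A$ is triangular is a welcome detail the paper leaves implicit, but it does not change the argument.
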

  
\begin{proof} Let ${\cal B}_A$ and ${\cal B}_B$ be the differential tensor 
$S$-coalgebras (or differential tensor $S$-bocses) associated to $A$ and $B$, respectively.  Then, $\phi=\Psi(f)$ and $\psi=\Psi(g)$ are homotopic morphisms of graded $S$-coalgebras 
${\cal B}_A\rightmap{}{\cal B}_B$. We have the following commutative diagram 
$$\begin{matrix}
 \underline{\Mod}_\infty\g B&\rightmap{\underline{\hueca{G}}_B}&
 \underline{\TGM}^0\g {\cal B}_B\\
 \lmapdown{\underline{R}_f}&&\rmapdown{\underline{R}_\phi}\\
 \underline{\Mod}_\infty\g A&\rightmap{\underline{\hueca G}_A}&
 \underline{\TGM}^0\g {\cal B}_A,\\
  \end{matrix}$$ 
and similarly for the morphisms $g$ and $\psi$. 
  From (\ref{P: equiv de funtores restriccion de morfismos homotopicos}), we know there is an isomorphism of functors 
$\underline{R}_\phi\cong\underline{R}_\psi$.
Denote by $\hueca{G}'_A$ a quasi inverse for the equivalence $\hueca{G}_A$.  From the isomorphism  of 
functors $\underline{R}_\phi\underline{\hueca{G}}_B\cong \underline{R}_\psi
 \underline{\hueca{G}}_B$, we obtain  
$$\underline{R}_f
\cong
\underline{\hueca{G}}_A'\underline{\hueca{G}}_A\underline{R}_f
=
\underline{\hueca{G}}_A'\underline{R}_\phi
\underline{\hueca{G}}_B
\cong
\underline{\hueca{G}}_A'\underline{R}_\psi\underline{\hueca{G}}_B
=
\underline{\hueca{G}}_A'\underline{\hueca{G}}_A\underline{R}_g
\cong 
\underline{R}_g.$$
\end{proof}

\noindent{\bf  Acknowledgements.}  The second author acknowledges the hospitality of
Centro de Ciencias Matem\'aticas, UNAM,  during his sabbatical year  and  the support of CONACyT
sabbatical grant 2018-000008-01NACV.

 \hskip2cm

\vbox{\noindent R. Bautista\\
Centro de Ciencias Matem\'aticas\\
Universidad Nacional Aut\'onoma de M\'exico\\
Morelia, M\'exico\\
raymundo@matmor.unam.mx\\}

\vbox{\noindent E. P\'erez\\
Facultad de Matem\'aticas\\
Universidad Aut\'onoma de Yucat\'an\\
M\'erida, M\'exico\\
jperezt@correo.uady.mx\\}

\vbox{\noindent L. Salmer\'on\\
Centro de Ciencias  Matem\'aticas\\
Universidad Nacional Aut\'onoma de M\'exico\\
Morelia, M\'exico\\
salmeron@matmor.unam.mx\\}


\begin{thebibliography}{99}

\bibitem{BS} Bautista, R. and Souto, M.J. \textit{Categor\'ias Derivadas}, Preliminary version, 2016. 

\bibitem{BSZ} Bautista, R.,  Salmer\'on, L. and Zuazua, R.  \textit{Differential 
Tensor Algebras and their  Module Categories.} London Math. Soc. Lecture Note 
Series  362, Cambridge University Press,  2009.

\bibitem{BK} Bondal, A.I. and Kapranov M.M. \emph{Enhanced triangulated categories}, Math. USSR Sbornik Vol. 70 (1991), No. 1, 93--107.

\bibitem{Brown} Brown, K.S. Cohomology of Groups, GTM 87, Springer, 1982.

\bibitem{H} Happel, D. Triangulated Categories in the Representation Theory of Finite Dimensional Algebras. London Math. Soc.  Lecture Note Series 119, Cambridge University Press, 1988.

\bibitem{K1} Keller, B. \textit{Introduction to $A_\infty$-algebras.} Homology, Homotopy and Applications, vol. 3, 1, 2001, pp. 135.

\bibitem{L-H} Lef\`evre-Hasegawa, K. \textit{Sur les 
$A_\infty$-cat\'egories}, Th\`ese de Doctorat, 2003. 
  
  \end{thebibliography}
 \end{document}